\numberwithin{equation}{section}  
\theoremstyle{thmstyleone}       
\newtheorem{theorem}{Theorem}[section] 
\newtheorem{lemma}[theorem]{Lemma}     
\newtheorem{proposition}[theorem]{Proposition}
\theoremstyle{thmstyletwo}        
\newtheorem{example}{Example}[section]
\theoremstyle{thmstylethree}      
\newtheorem{definition}{Definition}[section]  
\theoremstyle{thmstyleone}
\theoremstyle{thmstyletwo}      
\newtheorem{Algorithm}{Algorithm}[section]    
\newcommand\scalemath[2]{\scalebox{#1}{\mbox{\ensuremath{\displaystyle #2}}}}  
\newcommand{\Gr}[2]{\mathrm{Gr}(#1,#2)}
\newcommand{\x}{\mathbf{x}}      
\newcommand{\y}{\mathbf{y}}
\NewDocumentCommand{\hg}{ O{} O{} m m }{
	\path let
	\p1 = ($(#4)-(#3)$),
	\n1 = {atan2(\y1,\x1)}
	in
	coordinate (#3#4left1)  at ($(#3)!1/3!(#4) + ({\n1 + 90}:0.15)$)
	coordinate (#3#4right1) at ($(#3)!1/3!(#4) + ({\n1 - 90}:0.15)$)
	coordinate (#3#4left2)  at ($(#3)!2/3!(#4) + ({\n1 + 90}:0.15)$)
	coordinate (#3#4right2) at ($(#3)!2/3!(#4) + ({\n1 - 90}:0.15)$);
	
	\draw
	[\IfNoValueF{#1}{#1}]
	(#3) .. controls (#3#4left1) and (#3#4right2) .. (#4);
	
	\draw
	[\IfNoValueF{#2}{#2}]
	(#3) .. controls (#3#4right1) and (#3#4left2) .. (#4);
}
\newcommand{\Disk}[1]{
	\begin{tikzpicture}
		\draw (0,0) circle (1);
		\foreach \k in {1,...,12} {
			\coordinate (\k) at (120-30*\k:1);
		}
		\foreach \k in {1,2,...,12} {
			\fill (\k) circle (2pt);
		}
		#1
	\end{tikzpicture}
}
\newcommand{\GrDisk}[1]{
	\begin{tikzpicture}
		\draw (0,0) circle (1);
		\foreach \k in {1,...,8} {
			\coordinate (\k) at (135-45*\k:1);
		}
		\foreach \k in {1,2,...,8} {
			\fill (\k) circle (2pt);
		}
		#1
	\end{tikzpicture}
}
\newcommand{\Diska}[1]{
	\GrDisk{
		#1
		\fill[white] (1) circle (2pt);
		\draw (1) circle (2pt);
		\fill[white] (2) circle (2pt);
		\draw (2) circle (2pt);
		\fill[white] (3) circle (2pt);
		\draw (3) circle (2pt);
		\fill[white] (4) circle (2pt);
		\draw (4) circle (2pt);
	}
}
\newcommand{\Diskb}[1]{
	\GrDisk{
		#1
		\fill[white] (1) circle (2pt);
		\draw (1) circle (2pt);
		\fill[white] (2) circle (2pt);
		\draw (2) circle (2pt);
		\fill[white] (3) circle (2pt);
		\draw (3) circle (2pt);
		\fill[white] (5) circle (2pt);
		\draw (5) circle (2pt);
	}
}
\newcommand{\Diskc}[1]{
	\GrDisk{
		#1
		\fill[white] (1) circle (2pt);
		\draw (1) circle (2pt);
		\fill[white] (2) circle (2pt);
		\draw (2) circle (2pt);
		\fill[white] (3) circle (2pt);
		\draw (3) circle (2pt);
		\fill[white] (6) circle (2pt);
		\draw (6) circle (2pt);
	}
}
\newcommand{\Diskd}[1]{
	\GrDisk{
		#1
		\fill[white] (1) circle (2pt);
		\draw (1) circle (2pt);
		\fill[white] (2) circle (2pt);
		\draw (2) circle (2pt);
		\fill[white] (4) circle (2pt);
		\draw (4) circle (2pt);
		\fill[white] (5) circle (2pt);
		\draw (5) circle (2pt);
	}
}
\newcommand{\Diske}[1]{
	\GrDisk{
		#1
		\fill[white] (1) circle (2pt);
		\draw (1) circle (2pt);
		\fill[white] (2) circle (2pt);
		\draw (2) circle (2pt);
		\fill[white] (4) circle (2pt);
		\draw (4) circle (2pt);
		\fill[white] (6) circle (2pt);
		\draw (6) circle (2pt);
	}
}
\newcommand{\Diskf}[1]{
	\GrDisk{
		#1
		\fill[white] (1) circle (2pt);
		\draw (1) circle (2pt);
		\fill[white] (2) circle (2pt);
		\draw (2) circle (2pt);
		\fill[white] (4) circle (2pt);
		\draw (4) circle (2pt);
		\fill[white] (7) circle (2pt);
		\draw (7) circle (2pt);
	}
}
\newcommand{\Diskg}[1]{
	\GrDisk{
		#1
		\fill[white] (1) circle (2pt);
		\draw (1) circle (2pt);
		\fill[white] (2) circle (2pt);
		\draw (2) circle (2pt);
		\fill[white] (5) circle (2pt);
		\draw (5) circle (2pt);
		\fill[white] (6) circle (2pt);
		\draw (6) circle (2pt);
	}
}
\newcommand{\Diskh}[1]{
	\GrDisk{
		#1
		\fill[white] (1) circle (2pt);
		\draw (1) circle (2pt);
		\fill[white] (3) circle (2pt);
		\draw (3) circle (2pt);
		\fill[white] (5) circle (2pt);
		\draw (5) circle (2pt);
		\fill[white] (7) circle (2pt);
		\draw (7) circle (2pt);
	}
}
\begin{document}

\title[Web diagrams of cluster variables for Gr(4,8)]{Web Diagrams of Cluster Variables for Grassmannian Gr(4,8)}

%%=============================================================%%
%% GivenName	-> \fnm{Joergen W.}
%% Particle	-> \spfx{van der} -> surname prefix
%% FamilyName	-> \sur{Ploeg}
%% Suffix	-> \sfx{IV}
%% \author*[1,2]{\fnm{Joergen W.} \spfx{van der} \sur{Ploeg} 
%%  \sfx{IV}}\email{iauthor@gmail.com}
%%=============================================================%%

\author[1]{\fnm{Wen Ting} \sur{Zhang}}\email{zhangwt@lzu.edu.cn}

\author*[2]{\fnm{Rui Zhi} \sur{Tang}}\email{tangrzh01@gmail.com}

\author[2]{\fnm{Jin Xing} \sur{Zhao}}\email{zhjxing@imu.edu.cn}

\affil[1]{\orgdiv{School of Mathematics and Statistics}, \orgname{Lanzhou University}, \orgaddress{ \city{Lanzhou}, \country{P. R. China}}}

\affil*[2]{\orgdiv{School of Mathematical Sciences}, \orgname{Inner Mongolia University}, \orgaddress{\city{Hohhot}, \country{P. R. China}}}

\abstract{Gaetz, Pechenik, Pfannerer, Striker, and Swanson introduced the concept of hourglass plabic graphs and provided a method for computing web diagrams and invariants corresponding to $4\times n$ Young tableaux, while Elkin, Musiker, and Wright applied Lam's method to explicitly compute the webs compatible with cluster variables in $\Gr{3}{n}$ and their twists, namely, the preimages of the immanant map introduced by Fraser, Lam, and Le. In this paper, we use these two methods to compute both the web diagrams and the dual webs corresponding to quadratic and cubic cluster variables in the Grassmannian cluster algebra $\mathbb{C}[\Gr{4}{8}]$.}

\keywords{Grassmannian, cluster algebra, web invariant, plabic graph}

%%\pacs[JEL Classification]{D8, H51}

%%\pacs[MSC Classification]{35A01, 65L10, 65L12, 65L20, 65L70}

\maketitle

\section{Introduction}\label{sec:Int}

In \cite{fomin_cluster_2002} of Fomin and Zelevinsky, the definition of cluster algebra is introduced to consider a dual canonical basis and to understand and compute the canonical basis of quantum groups through this, over time, it emerged as a central theme in algebraic combinatorics, establishing deep connections with numerous other areas of mathematics. Scott's seminal work \cite{scott_grassmannians_2006} laid the groundwork for the cluster structure on the Grassmannian, introduced these structures and established that cluster algebras can exhibit finite type under certain conditions. Postnikov advanced the combinatorial study of these algebras in \cite{postnikov_total_2006} by introducing plabic graphs as a key tool for analyzing their structure. He also developed the boundary measurement map, which links Pl\"ucker coordinates to dimers on plabic graphs, thereby parametrizing the Grassmannian as a projective variety. Talaska later deepened the understanding of this map in \cite{talaska_formula_2008}. Berenstein, Fomin, and Zelevinsky in \cite{berenstein_parametrizations_1996} originally introduced the concept of the twist map and its connection to the Laurent expansions of Pl\"ucker coordinates. Building on this result, in \cite{marsh_twists_2016} of Marsh and Scott, and in \cite{muller_twist_2016} of Muller and Speyer, the authors respectively studied these Laurent expansions of Pl\"ucker coordinates through detailed analysis and new developments.

In \cite{kuperberg_spiders_1996}, Kuperberg introduced the notion of $sl_r$-webs, provided a web basis for cases $r=2$ and 3, and further proposed the concept of web invariants in $U_q(\hat{g})$-modules. Fomin and Pylyavskyy in \cite{FOMIN2016717} subsequently developed computational methods for tensor diagrams. Chang, Dual, Fraser, and Li \cite{chang_quantum_2020} combined this approach with Young tableaux representations of cluster monomials. Later, Gaetz, Pechenik, Pfannerer, Striker and Swanson generalized the methodology and established a rotation-invariant web basis for $sl_4$-webs in \cite{gaetz2023rotation}. In a separate direction, Lam introduced compatibility relations to construct Temperley-Lieb invariants for Pl\"ucker polynomials in \cite{lam_dimers_2015}. Fraser, Lam, and Le \cite{fraser_dimers_2019} further investigated connections between dimer models and web invariants, defining the immanant map that links Pl\"ucker polynomials with dual tensor diagrams. More recently, in \cite{elkin_twists_2023}, Elkin, Musiker, and Wright applied this mapping to compute images of Pl\"ucker polynomials under the twist map.

In this paper, we primarily study the quadratic and cubic cluster variables in $\mathbb{C}[\Gr{4}{8}]$. Specifically, to study the 120 quadratic and 174 cubic cluster variables in $\mathbb{C}[\Gr{4}{8}]$, we choose 3 quadratic and 14 cubic cluster variables as representatives, and the others can be obtained by dihedral translates or changing the labels of isolated vertices. The rotation-invariant $U_q(sl_4)$-web basis and the growth algorithm introduced in \cite{gaetz2023rotation} provide a computational method for web invariants of quadratic and cubic cluster variables in $\mathbb{C}[\mathrm{Gr}(k,n)]$, and represent the web diagrams as hourglass graphs. Additionally, we apply the method of compatibility mentioned in \cite{lam_dimers_2015} to determine the non-crossing matchings and non-elliptic webs compatible with quadratic and cubic cluster variables. According to the definition in \cite[Section 4.2]{lam_dimers_2015}, non-elliptic webs compatible with a certain Pl\"ucker polynomial are precisely those obtained by reducing triple dimer configurations that share the same boundary conditions as the polynomial. Consequently, the method of compatibility requires enumeration of all non-elliptic webs, so we refine the method for enumerating non-elliptic webs in \cite{elkin_twists_2023} and fully classify all non-elliptic webs with 4 black and 4 white boundary vertices. In \cite{fraser_dimers_2019}, it is proven that the webs obtained by compatibility conditions are exactly the dual webs given by the preimage of the immanant map.

In Section \ref{sec:Pre}, some necessary definitions, including cluster algebras, plabic graphs, webs and compatibility, are given. In Section \ref{sec:Inv}, we detail the method to compute the web diagrams using the growth algorithm introduced in \cite{gaetz2023rotation}. For a semi-standard Young tableau, its lattice word is constructed by taking the row indices of entries labeled $1,2,\ldots, n$ in the tableau as the sequence of letters forming the word. The algorithm constructs a boundary line and suspends downward-labeled segments corresponding to the letters in the lattice word. Following the growth rules illustrated in Figure \ref{fig:growthrule}, these segments are iteratively crossing and merged, ultimately yielding an hourglass graph, the web diagram associated with the Young tableau. According to \cite{chang_quantum_2020}, cluster variables can be associated with Young tableaux; therefore, we apply the above method to compute the web diagrams corresponding to quadratic and cubic cluster variables in the Grassmannian $\mathbb{C}[\Gr{4}{8}]$.

In Section \ref{sec:Inm}, we consider the compatibility condition introduced in \cite{lam_dimers_2015}, which enables the computation of webs compatible with cluster variables. Based on \cite{fraser_dimers_2019}, the resulting webs obtained through this approach are precisely the dual webs of the web diagrams. These dual webs play an important role in computing twists of cluster variables as demonstrated in \cite{elkin_twists_2023}. To construct such dual webs, we utilize the enumeration procedure in \cite{elkin_twists_2023} for generating all non-elliptic webs with 12 black boundary vertices corresponding to cluster variables in $\mathbb{C}[\Gr{4}{12}]$. Subsequently, we apply the contraction operation to sink vertices following \cite{russell_explicit_2013}, reducing them to non-elliptic webs with 4 black and 4 white boundary vertices that correspond to cluster variables in $\mathbb{C}[\Gr{4}{8}]$. Finally, by implementing Lam's compatibility framework \cite{lam_dimers_2015}, we explicitly determine all compatible webs for quadratic and cubic cluster variables in $\mathbb{C}[\Gr{4}{8}]$.	

\section{Preliminaries}\label{sec:Pre}
For $m, n \in \mathbb{Z}$, if $m < n$, denote the set $\{m, m+1,\ldots, n\}$ by $[m,n]$, by $[n]$ the set $[1,n]$, and by $\pm[n]$ the set $\{\pm 1, \pm 2, \ldots, \pm n\}$. For $k \in \mathbb{Z}$, if $0 \le k \le n$, we denote the set of all subsets with $k$ elements of $[n]$ by $\binom{[n]}{k}$.

\subsection{Grassmannian and the cluster structure}\label{ssc:PreGra}
In this subsection, we briefly review the structure of the Grassmannian and the definition of cluster algebra. 

Denote by $\Gr{k}{n}\subset \mathbb{P}^{\binom{n}{k}-1}$ the Grassmannian consists of all $k$-planes in $\mathbb{C}^n$, with its Pl\"ucker embedding in projective space. The point in $\Gr{k}{n}$ can be represented by a matrix $M \in \operatorname{Mat}_{k \times n}(\mathbb{C})$, and the embedding is defined as follows: For any $J \in \binom{[n]}{k}$, there is a corresponding projective called \textit{Pl\"ucker coordinate} $P_J$, and in the point $M$, the value of $P_J(M)$ is defined as the maximal minor of $M$ using the column set $J$.

Denote by $\mathbb{C}[\Gr{k}{n}]$ the homogeneous coordinate ring on $\Gr{k}{n}$, which is the affine cone over $\Gr{k}{n}$. In \cite{scott_grassmannians_2006}, Scott shows that $\mathbb{C}[\Gr{k}{n}]$ is a cluster algebra in the sense of \cite{fomin_cluster_2002}.

Now, we give a definition of cluster algebra.

\begin{definition}[\cite{fomin_cluster_2002}]\label{def:clualg}
	A \textit{quiver} $Q = (Q_0, Q_1, s, t)$ is a finite directed graph with no loops or $2$-cycles, where $Q_0$ is the set of vertices, $Q_1$ is the set of arrows, and $s, t: Q_1 \to Q_0$ are the maps assigning each arrow its source and target. The vertex set $Q_0$ is indexed by $[n]$, where the first $m$ vertices are mutable and the remaining $n-m$ vertices are frozen.
	
	For any $k \in [m]$, the \textit{mutated quiver} $\mu_k(Q)$ is formed by:
	\begin{itemize}
		\item adding an arrow $i \rightarrow j$ for each 2-length path $i \rightarrow k \rightarrow j$,
		\item reversing all arrows with source or target $k$,
		\item removing arrows that belong to a maximal collection of disjoint $2$-cycles.
	\end{itemize}
	
	Let $\mathcal{F}$ be a field isomorphic to a field of rational functions in $m$ independent variables. A \textit{seed} in $\mathcal{F}$ is a pair $(\mathbf{x}, Q)$, where $\mathbf{x} = (x_1, \ldots, x_m)$ is a free generating set of $\mathcal{F}$ and $Q$ is a quiver. The elements of $\mathbf{x}$ are \textit{cluster variables}, and $x_{m+1}, \ldots, x_n$ are \textit{frozen variables}.
	
	Given a seed $(\mathbf{x}, Q)$ and $k \in [m]$, the \textit{mutated seed} $\mu_k(\mathbf{x}, Q)$ is $(\mathbf{x}', \mu_k(Q))$, where $\mathbf{x}' = (x'_1, \ldots, x'_m)$ and $x'_k$ is given by
	\[
	x_k' x_k = \prod_{s(\alpha) = k} x_{t(\alpha)} + \prod_{t(\alpha) = k} x_{s(\alpha)},
	\]
	with $x'_j = x_j$ for all $j \neq k$.
	
	Starting from an initial seed, a seed is \textit{reachable} if it can be obtained by a sequence of mutations. The \textit{cluster algebra} is the $\mathbb{C}$-algebra generated by all cluster and frozen variables in all reachable seeds. Cluster variables are \textit{compatible} if two cluster variables appear together in a common cluster, and \textit{cluster monomials} are products of compatible cluster variables.
\end{definition}

For cluster algebra $\mathbb{C}[\Gr{k}{n}]$, it was proposed in \cite{scott_grassmannians_2006} that only $\Gr{2}{n}$, $\Gr{n-2}{n}$, $\Gr{3}{6}$, $\Gr{3}{7}$, $\Gr{4}{7}$, $\Gr{3}{8}$ and $\Gr{5}{8}$ are of finite type.

Taking $\Gr{3}{6}$ as an example, besides the rank $1$ cluster variables, which are precisely the monomial forms of Pl\"ucker coordinates, there also exist quadratic cluster variables.

Denote the two different quadratic cluster variables in $\mathbb{C}[\Gr{3}{6}]$ by $X=\operatorname{det}(v_1\times v_2 \hspace{0.3cm} v_3\times v_4 \hspace{0.3cm} v_5\times v_6)$ and $Y=\operatorname{det}(v_6\times v_1 \hspace{0.3cm} v_2\times v_3 \hspace{0.3cm} v_4\times v_5)$, where $v_i$ denotes by the vector of the $i$th column of the point $M \in \Gr{3}{6}$. From the relation

\begin{equation*}
	\operatorname{det}(u\hspace{0.2cm} v\hspace{0.2cm} w) = u \cdot (v \times w) = (u \times v) \cdot w
\end{equation*}
and
\begin{equation*}
	(u \times v) \cdot(w \times z)=
	\left| \begin{array}{cc}
		u\cdot w & u\cdot z \\
		v\cdot w & v\cdot z
	\end{array} \right| 
\end{equation*}
the cluster variables can be expressed as polynomials, for example, $X$ can also be expressed as quadratic difference $P_{134}P_{256}-P_{156}P_{234}$, $P_{124}P_{356}-P_{123}P_{456}$ or $P_{125}P_{346}-P_{126}P_{345}$. Similarly, in $\mathbb{C}[\Gr{3}{8}]$, we can find $24$ cubic cluster variables. In other Grassmannian cluster algebras, there are also higher rank cluster variables.

We count the occurrences of numbers in the subscript indices of each Pl\"ucker monomial appearing in a Pl\"ucker polynomial and record the multiplicity of each number as a sequence $\lambda = (\lambda_1, \lambda_2, \ldots, \lambda_n)$, where $\lambda_i$ denotes the number of occurrence of $i$. This sequence $\lambda$ is defined to be the \textit{boundary condition} of the Pl\"ucker polynomial. In particular, if $\lambda_i=0$, we call $i$ an \textit{isolated vertex} of the Pl\"ucker polynomial. In general, there is a $\mathbb{Z}^n$-grading of $\mathbb{C}[\Gr{k}{n}]$ given by the boundary condition $\lambda$, where
\begin{equation*}
	\mathbb{C}[\Gr{k}{n}]=\bigoplus_{r=1}^{\infty}\bigoplus_{\sum \lambda_i=rk}\mathbb{C}[\Gr{k}{n}]_{\lambda}
\end{equation*}

In this paper, we focus on the cluster algebra $\mathbb{C}[\operatorname{Gr}(4,8)]$, and study the quadratic and cubic cluster variables. From the computation in \cite{cheung_clustering_2022}, there are 120 quadratic and 174 cubic cluster variables in $\mathbb{C}[\operatorname{Gr}(4,8)]$. Explicitly, since $\mathbb{C}[\operatorname{Gr}(4,7)]$ is isomorphic to $\mathbb{C}[\operatorname{Gr}(3,7)]$, which contains 14 quadratic cluster variables and no higher-rank cluster variables, among the 120 quadratic cluster variables in $\mathbb{C}[\operatorname{Gr}(4,8)]$, $\binom{8}{7} \times 14=112$ of them originate from the cluster variables in $\mathbb{C}[\operatorname{Gr}(4,7)]$. Therefore, we select two cluster variables with $\lambda_1=2$ and $\lambda_8=0$ as representatives for computation, while the others are either dihedral translates of these two or involve relabeling of isolated points. For the remaining 8 quadratic cluster variables without isolated points, we choose one representative for computation, with the other seven being its dihedral translates. For the 174 cubic cluster variables, we classify them into 8 types based on their boundary conditions, and select a total of 14 distinct cluster variables as representatives. All other cubic cluster variables are dihedral translates of these 14 representatives.

\subsection{Plabic graph and dimer configuration}\label{ssc:PreDimers}

\begin{definition}[\cite{postnikov_total_2006}]\label{def:plabic}
	A \textit{plabic graph} $G$ is a planar bicolored graph embedded in a disk with $n$ boundary vertices, the degree of each vertex is $1$, labeled $1, \ldots, n$ in a clockwise order. The embedding of $G$ is \textit{proper}, meaning that edges do not cross, and every internal vertex is connected to at least one boundary vertex by a path.
\end{definition}

Clearly, all of plabic graphs are bipartite, and all boundary vertices are black. Refer to the set of vertices of $G$ as $V (G)$, the set of edges of $G$ as $E(G)$, and the set of faces of $G$ as $F(G)$.

\begin{definition}[\cite{postnikov_total_2006}]\label{def:zigzag}
	A path $i \rightarrow \cdots \rightarrow j$ ($i$, $j$ are boundary vertices) is called a \textit{zigzag path} (or \textit{Postnikov strand}), if 
	\begin{itemize}
		\item we reach a black vertex, turn right to the maximum extent towards the next vertex.
		\item we reach a white vertex, turn left to the maximum extent towards the next vertex.
	\end{itemize}
\end{definition}

A zigzag path is labeled with its terminal $j$, therefore denoted by $p_j$. For every face $f$ in $F(G)$, if $f$ is located at the left of $p_j$, then add the label $j$ to the face $f$. By this method, after labeling the faces on the left side of all the zigzag paths in this manner, each face will carry exactly $k$ labels in the plabic graph of $\Gr{k}{n}$.

\begin{definition}[\cite{postnikov_total_2006}]\label{def:dimer}
	A \textit{dimer configuration} (also called an \textit{almost perfect matching}) $D$ on a plabic graph $G$ is a subset of edges of $G$ such that
	\begin{itemize}
		\item each interior vertex of G is adjacent to exactly one edge in D,
		\item each boundary vertex of G is adjacent to either no edges or one edge in D.
	\end{itemize}
\end{definition}
The set of boundary vertices adjacent to one edge in $D$ is denoted by $\partial(D)$, which is also called the \textit{boundary condition} of $D$, for convenience. Sometimes we denote it as a sequence $\lambda=(\lambda_1,\ldots,\lambda_n)$, where $\lambda_i$ corresponds to the number of times the $i$-th boundary vertex appears in the boundary conditions. This notation corresponds to the boundary condition of Pl\"ucker polynomials, although it is a slight abuse of notation, it is unambiguous.

We denote by $\mathcal{D}(G)$ the set of all dimer configurations in plabic graph $G$, and $\mathcal{D}_J(G)$ the set of all dimer configurations whose boundary condition is exactly $J \subset [n]$.

The multiset of the edges in $G$ such that every interior vertex has degree exactly $m$ is called an $m$-fold dimer configuration, that is, it is also a superimposition of $m$ single dimer configurations of $G$.  We denote by $\mathcal{D}^m(G)$ the set of $m$-fold dimer configurations of $G$.

\subsection{Web invariant, $sl_r$-webs and web basis}\label{ssc:PreInv}

The quantum group $U_q(\mathfrak{sl}_r)$ is a $\mathbb{C}(q)$-algebra that arises from the theory of quantum integrable systems; see \cite{lusztig2010introduction} for a definition by generators and relations. The quantum group $U_q(\mathfrak{sl}_r)$ deforms the universal enveloping algebra $U(\mathfrak{sl}_r)$, so the classical theory of $\mathfrak{sl}_r$ is recovered when $q = 1$. 

For a standard $U_q(\mathfrak{sl}_r)$-module $V_q$ with standard $\mathbb{C}_q$-basis $v_1, \ldots, v_r$, its \textit{quantum exterior algebra} $\bigwedge_q^{c} V_q$ is a $U_q(\mathfrak{sl}_r)$-module (see \cite{berenstein2008braided} and \cite{Cautis_2014}). For the generators $v_i$ and $v_j$, the quantum exterior product satisfies the $q$-commutation relations
\begin{equation*}
	v_i \wedge_q v_j \;=\;
	\begin{cases}
		(-q)v_j \wedge_q v_i, & i<j,\\
		0, & i=j.\\
	\end{cases}
\end{equation*}
For $I=\{i_1> \cdots > i_c\}$, denote by $v_I$ the ordered quantum exterior product $v_{i_1}\wedge_q \cdots \wedge_q v_{i_c}$. The quantum exterior algebra $\bigwedge_q^{c} V_q$ has $\mathbb{C}_q$-basis $\{v_I:I\in\binom{[r]}{c}\}$.

The linear dual $(\bigwedge_q^{c} V_q)^\ast$ is also a $U_q(\mathfrak{sl}_r)$-module with dual basis $\{v_I^{\ast}\}$. For convenience, denoted by $\bigwedge_q^{-c} V_q$ the dual $(\bigwedge_q^{c} V_q)^\ast$, and by $v_{\overline{I}}$ its basis $\{v_I^{\ast}\}$.

\begin{definition}\label{def:wedge}
	Given a type $\underline{c} = (c_1, \ldots, c_n)$ where each $c_j\in \pm[r]$, let
	\begin{equation*}
		\bigwedge\nolimits_q^{\underline{c}} V_q = \bigwedge\nolimits_q^{c_1} V_q \otimes \cdots \otimes \bigwedge\nolimits_q^{c_n} V_q.
	\end{equation*}
\end{definition}
Elements in $H=\operatorname{Hom}_{U_q(\mathfrak{sl}_r)}(\bigwedge_q^{\underline{c}} V_q,\bigwedge_q^{\underline{d}} V_q)$ can be obtained by composition and tensoring from	the natural product map as well as natural $U_q(\mathfrak{sl}_r)$-module maps arising from coproducts, duals, evaluation, coevaluation, and the identity. A \textit{tensor invariant} is an element in 
\begin{equation*}
	\mathcal{W}_{\underline{c}}(U_q(\mathfrak{sl}_r))=\operatorname{Inv}_{U_q(\mathfrak{sl}_r)}(\bigwedge\nolimits_q^{\underline{c}} V_q)=\operatorname{Hom}_{U_q(\mathfrak{sl}_r)}(\bigwedge\nolimits_q^{\underline{c}} V_q,\mathbb{C}(q)).
\end{equation*}
In fact, the tensor invariant is denoted by $[W]_q$, where $W$ is a particular way of encoding tensor invariants by using planar diagrams called \textit{web diagrams}. The definition of webs were first introduced by Kuperberg in \cite{kuperberg_spiders_1996}. In \cite{morrison2007diagrammatic}, the notion of a tag was introduced and used in \cite{Cautis_2014}. In this paper, we use the definition in \cite{fraser_dimers_2019} with tags which are summarized in \cite{gaetz2023rotation}.

\begin{definition}\label{def:tagwebs}
	\cite[Definition 2.3]{gaetz2023rotation} A $U_q(\mathfrak{sl}_r)$\textit{-web} (or just \textit{web}) is a planar graph $W$ embedded in the disk such that
	\begin{itemize}
		\item $W$ is properly bicolored, with black and white vertices;
		\item all vertices on the boundary circle have degree 1;
		\item boundary vertices are labeled $1, 2, \ldots, n$ in clockwise order;
		\item all interior vertices have a special ``tag'' edge, which immediately terminates in the interior
		and not at a vertex;
		\item non-tag edges have multiplicities in $[r]$;
		\item all vertices in the interior of the disk have incident edge multiplicities summing to $r$.
	\end{itemize}
	These webs are defined up to planar isotopy fixing the boundary circle.
\end{definition}

In this paper, we focus on how to compute these diagrams refer to the methods in \cite{gaetz2023rotation}. For the full diagrammatic construction and its relationship with tensor invariants, see \cite{Cautis_2014}, \cite{kim2003graphical}, \cite{morrison2007diagrammatic}, \cite{fraser_dimers_2019}, and \cite{gaetz2023rotation} for details. In these papers, each vertex in a $U_q(\mathfrak{sl}_r)$-web corresponds to the relevant wedge, cowedge, or dual map, and the resulting equivalence classes of these webs under planar isotopy and certain specialized moves correspond precisely to tensor invariants as elements of $\operatorname{Inv}_{U_q(\mathfrak{sl}_r)}(\bigwedge\nolimits_q^{\underline{c}} V_q)$.

We now describe some special cases and their web bases applied in this paper. In Figure \ref{fig:web_basis}, examples for the cases of $r=2,3,4$ are given respectively.

\begin{figure}[h]
	\centering
	\begin{minipage}[t]{0.3\textwidth}
		\centering
		\begin{tikzpicture}
			\draw (0,0) circle (1);
			\foreach \k in {1,...,6} {
				\coordinate (\k) at (150-60*\k:1);
				\fill (\k) circle (2pt);
			}
			\draw (6) arc[start angle=-120, end angle=0, radius=0.58];
			\draw (2) arc[start angle=120, end angle=240, radius=0.58];
			\draw (4) arc[start angle=0, end angle=120, radius=0.58];
			\node[below] at (0,-1.2) {non-crossing matching};
		\end{tikzpicture}
	\end{minipage}
	\begin{minipage}[t]{0.3\textwidth}
		\centering
		\begin{tikzpicture}
			\draw (0,0) circle (1);
			\foreach \k in {1,...,9} {
				\coordinate (\k) at (130-40*\k:1);
				\fill (\k) circle (2pt);
			}
			\coordinate (A) at (70:0.7);
			\coordinate (B) at (-30:0.6);
			\coordinate (C) at (-130:0.7);
			\coordinate (D) at (150:0.7);
			
			\coordinate (a) at (150:0.1);
			
			\draw (1)--(A);		\draw (2)--(A);		\draw (3)--(B);		
			\draw (4)--(B);		\draw (5)--(B);		\draw (6)--(C);		
			\draw (7)--(C);		\draw (8)--(D);		\draw (9)--(D);
			\draw (a)--(A);		\draw (a)--(C);		\draw (a)--(D);
			
			\foreach \p in {A,B,C,D} {
				\fill[white] (\p) circle (2pt);
				\draw (\p) circle (2pt);
			}
			\fill (a) circle (2pt);
			\node[below] at (0,-1.2) {non-elliptic web};
		\end{tikzpicture}
	\end{minipage}
	\begin{minipage}[t]{0.3\textwidth}
		\centering
		\Disk{
			\coordinate (A) at (60:0.6);
			\coordinate (B) at (15:0.8);
			\coordinate (C) at (-30:0.6);
			\coordinate (D) at (-75:0.8);
			\coordinate (E) at (-120:0.6);
			\coordinate (F) at (-165:0.8);
			\coordinate (G) at (150:0.6);
			\coordinate (H) at (105:0.8);
			\coordinate (I) at (0:0);
			
			\coordinate (a) at (90:0.5);
			\coordinate (b) at (15:0.5);
			\coordinate (c) at (-60:0.5);
			\coordinate (d) at (-90:0.5);
			\coordinate (e) at (-165:0.5);
			\coordinate (f) at (120:0.5);
			
			\draw (1)--(H);		\draw (2)--(A);		\draw (3)--(B);		\draw (4)--(B);
			\draw (5)--(C);		\draw (6)--(D);		\draw (7)--(D);		\draw (8)--(E);
			\draw (9)--(F);		\draw (10)--(F);	\draw (11)--(G);	\draw (12)--(H);
			
			\draw (a)--(H);		\draw (a)--(I);		\hg{a}{A};
			\draw (b)--(A);		\draw (b)--(C);		\hg{b}{B};
			\draw (c)--(D);		\draw (c)--(I);		\hg{c}{C};
			\draw (d)--(D);		\draw (d)--(I);		\hg{d}{E};
			\draw (e)--(E);		\draw (e)--(G);		\hg{e}{F};
			\draw (f)--(H);		\draw (f)--(I);		\hg{f}{G};
			
			\foreach \p in {A,B,C,D,E,F,G,H,I} {
				\fill[white] (\p) circle (2pt);
				\draw (\p) circle (2pt);
			}
			\foreach \p in {a,b,c,d,e,f}{
				\fill (\p) circle (2pt);
			}
			\node[below] at (0,-1.2) {fully reduced};
			\node[below] at (0,-1.5) {hourglass plabic graph};
		}
	\end{minipage}
	\caption{Examples of web basis of $r=2,3,4$.}	
	\label{fig:web_basis}
\end{figure}
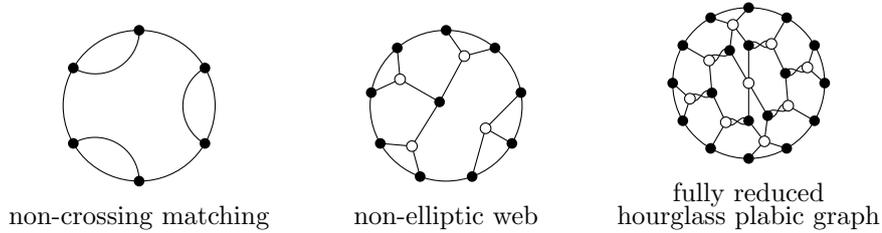

When $r=2$, $sl_2$-webs are spanned by \textit{non-crossing matchings}, which are partitions of the boundary vertices into pairs, such that the chords connecting each pair are non-crossing. 

When $r=3$, $sl_3$-webs are spanned by \textit{non-elliptic webs}, which are webs containing no interior faces bounded by four or fewer edges. That is, a non-elliptic web contains no contractible cycles, bigons, or squares. For an $sl_3$-web, we can express it as a linear combination of non-elliptic webs via skein relations in Figure \ref{fig:skein3}, which were introduced in \cite{kuperberg_spiders_1996}. 

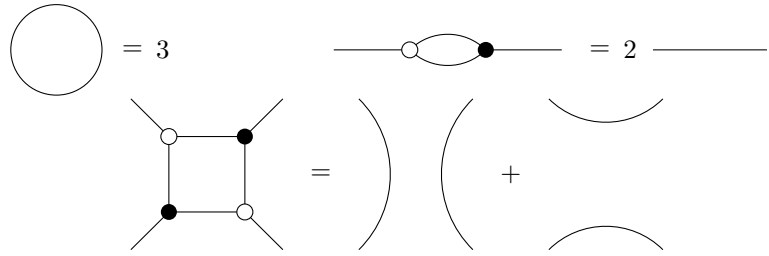
\begin{figure}[h]
	\centering
	\begin{minipage}{0.3\textwidth}
		\centering
		\begin{tikzpicture}
			\draw (-1,0) circle (0.6);
			\node at (0,0) {=};
			\node at (0.4,0) {3};
		\end{tikzpicture}
	\end{minipage}
	\begin{minipage}{0.6\textwidth}
		\centering
		\begin{tikzpicture}
			\draw (-3.5,0)--(-2.5,0);
			\draw (-2.5,0) to[out=-45, in=-135] (-1.5,0);
			\draw (-2.5,0) to[out=45, in=135] (-1.5,0);
			\draw (-1.5,0)--(-0.5,0);
			\fill[white] (-2.5,0) circle (3pt);
			\draw (-2.5,0) circle (3pt);
			\fill (-1.5,0) circle (3pt);
			\node at (0,0) {=};
			\node at (0.4,0) {2};
			\draw (0.7,0)--(2.2,0);
		\end{tikzpicture}
	\end{minipage}
	\begin{minipage}{\textwidth}
		\centering
		\begin{tikzpicture}
			\draw (-3,1)--(-2.5,0.5);
			\draw (-3,-1)--(-2.5,-0.5);
			\draw (-1,1)--(-1.5,0.5);
			\draw (-1,-1)--(-1.5,-0.5);
			\draw (-2.5,0.5)--(-2.5,-0.5);
			\draw (-2.5,0.5)--(-1.5,0.5);
			\draw (-1.5,-0.5)--(-1.5,0.5);
			\draw (-1.5,-0.5)--(-2.5,-0.5);
			\fill[white] (-2.5,0.5) circle (3pt);
			\draw (-2.5,0.5) circle (3pt);
			\fill[white] (-1.5,-0.5) circle (3pt);
			\draw (-1.5,-0.5) circle (3pt);
			\fill (-2.5,-0.5) circle (3pt);
			\fill (-1.5,0.5) circle (3pt);
			\node at (-0.5,0) {=};
			\draw (0,1) to[out=-45, in=45] (0,-1);
			\draw (1.5,1) to[out=-135, in=135] (1.5,-1);
			\node at (2,0) {+};
			\draw (2.5,1) to[out=-45, in=-135] (4,1);
			\draw (2.5,-1) to[out=45, in=135] (4,-1);
		\end{tikzpicture}
	\end{minipage}
	\caption{The skein relations of $sl_3$-webs.}
	\label{fig:skein3}
\end{figure}

\begin{lemma}\cite[Lemma 5.4]{elkin_twists_2023}\label{lem:interior}
	Given a non-elliptic web $W$, consider its web \textit{interior} $\mathring{W}$, which only consists of the internal vertices and edges of $W$. Suppose that $\mathring{W}$ is connected and consists only of cycles. Then there exist two adjacent vertices in $\mathring{W}$ that are bivalent in $\mathring{W}$.
\end{lemma}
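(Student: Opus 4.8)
The plan is to reduce the statement to a purely combinatorial fact about plane graphs and then run an Euler-characteristic (discharging) argument. First I would record the structure of $\mathring{W}$: since the internal vertices of an $sl_3$-web are trivalent, every vertex of $\mathring{W}$ has degree at most $3$, and the hypothesis that $\mathring{W}$ ``consists only of cycles'' means it has no leaves, so every vertex has degree $2$ or $3$. Here a bivalent vertex is exactly an internal vertex of $W$ carrying a single edge to the boundary circle, while a trivalent vertex is one all of whose edges remain internal. Finally, because $W$ is properly bicolored and non-elliptic, every bounded face of $\mathring{W}$ is an even cycle of length at least $6$. Thus $\mathring{W}$ is a connected bipartite plane graph with all vertex degrees in $\{2,3\}$ and all bounded faces of length $\ge 6$, and the goal is to exhibit an edge both of whose endpoints are bivalent.

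I would argue by contradiction, assuming that the set $V_2$ of bivalent vertices is independent, i.e.\ no edge joins two bivalent vertices; equivalently, along every facial cycle no two consecutive vertices are bivalent. Writing $V=|V_2|+|V_3|$ and $2E=2|V_2|+3|V_3|$, and combining $V-E+F=2$ with the incidence identity $2E=\sum_f \deg(f)$ together with $\deg(f)\ge 6$ on bounded faces, one already obtains a linear inequality forcing $|V_2|$ to be large. To upgrade this into a contradiction I would run a discharging argument with the weights $\mu(v)=2\deg(v)-6$ and $\mu(f)=\deg(f)-6$, whose total charge is $-12$ by Euler's formula. Then trivalent vertices and all bounded faces carry non-negative charge, while each bivalent vertex carries $-2$. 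The independence assumption is precisely what lets each bounded face distribute its surplus to its incident bivalent vertices, which (being pairwise non-consecutive on the facial cycle) number at most half the vertices of that face, so that each bivalent vertex can be sent enough charge to neutralize its $-2$; tallying the redistributed charges would then contradict the fixed total $-12$.

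The main obstacle is the unbounded face of $\mathring{W}$, which is not constrained to have length $\ge 6$ and so carries negative charge: the naive redistribution can fail to close. To see that the interior count alone cannot suffice, note that a bipartite cubic plane graph with every edge subdivided once (for instance a subdivided cube) meets all the stated degree and face conditions yet has no two adjacent bivalent vertices. The decisive extra input must therefore come from the boundary of $W$: each bivalent vertex is attached to the boundary circle by exactly one leg, and I would exploit the resulting relation between $|V_2|$, the length of the outer facial walk, and the cyclic order of the boundary vertices to bound the negative charge of the unbounded face. Concretely, I expect to track the legs of $W$ and show that the outer walk cannot absorb enough deficit, forcing at least one bounded face to contain two consecutive bivalent vertices, i.e.\ two adjacent bivalent vertices in $\mathring{W}$. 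Carrying out this boundary bookkeeping, rather than the interior Euler count, is where I anticipate the real work to lie.
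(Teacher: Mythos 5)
A preliminary remark: this lemma is imported by the paper from \cite[Lemma 5.4]{elkin_twists_2023} and is not proved in the paper itself, so your proposal can only be judged on its own terms. On those terms it has a genuine gap, one you yourself flag: the decisive step (your ``boundary bookkeeping'') is never carried out, and the discharging scheme you actually describe cannot work as stated. You propose that each bounded face pay off its incident bivalent vertices. But every bivalent vertex of $\mathring{W}$ is incident to the \emph{outer} face of $\mathring{W}$: its single leg is an edge of $W$ ending on the boundary circle, so by planarity the leg lies in the unique face of $\mathring{W}$ containing that circle. Hence each bivalent vertex touches at most one bounded face, and a bounded face of degree $d$ has surplus only $d-6$ while, even under your independence assumption, it may owe $2$ to each of up to $\lfloor d/2\rfloor$ bivalent vertices (a hexagon carrying three pairwise non-adjacent bivalent vertices has surplus $0$ but owes $6$). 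Your diagnosis of the obstacle is also off: the outer face's charge $\deg_{\mathrm{out}}-6$ is not negative in the relevant regime; the problem is the direction of payment. To your credit, the subdivided-cube example is a correct and genuinely useful observation -- it proves that no argument ignoring the legs can succeed -- but the proposal stops exactly where the content of the lemma lies.

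The missing ideas that close the argument are short. (i) As above, all bivalent vertices lie on the outer face of $\mathring{W}$; the same observation justifies your unproved assertion that every bounded face of $\mathring{W}$ is a face of $W$, hence has degree at least $6$. (ii) Since every edge of $\mathring{W}$ lies on a cycle (no bridges) and all degrees are at most $3$, there are no cut vertices, so $\mathring{W}$ is $2$-connected and the outer boundary is a simple cycle on which each bivalent vertex occurs exactly once; independence then forces $\deg_{\mathrm{out}}\geq 2|V_2|$. (iii) Euler's formula with $2E=2|V_2|+3|V_3|$ and $F=f+1$ (where $f$ is the number of bounded faces) yields $f=1+\tfrac{1}{2}|V_3|$ and the identity $\sum_{f'\ \mathrm{bounded}}\bigl(\deg(f')-6\bigr)=2|V_2|-\deg_{\mathrm{out}}-6$; the left side is nonnegative by (i), so $\deg_{\mathrm{out}}\leq 2|V_2|-6$, contradicting (ii). Equivalently, in your discharging language: let the \emph{outer} face pay $2$ to each bivalent vertex; by (ii) its charge stays at least $-6$, all other charges are nonnegative, and the total exceeds $-12$. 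Without (i)--(iii) -- all of which rest on the web structure rather than on the interior Euler count -- the proposal is a strategy outline, not a proof.
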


When $r=4$, the situation becomes more intricate: multiple edges are permitted in $sl_4$-webs, so a rotation-invariant web basis is introduced in \cite{gaetz2023rotation} which is called an hourglass plabic graph. 

\begin{definition}\label{def:hggraph}
	\cite[Section 3.1]{gaetz2023rotation} An \textit{hourglass graph} $G$ is an underlying planar embedded graph $\widehat{G}$, together with a positive integer multiplicity $m(e)$ for each edge $e$. The hourglass graph $G$ is drawn in the plane by replacing each edge $e$ of $\widehat{G}$ with $m(e) > 1$ by $m(e)$ strands, twisted so that the clockwise order of these strands around the two incident vertices are the same. For $m(e) \geq 2$, we call this twisted edge an \textit{$m(e)$-hourglass}, and call an edge with $m(e) = 1$ a \textit{simple edge}.
	
	An \textit{hourglass plabic graph} is a bipartite hourglass graph G, with a fixed proper
	black-white vertex coloring, embedded in a disk, with all internal vertices of degree 4, and all
	boundary vertices of simple degree 1, labeled clockwise as $1, 2, \ldots, n$.
\end{definition}

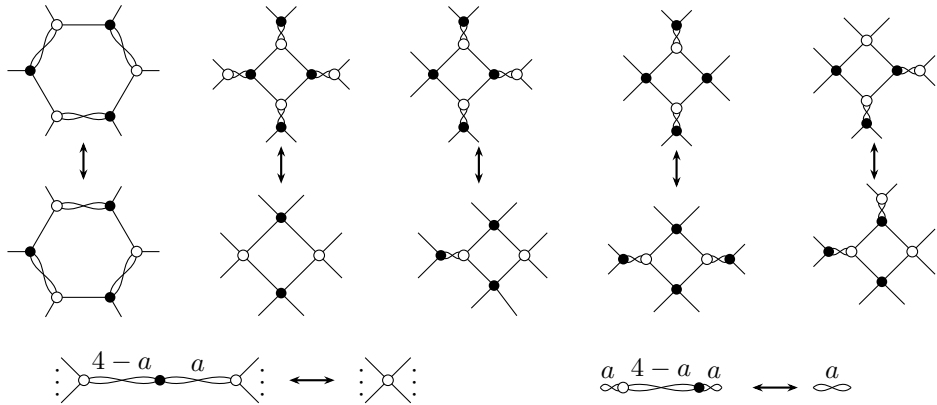
\begin{figure}[h]
	\centering
	\begin{minipage}{0.19\textwidth}
		\centering
		\begin{tikzpicture}
			\coordinate (A) at (-0.35,{1.2+0.35*sqrt(3)});
			\coordinate (B) at (0.7,1.2);
			\coordinate (C) at (-0.35,{1.2-0.35*sqrt(3)});
			\coordinate (D) at (-0.35,{-1.2+0.35*sqrt(3)});
			\coordinate (E) at (0.7,-1.2);
			\coordinate (F) at (-0.35,{-1.2-0.35*sqrt(3)});
			
			\coordinate (a) at (0.35,{1.2+0.35*sqrt(3)});
			\coordinate (b) at (-0.7,1.2);
			\coordinate (c) at (0.35,{1.2-0.35*sqrt(3)});
			\coordinate (d) at (0.35,{-1.2+0.35*sqrt(3)});
			\coordinate (e) at (-0.7,-1.2);
			\coordinate (f) at (0.35,{-1.2-0.35*sqrt(3)});
			
			\draw (A)--(-0.5,{1.2+0.5*sqrt(3)});		\draw (a)--(0.5,{1.2+0.5*sqrt(3)});
			\draw (B)--(1,1.2);							\draw (b)--(-1,1.2);
			\draw (C)--(-0.5,{1.2-0.5*sqrt(3)});		\draw (c)--(0.5,{1.2-0.5*sqrt(3)});
			\draw (D)--(-0.5,{-1.2+0.5*sqrt(3)});		\draw (d)--(0.5,{-1.2+0.5*sqrt(3)});
			\draw (E)--(1,-1.2);						\draw (e)--(-1,-1.2);
			\draw (F)--(-0.5,{-1.2-0.5*sqrt(3)});		\draw (f)--(0.5,{-1.2-0.5*sqrt(3)});
			
			\hg{A}{b};	\hg{B}{a};	\hg{C}{c};	\hg{D}{d};	\hg{E}{f};	\hg{F}{e};
			\draw (A)--(a);		\draw (B)--(c);		\draw (C)--(b);
			\draw (D)--(e);		\draw (E)--(d);		\draw (F)--(f);
			
			\draw[thick,<->,>={Stealth[scale=0.6]}] (0,0.25)--(0,-0.25);
			
			\foreach \p in {A,B,C,D,E,F} {
				\fill[white] (\p) circle (2pt);
				\draw (\p) circle (2pt);
			}
			\foreach \p in {a,b,c,d,e,f}{
				\fill (\p) circle (2pt);
			}
		\end{tikzpicture}
	\end{minipage}
	\begin{minipage}{0.19\textwidth}
		\centering
		\begin{tikzpicture}
			\coordinate (A) at (0,1.6);
			\coordinate (B) at (-0.7,1.2);
			\coordinate (C) at (0.7,1.2);
			\coordinate (D) at (0,0.8);
			\coordinate (E) at (-0.5,-1.2);
			\coordinate (F) at (0.5,-1.2);
			
			\coordinate (a) at (0,1.9);
			\coordinate (b) at (-0.4,1.2);
			\coordinate (c) at (0.4,1.2);
			\coordinate (d) at (0,0.5);
			\coordinate (e) at (0,-0.7);
			\coordinate (f) at (0,-1.7);
			
			\draw (a)--(0.2,2.1);		\draw (a)--(-0.2,2.1);	
			\draw (B)--(-0.9,1);		\draw (B)--(-0.9,1.4);							
			\draw (C)--(0.9,1);			\draw (C)--(0.9,1.4);		
			\draw (d)--(0.2,0.3);		\draw (d)--(-0.2,0.3);
			\draw (E)--(-0.8,-0.9);		\draw (E)--(-0.8,-1.5);	
			\draw (e)--(-0.3,-0.4);		\draw (e)--(0.3,-0.4);
			\draw (F)--(0.8,-0.9);		\draw (F)--(0.8,-1.5);
			\draw (f)--(-0.3,-2);		\draw (f)--(0.3,-2);
			
			\hg{A}{a};		\hg{B}{b};		\hg{C}{c};		\hg{D}{d};	
			\draw (A)--(b);		\draw (A)--(c);		\draw (D)--(b);		\draw (D)--(c);	
			\draw (E)--(e);		\draw (E)--(f);		\draw (F)--(e);		\draw (F)--(f);
			
			\draw[thick,<->,>={Stealth[scale=0.6]}] (0,0.25)--(0,-0.25);
			
			\foreach \p in {A,B,C,D,E,F} {
				\fill[white] (\p) circle (2pt);
				\draw (\p) circle (2pt);
			}
			\foreach \p in {a,b,c,d,e,f}{
				\fill (\p) circle (2pt);
			}
		\end{tikzpicture}
	\end{minipage}
	\begin{minipage}{0.19\textwidth}
		\centering
		\begin{tikzpicture}
			\coordinate (A) at (-0.2,1.6);
			\coordinate (B) at (0.5,1.2);
			\coordinate (C) at (-0.2,0.8);
			\coordinate (D) at (-0.2,-1.2);
			\coordinate (E) at (0.6,-1.2);
			
			\coordinate (a) at (-0.2,1.9);
			\coordinate (b) at (-0.6,1.2);
			\coordinate (c) at (0.2,1.2);
			\coordinate (d) at (-0.2,0.5);
			\coordinate (e) at (0.2,-0.8);
			\coordinate (f) at (-0.5,-1.2);
			\coordinate (g) at (0.2,-1.6);
			
			\draw (a)--(-0.4,2.1);		\draw (a)--(0,2.1);	
			\draw (B)--(0.7,1);			\draw (B)--(0.7,1.4);							
			\draw (b)--(-0.9,0.9);		\draw (b)--(-0.9,1.5);		
			\draw (d)--(-0.4,0.3);		\draw (d)--(0,0.3);
			\draw (E)--(0.9,-0.9);		\draw (E)--(0.9,-1.5);	
			\draw (e)--(-0.1,-0.5);		\draw (e)--(0.5,-0.5);
			\draw (f)--(-0.8,-0.9);		\draw (f)--(-0.8,-1.5);
			\draw (g)--(-0.1,-1.9);		\draw (g)--(0.5,-2);
			
			\hg{A}{a};		\hg{B}{c};		\hg{C}{d};		\hg{D}{f};	
			\draw (A)--(b);		\draw (A)--(c);		\draw (C)--(b);		\draw (C)--(c);	
			\draw (D)--(e);		\draw (D)--(g);		\draw (E)--(e);		\draw (E)--(g);
			
			\draw[thick,<->,>={Stealth[scale=0.6]}] (0,0.25)--(0,-0.25);
			
			\foreach \p in {A,B,C,D,E} {
				\fill[white] (\p) circle (2pt);
				\draw (\p) circle (2pt);
			}
			\foreach \p in {a,b,c,d,e,f,g}{
				\fill (\p) circle (2pt);
			}
		\end{tikzpicture}
	\end{minipage}
	\begin{minipage}{0.19\textwidth}
		\centering
		\begin{tikzpicture}
			\coordinate (A) at (0,1.6);
			\coordinate (B) at (0,0.8);
			\coordinate (C) at (-0.4,-1.2);
			\coordinate (D) at (0.4,-1.2);
			
			\coordinate (a) at (0,1.9);
			\coordinate (b) at (-0.4,1.2);
			\coordinate (c) at (0.4,1.2);
			\coordinate (d) at (0,0.5);
			\coordinate (e) at (0,-0.8);
			\coordinate (f) at (-0.7,-1.2);
			\coordinate (g) at (0.7,-1.2);
			\coordinate (h) at (0,-1.6);
			
			\draw (a)--(-0.2,2.1);		\draw (a)--(0.2,2.1);							
			\draw (b)--(-0.7,0.9);		\draw (b)--(-0.7,1.5);		
			\draw (c)--(0.7,0.9);		\draw (c)--(0.7,1.5);
			\draw (d)--(-0.2,0.3);		\draw (d)--(0.2,0.3);	
			\draw (e)--(-0.3,-0.5);		\draw (e)--(0.3,-0.5);
			\draw (f)--(-0.9,-1);		\draw (f)--(-0.9,-1.4);
			\draw (g)--(0.9,-1);		\draw (g)--(0.9,-1.4);
			\draw (h)--(-0.3,-1.9);		\draw (h)--(0.3,-1.9);
			
			\hg{A}{a};		\hg{B}{d};		\hg{C}{f};		\hg{D}{g};	
			\draw (A)--(b);		\draw (A)--(c);		\draw (B)--(b);		\draw (B)--(c);	
			\draw (C)--(e);		\draw (C)--(h);		\draw (D)--(e);		\draw (D)--(h);
			
			\draw[thick,<->,>={Stealth[scale=0.6]}] (0,0.25)--(0,-0.25);
			
			\foreach \p in {A,B,C,D} {
				\fill[white] (\p) circle (2pt);
				\draw (\p) circle (2pt);
			}
			\foreach \p in {a,b,c,d,e,f,g,h}{
				\fill (\p) circle (2pt);
			}
		\end{tikzpicture}
	\end{minipage}
	\begin{minipage}{0.19\textwidth}
		\centering
		\begin{tikzpicture}
			\coordinate (A) at (-0.1,1.6);
			\coordinate (B) at (0.6,1.2);
			\coordinate (C) at (-0.1,0.8);
			\coordinate (D) at (0.1,-0.5);
			\coordinate (E) at (-0.3,-1.2);
			\coordinate (F) at (0.5,-1.2);	
			
			\coordinate (a) at (-0.5,1.2);
			\coordinate (b) at (0.3,1.2);
			\coordinate (c) at (-0.1,0.5);
			\coordinate (d) at (0.1,-0.8);
			\coordinate (e) at (-0.6,-1.2);
			\coordinate (f) at (0.1,-1.6);
			
			\draw (A)--(-0.4,1.9);		\draw (A)--(0.2,1.9);							
			\draw (a)--(-0.8,0.9);		\draw (a)--(-0.8,1.5);		
			\draw (B)--(0.8,1);			\draw (B)--(0.8,1.4);
			\draw (c)--(-0.3,0.3);		\draw (c)--(0.1,0.3);	
			\draw (D)--(-0.1,-0.3);		\draw (D)--(0.3,-0.3);
			\draw (e)--(-0.8,-1);		\draw (e)--(-0.8,-1.4);
			\draw (F)--(0.8,-0.9);		\draw (F)--(0.8,-1.5);
			\draw (f)--(-0.2,-1.9);		\draw (f)--(0.4,-1.9);
			
			\hg{B}{b};		\hg{C}{c};		\hg{D}{d};		\hg{E}{e};	
			\draw (A)--(a);		\draw (A)--(b);		\draw (C)--(a);		\draw (C)--(b);	
			\draw (E)--(d);		\draw (E)--(f);		\draw (F)--(d);		\draw (F)--(f);
			
			\draw[thick,<->,>={Stealth[scale=0.6]}] (0,0.25)--(0,-0.25);
			
			\foreach \p in {A,B,C,D,E,F} {
				\fill[white] (\p) circle (2pt);
				\draw (\p) circle (2pt);
			}
			\foreach \p in {a,b,c,d,e,f}{
				\fill (\p) circle (2pt);
			}
		\end{tikzpicture}
	\end{minipage}
	\\\vspace{0.3cm}
	\begin{minipage}{0.48\textwidth}
		\centering
		\begin{tikzpicture}
			\coordinate (A) at (-3,0);
			\coordinate (B) at (-1,0);
			\coordinate (C) at (1,0);
			
			\coordinate (a) at (-2,0);
			
			\hg{A}{a};		\hg{B}{a};
			\draw (A)--(-3.3,0.3);		\draw (A)--(-3.3,-0.3);	
			\draw (B)--(-0.7,0.3);		\draw (B)--(-0.7,-0.3);
			\draw (C)--(0.7,0.3);		\draw (C)--(0.7,-0.3);	
			\draw (C)--(1.3,0.3);		\draw (C)--(1.3,-0.3);
			
			\draw[thick,<->,>={Stealth[scale=0.6]}] (-0.3,0)--(0.3,0);
			
			\foreach \p in {A,B,C} {
				\fill[white] (\p) circle (2pt);
				\draw (\p) circle (2pt);
			}
			\fill (a) circle (2pt);
			
			\node at (-3.35,0) [rotate=90] {\ldots};
			\node at (-0.65,0) [rotate=90] {\ldots};
			\node at (0.65,0) [rotate=90] {\ldots};
			\node at (1.35,0) [rotate=90] {\ldots};
			\node[above] at (-2.5,0) {$4-a$};	
			\node[above] at (-1.5,0) {$a$};	
		\end{tikzpicture}
	\end{minipage}
	\begin{minipage}{0.48\textwidth}
		\centering
		\begin{tikzpicture}
			\coordinate (A) at (-2,0);
			\coordinate (B) at (-2.3,0);
			\coordinate (C) at (-0.7,0);
			\coordinate (a) at (-1,0);
			\coordinate (b) at (0.5,0);
			\coordinate (c) at (1,0);
			
			\hg{A}{a};		\hg{A}{B};		\hg{a}{C};		\hg{b}{c}

			\draw[thick,<->,>={Stealth[scale=0.6]}] (-0.3,0)--(0.3,0);
			
			\fill[white] (A) circle (2pt);
			\draw (A) circle (2pt);
			
			\fill (a) circle (2pt);
			\node[above] at (-2.2,0) {$a$};	
			\node[above] at (-1.5,0) {$4-a$};	
			\node[above] at (-0.8,0) {$a$};	
			\node[above] at (0.75,0) {$a$};	
		\end{tikzpicture}
	\end{minipage}
	\caption{Benzene move, square moves and construction moves of hourglass plabic graphs.}
	\label{fig:hgmoves}
\end{figure}
To construct the web basis for hourglass plabic graphs, the benzene move, square moves, and contraction moves in Figure \ref{fig:hgmoves} are introduced as equivalence relations in \cite{gaetz2023rotation}. An hourglass plabic graph $G$ is defined to be fully reduced if every hourglass plabic graph in its equivalence class $[G]$ under these moves contains none of the following configurations:
\begin{itemize}
	\item an interior vertex incident to fewer than three other vertices;
	\item 2-cycle (treating an hourglass edge as a single edge);
	\item 4-cycle containing an hourglass edge.
\end{itemize}
All the skein relations of hourglass plabic graphs are listed in \cite[Figures 34-37]{gaetz2023rotation}.

\subsection{Compatibility and immanant map}\label{ssc:PreComp}

In \cite{lam_dimers_2015}, the definition of compatibility is introduced to establish a correspondence among webs, dimers and Pl\"ucker polynomials.

In \cite{lam_dimers_2015}, there is a method to reduce a dimer to a corresponding web. For a double dimer configuration, each internal vertex is either a bivalent vertex (which can be contracted) or lies in a 2-cycle. Therefore, we only need to consider whether each connected component connects two distinct boundary vertices, thereby allowing it to be reduced to a non-crossing matching.

For a triple dimer configuration $D$ on a plabic graph $G$, let $G_D$ be the subgraph of $G$ whose edge set consists of all distinct edges appearing in $D$. In \cite{lam_dimers_2015}, there is a method to create a web $W(D)$ corresponding to $D$ as follows:

\begin{enumerate}[label={(\arabic*)}]
	\item for each boundary vertex in the graph $G$, create a corresponding boundary vertex in $W(D)$. If a boundary vertex is adjacent to two edges in $D$, color it white; otherwise, color it black;
	
	\item for any cycle in $G_D$ composed only of bivalent vertices, add a cycle without vertices in $W(D)$ and orient it arbitrarily;
	
	\item for a chain of bivalent vertices in $G_D$ that connects two vertices $v$ and $v'$, add an arrow in $W(D)$ pointing from the degree-1 boundary vertex in $G_D$ to the degree-2 vertex, that is, from the black vertex in $W(D)$ to the white one;
	
	\item for a connected component in $G_D$ that contains at least one trivalent vertex, create a corresponding connected component in $W(D)$, and contract the two neighbors of the bivalent vertices within it.
\end{enumerate}
\begin{figure}[h]
	\centering
	\begin{minipage}[h]{0.3\textwidth}
		\centering
		\begin{tikzpicture}
			\draw (0,0) circle (1.5);
			\foreach \k in {1,...,8} {
				\coordinate (\k) at (135-45*\k:1.5);
			}
			
			\coordinate (A) at (90:1.2);
			\coordinate (B) at (45:1.05);
			\coordinate (C) at (0:1.05);
			\coordinate (D) at (-45:1.05);
			\coordinate (E) at (-90:1.275);
			\coordinate (F) at (-135:1.275);
			\coordinate (G) at (180:1.275);
			\coordinate (H) at (135:0.9);
			\coordinate (I) at (82.5:0.375);
			\coordinate (J) at (45:0.675);
			\coordinate (K) at (-37.5:0.375);
			\coordinate (L) at (-112.5:0.9);
			\coordinate (M) at (-157.5:0.9);
			\coordinate (N) at (-157.5:0.375);
			
			\coordinate (a) at (90:0.9);
			\coordinate (b) at (22.5:0.9);
			\coordinate (c) at (-22.5:0.9);
			\coordinate (d) at (-90:1.05);
			\coordinate (e) at (-135:1.05);
			\coordinate (f) at (180:1.05);
			\coordinate (g) at (180:0.675);
			\coordinate (h) at (142.5:0.375);
			\coordinate (i) at (22.5:0.375);
			\coordinate (j) at (-97.5:0.375);
			
			\draw[red] (1)--(A);		\draw[red] (2)--(B);		
			\draw[red] (3)--(C);		\draw[red] (8)--(H);
			\draw[red] (a)--(I);		\draw[red] (b)--(J);		
			\draw[red] (c)--(D);		\draw[red] (d)--(E);		
			\draw[red] (e)--(F);		\draw[red] (f)--(G);		
			\draw[red] (g)--(M);		\draw[red] (h)--(N);		
			\draw[red] (i)--(K);		\draw[red] (j)--(L);
			\draw (4)--(D);		\draw (5)--(E);		\draw (6)--(F);		\draw (7)--(G);		
			\draw (a)--(A);		\draw (a)--(B);		\draw (a)--(H);		\draw (a)--(J);		\draw (b)--(B);		\draw (b)--(C);		\draw (c)--(C);		\draw (c)--(K);		\draw (d)--(D);		\draw (d)--(L);		\draw (e)--(L);		\draw (e)--(M);		\draw (f)--(H);		\draw (f)--(M);		\draw (g)--(H);		\draw (g)--(N);		\draw (h)--(H);		\draw (h)--(I);		\draw (i)--(I);		\draw (i)--(J);			\draw (j)--(K);		\draw (j)--(N);
			
			\foreach \k in {1,2,...,8} {
				\fill (\k) circle (2pt);
			}
			\foreach \p in {A,B,C,D,E,F,G,H,I,J,K,L,M,N} {
				\fill[white] (\p) circle (2pt);
				\draw (\p) circle (2pt);
			}
			\foreach \p in {a,b,c,d,e,f,g,h,i,j}{
				\fill (\p) circle (2pt);
			}
		\end{tikzpicture}
	\end{minipage}
	\begin{minipage}[h]{0.3\textwidth}
		\centering
		\begin{tikzpicture}
			\draw (0,0) circle (1.5);
			\foreach \k in {1,...,8} {
				\coordinate (\k) at (135-45*\k:1.5);
			}
			
			\coordinate (A) at (90:1.2);
			\coordinate (B) at (45:1.05);
			\coordinate (C) at (0:1.05);
			\coordinate (D) at (-45:1.05);
			\coordinate (E) at (-90:1.275);
			\coordinate (F) at (-135:1.275);
			\coordinate (G) at (180:1.275);
			\coordinate (H) at (135:0.9);
			\coordinate (I) at (82.5:0.375);
			\coordinate (J) at (45:0.675);
			\coordinate (K) at (-37.5:0.375);
			\coordinate (L) at (-112.5:0.9);
			\coordinate (M) at (-157.5:0.9);
			\coordinate (N) at (-157.5:0.375);
			
			\coordinate (a) at (90:0.9);
			\coordinate (b) at (22.5:0.9);
			\coordinate (c) at (-22.5:0.9);
			\coordinate (d) at (-90:1.05);
			\coordinate (e) at (-135:1.05);
			\coordinate (f) at (180:1.05);
			\coordinate (g) at (180:0.675);
			\coordinate (h) at (142.5:0.375);
			\coordinate (i) at (22.5:0.375);
			\coordinate (j) at (-97.5:0.375);
			
			\draw[blue] (1)--(A);		\draw[blue] (4)--(D);
			\draw[blue] (5)--(E);		\draw[blue] (6)--(F);
			\draw[blue] (a)--(J);		\draw[blue] (b)--(B);
			\draw[blue] (c)--(C);		\draw[blue] (d)--(L);
			\draw[blue] (e)--(M);		\draw[blue] (f)--(G);
			\draw[blue] (g)--(H);		\draw[blue] (h)--(N);
			\draw[blue] (i)--(I);		\draw[blue] (j)--(K);
			\draw (2)--(B);		\draw (3)--(C);		\draw (7)--(G);		\draw (8)--(H);
			\draw (a)--(A);		\draw (a)--(B);		\draw (a)--(H);		\draw (a)--(I);			\draw (b)--(C);		\draw (b)--(J);		\draw (c)--(D);		\draw (c)--(K);		\draw (d)--(D);		\draw (d)--(E);		\draw (e)--(F);		\draw (e)--(L);			\draw (f)--(H);		\draw (f)--(M);		\draw (g)--(M);		\draw (g)--(N);		\draw (h)--(H);		\draw (h)--(I);		\draw (i)--(J);		\draw (i)--(K);			\draw (j)--(L);		\draw (j)--(N);
			
			\foreach \k in {1,2,...,8} {
				\fill (\k) circle (2pt);
			}
			\foreach \p in {A,B,C,D,E,F,G,H,I,J,K,L,M,N} {
				\fill[white] (\p) circle (2pt);
				\draw (\p) circle (2pt);
			}
			\foreach \p in {a,b,c,d,e,f,g,h,i,j}{
				\fill (\p) circle (2pt);
			}
		\end{tikzpicture}
	\end{minipage}
	\begin{minipage}[h]{0.3\textwidth}
		\centering
		\begin{tikzpicture}
			\draw (0,0) circle (1.5);
			\foreach \k in {1,...,8} {
				\coordinate (\k) at (135-45*\k:1.5);
			}
			
			\coordinate (A) at (90:1.2);
			\coordinate (B) at (45:1.05);
			\coordinate (C) at (0:1.05);
			\coordinate (D) at (-45:1.05);
			\coordinate (E) at (-90:1.275);
			\coordinate (F) at (-135:1.275);
			\coordinate (G) at (180:1.275);
			\coordinate (H) at (135:0.9);
			\coordinate (I) at (82.5:0.375);
			\coordinate (J) at (45:0.675);
			\coordinate (K) at (-37.5:0.375);
			\coordinate (L) at (-112.5:0.9);
			\coordinate (M) at (-157.5:0.9);
			\coordinate (N) at (-157.5:0.375);
			
			\coordinate (a) at (90:0.9);
			\coordinate (b) at (22.5:0.9);
			\coordinate (c) at (-22.5:0.9);
			\coordinate (d) at (-90:1.05);
			\coordinate (e) at (-135:1.05);
			\coordinate (f) at (180:1.05);
			\coordinate (g) at (180:0.675);
			\coordinate (h) at (142.5:0.375);
			\coordinate (i) at (22.5:0.375);
			\coordinate (j) at (-97.5:0.375);
			
			\draw[green] (2)--(B);		\draw[green] (3)--(C);		
			\draw[green] (4)--(D);		\draw[green] (7)--(G);	
			\draw[green] (a)--(A);		\draw[green] (b)--(J);
			\draw[green] (c)--(K);		\draw[green] (d)--(E);		
			\draw[green] (e)--(F);		\draw[green] (f)--(M);		
			\draw[green] (g)--(N);		\draw[green] (h)--(H);
			\draw[green] (i)--(I);		\draw[green] (j)--(L);		
			\draw (1)--(A);		\draw (5)--(E);		\draw (6)--(F);		\draw (8)--(H);
			\draw (a)--(B);		\draw (a)--(H);		\draw (a)--(I);		\draw (a)--(J);		\draw (b)--(B);		\draw (b)--(C);		\draw (c)--(C);		\draw (c)--(D);			\draw (d)--(D);		\draw (d)--(L);		\draw (e)--(L);		\draw (e)--(M);		\draw (f)--(G);		\draw (f)--(H);		\draw (g)--(H);		\draw (g)--(M);			\draw (h)--(I);		\draw (h)--(N);		\draw (i)--(J);		\draw (i)--(K);		\draw (j)--(K);		\draw (j)--(N);
			
			\foreach \k in {1,2,...,8} {
				\fill (\k) circle (2pt);
			}
			\foreach \p in {A,B,C,D,E,F,G,H,I,J,K,L,M,N} {
				\fill[white] (\p) circle (2pt);
				\draw (\p) circle (2pt);
			}
			\foreach \p in {a,b,c,d,e,f,g,h,i,j}{
				\fill (\p) circle (2pt);
			}
		\end{tikzpicture}
	\end{minipage}
	\caption{The 3 different dimers $D_1$, $D_2$, $D_3$ (from left to right) as the bases of the $3$-fold dimer configuration.}
	\label{fig:3dimers}
\end{figure}
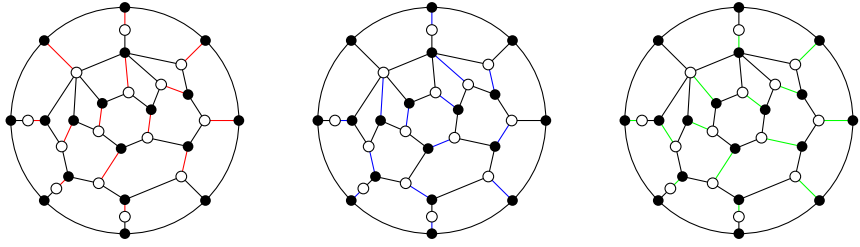
\begin{example}
	Consider a $3$-fold dimer configuration $D \in \mathcal{D}^3(G)$ in a plabic graph $G$ drawn by the method from \cite{postnikov_total_2006} with the boundary condition $\lambda=(2,2,2,2,1,1,1,1)$. Then $D$ can be constructed by folding three dimer configurations, for example, the configurations $D_1$, $D_2$, $D_3$ displayed in \Cref{fig:3dimers}, whose boundary conditions are $\partial(D_1)=\{1,2,3,8\}$, $\partial(D_2)=\{1,4,5,6\}$ and $\partial(D_3)=\{2,3,4,7\}$.
	
	Figure \ref{fig:dimer_to_web} displays the 3-fold dimer configuration $D$ (left) and its corresponding web (right). Although the web is not non-elliptic, it can be expressed as a linear combination of non-elliptic webs through the skein relations shown in Figure \ref{fig:skein3}.
	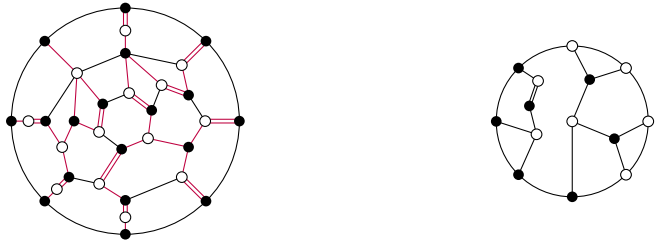
\begin{figure}[h]
		\centering
		\begin{minipage}[h]{0.6\textwidth}
			\centering
			\begin{tikzpicture}
				\draw (0,0) circle (1.5);
				\foreach \k in {1,...,8} {
					\coordinate (\k) at (135-45*\k:1.5);
				}
				
				\coordinate (A) at (90:1.2);
				\coordinate (B) at (45:1.05);
				\coordinate (C) at (0:1.05);
				\coordinate (D) at (-45:1.05);
				\coordinate (E) at (-90:1.275);
				\coordinate (F) at (-135:1.275);
				\coordinate (G) at (180:1.275);
				\coordinate (H) at (135:0.9);
				\coordinate (I) at (82.5:0.375);
				\coordinate (J) at (45:0.675);
				\coordinate (K) at (-37.5:0.375);
				\coordinate (L) at (-112.5:0.9);
				\coordinate (M) at (-157.5:0.9);
				\coordinate (N) at (-157.5:0.375);
				
				\coordinate (a) at (90:0.9);
				\coordinate (b) at (22.5:0.9);
				\coordinate (c) at (-22.5:0.9);
				\coordinate (d) at (-90:1.05);
				\coordinate (e) at (-135:1.05);
				\coordinate (f) at (180:1.05);
				\coordinate (g) at (180:0.675);
				\coordinate (h) at (142.5:0.375);
				\coordinate (i) at (22.5:0.375);
				\coordinate (j) at (-97.5:0.375);
				
				\draw[purple] [double distance=1pt] (1)--(A);		
				\draw[purple] [double distance=1pt] (2)--(B);		
				\draw[purple] [double distance=1pt] (3)--(C);			
				\draw[purple] [double distance=1pt] (4)--(D);		
				\draw[purple] (5)--(E);		\draw[purple] (6)--(F);
				\draw[purple] (7)--(G);		\draw[purple] (8)--(H);
				
				\draw[purple][double distance=1pt] (b)--(J);		
				\draw[purple] (a)--(A);		\draw[purple] (a)--(I);		
				\draw[purple] [double distance=1pt] (d)--(E);		
				\draw[purple] (a)--(J);		\draw[purple] (b)--(B);		
				\draw[purple] [double distance=1pt] (e)--(F);		
				\draw[purple] (c)--(C);		\draw[purple] (c)--(D);		
				\draw[purple] [double distance=1pt] (f)--(G);		
				\draw[purple] (c)--(K);		\draw[purple] (d)--(L);		
				\draw[purple] [double distance=1pt] (h)--(N);		
				\draw[purple] (e)--(M);		\draw[purple] (f)--(M);	
				\draw[purple] [double distance=1pt] (i)--(I);		
				\draw[purple] (g)--(H);		\draw[purple] (g)--(M);
				\draw[purple] [double distance=1pt] (j)--(L);		
				\draw[purple] (g)--(N);		\draw[purple] (h)--(H);		
				\draw[purple] (i)--(K);		\draw[purple] (j)--(K);		
				
				\draw (a)--(B);		\draw (a)--(H);		\draw (b)--(C);		\draw (d)--(D);			\draw (e)--(L);		\draw (f)--(H);		\draw (h)--(I);		\draw (i)--(J);		\draw (j)--(N);
				
				\foreach \k in {1,2,...,8} {
					\fill (\k) circle (2pt);
				}
				\foreach \p in {A,B,C,D,E,F,G,H,I,J,K,L,M,N} {
					\fill[white] (\p) circle (2pt);
					\draw (\p) circle (2pt);
				}
				\foreach \p in {a,b,c,d,e,f,g,h,i,j}{
					\fill (\p) circle (2pt);
				}
			\end{tikzpicture}
		\end{minipage}
		\begin{minipage}[h]{0.3\textwidth}
			\centering
			\Diska{
				\coordinate (A) at (-160:0.5);
				\coordinate (B) at (130:0.7);
				\coordinate (C) at (0:0);
				
				\coordinate (a) at (160:0.6);
				\coordinate (b) at (67.5:0.6);
				\coordinate (c) at (-22.5:0.6);
				
				\draw (1)--(b);		\draw (2)--(b);		\draw (3)--(c);		\draw (4)--(c);
				\draw (5)--(C);		\draw (6)--(A);		\draw (7)--(A);		\draw (8)--(B);
				\draw (a)--(A);		\draw (b)--(C);		\draw (c)--(C);		
				\draw[double] (a)--(B);
				
				\foreach \p in {A,B,C} {
					\fill[white] (\p) circle (2pt);
					\draw (\p) circle (2pt);
				}
				\foreach \p in {a,b,c}{
					\fill (\p) circle (2pt);
				}
			}
		\end{minipage}
		\caption{The $3$-fold dimer configuration (left) and the web correspond to it (right).}
		\label{fig:dimer_to_web}
	\end{figure}
\end{example}

The method above induced the following definitions. 

\begin{definition}\label{def:compatible2}
	\cite[Section 3.1]{lam_dimers_2015} Let $M$ be a matching with $n$ boundary vertices $v_1, \ldots, v_n$. Suppose $I, J\in \binom{[n]}{k}$, a monomial $P=P_I P_J$ and the matching $M$ are said to be \textit{compatible} if for each edge $\mu$ in $M$, $\mu$ matches a vertex in $I\backslash J$ with a vertex in $J\backslash I$, and boundary vertices in $I\cap J$ are isolated white vertices.  
\end{definition}

\begin{definition}\label{def:compatible3}
	\cite[Section 4.5]{lam_dimers_2015} Let $W$ be a web with $n$ boundary vertices $v_1, \ldots, v_n$. Suppose $I, J, K \in \binom{[n]}{k}$, a monomial $P = P_I P_J P_K$ and the web $W$ are said to be \textit{compatible} if there exists an edge coloring that satisfies the following conditions for all  $i\in [n]$:
	\begin{itemize}[leftmargin=0.5cm]
		\item the three edges incident to an internal vertex $v$ are in $3$ different colors $\alpha$,  $\beta$ and $\gamma$;
		
		\item the boundary vertex $v_i$ is black and adjacent to an edge in color $\alpha$ (resp. $\beta$, $\gamma$), if $i \in I\backslash (J\cup K)$ (resp. $J\backslash (I\cup K)$, $K\backslash (I\cup J)$);
		
		\item the boundary vertex $v_i$ is white and adjacent to an edge in color $\alpha$ (resp. $\beta$, $\gamma$), if $i \in J\cap K\backslash I$ (resp. $I\cap K\backslash J$, $I\cap J\backslash K$);
		
		\item otherwise, vertex $v_i$ is not adjacent to any edge.
	\end{itemize}
	We denote the number of edge colorings by the notation $a(I,J,K;W)$, and say the monomial $P_I P_J P_K$ and the $sl_3$-web $W$ are \textit{uniquely compatible} if $a(I,J,K;W)=1$.
\end{definition}

The method of compatibility provides an explicit realization of the immanant map from \cite{fraser_dimers_2019}, which maps the dual webs of web diagrams to Pl\"ucker polynomials. 

\begin{theorem}\label{thm:fll19}
	\cite[Section 4.1]{fraser_dimers_2019} The immanant map 
	\begin{equation*}
		\operatorname{Imm}:\mathcal{W}_{\lambda}(U)^* \rightarrow \mathbb{C}[\mathrm{Gr}(k,n)]_{\lambda}
	\end{equation*}
	defined by $\operatorname{Imm}(\phi)(\tilde{X}(N))=\phi(Web_r(N;\lambda))$ is an isomorphism.
\end{theorem}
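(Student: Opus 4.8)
The plan is to decompose the claim that $\operatorname{Imm}$ is an isomorphism into three verifications: that it is a well-defined linear map landing in $\mathbb{C}[\Gr{k}{n}]_{\lambda}$, that the two sides have equal dimension, and that it is surjective, after which equal dimensions force bijectivity. The conceptual input is to read both sides through $SL_r$ invariant theory. By the discussion following Definition~\ref{def:wedge}, $\mathcal{W}_{\lambda}(U) = \operatorname{Inv}_{U}\bigl(\bigwedge\nolimits_q^{\underline{c}} V_q\bigr)$ is the space of tensor invariants of the type $\underline{c}$ recorded by the boundary condition $\lambda$, while $\mathbb{C}[\Gr{k}{n}]_{\lambda}$ is the multihomogeneous piece of the Pl\"ucker coordinate ring of multidegree $\lambda$; in the Grassmannian setting one takes $r = k$, so that the standard module $V_q$ has dimension $k$ and the $k \times k$ minors of a configuration are exactly the Pl\"ucker coordinates.

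First I would settle well-definedness. For a fixed functional $\phi$, the quantity $\phi\bigl(Web_r(N;\lambda)\bigr)$ is, by construction of the web tensor from wedge, coevaluation, and contraction maps, a polynomial in the matrix entries of the configuration $N$ that is separately homogeneous of degree $\lambda_j$ in the $j$-th column and invariant under the $SL_k$-action on the rows. Such a polynomial is precisely a linear combination of products of Pl\"ucker coordinates of multidegree $\lambda$ --- this is the \emph{immanant} interpretation of the expression --- so it descends to an element of $\mathbb{C}[\Gr{k}{n}]_{\lambda}$. Because the twist map $\tilde{X}$ is a birational automorphism of the Grassmannian, prescribing the values on the points $\tilde{X}(N)$ determines this function unambiguously, and linearity in $\phi$ is immediate; hence $\operatorname{Imm}$ is a well-defined linear map.

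The main obstacle is the dimension equality $\dim \mathcal{W}_{\lambda}(U)^{*} = \dim \mathbb{C}[\Gr{k}{n}]_{\lambda}$. I would obtain it from (quantum) skew Howe duality for the commuting pair acting on $\bigwedge(\mathbb{C}^{k}\otimes\mathbb{C}^{n})$, as in \cite{Cautis_2014}: this decomposition simultaneously expresses the graded pieces of $\mathbb{C}[\Gr{k}{n}]$ as $\mathfrak{gl}_{n}$-isotypic components and the web spaces as the corresponding invariant (highest-weight) spaces, so that both dimensions equal one and the same multiplicity. Translating this multiplicity into a count of a combinatorial basis --- non-elliptic webs via skein relations for $r=3$ (Figure~\ref{fig:skein3}, \cite{kuperberg_spiders_1996}) and fully reduced hourglass plabic graphs for $r=4$ (\cite{gaetz2023rotation}) on the web side, and Pl\"ucker monomials modulo straightening relations on the coordinate-ring side --- is the delicate part, since one must prove that the diagrammatically produced webs actually form a basis of $\mathcal{W}_{\lambda}(U)$ rather than merely a spanning set; this is where the full skein/growth calculus, not a bare character computation, is required.

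Finally, granting the dimension match, it suffices to prove surjectivity. Here I would use the elementary webs in which each group of $k$ boundary strands is joined at a single interior $SL_k$-invariant vertex: such a web evaluates under $\operatorname{Imm}$ to a product of Pl\"ucker coordinates, and as the chosen groups vary these products run over all Pl\"ucker monomials of multidegree $\lambda$, which span $\mathbb{C}[\Gr{k}{n}]_{\lambda}$. Thus $\operatorname{Imm}$ is onto, and together with the equality of dimensions this yields the asserted isomorphism. An equivalent route, more in the spirit of Section~\ref{ssc:PreComp}, is to show that the compatibility pairing $a(I,J,K;W)$ of Definition~\ref{def:compatible3} (and its analogue for general $r$) is a nondegenerate pairing between web invariants and Pl\"ucker polynomials; nondegeneracy then identifies $\operatorname{Imm}$ with the transpose of an isomorphism and gives injectivity and surjectivity at once.
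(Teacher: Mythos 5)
First, a point of comparison: the paper does not prove Theorem~\ref{thm:fll19} at all --- it is imported verbatim from \cite{fraser_dimers_2019} --- so your proposal can only be measured against the argument in that source, and measured that way it has a genuine gap exactly where the theorem's content lies. In \cite{fraser_dimers_2019}, $N$ is a plabic \emph{network} and $Web_r(N;\lambda)$ is an $r$-fold dimer partition function; the reason one may define a function on the Grassmannian by prescribing its values at the twisted points $\tilde{X}(N)$ is the theorem of Marsh--Scott \cite{marsh_twists_2016} and Muller--Speyer \cite{muller_twist_2016} that single dimer partition functions compute Pl\"ucker coordinates of the \emph{twisted} boundary measurement point. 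Your well-definedness step establishes (correctly, by invariant theory) that $\phi\bigl(Web_r(N;\lambda)\bigr)$ is an $SL_k$-invariant polynomial of multidegree $\lambda$ \emph{in $N$}, and then appeals to birationality of $\tilde{X}$ to transport it. But transport means pulling back along $\tilde{X}^{-1}$, and the twist is not a graded automorphism: twisted Pl\"ucker coordinates are Laurent expressions, not linear ones, in the Pl\"ucker coordinates. So what you obtain is a priori only a rational function, and the membership $\operatorname{Imm}(\phi)\in\mathbb{C}[\Gr{k}{n}]_{\lambda}$ --- the actual assertion --- is assumed rather than proved. Indeed, if the twist could be treated as a black-box birational map, the statement would hold with $\tilde{X}$ replaced by an arbitrary birational automorphism, and it does not; the dimer-theoretic input is what makes the twist, and only the twist, work.

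Second, your surjectivity step fails as stated. Single-vertex ``elementary'' webs must be planar, so they realize only \emph{non-crossing} groupings of boundary vertices, hence only non-crossing Pl\"ucker monomials, and these do not span $\mathbb{C}[\Gr{k}{n}]_{\lambda}$. Concretely, for $\Gr{3}{6}$ and $\lambda=(1,1,1,1,1,1)$ the graded piece is $5$-dimensional, while there are only three non-crossing products $P_{123}P_{456}$, $P_{156}P_{234}$, $P_{126}P_{345}$; the two missing dimensions are spanned by the hexagon-web immanants, which are precisely the quadratic invariants $X,Y$ of Section~\ref{ssc:PreGra}, not monomials. To place \emph{all} monomials in the image one needs the expansion $P_IP_JP_K=\sum_W a(I,J,K;W)\operatorname{Imm}_W$ coming from the compatibility pairing of Definition~\ref{def:compatible3} --- your alternative ``nondegenerate pairing'' route --- but that expansion and the nondegeneracy of the pairing are themselves theorems of \cite{lam_dimers_2015} and \cite{fraser_dimers_2019}, essentially equivalent to the statement being proved, so they cannot be invoked for free. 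By contrast, your dimension-count step (skew Howe duality together with the fact that non-elliptic webs, respectively fully reduced hourglass graphs, form a basis) is sound and is a genuine ingredient of the source's argument; it is the other two steps that must be replaced by the dimer-theoretic input.
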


This isomorphism implies that for a Pl\"ucker polynomial $P = \sum \operatorname{sgn}_i P_i$, each component $P_i$ is compatible with a web $W_i$, while $P$ itself exhibits compatibility with the signed web combination $\sum \operatorname{sgn}_i W_i$.

\section{Web diagrams of cluster variables in $\mathbb{C}[\operatorname{Gr}(4,8)]$]}\label{sec:Inv}
In this section, we compute the web diagrams which correspond to the web invariants of the 3 quadratic and 14 cubic cluster variables which mentioned in Section \ref{ssc:PreGra}. 

\subsection{The promotion and trip permutations}\label{ssc:WinPromTrip}
Before the computation of the web diagrams, we recall that the bijection between rectangular standard Young tableaux and hourglass plabic graphs introduced in \cite{gaetz2023rotation}. This bijection consists of two key components: trip permutations and the growth algorithm. For convenience, we denote the set of all $a\times b$ rectangular standard Young tableaux by $\operatorname{SYT}(a\times b)$. For convenience, in a symmetric group $S_n$, let $c=(12\cdots n)$ and $w_0=n(n-1)\cdots 1$.

\begin{definition}\label{def:PE}
	\cite{SCHUTZENBERGER197273} \textit{Promotion} is a bijection $\mathcal{P}: \operatorname{SYT}(a\times b) \rightarrow \operatorname{SYT}(a\times b)$ defined as follows. Given $T \in \operatorname{SYT}(a\times b)$, first delete the entry 1 from the box $b$. Fill the (now) empty box $b$ by sliding into $b$ the smaller of the two values appearing immediately to the right of $b$ and below $b$. This slide yields a new empty box $b'$. Repeat this sliding process until the empty box appears in the bottom right corner of the tableau. Finally, fill the bottom right corner with $ab + 1$ and then subtract 1 from all entries. The
	result is the promotion $\mathcal{P}(T) \in \operatorname{SYT}(a\times b)$.
	
	For $T \in \operatorname{SYT}(a\times b)$,  denote \textit{evacuation} of $T$ by $\mathcal{E}(T)\in \operatorname{SYT}(a\times b)$, which is obtained by rotating $T$ by 180$^\circ$ and then replacing each entry $i$ by $ab+1-i$.
\end{definition}

\begin{figure}[h]
	\begin{tikzpicture}[node distance=0.25cm, >={Stealth[scale=0.6]}]
		\node(E1){
			\scalemath{0.8}{\begin{ytableau}
					1 & 4 \\
					2 & 5 \\
					3 & 7 \\
					6 & 8 
			\end{ytableau}}
		};
		\node(E2)[right=of E1]{
			\scalemath{0.8}{\begin{ytableau}
					\phantom{1} & 4 \\
					2 & 5 \\
					3 & 7 \\
					6 & 8 
			\end{ytableau}}
		};
		\node(E3)[right=of E2]{
			\scalemath{0.8}{\begin{ytableau}
					2 & 4 \\
					\phantom{1} & 5 \\
					3 & 7 \\
					6 & 8 
			\end{ytableau}}
		};
		\node(E4)[right=of E3]{
			\scalemath{0.8}{\begin{ytableau}
					2 & 4 \\
					3 & 5 \\
					\phantom{1} & 7 \\
					6 & 8 
			\end{ytableau}}
		};
		\node(E5)[right=of E4]{
			\scalemath{0.8}{\begin{ytableau}
					2 & 4 \\
					3 & 5 \\
					6 & 7 \\
					\phantom{1} & 8 
			\end{ytableau}}
		};
		\node(E6)[right=of E5]{
			\scalemath{0.8}{\begin{ytableau}
					2 & 4 \\
					3 & 5 \\
					6 & 7 \\
					8 & \phantom{1} 
			\end{ytableau}}
		};
		\node(E7)[right=of E6]{
			\scalemath{0.8}{\begin{ytableau}
					2 & 4 \\
					3 & 5 \\
					6 & 7 \\
					8 & 9
			\end{ytableau}}
		};
		\node(E8)[right=of E7]{
			\scalemath{0.8}{\begin{ytableau}
					1 & 3 \\
					2 & 4 \\
					5 & 6 \\
					7 & 8
			\end{ytableau}}
		};
		\node(E0)[left=-0.1cm of E1]{$T=$};
		\node(E0)[right=-0.1cm of E8]{$=\mathcal{P}(T)$};
		\draw[->] (E1) -- (E2);
		\draw[->] (E2) -- (E3);
		\draw[->] (E3) -- (E4);
		\draw[->] (E4) -- (E5);
		\draw[->] (E5) -- (E6);
		\draw[->] (E6) -- (E7);
		\draw[->] (E7) -- (E8);
	\end{tikzpicture}
	\begin{tikzpicture}[node distance=0.25cm, >={Stealth[scale=0.6]}]
		\node(E1){
			\scalemath{0.8}{\begin{ytableau}
					1 & 4 \\
					2 & 5 \\
					3 & 7 \\
					6 & 8 
			\end{ytableau}}
		};
		\node(E2)[right=of E1]{
			\scalemath{0.8}{\begin{ytableau}
					8 & 6 \\
					7 & 3 \\
					5 & 2 \\
					4 & 1 
			\end{ytableau}}
		};
		\node(E3)[right=of E2]{
			\scalemath{0.8}{\begin{ytableau}
					1 & 3 \\
					2 & 6 \\
					4 & 7 \\
					5 & 8 
			\end{ytableau}}
		};
		\node(E0)[left=-0.1cm of E1]{$T=$};
		\node(E0)[right=-0.1cm of E3]{$=\mathcal{E}(T)$};
		\draw[->] (E1) -- (E2);
		\draw[->] (E2) -- (E3);
	\end{tikzpicture}
	\caption{An example for promotion and evacuation of a rectangular standard Young tableau $T$.}
	\label{fig:PTET}
\end{figure}
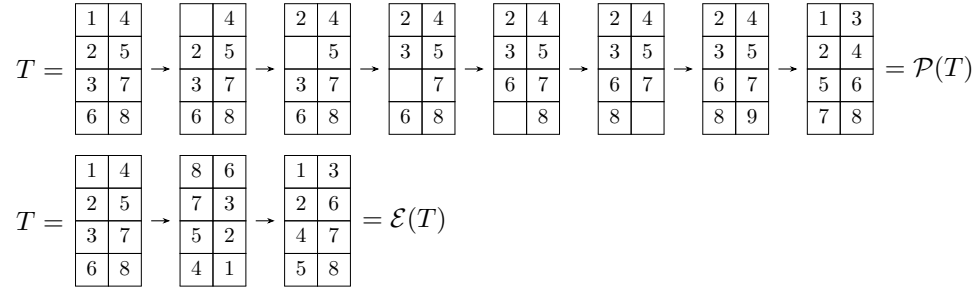

Note that $\mathcal{P}^{ab}=id$ and $\mathcal{E}$ is an involution, $\mathcal{P}$ and $\mathcal{E}$ generate a dihedral group action on $\operatorname{SYT}(a\times b)$.

Then recall the permutations of promotion and trip introduced in \cite{gaetz2024promotion} and \cite{gaetz2024web}. 

\begin{definition}\label{def:promiT}
	\cite{gaetz2024promotion} For $T \in \operatorname{SYT}(a\times b)$, $1 \leq i \leq a-1$ and $1\leq j \leq ab-1$, denote the unique entry of $\mathcal{P}^{j-1}(T)$ that moves from row $i+1$ to row $i$ in the process when applying promotion to $\mathcal{P}^{j-1}(T)$ by $p^{i,j}$. For $i\leq i\leq a-1$, the fixed-point-free permutation $\operatorname{prom}_i(T): j \mapsto p^{i,j}+j-1$ (mod $ab$) is called the $i$-th \textit{promotion permutation} of $T$.
\end{definition}

For an hourglass plabic graph $G$, similar to the definition of trip permutations in plabic graphs, for each integer $1 \leq j \leq ab-1$, we can find a path starting from the boundary vertex labeled $j$ in $G$. When passing through a white vertex, it turns to the $i$-th left edge; when passing through a black vertex, it turns to the $i$-th right edge. The path eventually terminates at boundary vertex $k$, thereby defining the $i$-th \textit{trip permutation} of $G$ as $\operatorname{trip}_i(G): j \mapsto k$. Additionally, let $\operatorname{rot}(G)$ be the hourglass plabic graph obtained by applying the map $i\mapsto i-1$ to the label of boundary vertices of $G$, and  $\operatorname{refl}(G)$ the hourglass plabic graph obtained by applying the map $i\mapsto ab+1-i$ to the label of boundary vertices of $G$.

Denote by $\operatorname{Aexc}(\pi) = \{i | \pi^{-1}(i)>i\}$ the \textit{anti-exceedance} set of a permutation $\pi$, with $\operatorname{rot}(\pi)=c^{-1} \circ \pi \circ c$ the \textit{rotation} of $\pi$ and with $\operatorname{refl}(\pi)=w_0\pi w_0$ the \textit{reverse-component} of $\pi$. The promotion permutations of $T$ and trip permutations of $G$ satisfy the following properties.

\begin{theorem}\label{thm:TG}
	\cite[Theorems.~4.7 and 6.1, Proposition.~4.11]{gaetz2023rotation} Let $T \in \operatorname{SYT}(4\times b)$, and $1 \leq i \leq a-1$. Then
	\begin{itemize}
		\item $\operatorname{rot}(\operatorname{prom}_i(T))=\operatorname{prom}_i(\mathcal{P}(T))$;
		\item $\operatorname{refl}(\operatorname{prom}_i(T))=\operatorname{prom}_i(\mathcal{E}(T))$;
		\item $\operatorname{prom}_i(T)=\operatorname{prom}_{a-i}(T)^{-1}$;
		\item $\operatorname{Aexc}(\operatorname{prom}_i(T))=\{e|e$ is in the first $i$ rows of $T\}$.
	\end{itemize} 
	For an equivalence class $[G]$ of fully reduced hourglass plabic graphs, there is a unique standard Young tableau $\mathcal{T}([G])$ corresponding to $[G]$ whose first $i$ rows contain the entries $\operatorname{Aexc} (\operatorname{trip}_i(G))$ for $i=1,2,3$. Furthermore, the map $\mathcal{T}$ is a bijection that satisfies
	\begin{itemize}
		\item $\operatorname{trip}_i(G)=\operatorname{prom}_i(\mathcal{T}([G]))$;
		\item $\mathcal{T}([\operatorname{rot}(G)])=\mathcal{P}(\mathcal{T}([G]))$;
		\item $\mathcal{T}([\operatorname{refl}(G)])=\mathcal{E}(\mathcal{T}([G]))$.
	\end{itemize}
\end{theorem}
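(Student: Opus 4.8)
The plan is to prove the four promotion-permutation identities first, entirely on the tableau side, and then transport them across the trip--promotion dictionary to obtain the statements about $\mathcal{T}$. The rotation identity $\operatorname{rot}(\operatorname{prom}_i(T))=\operatorname{prom}_i(\mathcal{P}(T))$ I would verify directly from Definition \ref{def:promiT}. Writing $p^{i,j}$ for the entry that migrates from row $i+1$ to row $i$ while promoting $\mathcal{P}^{j-1}(T)$, the tableau used to compute $\operatorname{prom}_i(\mathcal{P}(T))$ at index $j$ is $\mathcal{P}^{j-1}(\mathcal{P}(T))=\mathcal{P}^{j}(T)$, which is literally the tableau whose migrating entry is $p^{i,j+1}$. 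A one-line computation with $c:j\mapsto j+1$ (indices taken modulo $ab$, using $\mathcal{P}^{ab}=\mathrm{id}$) then gives $(c^{-1}\circ\operatorname{prom}_i(T)\circ c)(j)=p^{i,j+1}+j-1=\operatorname{prom}_i(\mathcal{P}(T))(j)$. This identity is the easy one and it fixes the normalisation for everything that follows.

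For the remaining three identities the key structural input is the anti-exceedance description $\operatorname{Aexc}(\operatorname{prom}_i(T))=\{e: e \text{ lies in the first } i \text{ rows of } T\}$, so I would establish it next. The plan is to observe that during a single promotion the empty box sweeps a monotone staircase from the top-left to the bottom-right corner, so exactly one entry crosses the horizontal boundary between rows $i$ and $i+1$ upward; tracking this entry across the full length-$ab$ orbit and accounting for which indices $j$ satisfy $\operatorname{prom}_i(T)^{-1}(k)>k$ then matches the anti-exceedances with the $ib$ entries sitting in the top $i$ rows of $T$. With this in hand, the reflection identity $\operatorname{refl}(\operatorname{prom}_i(T))=\operatorname{prom}_i(\mathcal{E}(T))$ follows from the dihedral relation $\mathcal{E}\circ\mathcal{P}=\mathcal{P}^{-1}\circ\mathcal{E}$ recorded after Definition \ref{def:PE}: rewriting $\mathcal{P}^{j-1}(\mathcal{E}(T))=\mathcal{E}(\mathcal{P}^{-(j-1)}(T))$ and noting that the $180^{\circ}$ rotation and the complementation $e\mapsto ab+1-e$ built into $\mathcal{E}$ realise precisely the conjugation by $w_0$, so reversing the promotion orbit converts $\operatorname{prom}_i(T)$ into $w_0\,\operatorname{prom}_i(T)\,w_0$. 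Finally the inverse identity $\operatorname{prom}_i(T)=\operatorname{prom}_{a-i}(T)^{-1}$ (for $a=4$: $\operatorname{prom}_1=\operatorname{prom}_3^{-1}$, with $\operatorname{prom}_2$ an involution) I would read off from a top--bottom duality of the rectangle: viewing each upward crossing of boundary $i$ from the complementary side exhibits it as a downward crossing of boundary $a-i$ traversed in reverse time, which inverts the corresponding permutation.

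The statements about $\mathcal{T}$ form the crux, and I expect the main obstacle to be showing simultaneously that $\mathcal{T}$ is well-defined on equivalence classes and is a bijection. The plan has three movements. First, define the trip permutations $\operatorname{trip}_i(G)$ on a fully reduced hourglass plabic graph (Definition \ref{def:hggraph}) and check that they are unchanged by the benzene, square, and contraction moves of Figure \ref{fig:hgmoves}; since each move is local, this reduces to a finite case analysis verifying that every strand entering the modified region exits at the same boundary vertex. Second, prove that the trip permutations obey the same four identities as the promotion permutations --- in particular that $\operatorname{Aexc}(\operatorname{trip}_1(G))\subset\operatorname{Aexc}(\operatorname{trip}_2(G))\subset\operatorname{Aexc}(\operatorname{trip}_3(G))$ are nested of sizes $b,2b,3b$ --- so that assigning the successive set-differences to rows $1,\dots,4$ produces a genuine $4\times b$ standard Young tableau $\mathcal{T}([G])$; here one must check that the nesting together with the inverse relation forces the columns to increase, which is the delicate point. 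Third, establish bijectivity by running the growth algorithm, which reconstructs a fully reduced hourglass plabic graph from any $T\in\operatorname{SYT}(4\times b)$ realising the prescribed promotion permutations as its trips, thereby producing a two-sided inverse to $\mathcal{T}$. Once $\operatorname{trip}_i(G)=\operatorname{prom}_i(\mathcal{T}([G]))$ is secured, the equivariances $\mathcal{T}([\operatorname{rot}(G)])=\mathcal{P}(\mathcal{T}([G]))$ and $\mathcal{T}([\operatorname{refl}(G)])=\mathcal{E}(\mathcal{T}([G]))$ follow formally: $\operatorname{rot}$ and $\operatorname{refl}$ act on trips exactly by the conjugations $c^{-1}(\cdot)c$ and $w_0(\cdot)w_0$, which by the identities of the first two paragraphs match the actions of $\mathcal{P}$ and $\mathcal{E}$ on tableaux, and the injectivity of $\mathcal{T}$ upgrades this equality of permutations to the stated equality of tableaux.
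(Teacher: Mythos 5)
First, a point of calibration: the paper you are working from does not prove Theorem \ref{thm:TG} at all --- it is imported verbatim from \cite{gaetz2023rotation} (with the tableau-side identities originating in the companion work \cite{gaetz2024promotion}), so your proposal has to be measured against the proofs in those sources. Your first paragraph is correct: the rotation identity is essentially definitional, since the tableau governing $\operatorname{prom}_i(\mathcal{P}(T))$ at index $j$ is $\mathcal{P}^{j}(T)$, and the computation $(c^{-1}\circ \operatorname{prom}_i(T)\circ c)(j)=p^{i,j+1}+j-1$ matches. The remaining tableau-side identities, however, are genuine theorems in \cite{gaetz2024promotion}, and your sketches for them (tracking the unique crossing entry to get the anti-exceedance set; a ``reverse-time'' top--bottom duality for $\operatorname{prom}_i(T)=\operatorname{prom}_{a-i}(T)^{-1}$) largely restate what has to be proven rather than prove it. These are plausibly completable along the lines you indicate, so I would not call them the fatal flaw.

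The decisive gap is in your third movement. Proving that the growth algorithm (Algorithm \ref{alg:growth}) sends $T$ to a fully reduced hourglass plabic graph whose trips equal $\operatorname{prom}_i(T)$, together with move-invariance of trips, only yields $\mathcal{T}([\mathcal{G}(T)])=T$: that is, surjectivity of $\mathcal{T}$ and injectivity of the growth map $\mathcal{G}$. It does not produce a two-sided inverse. Bijectivity additionally requires the converse direction: every fully reduced hourglass plabic graph is move-equivalent to one produced by the growth algorithm --- equivalently, any two fully reduced graphs with equal trip permutations are connected by the benzene, square, and contraction moves of Figure \ref{fig:hgmoves}. This separation statement is the deepest part of Theorem 6.1 of \cite{gaetz2023rotation}, occupying a substantial portion of that paper, and nothing in your proposal addresses it; the phrase ``thereby producing a two-sided inverse to $\mathcal{T}$'' is exactly where the content is skipped. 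Relatedly, your second movement's claim that the nested anti-exceedance sets of the trips of an \emph{arbitrary} fully reduced graph assemble into a standard (column-increasing) Young tableau is itself part of this hard direction: you flag it as ``the delicate point'' but give no argument, and without it $\mathcal{T}$ is not even well defined on all equivalence classes, so the formal transport of the $\mathcal{P}$- and $\mathcal{E}$-equivariance in your final sentence has nothing to stand on.
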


For rectangular semi-standard Young tableaux, it is necessary to convert them into standard Young tableaux before applying the maps above. For example, given a semi-standard Young tableau $S=\scalemath{0.6}{\;{\ytableausetup{centertableaux}\ytableaushort{11,24,36,57}}}$ with the boundary condition $\lambda_S=(2,1,1,1,1,1,1)$, we relabel the entries labeled ``1'' from left to right by $1,2$, and systematically increment the labels for subsequent entries. This process yields a rectangular standard Young tableau $T=\scalemath{0.6}{\;{\ytableausetup{centertableaux}\ytableaushort{12,35,47,68}}}$. By relabeling the entries labeled ``7'' and ``8'' in $\mathcal{P}^2(T)=\scalemath{0.6}{\;{\ytableausetup{centertableaux} \ytableaushort{13,25,46,78}}}$ as $7$, $\mathcal{P}(S)=\scalemath{0.6}{\;{\ytableausetup{centertableaux} \ytableaushort{13,25,46,77}}}$ can be obtained. Hence the boundary condition of $\mathcal{P}(S)$ is $\lambda_{\mathcal{P}(S)}=(1,1,1,1,1,1,2)$, $\mathcal{P}^2(S)$ can also be obtained from $\mathcal{P}^3(T)$. 

\begin{example}
	For $T$ given in Figure \ref{fig:PTET}, we have
	\begin{equation*}
		\begin{aligned}
			\operatorname{prom}_1(T) &= 23756184,\quad \operatorname{Aexc}(\operatorname{prom}_1(T)) = \{1,4\},\\
			\operatorname{prom}_2(T) &= 37168425,\quad \operatorname{Aexc}(\operatorname{prom}_2(T)) = \{1,2,4,5\},\\
			\operatorname{prom}_3(T) &= 37168425,\quad \operatorname{Aexc}(\operatorname{prom}_3(T)) = \{1,2,3,4,5,7\}.
		\end{aligned}
	\end{equation*}
	For the hourglass plabic graph $G$ in Figure \ref{fig:extrip}, we have 
	\begin{equation*}
		\begin{aligned}
			\operatorname{trip}_1(G) &= 23756184= \operatorname{prom}_1(T),\\
			\operatorname{trip}_2(G) &= 37168425= \operatorname{prom}_2(T),\\
			\operatorname{trip}_3(G) &= 37168425= \operatorname{prom}_3(T).
		\end{aligned}
	\end{equation*}
	Then the Young tableau $T$ is the Young tableau corresponding to the hourglass plabic graph $G$.
	\begin{figure}[h]
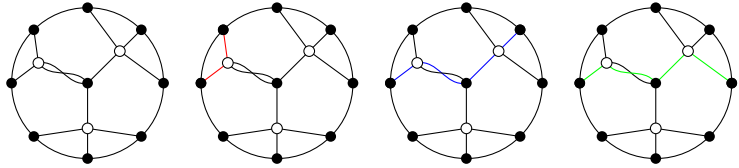

		\centering
		\GrDisk{
			\coordinate (A) at (45:0.6);
			\coordinate (B) at (-90:0.6);
			\coordinate (C) at (157.5:0.7);
			
			\coordinate (a) at (0:0);
			
			\draw (1)--(A);		\draw (2)--(A);		\draw (3)--(A);		\draw (4)--(B);
			\draw (5)--(B);		\draw (6)--(B);		\draw (7)--(C);		\draw (8)--(C);
			
			\draw (a)--(A);		\draw (a)--(B);		\hg{a}{C};
			
			\foreach \p in {A,B,C} {
				\fill[white] (\p) circle (2pt);
				\draw (\p) circle (2pt);
			}
			\fill (a) circle (2pt);
		}
		\GrDisk{
			\coordinate (A) at (45:0.6);
			\coordinate (B) at (-90:0.6);
			\coordinate (C) at (157.5:0.7);
			
			\coordinate (a) at (0:0);
			
			\draw (1)--(A);		\draw (2)--(A);		\draw (3)--(A);		\draw (4)--(B);
			\draw (5)--(B);		\draw (6)--(B);		\draw[red] (7)--(C);		\draw[red] (8)--(C);
			
			\draw (a)--(A);		\draw (a)--(B);		\hg{a}{C};
			
			\foreach \p in {A,B,C} {
				\fill[white] (\p) circle (2pt);
				\draw (\p) circle (2pt);
			}
			\fill (a) circle (2pt);
			\fill (7) circle (2pt);
			\fill (8) circle (2pt);
		}
		\GrDisk{
			\coordinate (A) at (45:0.6);
			\coordinate (B) at (-90:0.6);
			\coordinate (C) at (157.5:0.7);
			
			\coordinate (a) at (0:0);
			
			\draw (1)--(A);		\draw[blue] (2)--(A);		\draw (3)--(A);		\draw (4)--(B);
			\draw (5)--(B);		\draw (6)--(B);		\draw[blue] (7)--(C);		\draw (8)--(C);
			
			\draw[blue] (a)--(A);		\draw (a)--(B);		\hg[blue]{a}{C};
			
			\foreach \p in {A,B,C} {
				\fill[white] (\p) circle (2pt);
				\draw (\p) circle (2pt);
			}
			\fill (a) circle (2pt);
			\fill (2) circle (2pt);
			\fill (7) circle (2pt);
		}
		\GrDisk{
			\coordinate (A) at (45:0.6);
			\coordinate (B) at (-90:0.6);
			\coordinate (C) at (157.5:0.7);
			
			\coordinate (a) at (0:0);
			
			\draw (1)--(A);		\draw (2)--(A);		\draw[green] (3)--(A);		\draw (4)--(B);
			\draw (5)--(B);		\draw (6)--(B);		\draw[green] (7)--(C);		\draw (8)--(C);
			
			\draw[green] (a)--(A);		\draw (a)--(B);		\hg[][green]{a}{C};
			
			\foreach \p in {A,B,C} {
				\fill[white] (\p) circle (2pt);
				\draw (\p) circle (2pt);
			}
			\fill (a) circle (2pt);
			\fill (3) circle (2pt);
			\fill (7) circle (2pt);
		}
		\caption{An hourglass plabic graph $G$ as an example of $\operatorname{trip}_i(G)$, we can see that $\operatorname{trip}_1(G)(7)=8$(the red path in the second graph), $\operatorname{trip}_2(G)(7)=2$ (the blue path in the third graph) and $\operatorname{trip}_3(G)(7)=3$ (the green path in the fourth graph).}
		\label{fig:extrip}
	\end{figure}
\end{example}

\subsection{Growth algorithm}\label{ssc:WinGrowth}
In this subsection, we present the growth algorithm and growth rules introduced in \cite{gaetz2023rotation}. Using this method, we can construct an hourglass plabic graph associated with a given Young tableau. For a standard Young tableau $T$, where $L(T)=l_1l_2\cdots l_n$ is the \textit{lattice word} of $T$, the letter $l_i$ is the row index of the box labeled $i$ in $T$. If $T$ is a semi-standard Young tableau, we replace the letter $l_i$ in $L(T)$ with the multiset of row indices of all boxes labeled $i$.

\begin{figure}
	\centering
	\scalemath{0.8}{
		\begin{tikzpicture}[>=stealth]
			\centering
			\begin{scope}%1
				\draw[green!50!black, very thick] (0,0) rectangle (2,3); 
				\draw[thick] (0,1.5)--(2,1.5);
				
				\draw (0.5,2.75)--(0.5,2.25);
				\draw (0.5,2.25) .. controls (0.5,1.55) and (1.5,1.55) .. (1.5,2.25);
				\draw (1.5,2.75)--(1.5,2.25);
				\draw[->] (1,1.725)--(1.05,1.725);
				\node[left,red!70!black,font=\tiny] at (0.55,2.55) {$1$};
				\node[right,red!70!black,font=\tiny] at (1.45,2.55) {$\overline{1}$};
				
				\draw (0.5,1.25)--(0.5,0.75);
				\draw (0.5,0.75) .. controls (0.5,0.05) and (1.5,0.05) .. (1.5,0.75);
				\draw (1.5,1.25)--(1.5,0.75);
				\draw[->] (1,0.225)--(0.95,0.225);
				\node[left,red!70!black,font=\tiny] at (0.55,1.05) {$\overline{4}$};
				\node[right,red!70!black,font=\tiny] at (1.45,1.05) {$4$};
			\end{scope}
			\begin{scope}%2
				\draw[green!50!black, very thick] (2.2,0) rectangle (6.2,3); %2
				\draw[thick] (2.2,1.5)--(6.2,1.5);
				\draw[thick] (4.2,3)--(4.2,0);
				
				\draw (2.7,2.75)--(3.7,1.75);
				\draw (3.7,2.75)--(2.7,1.75);
				\draw[->] (2.9,2.55)--(2.95,2.5);
				\draw[->] (3.5,1.95)--(3.55,1.9);
				\draw[->] (3.5,2.55)--(3.55,2.6);
				\draw[->] (2.9,1.95)--(2.95,2);
				\node[left,red!70!black,font=\tiny] at (2.9,2.5) {$1$};
				\node[right,red!70!black,font=\tiny] at (3.45,2.5) {$\overline{2}$};
				\node[left,red!70!black,font=\tiny] at (2.9,2) {$\overline{2}$};
				\node[right,red!70!black,font=\tiny] at (3.45,2) {$1$};
				
				\draw (4.7,2.75)--(5.7,1.75);
				\draw (5.7,2.75)--(4.7,1.75);
				\draw[->] (4.9,2.55)--(4.95,2.5);
				\draw[->] (5.5,1.95)--(5.55,1.9);
				\draw[->] (5.5,2.55)--(5.55,2.6);
				\draw[->] (4.9,1.95)--(4.95,2);
				\node[left,red!70!black,font=\tiny] at (4.9,2.5) {$2$};
				\node[right,red!70!black,font=\tiny] at (5.45,2.5) {$\overline{1}$};
				\node[left,red!70!black,font=\tiny] at (4.9,2) {$\overline{1}$};
				\node[right,red!70!black,font=\tiny] at (5.45,2) {$2$};
				
				\draw (2.7,1.25)--(3.7,0.25);
				\draw (3.7,1.25)--(2.7,0.25);
				\draw[->] (2.9,1.05)--(2.85,1.1);
				\draw[->] (3.5,0.45)--(3.45,0.5);
				\draw[->] (3.5,1.05)--(3.45,1);
				\draw[->] (2.9,0.45)--(2.85,0.4);
				\node[left,red!70!black,font=\tiny] at (2.9,1) {$\overline{3}$};
				\node[right,red!70!black,font=\tiny] at (3.45,1) {$4$};
				\node[left,red!70!black,font=\tiny] at (2.9,0.5) {$4$};
				\node[right,red!70!black,font=\tiny] at (3.45,0.5) {$\overline{3}$};
				
				\draw (4.7,1.25)--(5.7,0.25);
				\draw (5.7,1.25)--(4.7,0.25);
				\draw[->] (4.9,1.05)--(4.85,1.1);
				\draw[->] (5.5,0.45)--(5.45,0.5);
				\draw[->] (5.5,1.05)--(5.45,1);
				\draw[->] (4.9,0.45)--(4.85,0.4);
				\node[left,red!70!black,font=\tiny] at (4.9,1) {$\overline{4}$};
				\node[right,red!70!black,font=\tiny] at (5.45,1) {$3$};
				\node[left,red!70!black,font=\tiny] at (4.9,0.5) {$3$};
				\node[right,red!70!black,font=\tiny] at (5.45,0.5) {$\overline{4}$};
			\end{scope}
			\begin{scope}%3
				\draw[green!50!black, very thick] (6.4,0) rectangle (8.4,3); %2
				\draw[thick] (6.4,1.5)--(8.4,1.5);
				
				\draw (6.9,2.75)--(7.9,1.75);
				\draw (7.9,2.75)--(6.9,1.75);
				\draw[->] (7.1,2.55)--(7.15,2.5);
				\draw[->] (7.7,1.95)--(7.75,1.9);
				\draw[->] (7.7,2.55)--(7.75,2.6);
				\draw[->] (7.1,1.95)--(7.15,2);
				\node[left,red!70!black,font=\tiny] at (7.1,2.5) {$2$};
				\node[right,red!70!black,font=\tiny] at (7.65,2.5) {$\overline{2}$};
				\node[left,red!70!black,font=\tiny] at (7.1,2) {$\overline{1}$};
				\node[right,red!70!black,font=\tiny] at (7.65,2) {$1$};
				
				\draw (6.9,1.25)--(7.9,0.25);
				\draw (7.9,1.25)--(6.9,0.25);
				\draw[->] (7.1,1.05)--(7.05,1.1);
				\draw[->] (7.7,0.45)--(7.65,0.5);
				\draw[->] (7.7,1.05)--(7.65,1);
				\draw[->] (7.1,0.45)--(7.05,0.4);
				\node[left,red!70!black,font=\tiny] at (7.1,1) {$\overline{3}$};
				\node[right,red!70!black,font=\tiny] at (7.65,1) {$3$};
				\node[left,red!70!black,font=\tiny] at (7.1,0.5) {$4$};
				\node[right,red!70!black,font=\tiny] at (7.65,0.5) {$\overline{4}$};
			\end{scope}
			\begin{scope}%4
				\draw[green!50!black, very thick] (8.6,0) rectangle (13.6,3); %2
				\draw[thick] (8.6,1.5)--(13.6,1.5);
				\draw[thick] (11.1,0)--(11.1,3);
				
				\draw (9.1,2.75)--(10.1,1.75);
				\draw (10.1,2.75)--(9.1,1.75);
				\draw (10.6,2.75)--(10.6,1.75);
				\draw[->] (9.3,2.55)--(9.35,2.5);
				\draw[->] (9.9,1.95)--(9.95,1.9);
				\draw[->] (9.9,2.55)--(9.95,2.6);
				\draw[->] (9.3,1.95)--(9.35,2);
				\node[left,red!70!black,font=\tiny] at (9.3,2.5) {$1$};
				\node[right,red!70!black,font=\tiny] at (9.85,2.5) {$\overline{3}$};
				\node[left,red!70!black,font=\tiny] at (9.3,2) {$\overline{3}$};
				\node[right,red!70!black,font=\tiny] at (9.85,2) {$1$};
				\node[left,red!70!black,font=\tiny] at (10.65,2.4) {$\overline{1}$};
				\node[left,red!70!black,font=\tiny] at (10.65,2.1) {$2$};
				
				\draw (12.1,2.75)--(13.1,1.75);
				\draw (13.1,2.75)--(12.1,1.75);
				\draw (11.7,2.75)--(11.7,1.75);
				\draw[->] (12.3,2.55)--(12.35,2.5);
				\draw[->] (12.9,1.95)--(12.95,1.9);
				\draw[->] (12.9,2.55)--(12.95,2.6);
				\draw[->] (12.3,1.95)--(12.35,2);
				\node[left,red!70!black,font=\tiny] at (12.3,2.5) {$3$};
				\node[right,red!70!black,font=\tiny] at (12.85,2.5) {$\overline{1}$};
				\node[left,red!70!black,font=\tiny] at (12.3,2) {$\overline{1}$};
				\node[right,red!70!black,font=\tiny] at (12.85,2) {$3$};
				\node[left,red!70!black,font=\tiny] at (11.75,2.4) {$1$};
				\node[left,red!70!black,font=\tiny] at (11.75,2.1) {$\overline{2}$};
				
				\draw (9.6,1.25)--(10.6,0.25);
				\draw (10.6,1.25)--(9.6,0.25);
				\draw (9.2,1.25)--(9.2,0.25);
				\draw[->] (9.8,1.05)--(9.75,1.1);
				\draw[->] (10.4,0.45)--(10.35,0.5);
				\draw[->] (10.4,1.05)--(10.35,1);
				\draw[->] (9.8,0.45)--(9.75,0.4);
				\node[left,red!70!black,font=\tiny] at (9.8,1) {$\overline{2}$};
				\node[right,red!70!black,font=\tiny] at (10.35,1) {$4$};
				\node[left,red!70!black,font=\tiny] at (9.8,0.5) {$4$};
				\node[right,red!70!black,font=\tiny] at (10.35,0.5) {$\overline{2}$};
				\node[left,red!70!black,font=\tiny] at (9.25,0.9) {$\overline{4}$};
				\node[left,red!70!black,font=\tiny] at (9.25,0.6) {$3$};
				
				\draw (11.6,1.25)--(12.6,0.25);
				\draw (12.6,1.25)--(11.6,0.25);
				\draw (13.1,1.25)--(13.1,0.25);
				\draw[->] (11.8,1.05)--(11.75,1.1);
				\draw[->] (12.4,0.45)--(12.35,0.5);
				\draw[->] (12.4,1.05)--(12.35,1);
				\draw[->] (11.8,0.45)--(11.75,0.4);
				\node[left,red!70!black,font=\tiny] at (11.8,1) {$\overline{4}$};
				\node[right,red!70!black,font=\tiny] at (12.35,1) {$2$};
				\node[left,red!70!black,font=\tiny] at (11.8,0.5) {$2$};
				\node[right,red!70!black,font=\tiny] at (12.35,0.5) {$\overline{4}$};
				\node[left,red!70!black,font=\tiny] at (13.15,0.9) {$4$};
				\node[left,red!70!black,font=\tiny] at (13.15,0.6) {$\overline{3}$};
			\end{scope}
	\end{tikzpicture}}
	\\\vspace{0.1cm}
	\scalemath{0.8}{
		\begin{tikzpicture}[>=stealth]
			\centering
			\begin{scope}%5
				\draw[green!50!black, very thick] (0,0) rectangle (5,3); %2
				\draw[thick] (0,1.5)--(5,1.5);
				\draw[thick] (2.5,0)--(2.5,3);
				
				\draw (0.5,2.75)--(1.5,1.75);
				\draw (1.5,2.75)--(0.5,1.75);
				\draw (2,2.75)--(2,1.75);
				\draw[->] (0.7,2.55)--(0.75,2.5);
				\draw[->] (1.3,2.55)--(1.25,2.5);
				\draw[->] (0.7,1.95)--(0.65,1.9);
				\draw[->] (1.3,1.95)--(1.35,1.9);
				\node[left,red!70!black,font=\tiny] at (0.7,2.5) {$1$};
				\node[right,red!70!black,font=\tiny] at (1.25,2.5) {$2$};
				\node[left,red!70!black,font=\tiny] at (0.7,2) {$2$};
				\node[right,red!70!black,font=\tiny] at (1.25,2) {$1$};
				\node[left,red!70!black,font=\tiny] at (2.05,2.4) {$\overline{1}$};
				\node[left,red!70!black,font=\tiny] at (2.05,2.1) {$2$};
				
				\draw (3.5,2.75)--(4.5,1.75);
				\draw (4.5,2.75)--(3.5,1.75);
				\draw (3.1,2.75)--(3.1,1.75);
				\draw[->] (3.7,2.55)--(3.65,2.6);
				\draw[->] (4.3,2.55)--(4.35,2.6);
				\draw[->] (3.7,1.95)--(3.75,2);
				\draw[->] (4.3,1.95)--(4.25,2);
				\node[left,red!70!black,font=\tiny] at (3.7,2.5) {$\overline{2}$};
				\node[right,red!70!black,font=\tiny] at (4.25,2.5) {$\overline{1}$};
				\node[left,red!70!black,font=\tiny] at (3.7,2) {$\overline{1}$};
				\node[right,red!70!black,font=\tiny] at (4.25,2) {$\overline{2}$};
				\node[left,red!70!black,font=\tiny] at (3.15,2.4) {$1$};
				\node[left,red!70!black,font=\tiny] at (3.15,2.1) {$\overline{2}$};
				
				\draw (1,1.25)--(2,0.25);
				\draw (2,1.25)--(1,0.25);
				\draw (0.6,1.25)--(0.6,0.25);
				\draw[->] (1.2,1.05)--(1.25,1);
				\draw[->] (1.8,1.05)--(1.75,1);
				\draw[->] (1.2,0.45)--(1.15,0.4);
				\draw[->] (1.8,0.45)--(1.85,0.4);
				\node[left,red!70!black,font=\tiny] at (1.2,1) {$3$};
				\node[right,red!70!black,font=\tiny] at (1.75,1) {$4$};
				\node[left,red!70!black,font=\tiny] at (1.2,0.5) {$4$};
				\node[right,red!70!black,font=\tiny] at (1.75,0.5) {$3$};
				\node[left,red!70!black,font=\tiny] at (0.65,0.9) {$\overline{4}$};
				\node[left,red!70!black,font=\tiny] at (0.65,0.6) {$3$};
				
				\draw (3,1.25)--(4,0.25);
				\draw (4,1.25)--(3,0.25);
				\draw (4.5,1.25)--(4.5,0.25);
				\draw[->] (3.2,1.05)--(3.15,1.1);
				\draw[->] (3.8,1.05)--(3.85,1.1);
				\draw[->] (3.2,0.45)--(3.25,0.5);
				\draw[->] (3.8,0.45)--(3.75,0.5);
				\node[left,red!70!black,font=\tiny] at (3.2,1) {$\overline{4}$};
				\node[right,red!70!black,font=\tiny] at (3.75,1) {$\overline{3}$};
				\node[left,red!70!black,font=\tiny] at (3.2,0.5) {$\overline{3}$};
				\node[right,red!70!black,font=\tiny] at (3.75,0.5) {$\overline{4}$};
				\node[left,red!70!black,font=\tiny] at (4.55,0.9) {$4$};
				\node[left,red!70!black,font=\tiny] at (4.55,0.6) {$\overline{3}$};
			\end{scope}
			\begin{scope}%6
				\draw[green!50!black, very thick] (5.2,0) rectangle (10.2,3); %2
				\draw[thick] (5.2,1.5)--(10.2,1.5);
				\draw[thick] (7.7,0)--(7.7,3);
				
				\draw (5.7,2.75)--(6.7,1.75);
				\draw (6.7,2.75)--(5.7,1.75);
				\draw (7.2,2.75)--(7.2,1.75);
				\draw[->] (5.9,2.55)--(5.95,2.5);
				\draw[->] (6.5,2.55)--(6.45,2.5);
				\draw[->] (5.9,1.95)--(5.85,1.9);
				\draw[->] (6.5,1.95)--(6.55,1.9);
				\node[left,red!70!black,font=\tiny] at (5.9,2.5) {$1$};
				\node[right,red!70!black,font=\tiny] at (6.45,2.5) {$3$};
				\node[left,red!70!black,font=\tiny] at (5.9,2) {$3$};
				\node[right,red!70!black,font=\tiny] at (6.45,2) {$1$};
				\node[left,red!70!black,font=\tiny] at (7.25,2.55) {$\overline{1}$};
				\node[left,red!70!black,font=\tiny] at (7.25,2.25) {$2$};
				\node[left,red!70!black,font=\tiny] at (7.25,1.95) {$3$};
				
				\draw (8.7,2.75)--(9.7,1.75);
				\draw (9.7,2.75)--(8.7,1.75);
				\draw (8.3,2.75)--(8.3,1.75);
				\draw[->] (8.9,2.55)--(8.85,2.6);
				\draw[->] (9.5,2.55)--(9.55,2.6);
				\draw[->] (8.9,1.95)--(8.95,2);
				\draw[->] (9.5,1.95)--(9.45,2);
				\node[left,red!70!black,font=\tiny] at (8.9,2.5) {$\overline{3}$};
				\node[right,red!70!black,font=\tiny] at (9.45,2.5) {$\overline{1}$};
				\node[left,red!70!black,font=\tiny] at (8.9,2) {$\overline{1}$};
				\node[right,red!70!black,font=\tiny] at (9.45,2) {$\overline{3}$};
				\node[left,red!70!black,font=\tiny] at (8.35,2.55) {$1$};
				\node[left,red!70!black,font=\tiny] at (8.35,2.25) {$\overline{2}$};
				\node[left,red!70!black,font=\tiny] at (8.35,1.95) {$\overline{3}$};
				
				\draw (6.2,1.25)--(7.2,0.25);
				\draw (7.2,1.25)--(6.2,0.25);
				\draw (5.8,1.25)--(5.8,0.25);
				\draw[->] (6.4,1.05)--(6.45,1);
				\draw[->] (7,1.05)--(6.95,1);
				\draw[->] (6.4,0.45)--(6.35,0.4);
				\draw[->] (7,0.45)--(7.05,0.4);
				\node[left,red!70!black,font=\tiny] at (6.4,1) {$2$};
				\node[right,red!70!black,font=\tiny] at (6.95,1) {$4$};
				\node[left,red!70!black,font=\tiny] at (6.4,0.5) {$4$};
				\node[right,red!70!black,font=\tiny] at (6.95,0.5) {$2$};
				\node[left,red!70!black,font=\tiny] at (5.85,1.05) {$\overline{4}$};
				\node[left,red!70!black,font=\tiny] at (5.85,0.75) {$3$};
				\node[left,red!70!black,font=\tiny] at (5.85,0.45) {$2$};
				
				\draw (8.2,1.25)--(9.2,0.25);
				\draw (9.2,1.25)--(8.2,0.25);
				\draw (9.7,1.25)--(9.7,0.25);
				\draw[->] (8.4,1.05)--(8.35,1.1);
				\draw[->] (9,1.05)--(9.05,1.1);
				\draw[->] (8.4,0.45)--(8.45,0.5);
				\draw[->] (9,0.45)--(8.95,0.5);
				\node[left,red!70!black,font=\tiny] at (8.4,1) {$\overline{4}$};
				\node[right,red!70!black,font=\tiny] at (8.95,1) {$\overline{2}$};
				\node[left,red!70!black,font=\tiny] at (8.4,0.5) {$\overline{2}$};
				\node[right,red!70!black,font=\tiny] at (8.95,0.5) {$\overline{4}$};
				\node[left,red!70!black,font=\tiny] at (9.75,1.05) {$4$};
				\node[left,red!70!black,font=\tiny] at (9.75,0.75) {$\overline{3}$};
				\node[left,red!70!black,font=\tiny] at (9.75,0.45) {$\overline{2}$};
			\end{scope}
			\begin{scope}%7
				\draw[green!50!black, very thick] (10.4,0) rectangle (15.4,3); %2
				\draw[thick] (10.4,1.5)--(15.4,1.5);
				\draw[thick] (12.9,0)--(12.9,3);
				
				\draw (10.9,2.75)--(11.9,1.75);
				\draw (11.9,2.75)--(10.9,1.75);
				\draw (12.4,2.75)--(12.4,1.75);
				\draw[->] (11.1,2.55)--(11.15,2.5);
				\draw[->] (11.7,2.55)--(11.65,2.5);
				\draw[->] (11.1,1.95)--(11.05,1.9);
				\draw[->] (11.7,1.95)--(11.75,1.9);
				\node[left,red!70!black,font=\tiny] at (11.1,2.5) {$1$};
				\node[right,red!70!black,font=\tiny] at (11.65,2.5) {$4$};
				\node[left,red!70!black,font=\tiny] at (11.1,2) {$4$};
				\node[right,red!70!black,font=\tiny] at (11.65,2) {$1$};
				\node[left,red!70!black,font=\tiny] at (12.45,2.7) {$\overline{1}$};
				\node[left,red!70!black,font=\tiny] at (12.45,2.4) {$2$};
				\node[left,red!70!black,font=\tiny] at (12.45,2.1) {$3$};
				\node[left,red!70!black,font=\tiny] at (12.45,1.8) {$4$};
				
				\draw (13.9,2.75)--(14.9,1.75);
				\draw (14.9,2.75)--(13.9,1.75);
				\draw (13.5,2.75)--(13.5,1.75);
				\draw[->] (14.1,2.55)--(14.05,2.6);
				\draw[->] (14.7,2.55)--(14.75,2.6);
				\draw[->] (14.1,1.95)--(14.15,2);
				\draw[->] (14.7,1.95)--(14.65,2);
				\node[left,red!70!black,font=\tiny] at (14.1,2.5) {$\overline{4}$};
				\node[right,red!70!black,font=\tiny] at (14.65,2.5) {$\overline{1}$};
				\node[left,red!70!black,font=\tiny] at (14.1,2) {$\overline{1}$};
				\node[right,red!70!black,font=\tiny] at (14.65,2) {$\overline{4}$};
				\node[left,red!70!black,font=\tiny] at (13.55,2.7) {$1$};
				\node[left,red!70!black,font=\tiny] at (13.55,2.4) {$\overline{2}$};
				\node[left,red!70!black,font=\tiny] at (13.55,2.1) {$\overline{3}$};
				\node[left,red!70!black,font=\tiny] at (13.55,1.8) {$\overline{4}$};
				
				\draw (11.4,1.25)--(12.4,0.25);
				\draw (12.4,1.25)--(11.4,0.25);
				\draw (11,1.25)--(11,0.25);
				\draw[->] (11.6,1.05)--(11.65,1);
				\draw[->] (12.2,1.05)--(12.15,1);
				\draw[->] (11.6,0.45)--(11.55,0.4);
				\draw[->] (12.2,0.45)--(12.25,0.4);
				\node[left,red!70!black,font=\tiny] at (11.6,1) {$1$};
				\node[right,red!70!black,font=\tiny] at (12.15,1) {$4$};
				\node[left,red!70!black,font=\tiny] at (11.6,0.5) {$4$};
				\node[right,red!70!black,font=\tiny] at (12.15,0.5) {$1$};
				\node[left,red!70!black,font=\tiny] at (11.05,1.2) {$\overline{4}$};
				\node[left,red!70!black,font=\tiny] at (11.05,0.9) {$3$};
				\node[left,red!70!black,font=\tiny] at (11.05,0.6) {$2$};
				\node[left,red!70!black,font=\tiny] at (11.05,0.3) {$1$};
				
				\draw (13.4,1.25)--(14.4,0.25);
				\draw (14.4,1.25)--(13.4,0.25);
				\draw (14.9,1.25)--(14.9,0.25);
				\draw[->] (13.6,1.05)--(13.55,1.1);
				\draw[->] (14.2,1.05)--(14.25,1.1);
				\draw[->] (13.6,0.45)--(13.65,0.5);
				\draw[->] (14.2,0.45)--(14.15,0.5);
				\node[left,red!70!black,font=\tiny] at (13.6,1) {$\overline{4}$};
				\node[right,red!70!black,font=\tiny] at (14.15,1) {$\overline{1}$};
				\node[left,red!70!black,font=\tiny] at (13.6,0.5) {$\overline{1}$};
				\node[right,red!70!black,font=\tiny] at (14.15,0.5) {$\overline{4}$};
				\node[left,red!70!black,font=\tiny] at (14.95,1.2) {$4$};
				\node[left,red!70!black,font=\tiny] at (14.95,0.9) {$\overline{3}$};
				\node[left,red!70!black,font=\tiny] at (14.95,0.6) {$\overline{2}$};
				\node[left,red!70!black,font=\tiny] at (14.95,0.3) {$\overline{1}$};
			\end{scope}
	\end{tikzpicture}}
	\\\vspace{0.1cm}
	\scalemath{0.8}{
		\begin{tikzpicture}[>=stealth]
			\centering
			\begin{scope}%8
				\draw[green!50!black, very thick] (0,0) rectangle (5,3); %2
				\draw[thick] (0,1.5)--(5,1.5);
				\draw[thick] (2.5,0)--(2.5,3);
				
				\draw (0.5,2.75)--(1.5,1.75);
				\draw (1.5,2.75)--(0.5,1.75);
				\draw (2,2.75)--(2,1.75);
				\draw[->] (0.7,2.55)--(0.75,2.5);
				\draw[->] (1.3,2.55)--(1.25,2.5);
				\draw[->] (0.7,1.95)--(0.75,2);
				\draw[->] (1.3,1.95)--(1.25,2);
				\node[left,red!70!black,font=\tiny] at (0.7,2.5) {$1$};
				\node[right,red!70!black,font=\tiny] at (1.25,2.5) {$2$};
				\node[left,red!70!black,font=\tiny] at (0.7,2) {$\overline{3}$};
				\node[right,red!70!black,font=\tiny] at (1.25,2) {$\overline{4}$};
				\node[left,red!70!black,font=\tiny] at (2.05,2.4) {$\overline{3}$};
				\node[left,red!70!black,font=\tiny] at (2.05,2.1) {$4$};
				
				\draw (3.5,2.75)--(4.5,1.75);
				\draw (4.5,2.75)--(3.5,1.75);
				\draw (3.1,2.75)--(3.1,1.75);
				\draw[->] (3.7,2.55)--(3.65,2.6);
				\draw[->] (4.3,2.55)--(4.35,2.6);
				\draw[->] (3.7,1.95)--(3.65,1.9);
				\draw[->] (4.3,1.95)--(4.35,1.9);
				\node[left,red!70!black,font=\tiny] at (3.7,2.5) {$\overline{2}$};
				\node[right,red!70!black,font=\tiny] at (4.25,2.5) {$\overline{1}$};
				\node[left,red!70!black,font=\tiny] at (3.7,2) {$4$};
				\node[right,red!70!black,font=\tiny] at (4.25,2) {$3$};
				\node[left,red!70!black,font=\tiny] at (3.15,2.4) {$3$};
				\node[left,red!70!black,font=\tiny] at (3.15,2.1) {$\overline{4}$};
				
				\draw (1,1.25)--(2,0.25);
				\draw (2,1.25)--(1,0.25);
				\draw (0.6,1.25)--(0.6,0.25);
				\draw[->] (1.2,1.05)--(1.25,1);
				\draw[->] (1.8,1.05)--(1.75,1);
				\draw[->] (1.2,0.45)--(1.25,0.5);
				\draw[->] (1.8,0.45)--(1.75,0.5);
				\node[left,red!70!black,font=\tiny] at (1.2,1) {$3$};
				\node[right,red!70!black,font=\tiny] at (1.75,1) {$4$};
				\node[left,red!70!black,font=\tiny] at (1.2,0.5) {$\overline{1}$};
				\node[right,red!70!black,font=\tiny] at (1.75,0.5) {$\overline{2}$};
				\node[left,red!70!black,font=\tiny] at (0.65,0.9) {$\overline{2}$};
				\node[left,red!70!black,font=\tiny] at (0.65,0.6) {$1$};
				
				\draw (3,1.25)--(4,0.25);
				\draw (4,1.25)--(3,0.25);
				\draw (4.5,1.25)--(4.5,0.25);
				\draw[->] (3.2,1.05)--(3.15,1.1);
				\draw[->] (3.8,1.05)--(3.85,1.1);
				\draw[->] (3.2,0.45)--(3.15,0.4);
				\draw[->] (3.8,0.45)--(3.85,0.4);
				\node[left,red!70!black,font=\tiny] at (3.2,1) {$\overline{4}$};
				\node[right,red!70!black,font=\tiny] at (3.75,1) {$\overline{3}$};
				\node[left,red!70!black,font=\tiny] at (3.2,0.5) {$2$};
				\node[right,red!70!black,font=\tiny] at (3.75,0.5) {$1$};
				\node[left,red!70!black,font=\tiny] at (4.55,0.9) {$2$};
				\node[left,red!70!black,font=\tiny] at (4.55,0.6) {$\overline{1}$};
			\end{scope}
			\begin{scope}%9
				\draw[green!50!black, very thick] (5.2,0) rectangle (10.2,3); %2
				\draw[thick] (5.2,1.5)--(10.2,1.5);
				\draw[thick] (7.7,0)--(7.7,3);
				
				\draw (5.7,2.75)--(6.7,1.75);
				\draw (6.7,2.75)--(5.7,1.75);
				\draw (7.2,2.75)--(7.2,1.75);
				\draw[->] (5.9,2.55)--(5.95,2.5);
				\draw[->] (6.5,2.55)--(6.45,2.5);
				\draw[->] (5.9,1.95)--(5.95,2);
				\draw[->] (6.5,1.95)--(6.45,2);
				\node[left,red!70!black,font=\tiny] at (5.9,2.5) {$1$};
				\node[right,red!70!black,font=\tiny] at (6.45,2.5) {$3$};
				\node[left,red!70!black,font=\tiny] at (5.9,2) {$\overline{2}$};
				\node[right,red!70!black,font=\tiny] at (6.45,2) {$\overline{4}$};
				\node[left,red!70!black,font=\tiny] at (7.25,2.55) {$\overline{2}$};
				\node[left,red!70!black,font=\tiny] at (7.25,2.25) {$\overline{3}$};
				\node[left,red!70!black,font=\tiny] at (7.25,1.95) {$4$};
				
				\draw (8.7,2.75)--(9.7,1.75);
				\draw (9.7,2.75)--(8.7,1.75);
				\draw (8.3,2.75)--(8.3,1.75);
				\draw[->] (8.9,2.55)--(8.85,2.6);
				\draw[->] (9.5,2.55)--(9.55,2.6);
				\draw[->] (8.9,1.95)--(8.85,1.9);
				\draw[->] (9.5,1.95)--(9.55,1.9);
				\node[left,red!70!black,font=\tiny] at (8.9,2.5) {$\overline{3}$};
				\node[right,red!70!black,font=\tiny] at (9.45,2.5) {$\overline{1}$};
				\node[left,red!70!black,font=\tiny] at (8.9,2) {$4$};
				\node[right,red!70!black,font=\tiny] at (9.45,2) {$2$};
				\node[left,red!70!black,font=\tiny] at (8.35,2.55) {$2$};
				\node[left,red!70!black,font=\tiny] at (8.35,2.25) {$3$};
				\node[left,red!70!black,font=\tiny] at (8.35,1.95) {$\overline{4}$};
				
				\draw (6.2,1.25)--(7.2,0.25);
				\draw (7.2,1.25)--(6.2,0.25);
				\draw (5.8,1.25)--(5.8,0.25);
				\draw[->] (6.4,1.05)--(6.45,1);
				\draw[->] (7,1.05)--(6.95,1);
				\draw[->] (6.4,0.45)--(6.45,0.5);
				\draw[->] (7,0.45)--(6.95,0.5);
				\node[left,red!70!black,font=\tiny] at (6.4,1) {$2$};
				\node[right,red!70!black,font=\tiny] at (6.95,1) {$4$};
				\node[left,red!70!black,font=\tiny] at (6.4,0.5) {$\overline{1}$};
				\node[right,red!70!black,font=\tiny] at (6.95,0.5) {$\overline{3}$};
				\node[left,red!70!black,font=\tiny] at (5.85,1.05) {$\overline{3}$};
				\node[left,red!70!black,font=\tiny] at (5.85,0.75) {$\overline{2}$};
				\node[left,red!70!black,font=\tiny] at (5.85,0.45) {$1$};
				
				\draw (8.2,1.25)--(9.2,0.25);
				\draw (9.2,1.25)--(8.2,0.25);
				\draw (9.7,1.25)--(9.7,0.25);
				\draw[->] (8.4,1.05)--(8.35,1.1);
				\draw[->] (9,1.05)--(9.05,1.1);
				\draw[->] (8.4,0.45)--(8.35,0.4);
				\draw[->] (9,0.45)--(9.05,0.4);
				\node[left,red!70!black,font=\tiny] at (8.4,1) {$\overline{4}$};
				\node[right,red!70!black,font=\tiny] at (8.95,1) {$\overline{2}$};
				\node[left,red!70!black,font=\tiny] at (8.4,0.5) {$3$};
				\node[right,red!70!black,font=\tiny] at (8.95,0.5) {$1$};
				\node[left,red!70!black,font=\tiny] at (9.75,1.05) {$3$};
				\node[left,red!70!black,font=\tiny] at (9.75,0.75) {$2$};
				\node[left,red!70!black,font=\tiny] at (9.75,0.45) {$\overline{1}$};
			\end{scope}
			\begin{scope}%10
				\draw[green!50!black, very thick] (10.4,0) rectangle (15.4,3); %2
				\draw[thick] (10.4,1.5)--(15.4,1.5);
				\draw[thick] (12.9,0)--(12.9,3);
				
				\draw (10.9,2.75)--(11.9,1.75);
				\draw (11.9,2.75)--(10.9,1.75);
				\draw (12.4,2.75)--(12.4,1.75);
				\draw[->] (11.1,2.55)--(11.15,2.5);
				\draw[->] (11.7,2.55)--(11.65,2.5);
				\draw[->] (11.1,1.95)--(11.15,2);
				\draw[->] (11.7,1.95)--(11.65,2);
				\node[left,red!70!black,font=\tiny] at (11.1,2.5) {$2$};
				\node[right,red!70!black,font=\tiny] at (11.65,2.5) {$3$};
				\node[left,red!70!black,font=\tiny] at (11.1,2) {$\overline{1}$};
				\node[right,red!70!black,font=\tiny] at (11.65,2) {$\overline{4}$};
				\node[left,red!70!black,font=\tiny] at (12.45,2.7) {$\overline{1}$};
				\node[left,red!70!black,font=\tiny] at (12.45,2.4) {$\overline{2}$};
				\node[left,red!70!black,font=\tiny] at (12.45,2.1) {$\overline{3}$};
				\node[left,red!70!black,font=\tiny] at (12.45,1.8) {$4$};
				
				\draw (13.9,2.75)--(14.9,1.75);
				\draw (14.9,2.75)--(13.9,1.75);
				\draw (13.5,2.75)--(13.5,1.75);
				\draw[->] (14.1,2.55)--(14.05,2.6);
				\draw[->] (14.7,2.55)--(14.75,2.6);
				\draw[->] (14.1,1.95)--(14.05,1.9);
				\draw[->] (14.7,1.95)--(14.75,1.9);
				\node[left,red!70!black,font=\tiny] at (14.1,2.5) {$\overline{3}$};
				\node[right,red!70!black,font=\tiny] at (14.65,2.5) {$\overline{2}$};
				\node[left,red!70!black,font=\tiny] at (14.1,2) {$4$};
				\node[right,red!70!black,font=\tiny] at (14.65,2) {$1$};
				\node[left,red!70!black,font=\tiny] at (13.55,2.7) {$1$};
				\node[left,red!70!black,font=\tiny] at (13.55,2.4) {$2$};
				\node[left,red!70!black,font=\tiny] at (13.55,2.1) {$3$};
				\node[left,red!70!black,font=\tiny] at (13.55,1.8) {$\overline{4}$};
				
				\draw (11.4,1.25)--(12.4,0.25);
				\draw (12.4,1.25)--(11.4,0.25);
				\draw (11,1.25)--(11,0.25);
				\draw[->] (11.6,1.05)--(11.65,1);
				\draw[->] (12.2,1.05)--(12.15,1);
				\draw[->] (11.6,0.45)--(11.65,0.5);
				\draw[->] (12.2,0.45)--(12.15,0.5);
				\node[left,red!70!black,font=\tiny] at (11.6,1) {$2$};
				\node[right,red!70!black,font=\tiny] at (12.15,1) {$3$};
				\node[left,red!70!black,font=\tiny] at (11.6,0.5) {$\overline{1}$};
				\node[right,red!70!black,font=\tiny] at (12.15,0.5) {$\overline{4}$};
				\node[left,red!70!black,font=\tiny] at (11.05,1.2) {$\overline{4}$};
				\node[left,red!70!black,font=\tiny] at (11.05,0.9) {$\overline{3}$};
				\node[left,red!70!black,font=\tiny] at (11.05,0.6) {$\overline{2}$};
				\node[left,red!70!black,font=\tiny] at (11.05,0.3) {$1$};
				
				\draw (13.4,1.25)--(14.4,0.25);
				\draw (14.4,1.25)--(13.4,0.25);
				\draw (14.9,1.25)--(14.9,0.25);
				\draw[->] (13.6,1.05)--(13.55,1.1);
				\draw[->] (14.2,1.05)--(14.25,1.1);
				\draw[->] (13.6,0.45)--(13.55,0.4);
				\draw[->] (14.2,0.45)--(14.25,0.4);
				\node[left,red!70!black,font=\tiny] at (13.6,1) {$\overline{3}$};
				\node[right,red!70!black,font=\tiny] at (14.15,1) {$\overline{2}$};
				\node[left,red!70!black,font=\tiny] at (13.6,0.5) {$4$};
				\node[right,red!70!black,font=\tiny] at (14.15,0.5) {$1$};
				\node[left,red!70!black,font=\tiny] at (14.95,1.2) {$4$};
				\node[left,red!70!black,font=\tiny] at (14.95,0.9) {$3$};
				\node[left,red!70!black,font=\tiny] at (14.95,0.6) {$2$};
				\node[left,red!70!black,font=\tiny] at (14.95,0.3) {$\overline{1}$};			\end{scope}
	\end{tikzpicture}}
	\\\vspace{0.1cm}
	\scalemath{0.8}{
		\begin{tikzpicture}[>=stealth]
			\centering
			\begin{scope}%1
				\draw[red!80!black, very thick] (0,0) rectangle (7.6,3); %1
				\draw[thick] (0,1.5)--(7.6,1.5);
				\draw[thick] (3.8,0)--(3.8,3);
				
				\draw (0.5,2.75)--(1.5,1.75);
				\draw (1.5,2.75)--(0.5,1.75);
				\draw (2.25,2.75)--(2.25,1.75);
				\draw (3.3,2.75)--(3.3,1.75);
				\draw[->] (0.7,2.55)--(0.75,2.5);
				\draw[->] (1.3,2.55)--(1.25,2.5);
				\draw[->] (0.7,1.95)--(0.65,1.9);
				\draw[->] (1.3,1.95)--(1.35,1.9);
				\node[left,red!70!black,font=\tiny] at (0.7,2.5) {$1$};
				\node[right,red!70!black,font=\tiny] at (1.25,2.5) {$4$};
				\node[left,red!70!black,font=\tiny] at (0.7,2) {$4$};
				\node[right,red!70!black,font=\tiny] at (1.25,2) {$1$};
				\node at (1.8,2.25) {$\Biggl($};
				\node[left,red!70!black,font=\tiny] at (2.3,2.4) {$\overline{2}$};
				\node[left,red!70!black,font=\tiny] at (2.3,2.1) {$\overline{3}$};
				\node at (2.55,2.25) {$\Biggr)$};
				\node at (2.7,2.7) {$\star$};
				\node[left,red!70!black,font=\tiny] at (3.35,2.7) {$\overline{1}$};
				\node[left,red!70!black,font=\tiny] at (3.35,2.4) {$2$};
				\node[left,red!70!black,font=\tiny] at (3.35,2.1) {$3$};
				\node[left,red!70!black,font=\tiny] at (3.35,1.8) {$4$};
				
				\draw (6.1,2.75)--(7.1,1.75);
				\draw (7.1,2.75)--(6.1,1.75);
				\draw (5.25,2.75)--(5.25,1.75);
				\draw (4.4,2.75)--(4.4,1.75);
				\draw[->] (6.3,2.55)--(6.25,2.6);
				\draw[->] (6.9,2.55)--(6.95,2.6);
				\draw[->] (6.3,1.95)--(6.35,2);
				\draw[->] (6.9,1.95)--(6.85,2);
				\node[left,red!70!black,font=\tiny] at (6.3,2.5) {$\overline{4}$};
				\node[right,red!70!black,font=\tiny] at (6.85,2.5) {$\overline{1}$};
				\node[left,red!70!black,font=\tiny] at (6.3,2) {$\overline{1}$};
				\node[right,red!70!black,font=\tiny] at (6.85,2) {$\overline{4}$};
				\node at (4.8,2.25) {$\Biggl($};
				\node[left,red!70!black,font=\tiny] at (5.3,2.4) {$2$};
				\node[left,red!70!black,font=\tiny] at (5.3,2.1) {$3$};
				\node at (5.55,2.25) {$\Biggr)$};
				\node at (5.7,2.7) {$\star$};
				\node[left,red!70!black,font=\tiny] at (4.45,2.7) {$1$};
				\node[left,red!70!black,font=\tiny] at (4.45,2.4) {$\overline{2}$};
				\node[left,red!70!black,font=\tiny] at (4.45,2.1) {$\overline{3}$};
				\node[left,red!70!black,font=\tiny] at (4.45,1.8) {$\overline{4}$};
				
				\draw (2.3,1.25)--(3.3,0.25);
				\draw (3.3,1.25)--(2.3,0.25);
				\draw (1.45,1.25)--(1.45,0.25);
				\draw (0.6,1.25)--(0.6,0.25);
				\draw[->] (2.5,1.05)--(2.55,1);
				\draw[->] (3.1,1.05)--(3.05,1);
				\draw[->] (2.5,0.45)--(2.45,0.4);
				\draw[->] (3.1,0.45)--(3.15,0.4);
				\node[left,red!70!black,font=\tiny] at (2.5,1.0) {$1$};
				\node[right,red!70!black,font=\tiny] at (3.05,1.0) {$4$};
				\node[left,red!70!black,font=\tiny] at (2.5,0.5) {$4$};
				\node[right,red!70!black,font=\tiny] at (3.05,0.5) {$1$};
				\node at (1,0.75) {$\Biggl($};
				\node[left,red!70!black,font=\tiny] at (1.5,0.9) {$\overline{3}$};
				\node[left,red!70!black,font=\tiny] at (1.5,0.6) {$\overline{2}$};
				\node at (1.75,0.75) {$\Biggr)$};
				\node at (1.9,1.2) {$\star$};
				\node[left,red!70!black,font=\tiny] at (0.65,1.2) {$\overline{4}$};
				\node[left,red!70!black,font=\tiny] at (0.65,0.9) {$3$};
				\node[left,red!70!black,font=\tiny] at (0.65,0.6) {$2$};
				\node[left,red!70!black,font=\tiny] at (0.65,0.3) {$1$};
				
				\draw (4.3,1.25)--(5.3,0.25);
				\draw (5.3,1.25)--(4.3,0.25);
				\draw (6.05,1.25)--(6.05,0.25);
				\draw (7.1,1.25)--(7.1,0.25);
				\draw[->] (4.5,1.05)--(4.45,1.1);
				\draw[->] (5.1,1.05)--(5.15,1.1);
				\draw[->] (4.5,0.45)--(4.55,0.5);
				\draw[->] (5.1,0.45)--(5.05,0.5);
				\node[left,red!70!black,font=\tiny] at (4.5,1) {$\overline{4}$};
				\node[right,red!70!black,font=\tiny] at (5.05,1) {$\overline{1}$};
				\node[left,red!70!black,font=\tiny] at (4.5,0.5) {$\overline{1}$};
				\node[right,red!70!black,font=\tiny] at (5.05,0.5) {$\overline{4}$};
				\node at (5.6,0.75) {$\Biggl($};
				\node[left,red!70!black,font=\tiny] at (6.1,0.9) {$3$};
				\node[left,red!70!black,font=\tiny] at (6.1,0.6) {$2$};
				\node at (6.35,0.75) {$\Biggr)$};
				\node at (6.5,1.2) {$\star$};
				\node[left,red!70!black,font=\tiny] at (7.15,1.2) {$4$};
				\node[left,red!70!black,font=\tiny] at (7.15,0.9) {$\overline{3}$};
				\node[left,red!70!black,font=\tiny] at (7.15,0.6) {$\overline{2}$};
				\node[left,red!70!black,font=\tiny] at (7.15,0.3) {$\overline{1}$};		
			\end{scope}
			\begin{scope}%2
				\draw[red!80!black, very thick] (7.8,0) rectangle (15.4,3); %2
				\draw[thick] (7.8,1.5)--(15.4,1.5);
				\draw[thick] (11.6,0)--(11.6,3);
				
				\draw (8.3,2.75)--(9.3,1.75);
				\draw (9.3,2.75)--(8.3,1.75);
				\draw (10.05,2.75)--(10.05,1.75);
				\draw (11.1,2.75)--(11.1,1.75);
				\draw[->] (8.5,2.55)--(8.55,2.5);
				\draw[->] (9.1,2.55)--(9.05,2.5);
				\draw[->] (8.5,1.95)--(8.55,2);
				\draw[->] (9.1,1.95)--(9.05,2);
				\node[left,red!70!black,font=\tiny] at (8.5,2.5) {$2$};
				\node[right,red!70!black,font=\tiny] at (9.05,2.5) {$3$};
				\node[left,red!70!black,font=\tiny] at (8.5,2) {$\overline{1}$};
				\node[right,red!70!black,font=\tiny] at (9.05,2) {$\overline{4}$};
				\node at (9.6,2.25) {$\Biggl($};
				\node[left,red!70!black,font=\tiny] at (10.1,2.4) {$2$};
				\node[left,red!70!black,font=\tiny] at (10.1,2.1) {$3$};
				\node at (10.35,2.25) {$\Biggr)$};
				\node at (10.5,2.7) {$\star$};
				\node[left,red!70!black,font=\tiny] at (11.15,2.7) {$\overline{1}$};
				\node[left,red!70!black,font=\tiny] at (11.15,2.4) {$\overline{2}$};
				\node[left,red!70!black,font=\tiny] at (11.15,2.1) {$\overline{3}$};
				\node[left,red!70!black,font=\tiny] at (11.15,1.8) {$4$};
				
				\draw (13.9,2.75)--(14.9,1.75);
				\draw (14.9,2.75)--(13.9,1.75);
				\draw (13.05,2.75)--(13.05,1.75);
				\draw (12.2,2.75)--(12.2,1.75);
				\draw[->] (14.1,2.55)--(14.05,2.6);
				\draw[->] (14.7,2.55)--(14.75,2.6);
				\draw[->] (14.1,1.95)--(14.05,1.9);
				\draw[->] (14.7,1.95)--(14.75,1.9);
				\node[left,red!70!black,font=\tiny] at (14.1,2.5) {$\overline{3}$};
				\node[right,red!70!black,font=\tiny] at (14.65,2.5) {$\overline{2}$};
				\node[left,red!70!black,font=\tiny] at (14.1,2) {$4$};
				\node[right,red!70!black,font=\tiny] at (14.65,2) {$1$};
				\node at (12.6,2.25) {$\Biggl($};
				\node[left,red!70!black,font=\tiny] at (13.1,2.4) {$\overline{2}$};
				\node[left,red!70!black,font=\tiny] at (13.1,2.1) {$\overline{3}$};
				\node at (13.35,2.25) {$\Biggr)$};
				\node at (13.5,2.7) {$\star$};
				\node[left,red!70!black,font=\tiny] at (12.25,2.7) {$1$};
				\node[left,red!70!black,font=\tiny] at (12.25,2.4) {$2$};
				\node[left,red!70!black,font=\tiny] at (12.25,2.1) {$3$};
				\node[left,red!70!black,font=\tiny] at (12.25,1.8) {$\overline{4}$};
				
				\draw (10.1,1.25)--(11.1,0.25);
				\draw (11.1,1.25)--(10.1,0.25);
				\draw (9.25,1.25)--(9.25,0.25);
				\draw (8.4,1.25)--(8.4,0.25);
				\draw[->] (10.3,1.05)--(10.35,1);
				\draw[->] (10.9,1.05)--(10.85,1);
				\draw[->] (10.3,0.45)--(10.35,0.5);
				\draw[->] (10.9,0.45)--(10.85,0.5);
				\node[left,red!70!black,font=\tiny] at (10.3,1.0) {$2$};
				\node[right,red!70!black,font=\tiny] at (10.85,1.0) {$3$};
				\node[left,red!70!black,font=\tiny] at (10.3,0.5) {$\overline{1}$};
				\node[right,red!70!black,font=\tiny] at (10.85,0.5) {$\overline{4}$};
				\node at (8.8,0.75) {$\Biggl($};
				\node[left,red!70!black,font=\tiny] at (9.3,0.9) {$3$};
				\node[left,red!70!black,font=\tiny] at (9.3,0.6) {$2$};
				\node at (9.55,0.75) {$\Biggr)$};
				\node at (9.7,1.2) {$\star$};
				\node[left,red!70!black,font=\tiny] at (8.45,1.2) {$\overline{4}$};
				\node[left,red!70!black,font=\tiny] at (8.45,0.9) {$\overline{3}$};
				\node[left,red!70!black,font=\tiny] at (8.45,0.6) {$\overline{2}$};
				\node[left,red!70!black,font=\tiny] at (8.45,0.3) {$1$};
				
				\draw (12.1,1.25)--(13.1,0.25);
				\draw (13.1,1.25)--(12.1,0.25);
				\draw (13.85,1.25)--(13.85,0.25);
				\draw (14.9,1.25)--(14.9,0.25);
				\draw[->] (12.3,1.05)--(12.25,1.1);
				\draw[->] (12.9,1.05)--(12.95,1.1);
				\draw[->] (12.3,0.45)--(12.25,0.4);
				\draw[->] (12.9,0.45)--(12.95,0.4);
				\node[left,red!70!black,font=\tiny] at (12.3,1) {$\overline{3}$};
				\node[right,red!70!black,font=\tiny] at (12.85,1) {$\overline{2}$};
				\node[left,red!70!black,font=\tiny] at (12.3,0.5) {$4$};
				\node[right,red!70!black,font=\tiny] at (12.85,0.5) {$1$};
				\node at (13.4,0.75) {$\Biggl($};
				\node[left,red!70!black,font=\tiny] at (13.9,0.9) {$\overline{3}$};
				\node[left,red!70!black,font=\tiny] at (13.9,0.6) {$\overline{2}$};
				\node at (14.15,0.75) {$\Biggr)$};
				\node at (14.3,1.2) {$\star$};
				\node[left,red!70!black,font=\tiny] at (14.95,1.2) {$4$};
				\node[left,red!70!black,font=\tiny] at (14.95,0.9) {$3$};
				\node[left,red!70!black,font=\tiny] at (14.95,0.6) {$2$};
				\node[left,red!70!black,font=\tiny] at (14.95,0.3) {$\overline{1}$};
			\end{scope}
	\end{tikzpicture}}
	\caption{The growth rules in Algorithm \ref{alg:growth}. All of the 88 short rules are boxed in green and classified to 10 families. The vertical lines beside the crossing arrows are called \textit{witness} which should labeled by one of the presented letters. Two families of long rules are boxed in red, the $\star$ in the long rules means that there can be any number of witnesses enclosed in parentheses.}
	\label{fig:growthrule}
\end{figure}
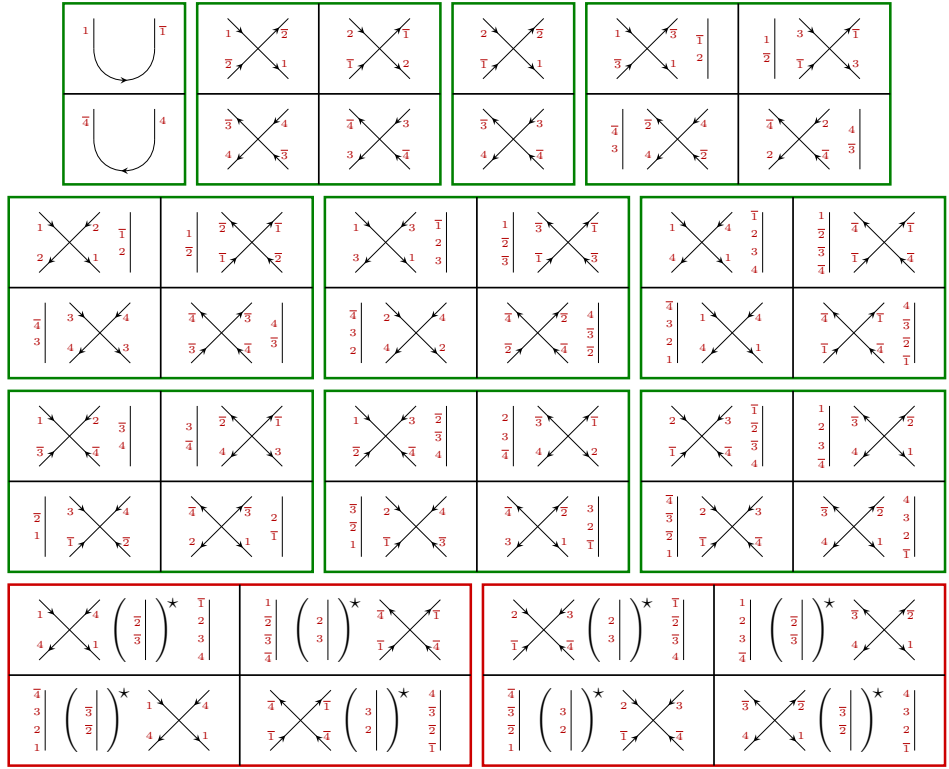

\begin{Algorithm}\label{alg:growth}
	\cite[Algorithm 5.1]{gaetz2023rotation} For $T \in \operatorname{SYT}(4\times b)$, $w=L(T)$.
	\begin{enumerate}[label=(\arabic*), topsep=0pt]
		\item Draw a horizontal line as the boundary edge. For each letter in $w = L(T)$, place a boundary vertex on the horizontal line and hang a dangling strand labeled with the letter itself. If a letter is replaced by a multiset, split it by treating each element in the multiset as an individual letter.
		
		\item Apply the growth rules given in \cite[Figure 22]{gaetz2023rotation} in Figure \ref{fig:growthrule}, until	there are no dangling strands. When applying the growth rules, letters 1, 2, 3, 4 represent a downward arrow, and the letters $\overline{1}$, $\overline{2}$, $\overline{3}$, $\overline{4}$ represent an upward arrow, more information can be found in \cite{gaetz2023rotation} and \cite{patrias2019promotion}.
		
		\item Pinch the endpoints of a boundary edge and convert the resulting intersection point into an hourglass edge or an internal vertex according to the orientation of edges in Figure \ref{fig:6verToHg}, thereby obtaining an hourglass plabic graph. If the resulting hourglass plabic graph contains boundary vertices obtained via multiset splitting in step 1, contract them into a single vertex; if multiple edges arise during contraction, represent them as hourglass edges.
	\end{enumerate}
\end{Algorithm}

\begin{figure}[H]
	\centering
	\begin{minipage}[h]{0.3\textwidth}
		\centering
		\begin{tikzpicture}[rotate=90, >=stealth]
			\draw (0.5,1)--(-0.5,0);
			\draw (-0.5,1)--(0.5,0);
			\draw[->] (0.3,0.8)--(0.25,0.75);
			\draw[->] (0.3,0.2)--(0.25,0.25);
			\draw[->] (-0.3,0.8)--(-0.25,0.75);
			\draw[->] (-0.3,0.2)--(-0.25,0.25);
			\draw[<->,thick] (0,-0.1)--(0,-0.7);
			\draw (0.5,-0.8)--(-0.5,-1.8);
			\draw (0.5,-1.8)--(-0.5,-0.8);
			\fill[white] (0,-1.3) circle (2pt);
			\draw (0,-1.3) circle (2pt);
		\end{tikzpicture}
	\end{minipage}
	\begin{minipage}[h]{0.3\textwidth}
		\centering
		\begin{tikzpicture}[rotate=90, >=stealth]
			\draw (0.5,1)--(-0.5,0);
			\draw (-0.5,1)--(0.5,0);
			\draw[->] (0.3,0.8)--(0.35,0.85);
			\draw[->] (0.3,0.2)--(0.35,0.15);
			\draw[->] (-0.3,0.8)--(-0.35,0.85);
			\draw[->] (-0.3,0.2)--(-0.35,0.15);
			\draw[<->,thick] (0,-0.1)--(0,-0.7);
			\draw (0.5,-0.8)--(-0.5,-1.8);
			\draw (0.5,-1.8)--(-0.5,-0.8);
			\fill (0,-1.3) circle (2pt);
		\end{tikzpicture}
	\end{minipage}
	\begin{minipage}[h]{0.3\textwidth}
		\centering
		\begin{tikzpicture}[rotate=90, >=stealth]
			\draw (0.5,1)--(-0.5,0);
			\draw (-0.5,1)--(0.5,0);
			\draw[->] (0.3,0.8)--(0.35,0.85);
			\draw[->] (0.3,0.2)--(0.35,0.15);
			\draw[->] (-0.3,0.8)--(-0.25,0.75);
			\draw[->] (-0.3,0.2)--(-0.25,0.25);
			\draw[<->,thick] (0,-0.1)--(0,-0.7);
			\draw (0.7,-0.8)--(0.3,-1.3);
			\draw (0.7,-1.8)--(0.3,-1.3);
			\draw (-0.7,-0.8)--(-0.3,-1.3);
			\draw (-0.7,-1.8)--(-0.3,-1.3);
			\coordinate (A) at (-0.3,-1.3);
			\coordinate (a) at (0.3,-1.3);
			\hg{A}{a};
			\fill (a) circle (2pt);
			\fill[white] (A) circle (2pt);
			\draw (A) circle (2pt);
		\end{tikzpicture}
	\end{minipage}
	\caption{The three distinct vertex configurations and their corresponding local realizations in hourglass plabic graphs.}
	\label{fig:6verToHg}
\end{figure}
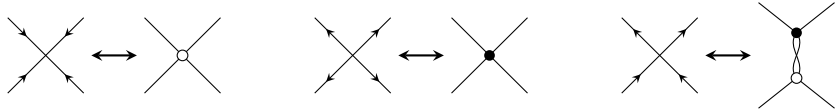

We keep using $T\in \operatorname{SYT}(a\times b)$ from Figure \ref{fig:PTET} as the example to generate its corresponding hourglass plabic graph. For convenience, we represent the growth rules in Figure \ref{fig:growthrule} by a map that maps the letters from top to bottom.

For $T=\scalemath{0.6}{\;{\ytableausetup{centertableaux}\ytableaushort{14,25,37,68}}}$ , $w=L(T)=12312434$, the graph on the left of Figure \ref{fig:exagrowth} is obtained using the growth rules, while the right shows the hourglass plabic graph produced by the entire growth algorithm.

\begin{figure}[h]
	\centering
	\begin{minipage}{0.69\textwidth}
		\centering
		\begin{tikzpicture}[>=stealth]
			\draw[thick] (-0.4,0)--(6,0);
			\foreach \k in {1,...,8} {
				\coordinate (\k) at (-0.8+0.8*\k,0);
				\fill (\k) circle (2pt);
				\draw (\k)--(-0.8+0.8*\k,-0.5);
				\node[font=\tiny, above] at (-0.8+0.8*\k,0) {\k};
			}
			\node[font=\tiny, left] at (0.05,-0.35) {1};
			\node[font=\tiny, left] at (0.85,-0.35) {2};
			\node[font=\tiny, left] at (1.65,-0.35) {3};
			\node[font=\tiny, left] at (2.45,-0.35) {1};
			\node[font=\tiny, left] at (3.25,-0.35) {2};
			\node[font=\tiny, left] at (4.05,-0.35) {4};
			\node[font=\tiny, left] at (4.85,-0.35) {3};
			\node[font=\tiny, left] at (5.65,-0.35) {4};
			
			\draw (0,-0.5)--(0,-1);
			\draw (0.8,-0.5)--(1.6,-1);
			\draw (1.6,-0.5)--(0.8,-1);
			\draw (2.4,-0.5)--(2.4,-1);
			\draw (3.2,-0.5)--(4,-1);
			\draw (4,-0.5)--(3.2,-1);
			\draw (4.8,-0.5)--(4.8,-1);
			\draw (5.6,-0.5)--(5.6,-1);
			
			\node[font=\tiny, left] at (0.05,-0.85) {1};
			\node[font=\tiny, left] at (0.9,-0.85) {$\overline{1}$};
			\node[font=\tiny, right] at (1.5,-0.85) {$\overline{4}$};
			\node[font=\tiny, left] at (2.45,-0.85) {1};
			\node[font=\tiny, left] at (3.3,-0.85) {$\overline{1}$};
			\node[font=\tiny, right] at (3.9,-0.85) {$\overline{3}$};
			\node[font=\tiny, left] at (4.85,-0.85) {3};
			\node[font=\tiny, left] at (5.65,-0.85) {4};
			
			\draw (0,-1)--(0,-1.2);
			\draw (0.8,-1)--(0.8,-1.2);
			\draw (1.6,-1)--(1.6,-1.5);
			\draw (2.4,-1)--(2.4,-1.2);
			\draw (3.2,-1)--(3.2,-1.2);
			\draw (4,-1)--(4.8,-1.5);
			\draw (4.8,-1)--(4,-1.5);
			\draw (5.6,-1)--(5.6,-1.5);
			
			\draw (0,-1.2)--(0.8,-1.2);
			\draw (2.4,-1.2)--(3.2,-1.2);
			\node[font=\tiny, left] at (1.65,-1.35) {$\overline{4}$};
			\node[font=\tiny, left] at (4.1,-1.35) {4};
			\node[font=\tiny, right] at (4.7,-1.35) {$\overline{4}$};
			\node[font=\tiny, left] at (5.65,-1.35) {4};
			
			\draw (1.6,-1.5)--(1.6,-1.7);
			\draw (4,-1.5)--(4,-1.7);
			\draw (4.8,-1.5)--(4.8,-1.7);
			\draw (5.6,-1.5)--(5.6,-1.7);
			
			\draw (1.6,-1.7)--(4,-1.7);
			\draw (4.8,-1.7)--(5.6,-1.7);

		\end{tikzpicture}
	\end{minipage}
	\begin{minipage}{0.29\textwidth}
		\GrDisk{
			\coordinate (A) at (45:0.5);
			\coordinate (B) at (-90:0.5);
			\coordinate (C) at (157.5:0.7);
			\coordinate (a) at (157.5:0.2);
			
			\draw (1)--(A); 	\draw (2)--(A);		\draw (3)--(A);		\draw (4)--(B);
			\draw (5)--(B);		\draw (6)--(B);		\draw (7)--(C);		\draw (8)--(C);
			
			\draw (a)--(A);		\draw (a)--(B);		\hg{a}{C};
			\foreach \p in {A,B,C} {
				\fill[white] (\p) circle (2pt);
				\draw (\p) circle (2pt);
			}
			\fill (a) circle (2pt);
		}
	\end{minipage}
	\caption{The graph obtained by growth rules (left) and the hourglass plabic graph (right) of $T$.}
	\label{fig:exagrowth}
\end{figure}
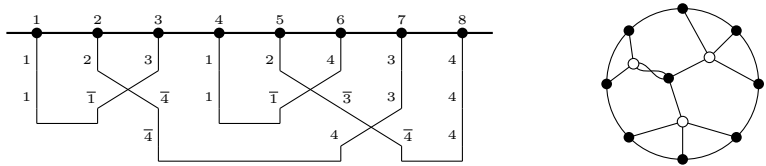

\subsection{Web diagrams of quadratic and cubic cluster variables in $\mathbb{C}[\Gr{4}{8}]$}\label{ssc:WinInv}
We compute the web diagrams of the 3 quadratic and 14 cubic cluster variables mentioned in Section \ref{ssc:PreGra}. Explicitly, we apply the growth algorithm to the Young tableaux corresponding to the cluster variables mentioned in \cite{chang_quantum_2020}.

In the Grassmannian cluster algebra $\mathbb{C}[\Gr{4}{8}]$, there are 120 quadratic cluster variables according to \cite{cheung_clustering_2022}, $\binom{8}{7}\times 14 =112$ of them can be obtained from $\mathbb{C}[\Gr{4}{7}]$, which are dihedral translates or isolated-vertex replacements of just two distinct cluster variables, whose boundary condition is $\lambda=(2,1,1,1,1,1,1,0)$, and the other 8 cluster variables with condition $\lambda=(1,1,1,1,1,1,1,1)$ are rotations obtained by someone of them. Therefore, it is necessary to discuss the 3 typical cluster variables, whose corresponding Young tableaux and web diagrams are listed below:
\begin{table}[h]
	\centering
	\setcellgapes{1mm}
	\makegapedcells
	\begin{tabular}{cccccc}
		\hline
		\makecell{tableau} & \makecell{web} & \makecell{tableau} & \makecell{web} & \makecell{tableau} & \makecell{web}\\
		\hline
		\makecell{\scalemath{0.9}{\ytableaushort{11,24,36,57}}} &  
		\makecell{
			\scalemath{0.8}{
				\GrDisk{
					\coordinate (A) at (45:0.5);
					\coordinate (B) at (-67.5:0.7);
					\coordinate (C) at (180:0.5);
					\coordinate (a) at (0:0);
					\draw (1)--(A);		\draw (1)--(C);		\draw (2)--(A);		\draw (3)--(A);		
					\draw (4)--(B);		\draw (5)--(B);		\draw (6)--(C);		\draw (7)--(C);
					\draw (a)--(A);		\draw (a)--(C);		\hg{B}{a};				
					\foreach \p in {A,B,C} {
						\fill[white] (\p) circle (2pt);
						\draw (\p) circle (2pt);}
					\fill (a) circle (2pt);}}} &
		\makecell{\scalemath{0.9}{\ytableaushort{11,23,45,67}}} &  
		\makecell{
			\scalemath{0.8}{
				\GrDisk{
					\coordinate (A) at (67.5:0.7);
					\coordinate (B) at (-22.5:0.7);
					\coordinate (C) at (-112.5:0.7);
					\coordinate (D) at (135:0.5);
					\coordinate (a) at (67.5:0.4);
					\coordinate (b) at (-112.5:0.4);
					\draw (1)--(A);		\draw (1)--(D);		\draw (2)--(A);		\draw (3)--(B);		
					\draw (4)--(B);		\draw (5)--(C);		\draw (6)--(C);		\draw (7)--(D);
					\draw (a)--(B);		\draw (a)--(D);		\hg{A}{a};	
					\draw (b)--(B);		\draw (b)--(D);		\hg{C}{b};			
					\foreach \p in {A,B,C,D} {
						\fill[white] (\p) circle (2pt);
						\draw (\p) circle (2pt);}
					\fill (a) circle (2pt);
					\fill (b) circle (2pt);}}} &
		\makecell{\scalemath{0.9}{\ytableaushort{14,25,37,68}}} &  
		\makecell{
			\scalemath{0.8}{
				\GrDisk{
					\coordinate (A) at (45:0.5);
					\coordinate (B) at (-90:0.5);
					\coordinate (C) at (157.5:0.7);
					\coordinate (a) at (157.5:0.2);
					\draw (1)--(A); 	\draw (2)--(A);		\draw (3)--(A);		\draw (4)--(B);
					\draw (5)--(B);		\draw (6)--(B);		\draw (7)--(C);		\draw (8)--(C);
					\draw (a)--(A);		\draw (a)--(B);		\hg{a}{C};
					\foreach \p in {A,B,C} {
						\fill[white] (\p) circle (2pt);
						\draw (\p) circle (2pt);}
					\fill (a) circle (2pt);}}} \\			
		\hline
	\end{tabular}
	\caption{Web diagrams of three typical cluster variables: two with boundary condition $\lambda=(2,1,1,1,1,1,1,0)$, and the rightmost variable selected from eight cluster variables with $\lambda=(1,1,1,1,1,1,1,1)$.}
	\label{tab:quainv}
\end{table}

The 174 cubic cluster variables are classified by their boundary conditions. Because $\sum_{i=1}^{8} \lambda_i=12$, up to dihedral translation of the component labels $i$ of $\lambda_i$ in $\lambda$, there are 8 different types of boundary conditions. In selecting typical cluster variables, we fix $\lambda_1=2$ and minimize the sum of indices i where $\lambda_i=2$. This yields 14 distinct cluster variables, with all other cubic cluster variables being dihedral translates of these.

\begin{table}[h]
	\centering
	\setcellgapes{1mm}
	\makegapedcells
	\begin{tabular}{ccccccccc}
		\hline
		\makecell{type} & \makecell{tableau} & \makecell{web} & \makecell{type} & \makecell{tableau} & \makecell{web} & \makecell{type} & \makecell{tableau} & \makecell{web}\\
		\hline
		\makecell{1} &
		\makecell{\scalemath{0.7}{\ytableaushort{112,233,445,678}}} &  
		\makecell{
			\scalemath{0.63}{
				\GrDisk{
					\coordinate (A) at (67.5:0.8);
					\coordinate (B) at (22.5:0.8);
					\coordinate (C) at (-22.5:0.8);
					\coordinate (D) at (-80:0.45);
					\coordinate (E) at (-112.5:0.8);
					\coordinate (F) at (180:0.7);
					\coordinate (G) at (112.5:0.8);
					\coordinate (H) at (90:0.1);
					\coordinate (a) at (90:0.5);
					\coordinate (b) at (22.5:0.5);
					\coordinate (c) at (-30:0.3);
					\coordinate (d) at (-135:0.5);
					\coordinate (e) at (157.5:0.4);
					\draw (1)--(A);		\draw (1)--(G);		\draw (2)--(A);		\draw (2)--(B);
					\draw (3)--(B);		\draw (3)--(C);		\draw (4)--(C);		\draw (4)--(D);		
					\draw (5)--(E);		\draw (6)--(E);		\draw (7)--(F);		\draw (8)--(G);
					\draw (a)--(G);		\draw (a)--(H);		\hg{A}{a};
					\draw (b)--(C);		\draw (b)--(H);		\hg{B}{b};
					\draw (c)--(C);		\draw (c)--(H);		\hg{D}{c};
					\draw (d)--(D);		\draw (d)--(F);		\hg{E}{d};
					\draw (e)--(G);		\draw (e)--(H);		\hg{F}{e};				
					\foreach \p in {A,B,C,D,E,F,G,H} {
						\fill[white] (\p) circle (2pt);
						\draw (\p) circle (2pt);}
					\foreach \p in {a,b,c,d,e}{
						\fill (\p) circle (2pt);}}}} &
		\multirow{5}{*}{\makecell{4}} &
		\makecell{\scalemath{0.7}{\ytableaushort{112,234,455,678}}} &  
		\makecell{
			\scalemath{0.63}{
				\GrDisk{
					\coordinate (A) at (67.5:0.8);
					\coordinate (B) at (22.5:0.8);
					\coordinate (C) at (-45:0.6);
					\coordinate (D) at (-100:0.5);
					\coordinate (E) at (-120:0.8);
					\coordinate (F) at (180:0.6);
					\coordinate (G) at (112.5:0.8);
					\coordinate (H) at (45:0.1);
					\coordinate (a) at (90:0.4);
					\coordinate (b) at (0:0.4);
					\coordinate (c) at (-67.5:0.3);
					\coordinate (d) at (-140:0.6);
					\coordinate (e) at (140:0.4);
					\draw (1)--(A);		\draw (1)--(G);		\draw (2)--(A);		\draw (2)--(B);
					\draw (3)--(B);		\hg{4}{C};			\draw (5)--(D);		\draw (5)--(E);		
					\draw (6)--(E);		\draw (7)--(F);		\draw (8)--(G);
					\draw (a)--(G);		\draw (a)--(H);		\hg{A}{a};
					\draw (b)--(C);		\draw (b)--(H);		\hg{B}{b};
					\draw (c)--(C);		\draw (c)--(H);		\hg{D}{c};
					\draw (d)--(D);		\draw (d)--(F);		\hg{E}{d};
					\draw (e)--(G);		\draw (e)--(H);		\hg{F}{e};
					\foreach \p in {A,B,C,D,E,F,G,H} {
						\fill[white] (\p) circle (2pt);
						\draw (\p) circle (2pt);}
					\foreach \p in {a,b,c,d,e}{
						\fill (\p) circle (2pt);}}}} &
		\makecell{6} &
		\makecell{\scalemath{0.7}{\ytableaushort{112,244,367,578}}} &  
		\makecell{
			\scalemath{0.63}{
				\GrDisk{
					\coordinate (A) at (70:0.5);
					\coordinate (B) at (22.5:0.8);
					\coordinate (C) at (-30:0.4);
					\coordinate (D) at (-67.5:0.8);
					\coordinate (E) at (-135:0.4);
					\coordinate (F) at (135:0.6);
					\coordinate (a) at (22.5:0.5);
					\coordinate (b) at (-75:0.5);
					\coordinate (c) at (120:0.3);
					\draw (1)--(A);		\draw (1)--(F);		\hg{2}{B}			\draw (3)--(B);
					\draw (4)--(C);		\draw (4)--(D);		\draw (5)--(D);		\draw (6)--(E);		
					\draw (7)--(E);		\draw (7)--(F);		\draw (8)--(F);
					\draw (a)--(A);		\draw (a)--(B);		\hg{C}{a};
					\draw (b)--(C);		\draw (b)--(E);		\hg{D}{b};
					\draw (c)--(E);		\draw (c)--(F);		\hg{A}{c};
					\foreach \p in {A,B,C,D,E,F} {
						\fill[white] (\p) circle (2pt);
						\draw (\p) circle (2pt);}
					\foreach \p in {a,b,c}{
						\fill (\p) circle (2pt);}}}} \\			
		\cmidrule(lr){1-3}
		\cmidrule(lr){5-9}
		\multirow{5}{*}{\makecell{2}} &
		\makecell{\scalemath{0.7}{\ytableaushort{112,233,455,678}}} &  
		\makecell{
			\scalemath{0.63}{
				\GrDisk{
					\coordinate (A) at (67.5:0.8);
					\coordinate (B) at (22.5:0.8);
					\coordinate (C) at (-22.5:0.8);
					\coordinate (D) at (-80:0.45);
					\coordinate (E) at (-112.5:0.8);
					\coordinate (F) at (180:0.7);
					\coordinate (G) at (112.5:0.8);
					\coordinate (H) at (90:0.1);
					\coordinate (a) at (90:0.5);
					\coordinate (b) at (22.5:0.5);
					\coordinate (c) at (-30:0.3);
					\coordinate (d) at (-135:0.5);
					\coordinate (e) at (157.5:0.4);
					\draw (1)--(A);		\draw (1)--(G);		\draw (2)--(A);		\draw (2)--(B);
					\draw (3)--(B);		\draw (3)--(C);		\draw (4)--(C);		\draw (5)--(D);		
					\draw (5)--(E);		\draw (6)--(E);		\draw (7)--(F);		\draw (8)--(G);
					\draw (a)--(G);		\draw (a)--(H);		\hg{A}{a};
					\draw (b)--(C);		\draw (b)--(H);		\hg{B}{b};
					\draw (c)--(C);		\draw (c)--(H);		\hg{D}{c};
					\draw (d)--(D);		\draw (d)--(F);		\hg{E}{d};
					\draw (e)--(G);		\draw (e)--(H);		\hg{F}{e};				
					\foreach \p in {A,B,C,D,E,F,G,H} {
						\fill[white] (\p) circle (2pt);
						\draw (\p) circle (2pt);}
					\foreach \p in {a,b,c,d,e}{
						\fill (\p) circle (2pt);}}}} & &
		\makecell{\scalemath{0.7}{\ytableaushort{113,225,447,568}}} & 
		\makecell{
			\scalemath{0.63}{
				\GrDisk{
					\coordinate (A) at (67.5:0.8);
					\coordinate (B) at (22.5:0.5);
					\coordinate (C) at (-22.5:0.8);
					\coordinate (D) at (-67.5:0.4);
					\coordinate (E) at (-135:0.6);
					\coordinate (F) at (157.5:0.8);
					\coordinate (G) at (112.5:0.5);
					\coordinate (a) at (67.5:0.5);
					\coordinate (b) at (-22.5:0.5);
					\coordinate (c) at (0:0);
					\coordinate (d) at (157.5:0.5);
					\draw (1)--(A);		\draw (1)--(G);		\draw (2)--(A);		\draw (2)--(B);
					\draw (3)--(C);		\draw (4)--(C);		\draw (4)--(D);		\draw (5)--(D);		
					\draw (5)--(E);		\draw (6)--(E);		\draw (7)--(F);		\draw (8)--(F);
					\draw (a)--(B);		\draw (a)--(G);		\hg{A}{a};
					\draw (b)--(B);		\draw (b)--(D);		\hg{C}{b};
					\draw (c)--(B);		\draw (c)--(D);		\draw (c)--(E);		\draw (c)--(G);
					\draw (d)--(E);		\draw (d)--(G);		\hg{F}{d};
					\foreach \p in {A,B,C,D,E,F,G} {
						\fill[white] (\p) circle (2pt);
						\draw (\p) circle (2pt);}
					\foreach \p in {a,b,c,d}{
						\fill (\p) circle (2pt);}}}} &
		\makecell{7} &
		\makecell{\scalemath{0.7}{\ytableaushort{112,245,366,578}}} &  
		\makecell{
			\scalemath{0.63}{
				\GrDisk{
					\coordinate (A) at (70:0.5);
					\coordinate (B) at (22.5:0.8);
					\coordinate (C) at (-50:0.6);
					\coordinate (D) at (-135:0.4);
					\coordinate (E) at (-170:0.7);
					\coordinate (F) at (125:0.7);
					\coordinate (a) at (22.5:0.5);
					\coordinate (b) at (120:0.3);
					\coordinate (c) at (160:0.5);
					\draw (1)--(A);		\draw (1)--(F);		\hg{2}{B}			\draw (3)--(B);
					\draw (4)--(C);		\draw (5)--(C);		\draw (5)--(D);		\draw (6)--(D);		
					\draw (6)--(E);		\draw (7)--(E);		\draw (8)--(F);
					\draw (a)--(A);		\draw (a)--(B);		\hg{C}{a};
					\draw (b)--(D);		\draw (b)--(F);		\hg{A}{b};
					\draw (c)--(D);		\draw (c)--(F);		\hg{E}{c};
					\foreach \p in {A,B,C,D,E,F} {
						\fill[white] (\p) circle (2pt);
						\draw (\p) circle (2pt);}
					\foreach \p in {a,b,c}{
						\fill (\p) circle (2pt);}}}} \\
		\cmidrule(lr){2-9}
		& \makecell{\scalemath{0.7}{\ytableaushort{113,225,347,568}}} & 
		\makecell{
			\scalemath{0.63}{
				\GrDisk{
					\coordinate (A) at (67.5:0.8);
					\coordinate (B) at (22.5:0.7);
					\coordinate (C) at (-45:0.6);
					\coordinate (D) at (-135:0.6);
					\coordinate (E) at (160:0.8);
					\coordinate (F) at (112.5:0.4);
					\coordinate (a) at (60:0.5);
					\coordinate (b) at (0:0);
					\coordinate (c) at (170:0.5);
					\draw (1)--(A);		\draw (1)--(F);		\draw (2)--(A);		\draw (2)--(B);
					\draw (3)--(B);		\draw (3)--(C);		\draw (4)--(C);		\draw (5)--(C);		
					\draw (5)--(D);		\draw (6)--(D);		\draw (7)--(E);		\draw (8)--(E);
					\draw (a)--(B);		\draw (a)--(F);		\hg{A}{a};
					\draw (b)--(B);		\draw (b)--(C);		\draw (b)--(D);		\draw (b)--(F);
					\draw (c)--(D);		\draw (c)--(F);		\hg{E}{c};
					\foreach \p in {A,B,C,D,E,F} {
						\fill[white] (\p) circle (2pt);
						\draw (\p) circle (2pt);}
					\foreach \p in {a,b,c}{
						\fill (\p) circle (2pt);}}}} &
		\multirow{9}{*}{\makecell{5}} &
		\makecell{\scalemath{0.7}{\ytableaushort{112,234,456,678}}} & 
		\makecell{
			\scalemath{0.63}{
				\GrDisk{
					\coordinate (A) at (67.5:0.8);
					\coordinate (B) at (22.5:0.8);
					\coordinate (C) at (-45:0.6);
					\coordinate (D) at (-112.5:0.8);
					\coordinate (E) at (-157.5:0.8);
					\coordinate (F) at (112.5:0.8);
					\coordinate (G) at (0:0);
					\coordinate (a) at (90:0.5);
					\coordinate (b) at (0:0.5);
					\coordinate (c) at (-90:0.4);
					\coordinate (d) at (170:0.5);
					\draw (1)--(A);		\draw (1)--(F);		\draw (2)--(A);		\draw (2)--(B);
					\draw (3)--(B);		\hg{4}{C};			\draw (5)--(D);		\draw (6)--(D);		
					\draw (6)--(E);		\draw (7)--(E);		\draw (8)--(F);
					\draw (a)--(F);		\draw (a)--(G);		\hg{A}{a};
					\draw (b)--(C);		\draw (b)--(G);		\hg{B}{b};
					\draw (c)--(C);		\draw (c)--(G);		\hg{D}{c};
					\draw (d)--(F);		\draw (d)--(G);		\hg{E}{d};
					\foreach \p in {A,B,C,D,E,F,G} {
						\fill[white] (\p) circle (2pt);
						\draw (\p) circle (2pt);}
					\foreach \p in {a,b,c,d}{
						\fill (\p) circle (2pt);}}}} & 
		\multirow{5}{*}{\makecell{8}} &
		\makecell{\scalemath{0.7}{\ytableaushort{113,245,367,578}}} & 
		\makecell{
			\scalemath{0.63}{
				\GrDisk{
					\coordinate (A) at (45:0.6);
					\coordinate (B) at (-45:0.6);
					\coordinate (C) at (-135:0.6);
					\coordinate (D) at (135:0.6);
					\coordinate (a) at (0:0);
					\draw (1)--(A);		\draw (1)--(D);		\draw (2)--(A);		\draw (3)--(A);
					\draw (3)--(B);		\draw (4)--(B);		\draw (5)--(B);		\draw (5)--(C);		
					\draw (6)--(C);		\draw (7)--(C);		\draw (7)--(D);		\draw (8)--(D);
					\draw (a)--(A);		\draw (a)--(B);		\draw (a)--(C);		\draw (a)--(D);
					\foreach \p in {A,B,C,D} {
						\fill[white] (\p) circle (2pt);
						\draw (\p) circle (2pt);}
					\foreach \p in {a}{
						\fill (\p) circle (2pt);}}}} \\
		\cmidrule(lr){1-3}
		\cmidrule(lr){5-6}
		\cmidrule(lr){8-9}
		\multirow{5}{*}{\makecell{3}} &
		\makecell{\scalemath{0.7}{\ytableaushort{112,233,456,678}}} &  
		\makecell{
			\scalemath{0.63}{
				\GrDisk{
					\coordinate (A) at (67.5:0.8);
					\coordinate (B) at (22.5:0.8);
					\coordinate (C) at (-22.5:0.8);
					\coordinate (D) at (-112.5:0.8);
					\coordinate (E) at (-157.5:0.8);
					\coordinate (F) at (112.5:0.8);
					\coordinate (G) at (0:0);
					\coordinate (a) at (90:0.4);
					\coordinate (b) at (0:0.4);
					\coordinate (c) at (-80:0.5);
					\coordinate (d) at (170:0.5);
					\draw (1)--(A);		\draw (1)--(F);		\draw (2)--(A);		\draw (2)--(B);
					\draw (3)--(B);		\draw (3)--(C);		\draw (4)--(C);		\draw (5)--(D);		
					\draw (6)--(D);		\draw (6)--(E);		\draw (7)--(E);		\draw (8)--(F);
					\draw (a)--(F);		\draw (a)--(G);		\hg{A}{a};
					\draw (b)--(C);		\draw (b)--(G);		\hg{B}{b};
					\draw (c)--(C);		\draw (c)--(G);		\hg{D}{c};
					\draw (d)--(F);		\draw (d)--(G);		\hg{E}{d};
					\foreach \p in {A,B,C,D,E,F,G} {
						\fill[white] (\p) circle (2pt);
						\draw (\p) circle (2pt);}
					\foreach \p in {a,b,c,d}{
						\fill (\p) circle (2pt);}}}} & &
		\makecell{\scalemath{0.7}{\ytableaushort{112,244,366,578}}} &  
		\makecell{
			\scalemath{0.63}{
				\GrDisk{
					\coordinate (A) at (67.5:0.7);
					\coordinate (B) at (22.5:0.8);
					\coordinate (C) at (-30:0.5);
					\coordinate (D) at (-67.5:0.8);
					\coordinate (E) at (-120:0.3);
					\coordinate (F) at (-170:0.8);
					\coordinate (G) at (112.5:0.8);
					\coordinate (a) at (22.5:0.5);
					\coordinate (b) at (-75:0.5);
					\coordinate (c) at (170:0.5);
					\coordinate (d) at (100:0.4);
					\draw (1)--(A);		\draw (1)--(G);		\hg{2}{B};			\draw (3)--(B);		
					\draw (4)--(C);		\draw (4)--(D);		\draw (5)--(D);		\draw (6)--(E);		
					\draw (6)--(F);		\draw (7)--(F);		\draw (8)--(G);	
					\draw (a)--(A);		\draw (a)--(B);		\hg{C}{a};
					\draw (b)--(C);		\draw (b)--(E);		\hg{D}{b};
					\draw (c)--(E);		\draw (c)--(G);		\hg{F}{c};
					\draw (d)--(E);		\draw (d)--(G);		\hg{A}{d};
					\foreach \p in {A,B,C,D,E,F,G} {
						\fill[white] (\p) circle (2pt);
						\draw (\p) circle (2pt);}
					\foreach \p in {a,b,c,d}{
						\fill (\p) circle (2pt);}}}} & &
		\makecell{\scalemath{0.7}{\ytableaushort{112,334,556,778}}} &  
		\makecell{
			\scalemath{0.63}{
				\GrDisk{
					\coordinate (A) at (50:0.5);
					\coordinate (B) at (22.5:0.85);
					\coordinate (C) at (-5:0.5);
					\coordinate (D) at (-67.5:0.7);
					\coordinate (E) at (-130:0.5);
					\coordinate (F) at (-157.5:0.85);
					\coordinate (G) at (175:0.5);
					\coordinate (H) at (112.5:0.7);
					\coordinate (I) at (0:0);
					\coordinate (a) at (80:0.3);
					\coordinate (b) at (22.5:0.55);
					\coordinate (c) at (-35:0.3);
					\coordinate (d) at (-100:0.3);
					\coordinate (e) at (-157.5:0.55);
					\coordinate (f) at (145:0.3);
					\draw (1)--(A);		\draw (1)--(H);		\draw (2)--(B);		\draw (3)--(B);
					\draw (3)--(C);		\draw (4)--(D);		\draw (5)--(D);		\draw (5)--(E);		
					\draw (6)--(F);		\draw (7)--(F);		\draw (7)--(G);		\draw (8)--(H);
					\draw (a)--(H);		\draw (a)--(I);		\hg{A}{a};
					\draw (b)--(A);		\draw (b)--(C);		\hg{B}{b};
					\draw (c)--(D);		\draw (c)--(I);		\hg{C}{c};
					\draw (d)--(D);		\draw (d)--(I);		\hg{E}{d};
					\draw (e)--(E);		\draw (e)--(G);		\hg{F}{e};
					\draw (f)--(H);		\draw (f)--(I);		\hg{G}{f};
					\foreach \p in {A,B,C,D,E,F,G,H,I} {
						\fill[white] (\p) circle (2pt);
						\draw (\p) circle (2pt);}
					\foreach \p in {a,b,c,d,e,f}{
						\fill (\p) circle (2pt);}}}} \\
		\cmidrule(lr){2-3}
		\cmidrule(lr){5-9}
		& \makecell{\scalemath{0.7}{\ytableaushort{112,234,366,578}}} &  
		\makecell{
			\scalemath{0.63}{
				\GrDisk{
					\coordinate (A) at (67.5:0.8);
					\coordinate (B) at (0:0.7);
					\coordinate (C) at (-67.5:0.8);
					\coordinate (D) at (-135:0.3);
					\coordinate (E) at (-157.5:0.8);
					\coordinate (F) at (112.5:0.8);
					\coordinate (a) at (90:0.4);
					\coordinate (b) at (-67.5:0.4);
					\coordinate (c) at (170:0.5);
					\draw (1)--(A);		\draw (1)--(F);		\draw (2)--(A);		\draw (2)--(B);
					\hg{3}{B};			\draw (4)--(C);		\draw (5)--(C);		\draw (6)--(D);		
					\draw (6)--(E);		\draw (7)--(E);		\draw (8)--(F);
					\draw (a)--(D);		\draw (a)--(F);		\hg{A}{a};
					\draw (b)--(B);		\draw (b)--(D);		\hg{C}{b};
					\draw (c)--(D);		\draw (c)--(F);		\hg{E}{c};
					\foreach \p in {A,B,C,D,E,F} {
						\fill[white] (\p) circle (2pt);
						\draw (\p) circle (2pt);}
					\foreach \p in {a,b,c}{
						\fill (\p) circle (2pt);}}}} & &
		\makecell{\scalemath{0.7}{\ytableaushort{112,234,366,578}}} &  
		\makecell{
			\scalemath{0.63}{
				\GrDisk{
					\coordinate (A) at (67.5:0.8);
					\coordinate (B) at (22.5:0.5);
					\coordinate (C) at (-22.5:0.8);
					\coordinate (D) at (-67.5:0.5);
					\coordinate (E) at (-112.5:0.8);
					\coordinate (F) at (-157.5:0.5);
					\coordinate (G) at (157.5:0.8);
					\coordinate (H) at (112.5:0.5);
					\coordinate (a) at (67.5:0.5);
					\coordinate (b) at (-22.5:0.5);
					\coordinate (c) at (-112.5:0.5);
					\coordinate (d) at (0:0);
					\coordinate (e) at (157.5:0.5);
					\draw (1)--(A);		\draw (1)--(H);		\draw (2)--(A);		\draw (2)--(B);
					\draw (3)--(C);		\draw (4)--(C);		\draw (4)--(D);		\draw (5)--(E);		
					\draw (6)--(E);		\draw (6)--(F);		\draw (7)--(G);		\draw (8)--(G);
					\draw (a)--(B);		\draw (a)--(H);		\hg{A}{a};
					\draw (b)--(B);		\draw (b)--(D);		\hg{C}{b};
					\draw (c)--(D);		\draw (c)--(F);		\hg{E}{c};
					\draw (d)--(B);		\draw (d)--(D);		\draw (d)--(F);		\draw (d)--(H);
					\draw (e)--(F);		\draw (e)--(H);		\hg{G}{e};
					\foreach \p in {A,B,C,D,E,F,G,H} {
						\fill[white] (\p) circle (2pt);
						\draw (\p) circle (2pt);}
					\foreach \p in {a,b,c,d,e}{
						\fill (\p) circle (2pt);}}}} & & & \\
		\hline	
	\end{tabular}
	\caption{Web diagrams of 14 distinct cubic cluster variables.}
	\label{tab:cubinv}
\end{table}

All of the 174 cubic cluster variables are dihedral translates of the 14 semi-standard Young tableaux in Table \ref{tab:cubinv}, which can be obtained by applying the promotion and evacuation maps in Theorem \ref{thm:TG}. Web diagrams of them can also be obtained by applying the rotation and reflection maps to the hourglass graphs in Table \ref{tab:cubinv} or in their equivalence classes.

\section{Dual webs compatible with cluster variables in $\mathbb{C}[\operatorname{Gr}(4,8)]$}\label{sec:Inm}
In this section, we discuss the $sl_r$-webs compatible with the quadratic and cubic cluster variables in $\mathbb{C}[\operatorname{Gr}(4,8)]$, where the $sl_r$-webs can be used to compute the twist of cluster variables. We use the definition of compatibility introduced in \cite{lam_dimers_2015}, and the method from \cite{elkin_twists_2023} that enumerates all non-elliptic webs to compute the compatible webs for the cubic cluster variables. For convenience, in this section, we refer to matchings as $sl_2$-webs and webs as $sl_3$-webs.

\subsection{Enumeration of non-elliptic webs}\label{ssc:ComSink}
To enumerate the non-elliptic webs of $\Gr{4}{8}$ which have 4 black and 4 white boundary vertices, we adopt a contracting approach which was first introduced in \cite{russell_explicit_2013} and is referred to as a sink vertex.

\begin{definition}\label{def:sink}
	If two adjacent black boundary vertices are connected to the same white internal vertex, then contract their corresponding two boundary edges and contract that white internal vertex with these two black boundary vertices into a single white boundary vertex, we call this single white boundary vertex a \textit{sink vertices}. We refer to the structure shown on the left side of Figure \ref{fig:sink}, where several adjacent boundary vertices are connected to the same white internal vertex, as a \textit{claw}. For convenience, to represent sink vertices, we will denote them by the set of two boundary vertices that correspond to them.
\end{definition}

\begin{figure}[h]
	\centering
	\begin{minipage}[h]{\textwidth}
		\centering
		\begin{tikzpicture}
			\draw (-4.5,0)--(-2.5,0);	\draw (-4,0)--(-3.5,0.25)--(-3,0);
			\draw (-3.5,0.25)--(-3.5,0.5);	\draw (-4,0.75)--(-3.5,0.5)--(-3,0.75);
			
			\fill [white] (-3.5,0.25) circle (2pt);
			\draw (-3.5,0.25) circle (2pt);
			\fill (-4,0) circle (2pt);
			\fill (-3,0) circle (2pt);
			\fill (-3.5,0.5) circle (2pt);
			
			\draw[-latex, >=stealth, line width=1pt] (-2,0.5) -- (-1.5,0.5);
			
			\draw (-1,0)--(1,0);	\draw (0,0)--(0,0.5);
			\draw (-0.5,0.7)--(0,0.5)--(0.5,0.75);
			
			\fill [white] (0,0) circle (2pt);
			\draw (0,0) circle (2pt);
			\fill (0,0.5) circle (2pt);
		\end{tikzpicture}
	\end{minipage}
	\caption{Contracting boundary edges to produce a sink vertex.}
	\label{fig:sink}
\end{figure}
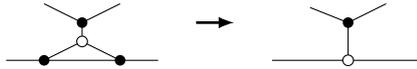

\begin{proposition}\label{prp:sink}
	Any non-elliptic web with $n$ black boundary vertices and $k$ white boundary vertices can be obtained by contracting a non-elliptic web with $n+2k$ boundary vertices.
\end{proposition}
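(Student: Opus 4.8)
\emph{Approach.} The plan is to exhibit an explicit preimage of $W$ under the sink-vertex contraction by carrying out, at each white boundary vertex, the move of Figure~\ref{fig:sink} in reverse, and then to check that this inverse move preserves non-ellipticity. Thus the proposition reduces to an existence statement: I only need one non-elliptic web on $n+2k$ boundary vertices whose sink contraction returns $W$.

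\emph{Construction.} Let $W$ be a non-elliptic web with $n$ black and $k$ white boundary vertices $w_1,\dots,w_k$. Since every boundary vertex has degree $1$ and $W$ is properly bicolored, each $w_j$ is joined by its unique incident edge to a black vertex $v_j$. For each $j$ I would delete $w_j$ and place, in an arbitrarily small neighbourhood of its former position, a new \emph{white internal} vertex $w_j'$ together with two new \emph{black boundary} vertices sitting consecutively on the boundary circle; I then join $w_j'$ to these two black boundary vertices by two new edges and to $v_j$ by a copy of the old edge. The vertex $w_j'$ then has degree $3$ and is adjacent only to black vertices, so the bipartite colouring is respected, while the degree of $v_j$ is unchanged. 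Performing this for all $j$ and relabelling the boundary vertices $1,2,\dots,n+2k$ clockwise yields a properly bicolored planar graph $\widetilde W$, embedded in the disk, with all internal vertices trivalent and all $n+2k$ boundary vertices black of degree $1$; that is, $\widetilde W$ is again a web. By construction, each $w_j'$ together with its two adjacent black boundary vertices is precisely a claw in the sense of Definition~\ref{def:sink}, so contracting the $k$ sink vertices of $\widetilde W$ recovers $W$ exactly. It therefore remains only to verify that $\widetilde W$ is non-elliptic.

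\emph{Main step (non-ellipticity).} This is the crux. Non-ellipticity is a condition on the \emph{interior} faces of a web, that is, the complementary regions whose boundary walk uses no arc of the surrounding boundary circle. The expansion at $w_j$ is supported in a small neighbourhood of the boundary, and the only region it newly creates is the triangular region bounded by the two new boundary edges and the boundary arc between the two new black vertices; this region meets the boundary circle and hence is \emph{not} interior. Moreover, the edge $w_jv_j$ of $W$ is incident to the boundary vertex $w_j$, so both faces flanking it already met the boundary circle; after the move the corresponding edge $w_j'v_j$ and the new vertex $w_j'$ still lie inside the boundary neighbourhood, and the two faces flanking $w_j'v_j$ continue to meet the boundary circle through the new triangle. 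Consequently the move alters no interior face: the interior faces of $\widetilde W$ are in bijection with those of $W$, with the same numbers of bounding edges, and no new contractible cycle, bigon, or square is introduced. Since $W$ is non-elliptic, so is $\widetilde W$, which completes the argument.

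\emph{Expected obstacle.} The step demanding the most care is the face-preservation claim, since one must rule out the possibility that expanding a white boundary vertex closes off a small interior face — for example when a single black vertex $v_j$ is adjacent to several white boundary vertices, or when two white boundary vertices are consecutive on the boundary circle. In each such configuration the locality of the move guarantees that every newly bounded region still touches the boundary circle and is therefore non-interior, so no interior bigon or square can appear; making this topological observation precise, rather than any computation, is where the real work lies.
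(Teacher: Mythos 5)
Your proposal is correct and takes essentially the same route as the paper: invert the sink-vertex contraction at each white boundary vertex, note the move is local to the boundary circle, and conclude that the web's interior (hence its non-ellipticity) is unchanged. The paper's own proof is simply a terser version of this argument, without your explicit face-by-face verification.
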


\begin{proof}
	For any non-elliptic web with $n$ black boundary vertices and $k$ white boundary vertices, by the inverse transformation of sink vertices to their white boundary vertices, we can obtain a web with $n+2k$ black boundary vertices. Since this process neither changes the interior of the web (it may merely add a univalent vertex within the web's interior), nor alter the non-elliptic nature of the web, that is the resulting web remains non-elliptic.
\end{proof}

The concept of sink vertices, as formalized in Proposition \ref{prp:sink}, significantly reduces the complexity of enumeration. Thus, it is only necessary to enumerate the cases of non-elliptic webs with only black boundary vertices, since the corresponding contractions into sink vertices can solve the problem of enumerating the non-elliptic webs with 4 white and 4 black boundary vertices. Consequently, the non-elliptic webs with 12 black boundary vertices should be enumerated.

Let $W$ be a non-elliptic web, denote $V_{int}$ by the set of internal vertices, that is, $|V_{int}|$ is the number of internal vertices in $W$.
\begin{proposition}\cite[Proposition 5.3]{elkin_twists_2023} \label{prp:Vint}
	Let $W$ be a non-elliptic web with $n$ boundary vertices, $c$ cycles and $m$ connected components. Then $|V_{int}|=n+2c-2m$.
\end{proposition}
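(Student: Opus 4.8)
The plan is to prove the identity by a single double count, using only the local degree data of a non-elliptic web together with the cyclomatic (first Betti number) formula for a planar graph. The two structural inputs I would isolate at the outset are that every boundary vertex of $W$ has degree $1$ and every interior vertex is trivalent; both are built into the definition of a non-elliptic $sl_3$-web. Writing $V = n + |V_{int}|$ for the total number of vertices and $E$ for the number of edges, the handshake lemma then reads off the edge count immediately: summing degrees gives $n\cdot 1 + |V_{int}|\cdot 3 = 2E$, so that $2E = n + 3|V_{int}|$.

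Next I would use the interpretation of $c$ as the number of independent cycles of $W$, i.e. its first Betti number, which for a graph with $m$ connected components satisfies $c = E - V + m$. Substituting $V = n + |V_{int}|$ and the value of $E$ above gives
\begin{equation*}
	c = \frac{n + 3|V_{int}|}{2} - (n + |V_{int}|) + m = \frac{|V_{int}| - n}{2} + m,
\end{equation*}
and rearranging yields $|V_{int}| = n + 2c - 2m$, as claimed. Equivalently, one may phrase the same step through the Euler characteristic $V - E = m - c$ of the underlying $1$-complex, which makes the bookkeeping uniform across connected components; the example with a tree-like web, where $c=0$ and $m$ may exceed $1$, is a useful sanity check that the formula does not secretly assume connectivity.

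The argument is essentially a one-line substitution, so there is no deep obstacle; the only point requiring genuine care is fixing the conventions precisely. Specifically, I would verify that no interior vertex of $W$ is bivalent (such vertices are contracted during reduction, so a non-elliptic web is honestly trivalent), since the trivalence hypothesis is exactly what keeps the coefficient $3$ in the handshake count and makes the identity clean — with bivalent vertices present, the $3$ would become a weighted sum of interior degrees and the formula would fail. I would also confirm that $c$ is read as the rank of the cycle space rather than as a count of faces or of vertex-free loops; because every cycle in a disk is contractible, an empty loop is forbidden in a non-elliptic web, so this degeneracy does not arise, and even if one admitted such a loop it would contribute equally to $c$ and to $m$, leaving $n + 2c - 2m$ unchanged.
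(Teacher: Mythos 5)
The paper never proves this proposition itself: it is imported verbatim from \cite[Proposition 5.3]{elkin_twists_2023}, so there is no internal proof to compare yours against; what matters is whether your argument stands alone, and it does. The two structural inputs are accurate for the webs in question (boundary vertices have degree $1$; interior vertices of an $sl_3$-web are trivalent, bivalent ones having been contracted away), so the handshake lemma correctly gives $2E = n + 3|V_{int}|$, and substituting $V = n + |V_{int}|$ into the cycle-rank identity $c = E - V + m$ yields $|V_{int}| = n + 2c - 2m$ after rearrangement; the algebra checks out. Your caveat about the meaning of $c$ is also the right one to flag, and it is consistent with how the present paper uses the proposition: in the proofs of Lemmas \ref{lem:1con_12} and \ref{lem:2con_12}, cycles are counted as bounded faces of the planar embedding (e.g.\ two hexagons sharing an edge count as $c=2$), and for a planar embedded graph the number of bounded faces equals $E - V + m$ by Euler's formula, so the face count and the cycle-rank reading agree. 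The final observation---that a hypothetical vertex-free loop would increment $c$ and $m$ equally and hence leave $n + 2c - 2m$ unchanged, while being excluded anyway by non-ellipticity---is a sound robustness check rather than a needed step. In short, the proposal is a correct, self-contained proof of the cited result.
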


\begin{proposition}\cite[Proposition 5.5]{elkin_twists_2023} \label{prp:c_Vint}
	Let $W$ be a non-elliptic web with $c$ cycles,. Then
	
	\begin{itemize}[leftmargin=0.5cm]
		\item $|V_{int}|\geq 2c+4$, when $c\geq 1$,
		
		\item $|V_{int}|\geq 2c+6$, when $c\geq 2$,
		
		\item $|V_{int}|\geq 2c+7$, when $c\geq 3$,
		
		\item and $|V_{int}|\geq 2c+8$, when $c\geq 4$.
	\end{itemize}
	
\end{proposition}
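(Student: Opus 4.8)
The plan is to reduce the statement to a purely combinatorial extremal problem—how few internal vertices a non-elliptic web with a prescribed number $c$ of bounded faces can have—and then to resolve that problem by an isoperimetric (compactness) estimate together with a short base analysis. First I would invoke Proposition~\ref{prp:Vint}: writing $|V_{int}| = n + 2c - 2m$, the four inequalities are exactly the lower bounds $n - 2m \ge 4,6,7,8$, where $n$ is the number of boundary vertices and $m$ the number of connected components. If $g(c)$ denotes the minimal internal-vertex count of a \emph{connected} non-elliptic web with cycle rank $c$ (and $g(0)=0$ for tree pieces), then merging two cyclic components along the boundary never costs extra vertices, so $g(a)+g(b)\ge g(a+b)$; hence any web with components of cycle ranks $c_1,\dots,c_m$ has $|V_{int}|=\sum_i |V_{int,i}| \ge \sum_i g(c_i) \ge g(c)$. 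Thus it suffices to treat a connected web, for which $m=1$ and the goal becomes $|V_{int}|\ge 6,10,13,16$ for $c=1,2,3,4$ and $|V_{int}|\ge 2c+8$ for all $c\ge 4$.

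Next I would record the structural constraints. Since the interior vertices are trivalent and $W$ is bipartite, every bounded face is an even cycle of length at least $6$ (no bigons or squares by non-ellipticity, no odd faces by bipartiteness). Moreover two bounded faces share at most a single edge: a shared path of length $\ge 2$ would leave its interior vertex of degree two with no room for a third (pendant) edge, which is precisely the forced bivalent configuration governed by Lemma~\ref{lem:interior}. I expect that enlarging a face beyond a hexagon, or letting the cyclic core surround a bounded region, only increases the perimeter, so that one may reduce to simply connected hexagonal patches. For a connected web Euler's formula (Proposition~\ref{prp:Vint} with $m=1$) gives $|V_{int}| = n + 2c - 2$, and the number of interior vertices carrying no pendant edge is exactly $2c-2$; hence minimizing $|V_{int}|$ is equivalent to minimizing the perimeter $n$, i.e.\ the number of boundary vertices.

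With this reduction in hand I would prove the perimeter bound $n \ge 6,8,9,10$ (for $c=1,2,3,4$) by induction on $c$, using Lemma~\ref{lem:interior} to drive the induction: when the interior $\mathring W$ is connected with no interior pendant vertex it supplies two adjacent bivalent vertices, that is, an edge on the perimeter of a boundary hexagon both of whose endpoints carry pendant edges. Excising this hexagon decreases $c$ by one and decreases $|V_{int}|$ by either $4$ (if the hexagon meets the rest along a single edge) or $3$ (if it meets the rest along two adjacent edges, producing one new fully-interior vertex). Tracking these decrements reproduces the minimal values $6,10,13,16$, the most compact gluings being benzene, naphthalene, the three-hexagons-at-a-vertex patch, and pyrene; the base cases $c\le 4$ then amount to a finite check of the few ways hexagons can be glued along single edges.

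The main obstacle is the genuinely isoperimetric content of the last step: controlling how pericondensed a patch can be—equivalently bounding the number of fully-interior (pendant-free, triply-shared) vertices by roughly $2c-\sqrt{12c}$—is what produces the growing deficit and, in particular, the clause ``$c\ge 4 \Rightarrow |V_{int}|\ge 2c+8$'' that must hold for \emph{all} $c\ge 4$. For the finitely many cycle ranks actually needed this is a direct enumeration, but the uniform growth statement requires the spiral/compact-packing estimate for honeycomb patches, together with the verification that each excision preserves non-ellipticity (no square or bigon is created upon removing a perimeter hexagon) and that passing to connected, hexagonal, simply connected patches never violates the face-length lower bounds. Justifying these reductions, rather than the Euler bookkeeping, is where essentially all of the care lies.
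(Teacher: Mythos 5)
First, for context: the paper does not actually prove this proposition---it is imported verbatim from \cite[Proposition 5.5]{elkin_twists_2023}, with only the remark that ``a similar proof'' yields the extensions to $c\geq 5,6,7$---so your attempt can only be measured against the route of the cited source (hexagonal faces, restricted edge-sharing, Euler counting via Proposition \ref{prp:Vint}), which your outline does resemble. The genuine gap is in the step that must deliver the uniform clauses ``for all $c\geq k$'', and it is concrete. Your excision induction asserts that removing the boundary hexagon singled out by Lemma \ref{lem:interior} decreases $|V_{int}|$ by $4$ (one shared edge) or $3$ (two adjacent shared edges). This case analysis is incomplete, and the bookkeeping it supports is false in general. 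Consider the coronene web: a central hexagon surrounded by six hexagons, with a pendant boundary edge attached to each of the twelve rim vertices of degree two. This is a bipartite, non-elliptic web with $n=12$, $m=1$, $c=7$, $|V_{int}|=24$ (consistent with Proposition \ref{prp:Vint}, and squarely in the range this paper uses, cf.\ the case $c=7$ in the proof of Lemma \ref{lem:1con_12}). Each outer hexagon shares \emph{three} edges with the other faces, contains an edge both of whose endpoints carry pendant edges (so it is exactly the face your rule excises), and its excision removes only \emph{two} internal vertices. Moreover, if decrements of at least $3$ were available, iterating from $c=4$ would force $|V_{int}|\geq 16+3(c-4)$, i.e.\ at least $25$ at $c=7$, which this web violates. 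So your mechanism cannot prove $|V_{int}|\geq 2c+8$ for all $c\geq 4$; and your fallback---the ``spiral/compact-packing estimate for honeycomb patches''---is invoked but never stated precisely or proved, nor is the reduction to it (``I expect that enlarging a face beyond a hexagon \dots only increases the perimeter'') argued.

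The repair is identifiable but is not what you wrote: the excision argument genuinely yields only decrement $\geq 2$ (the two adjacent pendant-bearing vertices lie on no bounded face other than the excised one, since a pendant edge has the unbounded face on both sides), hence only that the excess $|V_{int}|-2c$ is non-decreasing in $c$; the four base values $6,10,13,16$ must then be established by a separate extremal analysis, and that is where the remaining unproved steps sit. In that analysis, your claim that two bounded faces share at most a single edge is justified only when the shared edges are adjacent (the trivalence argument); sharing two \emph{non-adjacent} edges is not excluded by that argument and must instead be ruled out or absorbed by noting that it forces an additional enclosed face, which changes the count. Two smaller slips: the number of pendant-free interior vertices is at least $2c-2$, not ``exactly'' (an interior vertex may carry two or three pendant edges---the paper's claws), although the equivalence of minimizing $|V_{int}|$ and minimizing $n$ follows from Proposition \ref{prp:Vint} regardless; and the proposition is a statement about all $c\geq 4$, so ``direct enumeration of the finitely many cycle ranks actually needed'' does not discharge it. In short, the skeleton is the right one, but every step carrying the actual content---the base-case extremal analysis, the face-sharing lemma, and the growth of the excess---is either asserted, deferred to an unproven isoperimetric theorem, or, in the case of the decrement analysis, contradicted by an explicit web.
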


By a similar proof of \Cref{prp:c_Vint}, we can obtain

\begin{itemize}[leftmargin=0.5cm]
	\item $|V_{int}|\geq 2c+9$, when $c\geq 5$,
	
	\item $|V_{int}|\geq 2c+10$, when $c\geq 6$,
	
	\item $|V_{int}|\geq 2c+10$, $c\geq 7$.
\end{itemize}

\begin{proposition}\cite[Section 2.1]{fraser_dimers_2019}\label{prp:n=kr}
	For any web with only $n$ black vertices and no white vertices on its boundary, the difference in the number of white internal vertices and black vertices is a constant $k$, referred to as the \textit{exceedance}. In particular, $k=n/r$.
\end{proposition}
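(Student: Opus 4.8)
The plan is to prove this by a bipartite edge-counting (flow-conservation) argument, exploiting the fact that the underlying graph of the web is properly bicolored, so that every edge joins a black vertex to a white one. Write $b$ for the number of black internal vertices and $w$ for the number of white internal vertices; by hypothesis all $n$ boundary vertices are black and there are no white boundary vertices. I would orient each edge from its black endpoint toward its white endpoint, which is well defined precisely because bipartiteness forbids black--black and white--white edges, and then count the oriented edges in two ways.

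First, in the $sl_3$ case relevant to this section---where the web is a genuine trivalent non-elliptic web with all edges simple---each internal vertex has degree $3$ and each boundary vertex has degree $1$. Summing out-degrees over the black vertices gives $3b + n$, while summing in-degrees over the white vertices gives $3w$; since both quantities count the total number of oriented edges, $3w = 3b + n$, whence $w - b = n/3$. Thus the difference depends only on $n$ (and on $r = 3$), which is exactly the asserted constant $k = n/r$, and the integrality of $w - b$ forces $3 \mid n$. For general $r$ I would run the same count with edge multiplicities in place of degrees: the defining condition that every interior vertex has incident edge multiplicities summing to $r$ (Definition \ref{def:tagwebs}) yields $r\,w = r\,b + n$, so that $w - b = n/r =: k$ and $r \mid n$.

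The step I expect to require the most care is the bookkeeping in the general $sl_r$ setting, specifically the treatment of the tag edges and of the precise multiplicity carried by each boundary edge. A tag edge terminates in the interior rather than at a second vertex, so it contributes to the multiplicity sum at only one endpoint and must be accounted for separately when one equates the black-side and white-side totals; likewise one must confirm that each black boundary vertex contributes exactly $1$, being of simple degree $1$, rather than a larger multiplicity. Once these contributions are shown to cancel or to be absorbed uniformly into the two sides of the identity $r\,w = r\,b + n$, the conclusion $w - b = n/r$ is immediate and, crucially, independent of the particular web chosen. For the application in this paper it suffices to record the clean trivalent $sl_3$ instance, where no tags appear and the count is exact, so that a web with $12$ black boundary vertices has exceedance $k = 12/3 = 4$.
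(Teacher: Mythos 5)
Your proposal is correct: the bipartite double count $3w = 3b + n$ (or $rw = rb + n$ with multiplicities) immediately gives $w-b = n/r$, and your bookkeeping of boundary degrees and tags is sound. Note that the paper itself supplies no proof of this proposition -- it is quoted from Fraser--Lam--Le \cite[Section 2.1]{fraser_dimers_2019} -- and the degree/edge-counting argument you give is precisely the standard argument underlying that reference, so there is no divergence of approach to report; in particular your clean $sl_3$ instance is exactly what the paper needs, since Proposition \ref{prp:n=kr} is only invoked for trivalent non-elliptic webs with $12$ black boundary vertices, where $k=12/3=4$.
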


Note that exceedance $k$ mentioned here is consistent with the $k$ referred to in $\Gr{k}{n}$.

\begin{lemma}\label{lem:1con_12}
	All non-elliptic webs with $12$ black boundary vertices and just one connected component are dihedral translates of the $9$ webs labeled from $W_1$ to $W_9$ in \Cref{fig:1con_12}.
\end{lemma}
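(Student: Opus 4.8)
The plan is to turn the classification into a finite case analysis indexed by the number of cycles $c$, so the first thing I would do is extract the numerology forced by $n=12$ and $m=1$. By \Cref{prp:Vint} the number of internal vertices is $|V_{int}| = 12 + 2c - 2 = 10 + 2c$, and since the boundary is entirely black, \Cref{prp:n=kr} gives exceedance $k = 12/3 = 4$; counting edges at white versus black vertices then yields $w_{int} - b_{int} = 4$, hence $b_{int} = 3 + c$ and $w_{int} = 7 + c$. I would also observe that every boundary vertex is a leaf, so deleting the boundary legs neither disconnects the graph nor changes its cyclomatic number: the interior $\mathring{W}$ is connected, has exactly $c$ cycles and $9 + 3c$ edges, each black internal vertex retains degree $3$ in $\mathring{W}$, and each white internal vertex has degree $3$ minus its number of boundary legs, these legs totalling $12$.

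Next I would bound $c$ from above. Combining $|V_{int}| = 10 + 2c$ with the refined lower bounds of \Cref{prp:c_Vint} and the extension listed immediately after it forces $|V_{int}| \ge 2c + f(c)$ with $f$ nondecreasing and $f(6)=f(7)=10$; since the same argument continues to give $f(c) > 10$ once $c$ is large, the requirement $f(c)\le 10$ caps $c$ at a small value and leaves only finitely many cases, with the largest admissible $c$ occurring exactly when $|V_{int}|=10+2c$ meets the lower bound with equality. I would then treat each value of $c$ in turn. For $c=0$ the interior is a bipartite tree with three trivalent black nodes and seven white nodes, whose white leaves and degree-$2$ white vertices absorb the twelve legs; these trees together with their admissible leg placements are enumerated directly. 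For $c\ge 1$ I would isolate the cyclic core of $\mathring{W}$: non-ellipticity forces every bounded face to have at least six sides (no bigons, no squares), and \Cref{lem:interior} produces two adjacent bivalent interior vertices whenever the interior consists only of cycles, which lets me peel off chains and pin the hexagonal core down to a short finite list in each case.

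With the finitely many interiors fixed, I would reattach the boundary legs in every way compatible with planarity, the proper bicoloring, degree $3$ at each internal vertex, and the total of twelve boundary vertices, discarding any attachment that creates an elliptic (bigon or square) face. Finally I would quotient the resulting list by the dihedral action generated by $\operatorname{rot}$ and $\operatorname{refl}$ on the boundary labels, collapsing dihedral translates, and verify that precisely the nine representatives $W_1,\dots,W_9$ of \Cref{fig:1con_12} survive.

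The hard part is the middle stage: simultaneously guaranteeing \emph{completeness} and \emph{non-redundancy} in the core-and-attachment enumeration. Ensuring that no admissible web is overlooked, especially the more intricate multi-cycle cores near the top of the feasible range of $c$, while correctly recognizing when two superficially different webs are dihedral translates of one another, is where the genuine effort lies. \Cref{lem:interior} is the main lever for keeping the core enumeration finite, and the rigidity coming from $|V_{int}| = 10 + 2c$ meeting the bound of \Cref{prp:c_Vint} is what makes the high-cycle cases tractable rather than open-ended.
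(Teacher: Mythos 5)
Your skeleton --- induction-free case analysis on the cycle number $c$, driven by $|V_{int}|=2c+10$ from \Cref{prp:Vint}, the split $w_{int}=7+c$, $b_{int}=3+c$ from \Cref{prp:n=kr}, and the cap on $c$ from \Cref{prp:c_Vint} --- is exactly the paper's, and your vertex counts are correct. The genuine gap is in how the cases are disposed of. You never actually eliminate $c=4,5,6,7$; worse, your plan treats the ``multi-cycle cores near the top of the feasible range of $c$'' as enumeration cases to be worked through, when in fact no webs exist there at all, and the arguments showing this are the substantive content of those cases. The paper kills $c=4$ and $c=5$ by comparing the available black internal vertices ($b_{int}=7$, resp.\ $8$) with the minimum number of black vertices needed to build $4$, resp.\ $5$, cycles (at least $8$, resp.\ $9$), and kills $c=6$ and $c=7$ by noting that there $|V_{int}|=2c+10$ meets the lower bound of \Cref{prp:c_Vint} \emph{with equality}, so every internal vertex must lie on a cycle; then \Cref{lem:interior} produces two adjacent bivalent interior vertices whose remaining edges must run to the boundary, forcing a black internal vertex adjacent to a black boundary vertex --- impossible in a properly bicolored web. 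Your inequality $f(c)\le 10$ cannot see any of this precisely because it holds with equality at $c=6,7$; equality is where the extra argument is required, not where the case closes.

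The second gap is that for $c\le 3$ the proposal defers the whole enumeration (``pin the hexagonal core down to a short finite list,'' ``verify that precisely the nine representatives survive''). For a classification lemma this enumeration \emph{is} the proof: one must show that for $c=3$ and $c=2$ the minimal cycle configurations consume all but $3$, resp.\ $4$, white internal vertices, which are then forced to form claws, yielding the unique webs $W_1$ and $W_2$; that for $c=1$ the single cycle is a hexagon or an octagon, giving exactly $W_3,W_4$ in the first subcase and $W_5$ in the second; and that for $c=0$ a count of claws (at least five are needed, since fewer claws would require at least eight white internal vertices while only $w_{int}=7$ exist) yields exactly $W_6$ through $W_9$ in \Cref{fig:1con_12}. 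None of these steps appear in your write-up, so as it stands the proposal establishes the numerology and the finiteness of the search, but not the classification itself.
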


\begin{figure}[h]
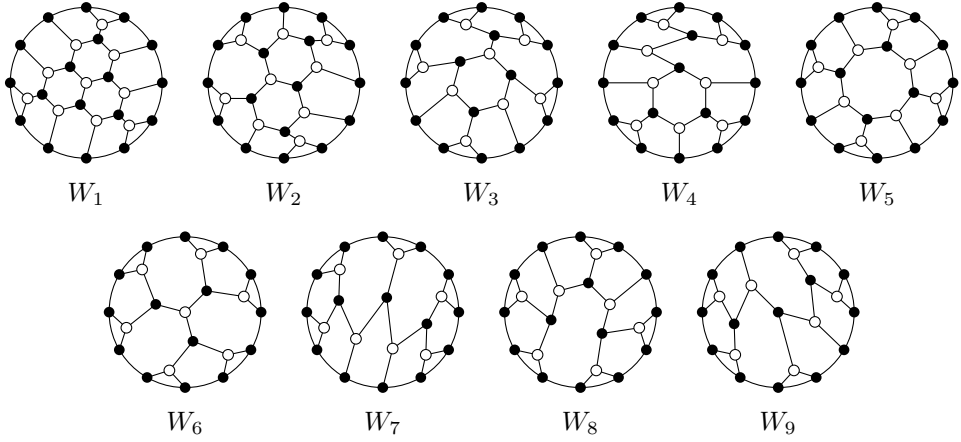

	\centering
	\begin{minipage}[t]{0.19\textwidth}
		\centering
		\Disk{
			\coordinate (A) at (75:0.8);
			\coordinate (B) at (-45:0.8);
			\coordinate (C) at (-165:0.8);
			\coordinate (D) at (45:{0.3*sqrt(3)});
			\coordinate (E) at (-15:{0.3*sqrt(3)});
			\coordinate (F) at (-75:{0.3*sqrt(3)});
			\coordinate (G) at (-135:{0.3*sqrt(3)});
			\coordinate (H) at (165:{0.3*sqrt(3)});
			\coordinate (I) at (105:{0.3*sqrt(3)});
			\coordinate (J) at (0:0);
			
			\coordinate (a) at (75:0.6);
			\coordinate (b) at (-45:0.6);
			\coordinate (c) at (-165:0.6);
			\coordinate (d) at (15:0.3);
			\coordinate (e) at (-105:0.3);
			\coordinate (f) at (135:0.3);
			
			\draw (1)--(A);		\draw (2)--(A);		\draw (3)--(D);		\draw (4)--(E);
			\draw (5)--(B);		\draw (6)--(B);		\draw (7)--(F);		\draw (8)--(G);
			\draw (9)--(C);		\draw (10)--(C);	\draw (11)--(H);	\draw (12)--(I);
			
			\draw (a)--(A);		\draw (a)--(D);		\draw (a)--(I);
			\draw (b)--(B);		\draw (b)--(E);		\draw (b)--(F);
			\draw (c)--(C);		\draw (c)--(G);		\draw (c)--(H);
			\draw (d)--(D);		\draw (d)--(E);		\draw (d)--(J);
			\draw (e)--(F);		\draw (e)--(G);		\draw (e)--(J);
			\draw (f)--(H);		\draw (f)--(I);		\draw (f)--(J);
			
			\foreach \p in {A,B,C,D,E,F,G,H,I,J} {
				\fill[white] (\p) circle (2pt);
				\draw (\p) circle (2pt);
			}
			\foreach \p in {a,b,c,d,e,f}{
				\fill (\p) circle (2pt);
			}
			\node[below] at (0,-1.2) {$W_1$};
		}
	\end{minipage}
	\begin{minipage}[t]{0.19\textwidth}
		\centering
		\Disk{
			\coordinate (A) at (45:0.8);
			\coordinate (B) at (-75:0.8);
			\coordinate (C) at (-165:0.8);
			\coordinate (D) at (135:0.8);
			\coordinate (E) at ({asin(sqrt(39)*2/13)-195}:{sqrt(13)*0.18});
			\coordinate (F) at ({165-asin(sqrt(39)*2/13)}:{sqrt(13)*0.18});
			\coordinate (G) at ({(asin(sqrt(21)/7))-15}:{sqrt(7)*0.18});
			\coordinate (H) at ({-(asin(sqrt(21)/7))-15}:{sqrt(7)*0.18});
			\coordinate (I) at (165:0.18);
			
			\coordinate (a) at ({asin(sqrt(39)*2/13)-15}:{sqrt(13)*0.18});
			\coordinate (b) at ({-asin(sqrt(39)*2/13)-15}:{sqrt(13)*0.18});
			\coordinate (c) at ({(asin(sqrt(21)/7))-195}:{sqrt(7)*0.18});
			\coordinate (d) at ({(165-asin(sqrt(21)/7))}:{sqrt(7)*0.18});
			\coordinate (e) at (-15:0.18);
			
			\draw (1)--(F);		\draw (2)--(A);		\draw (3)--(A);		\draw (4)--(G);
			\draw (5)--(H);		\draw (6)--(B);		\draw (7)--(B);		\draw (8)--(E);
			\draw (9)--(C);		\draw (10)--(C);	\draw (11)--(D);	\draw (12)--(D);
			
			\draw (a)--(A);		\draw (a)--(F);		\draw (a)--(G);
			\draw (b)--(B);		\draw (b)--(E);		\draw (b)--(H);
			\draw (c)--(C);		\draw (c)--(E);		\draw (c)--(I);
			\draw (d)--(D);		\draw (d)--(F);		\draw (d)--(I);
			\draw (e)--(G);		\draw (e)--(H);		\draw (e)--(I);
			
			\foreach \p in {A,B,C,D,E,F,G,H,I} {
				\fill[white] (\p) circle (2pt);
				\draw (\p) circle (2pt);
			}
			\foreach \p in {a,b,c,d,e}{
				\fill (\p) circle (2pt);
			}
			\node[below] at (0,-1.2) {$W_2$};
		}
	\end{minipage}
	\begin{minipage}[t]{0.19\textwidth}
		\centering
		\Disk{
			\coordinate (A) at (45:0.8);
			\coordinate (B) at (-15:0.8);
			\coordinate (C) at (-105:0.8);
			\coordinate (D) at (165:0.8);
			\coordinate (E) at (105:0.8);
			\coordinate (F) at (75:0.4);
			\coordinate (G) at (-45:0.4);
			\coordinate (H) at (-165:0.4);
			
			\coordinate (a) at (75:0.65);
			\coordinate (b) at (15:0.4);
			\coordinate (c) at (-105:0.4);
			\coordinate (d) at (135:0.4);
			
			\draw (1)--(E);		\draw (2)--(A);		\draw (3)--(A);		\draw (4)--(B);
			\draw (5)--(B);		\draw (6)--(G);		\draw (7)--(C);		\draw (8)--(C);
			\draw (9)--(H);		\draw (10)--(D);	\draw (11)--(D);	\draw (12)--(E);
			
			\draw (a)--(A);		\draw (a)--(E);		\draw (a)--(F);
			\draw (b)--(B);		\draw (b)--(F);		\draw (b)--(G);
			\draw (c)--(C);		\draw (c)--(G);		\draw (c)--(H);
			\draw (d)--(D);		\draw (d)--(F);		\draw (d)--(H);
			
			\foreach \p in {A,B,C,D,E,F,G,H} {
				\fill[white] (\p) circle (2pt);
				\draw (\p) circle (2pt);
			}
			\foreach \p in {a,b,c,d}{
				\fill (\p) circle (2pt);
			}
			\node[below] at (0,-1.2) {$W_3$};
		}
	\end{minipage}
	\begin{minipage}[t]{0.19\textwidth}
		\centering
		\Disk{
			\coordinate (A) at (45:0.8);
			\coordinate (B) at (-45:0.8);
			\coordinate (C) at (-135:0.8);
			\coordinate (D) at (105:0.8);
			\coordinate (E) at (-90:0.6);
			\coordinate (F) at (135:0.6);
			\coordinate (G) at (0:{sqrt(3)*0.2});
			\coordinate (H) at (180:{sqrt(3)*0.2});
			
			\coordinate (a) at (75:0.65);
			\coordinate (b) at ({-acos(sqrt(3/7))}:{sqrt(7)*0.2});
			\coordinate (c) at ({acos(sqrt(3/7))-180}:{sqrt(7)*0.2});
			\coordinate (d) at (90:0.2);
			
			\draw (1)--(D);		\draw (2)--(A);		\draw (3)--(A);		\draw (4)--(G);
			\draw (5)--(B);		\draw (6)--(B);		\draw (7)--(E);		\draw (8)--(C);
			\draw (9)--(C);		\draw (10)--(H);	\draw (11)--(F);	\draw (12)--(D);
			
			\draw (a)--(A);		\draw (a)--(D);		\draw (a)--(F);
			\draw (b)--(B);		\draw (b)--(E);		\draw (b)--(G);
			\draw (c)--(C);		\draw (c)--(E);		\draw (c)--(H);
			\draw (d)--(F);		\draw (d)--(G);		\draw (d)--(H);
			
			\foreach \p in {A,B,C,D,E,F,G,H} {
				\fill[white] (\p) circle (2pt);
				\draw (\p) circle (2pt);
			}
			\foreach \p in {a,b,c,d}{
				\fill (\p) circle (2pt);
			}
			\node[below] at (0,-1.2) {$W_4$};
		}
	\end{minipage}
	\begin{minipage}[t]{0.19\textwidth}
		\centering
		\Disk{
			\coordinate (A) at (75:0.8);
			\coordinate (B) at (-15:0.8);
			\coordinate (C) at (-105:0.8);
			\coordinate (D) at (165:0.8);
			\coordinate (E) at (30:0.5);
			\coordinate (F) at (-60:0.5);
			\coordinate (G) at (-150:0.5);
			\coordinate (H) at (120:0.5);
			
			\coordinate (a) at (75:0.5);
			\coordinate (b) at (-15:0.5);
			\coordinate (c) at (-105:0.5);
			\coordinate (d) at (165:0.5);
			
			\draw (1)--(A);		\draw (2)--(A);		\draw (3)--(E);		\draw (4)--(B);
			\draw (5)--(B);		\draw (6)--(F);		\draw (7)--(C);		\draw (8)--(C);
			\draw (9)--(G);		\draw (10)--(D);	\draw (11)--(D);	\draw (12)--(H);
			
			\draw (a)--(A);		\draw (a)--(E);		\draw (a)--(H);
			\draw (b)--(B);		\draw (b)--(E);		\draw (b)--(F);
			\draw (c)--(C);		\draw (c)--(F);		\draw (c)--(G);
			\draw (d)--(D);		\draw (d)--(G);		\draw (d)--(H);
			
			\foreach \p in {A,B,C,D,E,F,G,H} {
				\fill[white] (\p) circle (2pt);
				\draw (\p) circle (2pt);
			}
			\foreach \p in {a,b,c,d}{
				\fill (\p) circle (2pt);
			}
			\node[below] at (0,-1.2) {$W_5$};
		}
	\end{minipage}
	\\\vspace{0.2cm}
	\begin{minipage}[t]{0.19\textwidth}
		\centering
		\Disk{
			\coordinate (A) at (75:0.8);
			\coordinate (B) at (15:0.8);
			\coordinate (C) at (-45:0.8);
			\coordinate (D) at (-105:0.8);
			\coordinate (E) at (-165:0.8);
			\coordinate (F) at (135:0.8);
			\coordinate (G) at (0:0);
			
			\coordinate (a) at (45:0.4);
			\coordinate (b) at (-75:0.4);
			\coordinate (c) at (165:0.4);
			
			\draw (1)--(A);		\draw (2)--(A);		\draw (3)--(B);		\draw (4)--(B);
			\draw (5)--(C);		\draw (6)--(C);		\draw (7)--(D);		\draw (8)--(D);
			\draw (9)--(E);		\draw (10)--(E);	\draw (11)--(F);	\draw (12)--(F);
			
			\draw (a)--(A);		\draw (a)--(B);		\draw (a)--(G);
			\draw (b)--(C);		\draw (b)--(D);		\draw (b)--(G);
			\draw (c)--(E);		\draw (c)--(F);		\draw (c)--(G);
			
			\foreach \p in {A,B,C,D,E,F,G} {
				\fill[white] (\p) circle (2pt);
				\draw (\p) circle (2pt);
			}
			\foreach \p in {a,b,c}{
				\fill (\p) circle (2pt);
			}
			\node[below] at (0,-1.2) {$W_6$};
		}
	\end{minipage}
	\begin{minipage}[t]{0.19\textwidth}
		\centering
		\Disk{
			\coordinate (A) at (75:0.8);
			\coordinate (B) at (15:0.8);
			\coordinate (C) at (-45:0.8);
			\coordinate (D) at (-165:0.8);
			\coordinate (E) at (135:0.8);
			\coordinate (F) at (-75:0.5);
			\coordinate (G) at (-135:0.5);
			
			\coordinate (a) at (-15:0.6);
			\coordinate (b) at (165:0.6);
			\coordinate (c) at (75:0.2);
			
			\draw (1)--(A);		\draw (2)--(A);		\draw (3)--(B);		\draw (4)--(B);
			\draw (5)--(C);		\draw (6)--(C);		\draw (7)--(F);		\draw (8)--(G);
			\draw (9)--(D);		\draw (10)--(D);	\draw (11)--(E);	\draw (12)--(E);
			
			\draw (a)--(B);		\draw (a)--(C);		\draw (a)--(F);
			\draw (b)--(D);		\draw (b)--(E);		\draw (b)--(G);
			\draw (c)--(A);		\draw (c)--(F);		\draw (c)--(G);
			
			\foreach \p in {A,B,C,D,E,F,G} {
				\fill[white] (\p) circle (2pt);
				\draw (\p) circle (2pt);
			}
			\foreach \p in {a,b,c}{
				\fill (\p) circle (2pt);
			}
			\node[below] at (0,-1.2) {$W_7$};
		}
	\end{minipage}
	\begin{minipage}[t]{0.19\textwidth}
		\centering
		\Disk{
			\coordinate (A) at (75:0.8);
			\coordinate (B) at (-15:0.8);
			\coordinate (C) at (-75:0.8);
			\coordinate (D) at (-135:0.8);
			\coordinate (E) at (165:0.8);
			\coordinate (F) at (15:0.4);
			\coordinate (G) at (135:0.4);
			
			\coordinate (a) at (75:0.4);
			\coordinate (b) at (-45:0.4);
			\coordinate (c) at (-165:0.4);
			
			\draw (1)--(A);		\draw (2)--(A);		\draw (3)--(F);		\draw (4)--(B);
			\draw (5)--(B);		\draw (6)--(C);		\draw (7)--(C);		\draw (8)--(D);
			\draw (9)--(D);		\draw (10)--(E);	\draw (11)--(E);	\draw (12)--(G);
			
			\draw (a)--(A);		\draw (a)--(F);		\draw (a)--(G);
			\draw (b)--(B);		\draw (b)--(C);		\draw (b)--(F);
			\draw (c)--(D);		\draw (c)--(E);		\draw (c)--(G);
			
			\foreach \p in {A,B,C,D,E,F,G} {
				\fill[white] (\p) circle (2pt);
				\draw (\p) circle (2pt);
			}
			\foreach \p in {a,b,c}{
				\fill (\p) circle (2pt);
			}
			\node[below] at (0,-1.2) {$W_8$};
		}
	\end{minipage}
	\begin{minipage}[t]{0.19\textwidth}
		\centering
		\Disk{
			\coordinate (A) at (75:0.8);
			\coordinate (B) at (15:0.8);
			\coordinate (C) at (-75:0.8);
			\coordinate (D) at (-135:0.8);
			\coordinate (E) at (165:0.8);
			\coordinate (F) at (-15:0.5);
			\coordinate (G) at (135:0.5);
			
			\coordinate (a) at (45:0.6);
			\coordinate (b) at (-165:0.6);
			\coordinate (c) at (0:0);
			
			\draw (1)--(A);		\draw (2)--(A);		\draw (3)--(B);		\draw (4)--(B);
			\draw (5)--(F);		\draw (6)--(C);		\draw (7)--(C);		\draw (8)--(D);
			\draw (9)--(D);		\draw (10)--(E);	\draw (11)--(E);	\draw (12)--(G);
			
			\draw (a)--(A);		\draw (a)--(B);		\draw (a)--(F);
			\draw (b)--(D);		\draw (b)--(E);		\draw (b)--(G);
			\draw (c)--(C);		\draw (c)--(F);		\draw (c)--(G);
			
			\foreach \p in {A,B,C,D,E,F,G} {
				\fill[white] (\p) circle (2pt);
				\draw (\p) circle (2pt);
			}
			\foreach \p in {a,b,c}{
				\fill (\p) circle (2pt);
			}
			\node[below] at (0,-1.2) {$W_9$};
		}
	\end{minipage}
	\caption{Non-elliptic webs with 12 black boundary vertices and 1 connected component.}
	\label{fig:1con_12}
\end{figure}

\begin{proof}
	Let $W$ be a web with 1 connected component, that is $m=1$, then $|V_{int}|=12+2c-2=2c+10$ by \Cref{prp:Vint}. Furthermore, $c\leq7$ by \Cref{prp:c_Vint}.
	
	If $c=7$, then $|V_{int}|=24$, and so all of these $24$ vertices are required to construct the $7$ cycles, and at least one black vertex should be connected to the boundary by \Cref{lem:interior}. However, this is impossible because in plabic graphs, there are no white vertices which not in cycles connect the black internal vertex with a black boundary vertex. 
	
	If $c=6$, then $|V_{int}|=22$; similar to when $c=7$, all of these $22$ vertices are required to construct the $6$ cycles, but at least one black vertex should be connected to the boundary by \Cref{lem:interior}, which is also impossible. 
	
	If $c=5$, then $|V_{int}|=20$. There are $12$ white and $8$ black internal vertices since $k=4$ by \Cref{prp:n=kr}. Note that at least $9$ black vertices are needed to construct 5 cycles, this is impossible since we only have 8 black vertices. 
	
	If $c=4$, then $|V_{int}|=18$. There are $11$ white and $7$ black internal vertices since $k=4$ by \Cref{prp:n=kr}. Note that at least 8 black vertices are needed to construct 4 cycles, this is impossible since we only have 7 black vertices.
	
	If $c=3$, then $|V_{int}|=16$. There are $10$ white and $6$ black internal vertices since $k=4$ from \Cref{prp:n=kr}. Note that constructing 3 cycles requires at least $7$ white and $6$ black vertices. In this case, the 3 remaining white vertices are utilized to connect the bivalent black vertices on the cycles with 2 boundary vertices to construct a claw. Now we obtained one non-elliptic web $W_1$ which shown in \Cref{fig:1con_12}.
	
	If $c=2$, then $|V_{int}|=14$. There are $9$ white and $5$ black internal vertices since $k=4$ from \Cref{prp:n=kr}. Note that constructing 2 cycles requires at least $5$ white and $5$ black vertices. In this case, the $4$ remaining white vertices are utilized to connect the bivalent black vertices on the cycles with 2 boundary vertices to construct a claw. Now we obtained one non-elliptic web $W_2$ which shown in \Cref{fig:1con_12}.
	
	If $c=1$, then $|V_{int}|=12$. There are $8$ white internal vertices and $4$ black since $k=4$ from \Cref{prp:n=kr}. If the cycle is a hexagon, then there are $3$ white and $3$ black vertices in it, the remaining $5$ white and 1 black vertices can be used to connect the cycle to boundary vertices. Therefore, there are $2$ non-elliptic webs, $W_3$ and $W_4$ can be obtained which shown in \Cref{fig:1con_12}. If the cycle is an octagon, then there are $4$ black and $4$ white vertices in it. Therefore, the other $4$ white vertices connect the black vertices in the octagon to $2$ boundary vertices to construct a claw, there is another web which is $W_5$, in \Cref{fig:1con_12}.
	
	And if $c=0$, then $|V_{int}|=10$. There are $7$ white and $3$ black internal vertices. Consider the number of claws in web, if there are 6 claws in the web, because there are only $12$ boundary vertices in the web, there is only one case $W_6$. If there are 5 claws in the web, there are $2$ boundary vertices are not in the $5$ claws, and there are $3$ cases, which are $W_7$ to $W_9$. If there are less than 5 claws in the web, then less than 8 boundary vertices are contained in these claws, and the other boundary vertices are connected to different white internal vertices, then at least 8 white vertices are required, which is impossible. Similarly, the cases with fewer claws in the webs are also impossible.
\end{proof}

\begin{lemma}\label{lem:2con_12}
	All non-elliptic webs with $12$ black boundary vertices and $2$ connected components are the dihedral translates of the $10$ webs labeled from $W_{10}$ to $W_{19}$ in \Cref{fig:2con_12}.
\end{lemma}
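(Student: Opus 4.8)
The plan is to mirror the counting strategy of \Cref{lem:1con_12}, now with $m=2$, and to combine it with a structural decomposition forced by planarity. Setting $m=2$ in \Cref{prp:Vint} gives $|V_{int}| = 12 + 2c - 4 = 2c+8$, while \Cref{prp:n=kr} with exceedance $k=4$ gives $c+6$ white and $c+2$ black internal vertices. The extended form of \Cref{prp:c_Vint} already forces $c\le 4$ (since $c\ge 5$ would require $|V_{int}|\ge 2c+9 > 2c+8$), but the black-vertex budget will tighten this drastically once the component structure is taken into account.

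The key reduction is that a web with two connected components splits its $12$ boundary vertices into two cyclically contiguous arcs, since any interleaving of the two arcs would force a crossing and violate planarity. Applying \Cref{prp:n=kr} to each component separately, the number of boundary vertices on each arc is divisible by $3$; as each arc has at least $3$ vertices, the only admissible splittings are $(3,9)$ and $(6,6)$. A component on $3$ boundary vertices is cycle-free: by \Cref{prp:Vint} a cycle would require $|V_{int}|=2c+1$, which is incompatible with \Cref{prp:c_Vint} for $c\ge 1$, so with all interior vertices trivalent it is the unique tripod. A component on $6$ boundary vertices has $|V_{int}|=2c'+4$ with $c'+1$ black internal vertices; since a hexagonal cell already needs $3$ black vertices, $c'=1$ is impossible and the component is the acyclic skeleton consisting of a central black vertex joined to three white claws. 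A component on $9$ boundary vertices has $|V_{int}|=2c''+7$ with $c''+2$ black internal vertices, and the per-cycle black-vertex requirements used in \Cref{lem:1con_12} (two cycles need at least $5$ black vertices) force $c''\le 1$, with $c''=1$ realized by a single hexagon carrying three attached claws. In particular the global cycle count collapses to $c\in\{0,1\}$.

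With this in hand I would enumerate locally and then assemble. For the $(6,6)$ splitting both components are acyclic, so I first classify the acyclic $6$-webs by listing the planar, laminar ways the central black vertex can distribute its three white claws across the six arc vertices, then pair two such pieces and reduce modulo the dihedral group. For the $(3,9)$ splitting I attach a tripod on the short arc to each admissible $9$-web on the long arc, enumerating the $9$-webs in the two cases $c''=0$ (acyclic) and $c''=1$ (hexagon plus claws); here trivalence and \Cref{lem:interior} rule out interior-only cycle configurations exactly as in the previous lemma. Collecting all assemblies and discarding dihedral duplicates should yield precisely the ten representatives $W_{10},\dots,W_{19}$ of \Cref{fig:2con_12}.

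The hard part will be the completeness and non-redundancy of these local enumerations together with the dihedral bookkeeping. I must verify that every acyclic $6$-web and every $9$-web with $c''\le 1$ is accounted for, that the laminar planarity constraints correctly limit the claw placements, and---most delicately---that distinct assemblies related by a rotation or reflection of the disk are identified, so that neither spurious extra webs nor accidental omissions slip through. Confirming that exactly ten dihedral classes survive is where the careful casework will concentrate.
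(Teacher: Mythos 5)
Your proposal is correct and ultimately reaches the same case breakdown as the paper, but it takes a genuinely different route through it. The paper stratifies by the global cycle number $c$: it eliminates $c=4,3,2$ by comparing the $c+2$ available black internal vertices against the minimum number of black vertices needed to build that many cycles, and only then examines how the components split the structure in the cases $c=1$ and $c=0$. You instead decompose by the boundary first: planarity forces the two components onto contiguous arcs, and \Cref{prp:n=kr} applied componentwise makes each arc length divisible by $3$, so only the splittings $(3,9)$ and $(6,6)$ survive; all cycle counting is then done per component. Your organization buys exact component boundary sizes from the start (the paper's $c=1$ case only asserts ``at least $9$'' and ``at least $3$'', which pins down $(3,9)$ only after combining with the total of $12$), and it makes the final assembly and dihedral bookkeeping more systematic; what it costs is re-deriving vertex budgets per component, where the paper simply reuses the global counts from \Cref{lem:1con_12}. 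Note also that for the five acyclic $(3,9)$ assemblies ($W_{15}$--$W_{19}$) the paper avoids direct enumeration by citing the classification of nine-boundary-vertex non-elliptic webs in \cite{fraser_dimers_2019}; you could invoke the same shortcut rather than listing them by hand. Two small points to patch in a full write-up: excluding $c'\ge 2$ for the $6$-vertex component requires \Cref{prp:c_Vint} (which gives $|V_{int}|\ge 2c'+6>2c'+4$), not just the hexagon count that kills $c'=1$; and the assertion that every component has at least $3$ boundary vertices needs a sentence ruling out closed components, e.g.\ by Euler's formula no closed trivalent bipartite planar graph with all faces of length at least six can exist.
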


\begin{figure}[h]
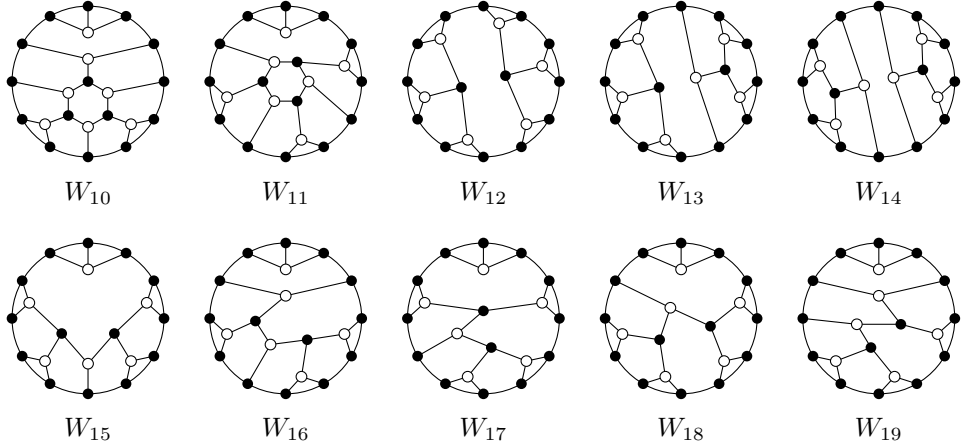

	\centering
	\begin{minipage}[t]{0.19\textwidth}
		\centering
		\Disk{
			\coordinate (A) at (90:0.65);
			\coordinate (B) at (-45:0.8);
			\coordinate (C) at (-135:0.8);
			\coordinate (D) at (-90:0.6);
			\coordinate (E) at (90:0.3);
			\coordinate (F) at (-30:0.3);
			\coordinate (G) at (-150:0.3);
			
			\coordinate (a) at (-60:{0.3*sqrt(3)});
			\coordinate (b) at (-120:{0.3*sqrt(3)});
			\coordinate (c) at (0:0);
			
			\draw (1)--(A);		\draw (2)--(A);		\draw (3)--(E);		\draw (4)--(F);
			\draw (5)--(B);		\draw (6)--(B);		\draw (7)--(D);		\draw (8)--(C);
			\draw (9)--(C);		\draw (10)--(G);	\draw (11)--(E);	\draw (12)--(A);
			
			\draw (a)--(B);		\draw (a)--(D);		\draw (a)--(F);
			\draw (b)--(C);		\draw (b)--(D);		\draw (b)--(G);
			\draw (c)--(E);		\draw (c)--(F);		\draw (c)--(G);
			
			\foreach \p in {A,B,C,D,E,F,G} {
				\fill[white] (\p) circle (2pt);
				\draw (\p) circle (2pt);
			}
			\foreach \p in {a,b,c}{
				\fill (\p) circle (2pt);
			}
			\node[below] at (0,-1.2) {$W_{10}$};
		}
	\end{minipage}
	\begin{minipage}[t]{0.19\textwidth}
		\centering
		\Disk{
			\coordinate (A) at (90:0.65);
			\coordinate (B) at (15:0.8);
			\coordinate (C) at (-75:0.8);
			\coordinate (D) at (-165:0.8);
			\coordinate (E) at (0:0.3);
			\coordinate (F) at (-120:0.3);
			\coordinate (G) at (120:0.3);
			
			\coordinate (a) at (60:0.3);
			\coordinate (b) at (-60:0.3);
			\coordinate (c) at (180:0.3);
			
			\draw (1)--(A);		\draw (2)--(A);		\draw (3)--(B);		\draw (4)--(B);
			\draw (5)--(E);		\draw (6)--(C);		\draw (7)--(C);		\draw (8)--(F);
			\draw (9)--(D);		\draw (10)--(D);	\draw (11)--(G);	\draw (12)--(A);
			
			\draw (a)--(B);		\draw (a)--(E);		\draw (a)--(G);
			\draw (b)--(C);		\draw (b)--(E);		\draw (b)--(F);
			\draw (c)--(D);		\draw (c)--(F);		\draw (c)--(G);
			
			\foreach \p in {A,B,C,D,E,F,G} {
				\fill[white] (\p) circle (2pt);
				\draw (\p) circle (2pt);
			}
			\foreach \p in {a,b,c}{
				\fill (\p) circle (2pt);
			}
			\node[below] at (0,-1.2) {$W_{11}$};
		}
	\end{minipage}
	\begin{minipage}[t]{0.19\textwidth}
		\centering
		\Disk{
			\coordinate (A) at (75:0.8);
			\coordinate (B) at (15:0.8);
			\coordinate (C) at (-45:0.8);
			\coordinate (D) at (-105:0.8);
			\coordinate (E) at (-165:0.8);
			\coordinate (F) at (135:0.8);
			
			\coordinate (a) at (15:0.3);
			\coordinate (b) at (-165:0.3);
			
			\draw (1)--(A);		\draw (2)--(A);		\draw (3)--(B);		\draw (4)--(B);
			\draw (5)--(C);		\draw (6)--(C);		\draw (7)--(D);		\draw (8)--(D);
			\draw (9)--(E);		\draw (10)--(E);	\draw (11)--(F);	\draw (12)--(F);
			
			\draw (a)--(A);		\draw (a)--(B);		\draw (a)--(C);
			\draw (b)--(D);		\draw (b)--(E);		\draw (b)--(F);
			
			\foreach \p in {A,B,C,D,E,F} {
				\fill[white] (\p) circle (2pt);
				\draw (\p) circle (2pt);
			}
			\foreach \p in {a,b}{
				\fill (\p) circle (2pt);
			}
			\node[below] at (0,-1.2) {$W_{12}$};
		}
	\end{minipage}
	\begin{minipage}[t]{0.19\textwidth}
		\centering
		\Disk{
			\coordinate (A) at (45:0.8);
			\coordinate (B) at (-15:0.8);
			\coordinate (C) at (-105:0.8);
			\coordinate (D) at (-165:0.8);
			\coordinate (E) at (135:0.8);
			\coordinate (F) at (15:0.2);
			
			\coordinate (a) at (15:0.6);
			\coordinate (b) at (-165:0.3);
			
			\draw (1)--(F);		\draw (2)--(A);		\draw (3)--(A);		\draw (4)--(B);
			\draw (5)--(B);		\draw (6)--(F);		\draw (7)--(C);		\draw (8)--(C);
			\draw (9)--(D);		\draw (10)--(D);	\draw (11)--(E);	\draw (12)--(E);
			
			\draw (a)--(A);		\draw (a)--(B);		\draw (a)--(F);
			\draw (b)--(C);		\draw (b)--(D);		\draw (b)--(E);
			
			\foreach \p in {A,B,C,D,E,F} {
				\fill[white] (\p) circle (2pt);
				\draw (\p) circle (2pt);
			}
			\foreach \p in {a,b}{
				\fill (\p) circle (2pt);
			}
			\node[below] at (0,-1.2) {$W_{13}$};
		}
	\end{minipage}
	\begin{minipage}[t]{0.19\textwidth}
		\centering
		\Disk{
			\coordinate (A) at (45:0.8);
			\coordinate (B) at (-15:0.8);
			\coordinate (C) at (-135:0.8);
			\coordinate (D) at (165:0.8);
			\coordinate (E) at (15:0.2);
			\coordinate (F) at (-165:0.2);
			
			\coordinate (a) at (15:0.6);
			\coordinate (b) at (-165:0.6);
			
			\draw (1)--(E);		\draw (2)--(A);		\draw (3)--(A);		\draw (4)--(B);
			\draw (5)--(B);		\draw (6)--(E);		\draw (7)--(F);		\draw (8)--(C);
			\draw (9)--(C);		\draw (10)--(D);	\draw (11)--(D);	\draw (12)--(F);
			
			\draw (a)--(A);		\draw (a)--(B);		\draw (a)--(E);
			\draw (b)--(C);		\draw (b)--(D);		\draw (b)--(F);
			
			\foreach \p in {A,B,C,D,E,F} {
				\fill[white] (\p) circle (2pt);
				\draw (\p) circle (2pt);
			}
			\foreach \p in {a,b}{
				\fill (\p) circle (2pt);
			}
			\node[below] at (0,-1.2) {$W_{14}$};
		}
	\end{minipage}
	\\\vspace{0.3cm}
	\begin{minipage}[t]{0.19\textwidth}
		\centering
		\Disk{
			\coordinate (A) at (90:0.65);
			\coordinate (B) at (15:0.8);
			\coordinate (C) at (-45:0.8);
			\coordinate (D) at (-135:0.8);
			\coordinate (E) at (165:0.8);
			\coordinate (F) at (-90:0.6);
			
			\coordinate (a) at (-30:0.4);
			\coordinate (b) at (-150:0.4);
			
			\draw (1)--(A);		\draw (2)--(A);		\draw (3)--(B);		\draw (4)--(B);
			\draw (5)--(C);		\draw (6)--(C);		\draw (7)--(F);		\draw (8)--(D);
			\draw (9)--(D);		\draw (10)--(E);	\draw (11)--(E);	\draw (12)--(A);
			
			\draw (a)--(B);		\draw (a)--(C);		\draw (a)--(F);
			\draw (b)--(D);		\draw (b)--(E);		\draw (b)--(F);
			
			\foreach \p in {A,B,C,D,E,F} {
				\fill[white] (\p) circle (2pt);
				\draw (\p) circle (2pt);
			}
			\foreach \p in {a,b}{
				\fill (\p) circle (2pt);
			}
			\node[below] at (0,-1.2) {$W_{15}$};
		}
	\end{minipage}
	\begin{minipage}[t]{0.19\textwidth}
		\centering
		\Disk{
			\coordinate (A) at (90:0.65);
			\coordinate (B) at (-15:0.8);
			\coordinate (C) at (-75:0.8);
			\coordinate (D) at (-165:0.8);
			\coordinate (E) at (-120:0.4);
			\coordinate (F) at (90:0.3);
			
			\coordinate (a) at (-45:0.4);
			\coordinate (b) at (-175:0.4);
			
			\draw (1)--(A);		\draw (2)--(A);		\draw (3)--(F);		\draw (4)--(B);
			\draw (5)--(B);		\draw (6)--(C);		\draw (7)--(C);		\draw (8)--(E);
			\draw (9)--(D);		\draw (10)--(D);	\draw (11)--(F);	\draw (12)--(A);
			
			\draw (a)--(B);		\draw (a)--(C);		\draw (a)--(E);
			\draw (b)--(D);		\draw (b)--(E);		\draw (b)--(F);
			
			\foreach \p in {A,B,C,D,E,F} {
				\fill[white] (\p) circle (2pt);
				\draw (\p) circle (2pt);
			}
			\foreach \p in {a,b}{
				\fill (\p) circle (2pt);
			}
			\node[below] at (0,-1.2) {$W_{16}$};
		}
	\end{minipage}
	\begin{minipage}[t]{0.19\textwidth}
		\centering
		\Disk{
			\coordinate (A) at (90:0.65);
			\coordinate (B) at (15:0.8);
			\coordinate (C) at (-45:0.8);
			\coordinate (D) at (-105:0.8);
			\coordinate (E) at (165:0.8);
			\coordinate (F) at (-150:0.4);
			
			\coordinate (a) at (-75:0.4);
			\coordinate (b) at (90:0.1);
			
			\draw (1)--(A);		\draw (2)--(A);		\draw (3)--(B);		\draw (4)--(B);
			\draw (5)--(C);		\draw (6)--(C);		\draw (7)--(D);		\draw (8)--(D);
			\draw (9)--(F);		\draw (10)--(E);	\draw (11)--(E);	\draw (12)--(A);
			
			\draw (a)--(C);		\draw (a)--(D);		\draw (a)--(F);
			\draw (b)--(B);		\draw (b)--(E);		\draw (b)--(F);
			
			\foreach \p in {A,B,C,D,E,F} {
				\fill[white] (\p) circle (2pt);
				\draw (\p) circle (2pt);
			}
			\foreach \p in {a,b}{
				\fill (\p) circle (2pt);
			}
			\node[below] at (0,-1.2) {$W_{17}$};
		}
	\end{minipage}
	\begin{minipage}[t]{0.19\textwidth}
		\centering
		\Disk{
			\coordinate (A) at (90:0.65);
			\coordinate (B) at (15:0.8);
			\coordinate (C) at (-45:0.8);
			\coordinate (D) at (-105:0.8);
			\coordinate (E) at (-165:0.8);
			\coordinate (F) at (135:0.2);
			
			\coordinate (a) at (-15:0.4);
			\coordinate (b) at (-135:0.4);
			
			\draw (1)--(A);		\draw (2)--(A);		\draw (3)--(B);		\draw (4)--(B);
			\draw (5)--(C);		\draw (6)--(C);		\draw (7)--(D);		\draw (8)--(D);
			\draw (9)--(E);		\draw (10)--(E);	\draw (11)--(F);	\draw (12)--(A);
			
			\draw (a)--(B);		\draw (a)--(C);		\draw (a)--(F);
			\draw (b)--(D);		\draw (b)--(E);		\draw (b)--(F);
			
			\foreach \p in {A,B,C,D,E,F} {
				\fill[white] (\p) circle (2pt);
				\draw (\p) circle (2pt);
			}
			\foreach \p in {a,b}{
				\fill (\p) circle (2pt);
			}
			\node[below] at (0,-1.2) {$W_{18}$};
		}
	\end{minipage}
	\begin{minipage}[t]{0.19\textwidth}
		\centering
		\Disk{
			\coordinate (A) at (90:0.65);
			\coordinate (B) at (-15:0.8);
			\coordinate (C) at (-75:0.8);
			\coordinate (D) at (-135:0.8);
			\coordinate (E) at (90:0.3);
			\coordinate (F) at (-165:0.3);
			
			\coordinate (a) at (-105:0.4);
			\coordinate (b) at (-15:0.3);
			
			\draw (1)--(A);		\draw (2)--(A);		\draw (3)--(E);		\draw (4)--(B);
			\draw (5)--(B);		\draw (6)--(C);		\draw (7)--(C);		\draw (8)--(D);
			\draw (9)--(D);		\draw (10)--(F);	\draw (11)--(E);	\draw (12)--(A);
			
			\draw (a)--(C);		\draw (a)--(D);		\draw (a)--(F);
			\draw (b)--(B);		\draw (b)--(E);		\draw (b)--(F);
			
			\foreach \p in {A,B,C,D,E,F} {
				\fill[white] (\p) circle (2pt);
				\draw (\p) circle (2pt);
			}
			\foreach \p in {a,b}{
				\fill (\p) circle (2pt);
			}
			\node[below] at (0,-1.2) {$W_{19}$};
		}
	\end{minipage}
	\caption{Non-elliptic webs with 12 black boundary vertices and 2 connected component.}
	\label{fig:2con_12}
\end{figure}

\begin{proof}
	Let $W$ be a web with 2 connected components, that is $m=2$, then $|V_{int}|=12+2c-4=2c+8$ by \Cref{prp:Vint}. Furthermore, $c\leq 4$ by \Cref{prp:c_Vint}.
	
	If $c=4$, then $|V_{int}|=16$. There are $10$ white and 6 black internal vertices since $k=4$ from \Cref{prp:n=kr}. Note that at least 8 black internal vertices are needed to construct 4 cycles, this is impossible since we only have 6 black vertices. 
	
	If $c=3$, then $|V_{int}|=14$. There are 9 white and 5 black internal vertices since $k=4$ from \Cref{prp:n=kr}. Note that at least 6 black internal vertices are needed to construct 3 cycles, this is impossible since we only have 5 black vertices. 
	
	If $c=2$, then $|V_{int}|=12$. There are $8$ white and 4 black internal vertices since $k=4$ from \Cref{prp:n=kr}. Note that at least 5 black internal vertices are needed to construct 2 cycles, this is impossible since we only have 4 black vertices.
	
	If $c=1$, then $|V_{int}|=10$. There are $7$ white and 3 black internal vertices since $k=4$ from \Cref{prp:n=kr}. Note that constructing the only cycle needs 3 white and 3 black vertices exactly, the other 4 white vertices are connected to boundary vertices. Consider the components of the web, the component containing the cycle contains at least 9 boundary vertices, and the other component contains at least 3 boundary vertices. So there are only the 2 cases $W_{10}$ and $W_{11}$ in \Cref{fig:2con_12}.
	
	If $c=0$, we have $|V_{int}|=8$. Since $k=4$ from \Cref{prp:n=kr}, there are 6 white and just 2 black internal vertices. Consider the 2 black vertices, if they are in the 2 different components, the 2 components are both contains 6 vertices, they are the 3 different cases $W_{12}$ to $W_{14}$ in \Cref{fig:2con_12}. And if the 2 black vertices are in the same component, there is a component have no black vertices, it is a tripod, the component with 2 black vertices can be seen as a non-elliptic web with 9 boundary vertices which enumerated in \cite{fraser_dimers_2019} and their dihedral translations, so there is 5 different webs from $W_{15}$ to $W_{19}$.
\end{proof}

\begin{lemma}\label{lem:3con_12}
	All non-elliptic webs with $12$ black boundary vertices and $3$ connected components are the dihedral translates of the $8$ webs from $W_{20}$ to $W_{27}$ in \Cref{fig:3con_12}.
\end{lemma}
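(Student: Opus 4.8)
The plan is to follow the same scheme as in the proofs of \Cref{lem:1con_12} and \Cref{lem:2con_12}: first determine $|V_{int}|$ and bound the number of cycles $c$, then eliminate all but one value of $c$ by a counting argument, and finally enumerate the surviving planar configurations up to dihedral symmetry. Setting $n=12$ and $m=3$ in \Cref{prp:Vint} gives $|V_{int}|=12+2c-6=2c+6$, and comparing this with \Cref{prp:c_Vint} forces $c\le 2$, since $c\ge 3$ would require $|V_{int}|\ge 2c+7>2c+6$. For each admissible $c$, \Cref{prp:n=kr} with exceedance $k=4$ splits $V_{int}$ into $(|V_{int}|+4)/2$ white and $(|V_{int}|-4)/2$ black vertices. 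This disposes of $c=2$ and $c=1$ exactly as before, by a black-vertex deficit: when $c=2$ there are only $3$ black interior vertices, whereas two (necessarily hexagonal) cycles need at least $5$; when $c=1$ there are only $2$ black vertices, whereas a single hexagon needs $3$. Hence only $c=0$ survives, with $|V_{int}|=6$, i.e.\ $5$ white and $1$ black interior vertex.

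Next I would show that the interior structure is rigid. The unique black interior vertex is trivalent, and since the web is properly bicolored with no bigons its three neighbours are three distinct white vertices; each such neighbour is itself trivalent and can spend its two remaining edges only on boundary vertices (it can meet neither the saturated black vertex again nor, by bicoloring, another white vertex), so each forms a claw. These assemble into one ``star'' component: a single black centre carrying three claws and meeting six boundary vertices. The two remaining white vertices likewise cannot touch the black vertex or one another, so each is joined to three boundary vertices and forms a tripod. Thus every such web is the non-crossing union of one star and two tripods, using $6+3+3=12$ boundary vertices.

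Finally I would enumerate the planar placements of one star and two tripods on the $12$-cycle up to dihedral equivalence, and this is the step I expect to be the main obstacle. The casework is organized by how the two tripods are situated relative to the star and to each other under the non-crossing constraint: a tripod may lie in a gap between two claws, inside a single claw, or be nested inside a sector of the other tripod, while the star itself may either enclose both tripods or be nested inside one of them; in parallel one must track whether the three claws are arranged sequentially or with one claw enclosing the other two (the latter already occurring in $W_{13}$). The difficulty is twofold: certifying completeness, using the non-crossing condition to discard every crossing arrangement and \Cref{prp:n=kr} to forbid spurious extra vertices, while simultaneously collapsing all configurations related by the $D_{12}$ action so that no web is double-counted. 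Carrying this bookkeeping out carefully should yield exactly the $8$ representatives $W_{20},\dots,W_{27}$ displayed in \Cref{fig:3con_12}.
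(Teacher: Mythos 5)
Your proposal is correct and takes essentially the same approach as the paper: the paper's own (very terse) proof also reduces the problem to the fact that the three components must be two tripods and one hexapod and then enumerates their relative positions to get $W_{20}$--$W_{27}$, while your cycle-counting preamble via \Cref{prp:Vint}, \Cref{prp:c_Vint} and \Cref{prp:n=kr} simply makes explicit the acyclicity step that the paper leaves implicit here (but spells out in the proofs of \Cref{lem:1con_12} and \Cref{lem:2con_12}). Note that both arguments ultimately assert, rather than exhaustively derive, that the dihedral placement count is $8$, so your honesty about that final bookkeeping matches the paper's own level of detail.
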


\begin{figure}[h]
	\centering
	\begin{minipage}[t]{0.23\textwidth}
		\centering
		\Disk{
			\coordinate (A) at (90:0.65);
			\coordinate (B) at (0:0.65);
			\coordinate (C) at (-75:0.8);
			\coordinate (D) at (-135:0.8);
			\coordinate (E) at (165:0.8);
			
			\coordinate (a) at (-135:0.3);
			
			\draw (1)--(A);		\draw (2)--(A);		\draw (3)--(B);		\draw (4)--(B);
			\draw (5)--(B);		\draw (6)--(C);		\draw (7)--(C);		\draw (8)--(D);
			\draw (9)--(D);		\draw (10)--(E);	\draw (11)--(E);	\draw (12)--(A);
			
			\draw (a)--(C);		\draw (a)--(D);		\draw (a)--(E);
			
			\foreach \p in {A,B,C,D,E} {
				\fill[white] (\p) circle (2pt);
				\draw (\p) circle (2pt);
			}
			\foreach \p in {a}{
				\fill (\p) circle (2pt);
			}
			\node[below] at (0,-1.2) {$W_{20}$};
		}
	\end{minipage}
	\begin{minipage}[t]{0.23\textwidth}
		\centering
		\Disk{
			\coordinate (A) at (90:0.65);
			\coordinate (B) at (0:0.65);
			\coordinate (C) at (-105:0.8);
			\coordinate (D) at (-165:0.8);
			\coordinate (E) at (-135:0.2);
			
			\coordinate (a) at (-135:0.6);
			
			\draw (1)--(A);		\draw (2)--(A);		\draw (3)--(B);		\draw (4)--(B);
			\draw (5)--(B);		\draw (6)--(E);		\draw (7)--(C);		\draw (8)--(C);
			\draw (9)--(D);		\draw (10)--(D);	\draw (11)--(E);	\draw (12)--(A);
			
			\draw (a)--(C);		\draw (a)--(D);		\draw (a)--(E);
			
			\foreach \p in {A,B,C,D,E} {
				\fill[white] (\p) circle (2pt);
				\draw (\p) circle (2pt);
			}
			\foreach \p in {a}{
				\fill (\p) circle (2pt);
			}
			\node[below] at (0,-1.2) {$W_{21}$};
		}
	\end{minipage}
	\begin{minipage}[t]{0.23\textwidth}
		\centering
		\Disk{
			\coordinate (A) at (60:0.65);
			\coordinate (B) at (60:0.3);
			\coordinate (C) at (-75:0.8);
			\coordinate (D) at (-135:0.8);
			\coordinate (E) at (165:0.8);
			
			\coordinate (a) at (-135:0.3);
			
			\draw (1)--(A);		\draw (2)--(A);		\draw (3)--(A);		\draw (4)--(B);
			\draw (5)--(B);		\draw (6)--(C);		\draw (7)--(C);		\draw (8)--(D);
			\draw (9)--(D);		\draw (10)--(E);	\draw (11)--(E);	\draw (12)--(B);
			
			\draw (a)--(C);		\draw (a)--(D);		\draw (a)--(E);
			
			\foreach \p in {A,B,C,D,E} {
				\fill[white] (\p) circle (2pt);
				\draw (\p) circle (2pt);
			}
			\foreach \p in {a}{
				\fill (\p) circle (2pt);
			}
			\node[below] at (0,-1.2) {$W_{22}$};
		}
	\end{minipage}
	\begin{minipage}[t]{0.23\textwidth}
		\centering
		\Disk{
			\coordinate (A) at (60:0.65);
			\coordinate (B) at (60:0.3);
			\coordinate (C) at (-105:0.8);
			\coordinate (D) at (-165:0.8);
			\coordinate (E) at (-135:0.2);
			
			\coordinate (a) at (-135:0.6);
			
			\draw (1)--(A);		\draw (2)--(A);		\draw (3)--(A);		\draw (4)--(B);
			\draw (5)--(B);		\draw (6)--(E);		\draw (7)--(C);		\draw (8)--(C);
			\draw (9)--(D);		\draw (10)--(D);	\draw (11)--(E);	\draw (12)--(B);
			
			\draw (a)--(C);		\draw (a)--(D);		\draw (a)--(E);
			
			\foreach \p in {A,B,C,D,E} {
				\fill[white] (\p) circle (2pt);
				\draw (\p) circle (2pt);
			}
			\foreach \p in {a}{
				\fill (\p) circle (2pt);
			}
			\node[below] at (0,-1.2) {$W_{23}$};
		}
	\end{minipage}
	\\\vspace{0.3cm}
	\begin{minipage}[t]{0.23\textwidth}
		\centering
		\Disk{
			\coordinate (A) at (90:0.65);
			\coordinate (B) at (-30:0.65);
			\coordinate (C) at (-105:0.8);
			\coordinate (D) at (-165:0.8);
			\coordinate (E) at (90:0.2);
			
			\coordinate (a) at (-135:0.4);
			
			\draw (1)--(A);		\draw (2)--(A);		\draw (3)--(E);		\draw (4)--(B);
			\draw (5)--(B);		\draw (6)--(B);		\draw (7)--(C);		\draw (8)--(C);
			\draw (9)--(D);		\draw (10)--(D);	\draw (11)--(E);	\draw (12)--(A);
			
			\draw (a)--(C);		\draw (a)--(D);		\draw (a)--(E);
			
			\foreach \p in {A,B,C,D,E} {
				\fill[white] (\p) circle (2pt);
				\draw (\p) circle (2pt);
			}
			\foreach \p in {a}{
				\fill (\p) circle (2pt);
			}
			\node[below] at (0,-1.2) {$W_{24}$};
		}
	\end{minipage}
	\begin{minipage}[t]{0.23\textwidth}
		\centering
		\Disk{
			\coordinate (A) at (90:0.65);
			\coordinate (B) at (-60:0.65);
			\coordinate (C) at (15:0.8);
			\coordinate (D) at (-135:0.8);
			\coordinate (E) at (165:0.8);
			
			\coordinate (a) at (0:0);
			
			\draw (1)--(A);		\draw (2)--(A);		\draw (3)--(C);		\draw (4)--(C);
			\draw (5)--(B);		\draw (6)--(B);		\draw (7)--(B);		\draw (8)--(D);
			\draw (9)--(D);		\draw (10)--(E);	\draw (11)--(E);	\draw (12)--(A);
			
			\draw (a)--(C);		\draw (a)--(D);		\draw (a)--(E);
			
			\foreach \p in {A,B,C,D,E} {
				\fill[white] (\p) circle (2pt);
				\draw (\p) circle (2pt);
			}
			\foreach \p in {a}{
				\fill (\p) circle (2pt);
			}
			\node[below] at (0,-1.2) {$W_{25}$};
		}
	\end{minipage}
	\begin{minipage}[t]{0.23\textwidth}
		\centering
		\Disk{
			\coordinate (A) at (90:0.65);
			\coordinate (B) at (-60:0.65);
			\coordinate (C) at (-60:0.35);
			\coordinate (D) at (-165:0.8);
			\coordinate (E) at (90:0.35);
			
			\coordinate (a) at (0:0);
			
			\draw (1)--(A);		\draw (2)--(A);		\draw (3)--(E);		\draw (4)--(C);
			\draw (5)--(B);		\draw (6)--(B);		\draw (7)--(B);		\draw (8)--(C);
			\draw (9)--(D);		\draw (10)--(D);	\draw (11)--(E);	\draw (12)--(A);
			
			\draw (a)--(C);		\draw (a)--(D);		\draw (a)--(E);
			
			\foreach \p in {A,B,C,D,E} {
				\fill[white] (\p) circle (2pt);
				\draw (\p) circle (2pt);
			}
			\foreach \p in {a}{
				\fill (\p) circle (2pt);
			}
			\node[below] at (0,-1.2) {$W_{26}$};
		}
	\end{minipage}
	\begin{minipage}[t]{0.23\textwidth}
		\centering
		\Disk{
			\coordinate (A) at (90:0.65);
			\coordinate (B) at (-90:0.65);
			\coordinate (C) at (15:0.8);
			\coordinate (D) at (-90:0.35);
			\coordinate (E) at (165:0.8);
			
			\coordinate (a) at (0:0);
			
			\draw (1)--(A);		\draw (2)--(A);		\draw (3)--(C);		\draw (4)--(C);
			\draw (5)--(D);		\draw (6)--(B);		\draw (7)--(B);		\draw (8)--(B);
			\draw (9)--(D);		\draw (10)--(E);	\draw (11)--(E);	\draw (12)--(A);
			
			\draw (a)--(C);		\draw (a)--(D);		\draw (a)--(E);
			
			\foreach \p in {A,B,C,D,E} {
				\fill[white] (\p) circle (2pt);
				\draw (\p) circle (2pt);
			}
			\foreach \p in {a}{
				\fill (\p) circle (2pt);
			}
			\node[below] at (0,-1.2) {$W_{27}$};
		}
	\end{minipage}
	\caption{Non-elliptic webs with 12 black boundary vertices and 3 connected component.}
	\label{fig:3con_12}
\end{figure}

\begin{proof}
	Let $W$ be a web with 3 connected components, that is $m=3$. Note that the degree of all internal vertices are 3. In this case, the components can only be tripods or hexapods. Therefore, there are 2 tripods and 1 hexapod in the 3 components. Now we consider the positions of the 2 tripods and 1 hexapod, there are 8 kinds of webs $W_{20}$ -$W_{27}$ shown in \Cref{fig:3con_12}.
\end{proof}

\begin{lemma}
	All non-elliptic webs with $12$ black boundary vertices and $4$ connected components are the dihedral translates of the $5$ webs from $W_{28}$ to $W_{32}$ in \Cref{fig:4con_12}.
\end{lemma}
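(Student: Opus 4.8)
First I would fix the internal combinatorics of the web and only then enumerate the boundary configurations, exactly as in the preceding three lemmas. Setting $m = 4$ and $n = 12$ in \Cref{prp:Vint} gives $|V_{int}| = 12 + 2c - 8 = 2c + 4$. All twelve boundary vertices are black, so \Cref{prp:n=kr} applies with exceedance $k = 12/3 = 4$; writing $w$ and $b$ for the numbers of white and black internal vertices, we get $w - b = 4$, and together with $w + b = |V_{int}| = 2c + 4$ this forces $b = c$ and $w = c + 4$. In particular the number of black internal vertices equals the number of cycles.

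Next I would show $c = 0$. If $c \geq 1$, the web contains a cycle; since the web is non-elliptic it has no bigons or squares, so this cycle has length at least $6$ and hence passes through at least three distinct black internal vertices. Thus $b \geq 3$, and as $b = c$ this gives $c \geq 3$. But \Cref{prp:c_Vint} requires $|V_{int}| \geq 2c + 7$ as soon as $c \geq 3$, contradicting $|V_{int}| = 2c + 4$. Hence $c = 0$, so $b = 0$, $w = 4$ and $|V_{int}| = 4$. Each of the four components is then a tree all of whose internal vertices are white and trivalent; two white internal vertices in one component would need an intervening black internal vertex by proper bicoloring, which is excluded, so each component has exactly one internal vertex joined to three black boundary leaves. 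Every such web is therefore a disjoint union of four tripods, determined by the non-crossing partition of the twelve boundary vertices into the four triples of leaves.

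Finally I would enumerate these non-crossing partitions of $[12]$ into four triples modulo the dihedral group $D_{12}$. For a triple $\{a_1 < a_2 < a_3\}$ the three cyclic gaps are multiples of $3$ summing to $9$, so each triple has gap-type $(9,0,0)$ (three consecutive vertices), $(6,3,0)$, or $(3,3,3)$; a $(3,3,3)$ triple forces the remaining three to be consecutive triples, and the length-$3$ gap of any $(6,3,0)$ triple must again be filled by a consecutive triple, which limits the number of $(6,3,0)$ blocks. Running the casework on the gap-types and the nesting pattern and then identifying rotations and reflections, I expect exactly five orbits, namely $W_{28}$ through $W_{32}$ of \Cref{fig:4con_12}; a Burnside tally over $D_{12}$ gives orbit sizes $3 + 4 + 12 + 24 + 12 = 55 = \tfrac{1}{9}\binom{12}{4}$, which certifies that the list is complete. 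The delicate point, and the main obstacle, is that the two partitions built from two $(6,3,0)$ triples and two consecutive triples share the same nesting shape yet lie in distinct orbits, being told apart only by whether the two $(6,3,0)$ triples are interchanged by the half-turn or by a reflection; checking that these five representatives are pairwise non-equivalent and exhaust all cases is where the enumeration must be carried out with care.
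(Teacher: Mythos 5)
Your proof is correct, and its skeleton --- reduce to four tripods, then enumerate their non-crossing placements up to dihedral symmetry --- is the same as the paper's. The difference is in what gets proved versus asserted. The paper's own proof of this lemma is two sentences: since all internal vertices are trivalent, the four components ``can only be'' four tripods, and the five placements are then read off from \Cref{fig:4con_12}. You supply both missing justifications, using the same machinery the paper deploys only for the one- and two-component cases. First, you derive the tripod decomposition rather than assert it: \Cref{prp:Vint} gives $|V_{int}|=2c+4$, \Cref{prp:n=kr} gives $w-b=4$, hence $b=c$; any cycle has length at least six (bipartite, no bigons or squares in a non-elliptic web), forcing $b\geq 3$ and so $c\geq 3$, which contradicts the bound $|V_{int}|\geq 2c+7$ of \Cref{prp:c_Vint}; therefore $c=0$, and proper bicoloring with $b=0$ forces each tree component to be a single white trivalent vertex with three black boundary leaves. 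Second, where the paper simply exhibits $W_{28}$--$W_{32}$, you certify completeness: your orbit sizes $3+4+12+24+12=55$ match the Fuss--Catalan count $\tfrac{1}{9}\binom{12}{4}$ of non-crossing partitions of $[12]$ into four triples, so the list provably exhausts all cases. Your flagged ``delicate point'' is also exactly the right one: the two size-$12$ orbits ($W_{31}$ and $W_{32}$) share the same nesting shape and are distinguished precisely by whether their two non-consecutive triples are exchanged by the half-turn or by a reflection. The trade-off is clear: the paper's argument is short but asks the reader to trust a figure-by-figure inspection, while yours is longer but self-verifying.
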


\begin{figure}[h]
	\centering
	\begin{minipage}[t]{0.19\textwidth}
		\centering
		\Disk{
			\coordinate (A) at (90:0.65);
			\coordinate (B) at (0:0.65);
			\coordinate (C) at (-90:0.65);
			\coordinate (D) at (180:0.65);
			
			\draw (1)--(A);		\draw (2)--(A);		\draw (3)--(B);		\draw (4)--(B);
			\draw (5)--(B);		\draw (6)--(C);		\draw (7)--(C);		\draw (8)--(C);
			\draw (9)--(D);		\draw (10)--(D);	\draw (11)--(D);	\draw (12)--(A);
			
			\foreach \p in {A,B,C,D} {
				\fill[white] (\p) circle (2pt);
				\draw (\p) circle (2pt);
			}
			\node[below] at (0,-1.2) {$W_{28}$};
		}
	\end{minipage}
	\begin{minipage}[t]{0.19\textwidth}
		\centering
		\Disk{
			\coordinate (A) at (90:0.65);
			\coordinate (B) at (-30:0.65);
			\coordinate (C) at (-120:0.65);
			\coordinate (D) at (90:0.2);
			
			\draw (1)--(A);		\draw (2)--(A);		\draw (3)--(D);		\draw (4)--(B);
			\draw (5)--(B);		\draw (6)--(B);		\draw (7)--(C);		\draw (8)--(C);
			\draw (9)--(C);		\draw (10)--(D);	\draw (11)--(D);	\draw (12)--(A);
			
			\foreach \p in {A,B,C,D} {
				\fill[white] (\p) circle (2pt);
				\draw (\p) circle (2pt);
			}
			\node[below] at (0,-1.2) {$W_{29}$};
		}
	\end{minipage}
	\begin{minipage}[t]{0.19\textwidth}
		\centering
		\Disk{
			\coordinate (A) at (90:0.65);
			\coordinate (B) at (-30:0.65);
			\coordinate (C) at (-150:0.65);
			\coordinate (D) at (0:0);
			
			\draw (1)--(A);		\draw (2)--(A);		\draw (3)--(D);		\draw (4)--(B);
			\draw (5)--(B);		\draw (6)--(B);		\draw (7)--(D);		\draw (8)--(C);
			\draw (9)--(C);		\draw (10)--(C);	\draw (11)--(D);	\draw (12)--(A);
			
			\foreach \p in {A,B,C,D} {
				\fill[white] (\p) circle (2pt);
				\draw (\p) circle (2pt);
			}
			\node[below] at (0,-1.2) {$W_{30}$};
		}
	\end{minipage}
	\begin{minipage}[t]{0.19\textwidth}
		\centering
		\Disk{
			\coordinate (A) at (90:0.65);
			\coordinate (B) at (-90:0.65);
			\coordinate (C) at (90:0.2);
			\coordinate (D) at (-90:0.2);
			
			\draw (1)--(A);		\draw (2)--(A);		\draw (3)--(C);		\draw (4)--(D);
			\draw (5)--(D);		\draw (6)--(B);		\draw (7)--(B);		\draw (8)--(B);
			\draw (9)--(D);		\draw (10)--(C);	\draw (11)--(C);	\draw (12)--(A);
			
			\foreach \p in {A,B,C,D} {
				\fill[white] (\p) circle (2pt);
				\draw (\p) circle (2pt);
			}
			\node[below] at (0,-1.2) {$W_{31}$};
		}
	\end{minipage}
	\begin{minipage}[t]{0.19\textwidth}
		\centering
		\Disk{
			\coordinate (A) at (90:0.65);
			\coordinate (B) at (-120:0.65);
			\coordinate (C) at (90:0.2);
			\coordinate (D) at (-120:0.2);
			
			\draw (1)--(A);		\draw (2)--(A);		\draw (3)--(C);		\draw (4)--(C);
			\draw (5)--(D);		\draw (6)--(D);		\draw (7)--(B);		\draw (8)--(B);
			\draw (9)--(B);		\draw (10)--(D);	\draw (11)--(C);	\draw (12)--(A);
			
			\foreach \p in {A,B,C,D} {
				\fill[white] (\p) circle (2pt);
				\draw (\p) circle (2pt);
			}
			\node[below] at (0,-1.2) {$W_{32}$};
		}
	\end{minipage}
	\caption{Non-elliptic webs with 12 black boundary vertices and 3 connected component.}
	\label{fig:4con_12}
\end{figure}

\begin{proof}
	Note that the degree of all internal vertices are 3, the 4 components in the web can only be 4 tripods. Now we just need to consider the positions of the 4 tripods. there are 5 kinds of webs $W_{28}$ -$W_{32}$ shown in \Cref{fig:4con_12}.
\end{proof}

\begin{theorem} \label{thm:Gr4_12}
	All non-elliptic webs with 12 black boundary vertices are the dihedral translates of the webs listed in Figures \ref{fig:1con_12}, \ref{fig:2con_12}, \ref{fig:3con_12} and \ref{fig:4con_12}. There are 32 such webs.
\end{theorem}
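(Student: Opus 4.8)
The plan is to derive the theorem as an immediate assembly of the four preceding classification lemmas, once I pin down the possible number of connected components. Write $m$ for the number of connected components of a non-elliptic web $W$ with $12$ black boundary vertices. Every connected component is itself a non-elliptic web all of whose boundary vertices are black, so \Cref{prp:n=kr} applies to each component separately: if a component has $n_i$ boundary vertices then its exceedance $n_i/r$ is a positive integer, and with $r=3$ this forces $3\mid n_i$ and hence $n_i\ge 3$. Summing $\sum_i n_i = 12$ then gives $m\le 4$, while $m\ge 1$ is automatic since $W$ is nonempty. Thus $m\in\{1,2,3,4\}$.

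Next I would dispatch each value of $m$ to the corresponding lemma. By \Cref{lem:1con_12} there are exactly $9$ webs (up to dihedral translation) when $m=1$, by \Cref{lem:2con_12} exactly $10$ when $m=2$, by \Cref{lem:3con_12} exactly $8$ when $m=3$, and by the classification for four components in \Cref{fig:4con_12} exactly $5$ when $m=4$. Since these four families are distinguished by the invariant $m$, they are pairwise disjoint, and together they exhaust all non-elliptic webs with $12$ black boundary vertices. Counting gives $9+10+8+5 = 32$, as claimed.

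The substantive combinatorial work is entirely contained in the four lemmas: the bound $c\le 7$ and the case analysis on the number of cycles $c$ via \Cref{prp:Vint} and \Cref{prp:c_Vint}, together with \Cref{lem:interior} to rule out configurations of bivalent interior vertices, and the geometric enumeration of how cycles and claws can be attached to the $12$ boundary vertices. At the level of the theorem the only point requiring care is the componentwise application of \Cref{prp:n=kr}: I must confirm that the exceedance identity holds for each connected component on its own, so that $3\mid n_i$, and that a minimal component is a tripod with exactly three boundary vertices, which is precisely what makes the bound $m\le 4$ sharp. Once this is in place, no further computation is needed and the theorem follows by summation over the four cases.
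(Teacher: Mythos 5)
Your proof is correct and takes essentially the same route as the paper: the theorem is assembled from the four component-count lemmas ($9+10+8+5=32$), and your componentwise application of Proposition \ref{prp:n=kr} to force $3\mid n_i$, $n_i\ge 3$, and hence $m\le 4$ correctly makes explicit a bound that the paper leaves implicit. The only extra content in the paper is an independent consistency check rather than a different argument: the $32$ webs have exactly $462$ dihedral translates, matching the hook-length count of standard Young tableaux of shape $(3,3,3,3)$ under the bijection with $sl_3$-webs.
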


We have another method to verify all webs with that all $12$ boundary vertices are black. In \cite{tymoczko_simple_2012}, the $m$-diagram on the boundary line provides a bijection between  $3$-column standard Young tableaux and $sl_3$-webs. Utilizing the hook-length formula, it is easy to compute that there are $462$ standard Young tableaux of shape $(3,3,3,3)$. Clearly, the number of dihedral translations of webs shown in Figures \ref{fig:1con_12}, \ref{fig:2con_12}, \ref{fig:3con_12} and \ref{fig:4con_12} are exactly 462, which also show our Theorem \ref{thm:Gr4_12} is hold.

Now, we present an examples to show how non-elliptic webs with 4 black and 4 white boundary vertices obtained by non-elliptic webs with 12 black boundary vertices. Additionally, we classify the webs according to the arrangement of boundary vertices. Furthermore, in Section \ref{sec:typelist}, we provide the complete list of non-elliptic webs with 4 black and 4 white boundary vertices.

For a web, we label the vertex at the top as ``1'', and then label the other vertices sequentially in a clockwise direction. For convenience, we denote the maps $\rho$ and $\tau$ on $Z_n$ as $\rho:i\mapsto n-i+1$ (mod $n$) and $\tau:i\mapsto i-1$ (mod $n$), which act on the label of boundary vertices and correspond to reflections and rotations mentioned in Section \ref{ssc:WinPromTrip}. (Here we identify 0  (mod $n$) with $n$.)

For a claw with only two toes, we can contract it into a sink vertex. And for a claw with three toes (also called a tripod, which is a whole component), only two adjacent toes of the three can be constructed, changing the tripod into an arrow directed from the black boundary vertex to the white one. The following Example \ref{exm:3toe} illustrates the above contraction process.	
\begin{example}\label{exm:3toe}
	Consider the web $W_{21}$ in Figure \ref{fig:3con_12}, there are exactly $4$ claws, but $2$ of them consist of $3$ toes, so it is necessary to discuss the direction of arrows. The contracted $2$ toes are also denoted by the set of their labels.
	
	\begin{figure}[h]
		\begin{tikzpicture}[remember picture]
			\hspace{0.5cm}
			\node (left) {
				\Disk{
					\coordinate (A) at (90:0.65);
					\coordinate (B) at (0:0.65);
					\coordinate (C) at (-105:0.8);
					\coordinate (D) at (-165:0.8);
					\coordinate (E) at (-135:0.2);
					
					\coordinate (a) at (-135:0.6);
					
					\draw (1)--(A);		\draw (2)--(A);		\draw (3)--(B);		\draw (4)--(B);
					\draw (5)--(B);		\draw (6)--(E);		\draw (7)--(C);		\draw (8)--(C);
					\draw (9)--(D);		\draw (10)--(D);	\draw (11)--(E);	\draw (12)--(A);
					
					\draw (a)--(C);		\draw (a)--(D);		\draw (a)--(E);
					
					\foreach \p in {A,B,C,D,E} {
						\fill[white] (\p) circle (2pt);
						\draw (\p) circle (2pt);
					}
					\foreach \p in {a}{
						\fill (\p) circle (2pt);
					}
					\node[below] at (0,-1.2) {$W_{21}$};
				}
			};
		\end{tikzpicture}
		\begin{tikzpicture}[remember picture, overlay]
			\node[right=2cm of left] (top1) {
				\GrDisk{
					\coordinate (A) at (-157.5:0.3);
					\coordinate (a) at (-157.5:0.7);
					
					\draw[-stealth] (1) arc[start angle=-180, end angle=-55, radius=.414];
					\draw[-stealth] (3) arc[start angle=90, end angle=215, radius=.414];
					
					\draw (5)--(A);		\draw (6)--(a);		\draw (7)--(a);		\draw (8)--(A);
					\draw (a)--(A);			
					
					\fill[white] (A) circle (2pt);
					\draw (A) circle (2pt);
					
					\fill (a) circle (2pt);
					
					\fill[white] (2) circle (2pt);
					\draw (2) circle (2pt);
					\fill[white] (4) circle (2pt);
					\draw (4) circle (2pt);
					\fill[white] (6) circle (2pt);
					\draw (6) circle (2pt);
					\fill[white] (7) circle (2pt);
					\draw (7) circle (2pt);
					\node[below] at (0,-1.2) {$W_{21}^{(1)}$};
				}
			};
			\node[right=0cm of top1] (top2) {
				\GrDisk{
					\coordinate (A) at (-157.5:0.3);
					\coordinate (a) at (-157.5:0.7);
					
					\draw[-stealth] (1) arc[start angle=-180, end angle=-55, radius=.414];
					\draw[-stealth] (4) arc[start angle=225, end angle=100, radius=.414];
					
					\draw (5)--(A);		\draw (6)--(a);		\draw (7)--(a);		\draw (8)--(A);
					\draw (a)--(A);			
					
					\fill[white] (A) circle (2pt);
					\draw (A) circle (2pt);
					
					\fill (a) circle (2pt);
					
					\fill[white] (2) circle (2pt);
					\draw (2) circle (2pt);
					\fill[white] (3) circle (2pt);
					\draw (3) circle (2pt);
					\fill[white] (6) circle (2pt);
					\draw (6) circle (2pt);
					\fill[white] (7) circle (2pt);
					\draw (7) circle (2pt);
					\node[below] at (0,-1.2) {$W_{21}^{(2)}$};
				}
			};
			\node[right=0cm of top2] (top3) {
				\GrDisk{
					\coordinate (A) at (-157.5:0.3);
					\coordinate (a) at (-157.5:0.7);
					
					\draw[-stealth] (2) arc[start angle=-45, end angle=-170, radius=.414];
					\draw[-stealth] (3) arc[start angle=90, end angle=215, radius=.414];
					
					\draw (5)--(A);		\draw (6)--(a);		\draw (7)--(a);		\draw (8)--(A);
					\draw (a)--(A);			
					
					\fill[white] (A) circle (2pt);
					\draw (A) circle (2pt);
					
					\fill (a) circle (2pt);
					
					\fill[white] (1) circle (2pt);
					\draw (1) circle (2pt);
					\fill[white] (4) circle (2pt);
					\draw (4) circle (2pt);
					\fill[white] (6) circle (2pt);
					\draw (6) circle (2pt);
					\fill[white] (7) circle (2pt);
					\draw (7) circle (2pt);
					\node[below] at (0,-1.2) {$W_{21}^{(3)}$};
				}
			};
			
			\draw[-Latex, line width=1pt] ([xshift=0.5cm]left.east) -- ([xshift=-0.5cm]top1.west |- left);
		\end{tikzpicture}
		\caption{The 3 webs with 4 black and 4 white boundary vertices obtained by $W_{21}$ figure 4.}
		\label{fig:exam}
	\end{figure}
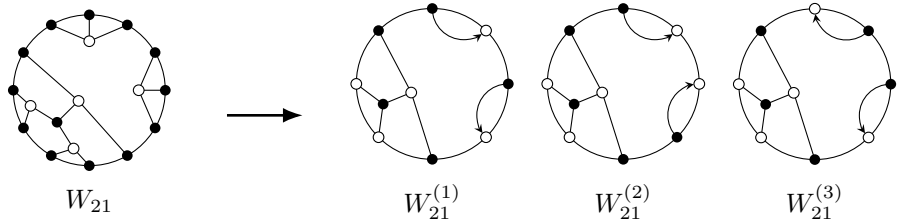
	
	Note that there are only $4$ claws, the $2$ claws with $2$ toes must be contracted, and then the direction of arrows contracted by other $2$ claws with $3$ toes need discussed. To do this,
	
	\begin{itemize}
		\item if $\{1,12\}$ and $\{3,4\}$ are contracted, then one can obtain a non-elliptic web that is a reflection of $W_{21}^{(1)}$ in \Cref{fig:exam}, by applying the map $\tau^4 \circ \rho$ to the labels of boundary vertices of $W_{21}^{(1)}$.
		
		\item if $\{1,2\}$ and $\{3,4\}$ are contracted, then one can obtain a non-elliptic web that is a rotation of $W_{21}^{(2)}$ in \Cref{fig:exam}, by applying the map $\tau$ to the labels of boundary vertices of $W_{21}^{(2)}$.
		
		\item if $\{1,12\}$ and $\{4,5\}$ are contracted, then one can obtain a non-elliptic web that is exactly $W_{21}^{(3)}$ in \Cref{fig:exam}. 
		
		\item if $\{1,2\}$ and $\{4,5\}$ are contracted, then one can obtain a non-elliptic web that is a rotation of $W_{21}^{(1)}$ in \Cref{fig:exam}, by applying the map $\tau$ to the labels of boundary vertices of $W_{21}^{(1)}$.
	\end{itemize}
	Therefore, we obtain the $3$ non-elliptic webs with 4 white and 4 black boundary vertices in \Cref{fig:exam}.
\end{example}

\hspace{\fill}

For each non-elliptic web with 12 black boundary vertices, as long as it has at least 4 claws, we can obtain the corresponding non-elliptic webs with 4 black and 4 white boundary vertices using methods similar to Example \ref{exm:3toe}, then there are a total of 182 such webs up to dihedral translations. Therefore, we have classified these webs according to the arrangement of their vertices, as shown in Figure \ref{fig:arrangements}, ensuring that vertex ``1'' is always white and vertex ``8'' is always black. For example, if the boundary vertices labeled ${2,3,6,8}$ of a web are white, the web is a reflection of type $5$, that is, applying the map $\tau^5 \circ \rho$ to the type $5$.

\begin{figure}[h]
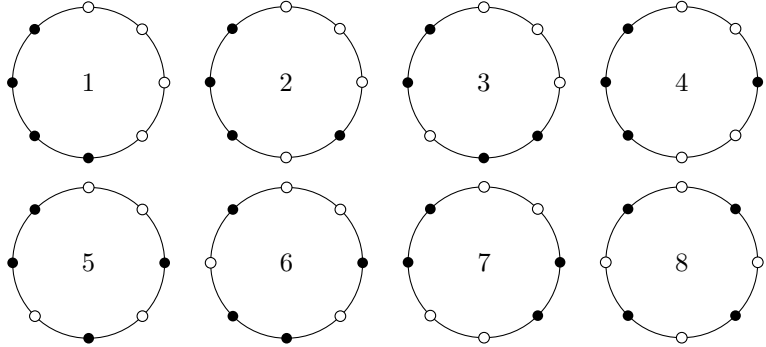

	\centering
	\begin{minipage}[t]{0.19\textwidth}%m=1
		\centering
		\Diska{\node at (0,0) {1};}
	\end{minipage}
	\begin{minipage}[t]{0.19\textwidth}%m=1
		\centering
		\Diskb{\node at (0,0) {2};}
	\end{minipage}
	\begin{minipage}[t]{0.19\textwidth}%m=1
		\centering
		\Diskc{\node at (0,0) {3};}
	\end{minipage}
	\begin{minipage}[t]{0.19\textwidth}%m=1
		\centering
		\Diskd{\node at (0,0) {4};}
	\end{minipage}
	\\\vspace{0.2cm}
	\begin{minipage}[t]{0.19\textwidth}%m=1
		\centering
		\Diske{\node at (0,0) {5};}
	\end{minipage}
	\begin{minipage}[t]{0.19\textwidth}%m=1
		\centering
		\Diskf{\node at (0,0) {6};}
	\end{minipage}
	\begin{minipage}[t]{0.19\textwidth}%m=1
		\centering
		\Diskg{\node at (0,0) {7};}
	\end{minipage}
	\begin{minipage}[t]{0.19\textwidth}%m=1
		\centering
		\Diskh{\node at (0,0) {8};}
	\end{minipage}
	\caption{The 8 arrangements of 4 white and 4 black boundary vertices.}
	\label{fig:arrangements}
\end{figure}

For each types of web, to facilitate verification, all possible cases are listed in Appendix \ref{sec:typelist}.

\subsection{Dual webs of quadratic and cubic cluster variables in $\mathbb{C}[\operatorname{Gr}(4,8)]$}\label{ssc:ComWebs}
In this section, we use the method of compatibility introduced in \cite{lam_dimers_2015} to compute the matchings and webs compatible with quadratic and cubic cluster variables in $\mathbb{C}[\operatorname{Gr}(4,8)]$. From \cite{fraser_dimers_2019} and the examples in \cite{elkin_twists_2023}, the web compatible with a Pl\"ucker polynomial is the dual web of its web diagrams.

For the 120 quadratic cluster variables, we have chosen 3 of them in Section \ref{ssc:WinInv}, whose tableaux and web diagrams are listed in Table \ref{tab:quainv}. Here, we give their Pl\"ucker polynomials, and compatible webs in Table \ref{tab:com2}.

\begin{table}[t]
	\centering
	\setcellgapes{1mm}
	\makegapedcells
	\begin{tabular}{cccccc}
		\hline
		\makecell{tableau} & \makecell{polynomial} & \makecell{matching} & \makecell{tableau} & \makecell{polynomial} & \makecell{matching}\\
		\hline
		\makecell{\scalemath{0.8}{\ytableaushort{11,24,36,57}}} &  
		\makecell{
			\scalebox{1}{
				$\thead{
					+P_{1235}P_{1467} \\ 
					-P_{1234}P_{1567}}$}}& 
		\makecell{
			\scalemath{0.7}{
				\GrDisk{
					\fill[white] (1) circle (2pt);
					\draw (1) circle (2pt);
					\draw (2)--(7);
					\draw (3) arc[start angle=90, end angle=225, radius=.414];
					\draw (5) arc[start angle=0, end angle=135, radius=.414];
		}}} &
		\makecell{\scalemath{0.8}{\ytableaushort{11,23,45,67}}} &  
		\makecell{
			\scalebox{1}{
				$\thead{
					+P_{1256}P_{1347} \\ 
					-P_{1234}P_{1567}}$}}& 
		\makecell{
			\scalemath{0.7}{
				\GrDisk{
					\fill[white] (1) circle (2pt);
					\draw (1) circle (2pt);
					\draw (2) arc[start angle=135, end angle=270, radius=.414];
					\draw (4) arc[start angle=45, end angle=180, radius=.414];
					\draw (6) arc[start angle=-45, end angle=90, radius=.414];
		}}} \\
		\hline
		\makecell{\scalemath{0.8}{\ytableaushort{14,25,37,68}}} &  
		\makecell{
			\scalebox{1}{
				$\thead{
					+P_{1234}P_{5678} \\ 
					-P_{1235}P_{4678} \\
					+P_{1236}P_{4578}}$}}& 
		\makecell{
			\scalemath{0.7}{
				\GrDisk{
					\draw (2)--(5);
					\draw (1) arc[start angle=0, end angle=-135, radius=.414];
					\draw (3) arc[start angle=90, end angle=225, radius=.414];
					\draw (6) arc[start angle=-45, end angle=90, radius=.414];
		}}} & & & \\
		\hline
	\end{tabular}
	\caption{3 different webs compatible with the quadratic cluster variables in $\Gr{4}{8}$ with 8 vertices.}
	\label{tab:com2}
\end{table}

We then presents all of webs compatible with the 174 cubic cluster variables, totaling $14$ of them are chosen which have mentioned in Section \ref{ssc:WinInv}, their Pl\"ucker polynomials and compatible webs are listed in Table \ref{tab:com3}.

\begin{table}[h]
	\centering
	\setcellgapes{1mm}
	\makegapedcells
	\begin{tabular}{cccccccc}
		\hline
		\makecell{type} & \makecell{tableau} & \makecell{polynomial} & \makecell{web} & \makecell{type} & \makecell{tableau} & \makecell{polynomial} & \makecell{web}\\
		\hline
		\makecell{1} & \makecell{\scalemath{0.7}{\ytableaushort{112,233,445,678}}} &  
		\makecell{
			\scalebox{0.8}{
				$\thead{
					+P_{1238}P_{1234}P_{4567} \\ 
					-P_{1238}P_{1456}P_{2347} \\ 
					-P_{1248}P_{1234}P_{3567} \\ 
					+P_{1248}P_{1356}P_{2347}}$}}& 
		\makecell{
			\scalemath{0.5}{
				\Diska{
					\coordinate (A) at (180:0.5);
					\coordinate (a) at (45:0.5);
					\draw (1)--(a);		\draw (2)--(a);		\draw (3)--(a);
					\draw (6)--(A);		\draw (7)--(A);		\draw (8)--(A);		
					\draw[-stealth] (5) arc[start angle=180, end angle=55, radius=.414];
					\fill[white] (A) circle (2pt);
					\draw (A) circle (2pt);
					\fill (a) circle (2pt);}}} &
		\multirow{9}{*}{\makecell{5}} & 
		\makecell{\scalemath{0.7}{\ytableaushort{112,234,456,678}}} &  
		\makecell{
			\scalebox{0.8}{
				$\thead{
					-P_{4567}P_{1246}P_{1238} \\  
					-P_{4567}P_{1234}P_{1268} \\ 
					+P_{1456}P_{1248}P_{2367} \\ 
					-P_{2456}P_{1467}P_{1238}}$}}& 
		\makecell{
			\scalemath{0.5}{
				\Diske{
					\coordinate (A) at (157.5:0.7);
					\coordinate (B) at (22.5:0.4);
					\coordinate (C) at (-97.5:0.4);
					\coordinate (a) at (67.5:0.7);
					\coordinate (b) at (-37.5:0.4);
					\coordinate (c) at (-157.5:0.4);
					\draw (1)--(a);		\draw (2)--(a);		\draw (4)--(b);		\draw (6)--(c);
					\draw (3)--(B);		\draw (5)--(C);		\draw (7)--(A);		\draw (8)--(A);
					\draw (a)--(B);		\draw (b)--(B);		\draw (b)--(C);
					\draw (c)--(A);		\draw (c)--(C);				
					\foreach \p in {A,B,C} {
						\fill[white] (\p) circle (2pt);
						\draw (\p) circle (2pt);}
					\foreach \p in {a,b,c}{
						\fill (\p) circle (2pt);}}}} \\
		\cmidrule(lr){1-4}
		\cmidrule(lr){6-8}
		\multirow{5}{*}{\makecell{2}} & 
		\makecell{\scalemath{0.7}{\ytableaushort{112,233,455,678}}} &  
		\makecell{
			\scalebox{0.8}{
				$\thead{
					+P_{5678}P_{1235}P_{1234} \\ 
					-P_{3567}P_{1258}P_{1234} \\ 
					-P_{1256}P_{3578}P_{1234} \\ 
					+P_{1356}P_{1257}P_{2348} \\
					-P_{2356}P_{1578}P_{1234}}$}}& 
		\makecell{
			\scalemath{0.5}{
				\Diskb{
					\coordinate (A) at (180:0.5);
					\coordinate (a) at (45:0.5);
					\draw (1)--(a);		\draw (2)--(a);		\draw (3)--(a);
					\draw (6)--(A);		\draw (7)--(A);		\draw (8)--(A);							
					\draw[-stealth] (4) arc[start angle=45, end angle=170, radius=.414];
					\fill[white] (A) circle (2pt);
					\draw (A) circle (2pt);
					\fill (a) circle (2pt);}}} &
		& \makecell{\scalemath{0.7}{\ytableaushort{112,244,366,578}}} &  
		\makecell{
			\scalebox{0.8}{
				$\thead{
					-P_{4567}P_{1246}P_{1238} \\ 
					-P_{4567}P_{1234}P_{1268} \\ 
					-P_{2456}P_{1467}P_{1238} \\ 
					+P_{1458}P_{2467}P_{1236}}$}}& 
		\makecell{
			\scalemath{0.5}{
				\Diske{
					\coordinate (A) at (157.5:0.7);
					\coordinate (a) at (67.5:0.7);
					\draw (1)--(a);		\draw (2)--(a);	
					\draw (7)--(A);		\draw (8)--(A);		
					\draw (a)--(A);	
					\draw[-stealth] (3) arc[start angle=90, end angle=215, radius=.414];
					\draw[-stealth] (5) arc[start angle=0, end angle=125, radius=.414];
					\fill[white] (A) circle (2pt);
					\draw (A) circle (2pt);
					\fill (a) circle (2pt);}}} \\
		\cmidrule(lr){2-4}
		\cmidrule(lr){6-8}
		& \makecell{\scalemath{0.7}{\ytableaushort{113,225,347,568}}} &  
		\makecell{
			\scalebox{0.8}{
				$\thead{
					-P_{1278}P_{1235}P_{3456} \\ 
					-P_{1378}P_{1256}P_{2345} \\ 
					+P_{2378}P_{1256}P_{1345}}$}}& 
		\makecell{
			\scalemath{0.5}{
				\Diskb{
					\coordinate (A) at (-157.5:0.7);
					\coordinate (B) at (-35:0.4);
					\coordinate (a) at (22.5:0.7);
					\coordinate (b) at (-100:0.4);
					\draw[-stealth] (8) arc[start angle=-135, end angle=-10, radius=.414];
					\draw (2)--(a);		\draw (3)--(a);		\draw (5)--(b);
					\draw (4)--(B);		\draw (6)--(A);		\draw (7)--(A);		
					\draw (a)--(B);	
					\draw (b)--(A);		\draw (b)--(B);
					\foreach \p in {A,B} {
						\fill[white] (\p) circle (2pt);
						\draw (\p) circle (2pt);}
					\foreach \p in {a,b}{
						\fill (\p) circle (2pt);}}}} &
		& \makecell{\scalemath{0.7}{\ytableaushort{113,225,447,668}}} &  
		\makecell{
			\scalebox{0.8}{
				$\thead{
					+P_{4567}P_{1234}P_{1268} \\ 
					-P_{4568}P_{1234}P_{1267} \\ 
					-P_{3456}P_{1246}P_{1278} \\ 
					+P_{1246}P_{3478}P_{1256} \\
					-P_{1246}P_{1234}P_{5678}}$}}& 
		\makecell{
			\scalemath{0.5}{
				\Diske{
					\draw[-stealth] (8) arc[start angle=-135, end angle=-10, radius=.414];
					\draw[-stealth] (3) arc[start angle=-90, end angle=-215, radius=.414];
					\draw[-stealth] (5) arc[start angle=180, end angle=55, radius=.414];
					\draw[-stealth] (7) arc[start angle=90, end angle=-35, radius=.414];}}} \\
		\hline
		\multirow{5}{*}{\makecell{3}} & 
		\makecell{\scalemath{0.7}{\ytableaushort{112,233,456,678}}} &  
		\makecell{
			\scalebox{0.8}{
				$\thead{
					+P_{5678}P_{1236}P_{1234} \\ 
					-P_{3567}P_{1268}P_{1234} \\ 
					-P_{1256}P_{3678}P_{1234} \\ 
					+P_{1356}P_{1267}P_{2348} \\
					-P_{2356}P_{1678}P_{1234}}$}}& 
		\makecell{
			\scalemath{0.5}{
				\Diskc{
					\coordinate (A) at (-67.5:0.7);
					\coordinate (B) at (157.5:0.7);
					\coordinate (a) at (45:0.5);
					\coordinate (b) at (-135:0.4);
					\draw (1)--(a);		\draw (2)--(a);		\draw (3)--(a);		\draw (6)--(b);
					\draw (4)--(A);		\draw (5)--(A);		\draw (7)--(B);		\draw (8)--(B);
					\draw (b)--(A);		\draw (b)--(B);
					\foreach \p in {A,B} {
						\fill[white] (\p) circle (2pt);
						\draw (\p) circle (2pt);}
					\foreach \p in {a,b}{
						\fill (\p) circle (2pt);}}}} &
		\makecell{6} & \makecell{\scalemath{0.7}{\ytableaushort{112,244,367,578}}} &  
		\makecell{
			\scalebox{0.8}{
				$\thead{
					-P_{2345}P_{1247}P_{1678} \\ 
					-P_{2345}P_{1278}P_{1467} \\ 
					-P_{1234}P_{2457}P_{1678} \\ 
					+P_{2367}P_{1245}P_{1478}}$}}& 
		\makecell{
			\scalemath{0.5}{
				\Diskf{
					\coordinate (A) at (-112.5:0.7);
					\coordinate (B) at (125:0.4);
					\coordinate (a) at (67.5:0.7);
					\coordinate (b) at (190:0.4);
					\draw (1)--(a);		\draw (2)--(a);		\draw (7)--(b);
					\draw (5)--(A);		\draw (6)--(A);		\draw (8)--(B);		
					\draw (a)--(B);		\draw (b)--(A);		\draw (b)--(B);	
					\draw[-stealth] (3) arc[start angle=90, end angle=215, radius=.414];
					\foreach \p in {A,B} {
						\fill[white] (\p) circle (2pt);
						\draw (\p) circle (2pt);}
					\foreach \p in {a,b}{
						\fill (\p) circle (2pt);}}}} \\
		\cmidrule(lr){2-8}
		& \makecell{\scalemath{0.7}{\ytableaushort{112,234,366,578}}} &  
		\makecell{
			\scalebox{0.8}{
				$\thead{
					+P_{1234}P_{1236}P_{5678} \\ 
					-P_{1234}P_{1567}P_{2368} \\ 
					-P_{1235}P_{1236}P_{4678} \\ 
					+P_{1235}P_{1467}P_{2368}}$}}& 
		\makecell{
			\scalemath{0.5}{
				\Diskc{
					\coordinate (A) at (157.5:0.7);
					\coordinate (a) at (67.5:0.7);
					\draw (1)--(a);		\draw (2)--(a);		
					\draw (7)--(A);		\draw (8)--(A);		
					\draw (a)--(A);	
					\draw[-stealth] (4) arc[start angle=225, end angle=100, radius=.414];
					\draw[-stealth] (5) arc[start angle=0, end angle=125, radius=.414];
					\fill[white] (A) circle (2pt);
					\draw (A) circle (2pt);
					\fill (a) circle (2pt);}}} &
		\makecell{7} & \makecell{\scalemath{0.7}{\ytableaushort{112,245,366,578}}} &  
		\makecell{
			\scalebox{0.8}{
				$\thead{
					-P_{4567}P_{1256}P_{1238} \\ 
					-P_{4567}P_{1235}P_{1268} \\ 
					-P_{2456}P_{1567}P_{1238} \\ 
					+P_{1458}P_{2567}P_{1236}}$}}& 
		\makecell{
			\scalemath{0.5}{
				\Diskg{
					\coordinate (A) at (-22.5:0.7);
					\coordinate (B) at (157.5:0.7);
					\coordinate (a) at (67.5:0.7);
					\coordinate (b) at (-112.5:0.7);
					\draw (1)--(a);		\draw (2)--(a);		\draw (5)--(b);		\draw (6)--(b);
					\draw (3)--(A);		\draw (4)--(A);		\draw (7)--(B);		\draw (8)--(B);	
					\draw (a)--(B);		\draw (b)--(A);	
					\foreach \p in {A,B} {
						\fill[white] (\p) circle (2pt);
						\draw (\p) circle (2pt);}
					\foreach \p in {a,b}{
						\fill (\p) circle (2pt);}}}} \\
		\hline
		\multirow{5}{*}{\makecell{4}} & 
		\makecell{\scalemath{0.7}{\ytableaushort{112,234,455,678}}} &  
		\makecell{
			\scalebox{0.8}{
				$\thead{
					+P_{1234}P_{1245}P_{5678} \\ 
					-P_{1234}P_{1256}P_{4578} \\ 
					-P_{1234}P_{1258}P_{4567} \\
					-P_{1234}P_{1578}P_{2456} \\ 
					+P_{1257}P_{1456}P_{2348}}$}}& 
		\makecell{
			\scalemath{0.5}{
				\Diskd{
					\coordinate (A) at (180:0.5);
					\coordinate (B) at (0:0.4);
					\coordinate (a) at (67.5:0.7);
					\coordinate (b) at (-67.5:0.7);
					\draw (1)--(a);		\draw (2)--(a);		\draw (4)--(b);		\draw (5)--(b);		
					\draw (3)--(B);		\draw (6)--(A);		\draw (7)--(A);		\draw (8)--(A);	
					\draw (a)--(B);		
					\draw (b)--(B);	
					\foreach \p in {A,B} {
						\fill[white] (\p) circle (2pt);
						\draw (\p) circle (2pt);}
					\foreach \p in {a,b}{
						\fill (\p) circle (2pt);}}}} &
		\multirow{5}{*}{\makecell{8}} & \makecell{\scalemath{0.7}{\ytableaushort{113,245,367,578}}} &  
		\makecell{
			\scalebox{0.8}{
				$\thead{
					-P_{1234}P_{1357}P_{5678} \\ 
					-P_{1234}P_{1578}P_{3567} \\ 
					+P_{1235}P_{1567}P_{3478} \\ 
					-P_{1237}P_{1345}P_{5678}}$}}& 
		\makecell{
			\scalemath{0.5}{
				\Diskh{
					\coordinate (A) at (45:0.6);
					\coordinate (B) at (-45:0.6);
					\coordinate (C) at (-135:0.6);
					\coordinate (D) at (135:0.6);
					\coordinate (a) at (0:0.6);
					\coordinate (b) at (-90:0.6);
					\coordinate (c) at (180:0.6);
					\coordinate (d) at (90:0.6);
					\draw (1)--(d);		\draw (3)--(a);		\draw (5)--(b);		\draw (7)--(c);
					\draw (2)--(A);		\draw (4)--(B);		\draw (6)--(C);		\draw (8)--(D);		
					\draw (a)--(A);		\draw (a)--(B);		\draw (b)--(B);		\draw (b)--(C);
					\draw (c)--(C);		\draw (c)--(D);		\draw (d)--(A);		\draw (d)--(D);
					\foreach \p in {A,B,C,D} {
						\fill[white] (\p) circle (2pt);
						\draw (\p) circle (2pt);}
					\foreach \p in {a,b,c,d}{
						\fill (\p) circle (2pt);}}}} \\
		\cmidrule(lr){2-4}
		\cmidrule(lr){6-8}
		& \makecell{\scalemath{0.7}{\ytableaushort{113,225,447,568}}} &  
		\makecell{
			\scalebox{0.8}{
				$\thead{
					-P_{1234}P_{1245}P_{5678} \\
					-P_{1234}P_{1256}P_{4578} \\
					+P_{1235}P_{1245}P_{4678} \\ 
					-P_{1236}P_{1245}P_{4578} \\ 
					-P_{1245}P_{1245}P_{3678} \\ 
					+P_{1245}P_{1246}P_{3578}}$}}& 
		\makecell{
			\scalemath{0.5}{
				\Diskd{
					\coordinate (A) at (-157.5:0.7);
					\coordinate (a) at (-67.5:0.7);
					\draw (4)--(a);		\draw (5)--(a);		
					\draw (6)--(A);		\draw (7)--(A);	
					\draw (a)--(A);		
					\draw[-stealth] (3) arc[start angle=-90, end angle=-215, radius=.414];
					\draw[-stealth] (8) arc[start angle=-135, end angle=-10, radius=.414];
					\fill[white] (A) circle (2pt);
					\draw (A) circle (2pt);
					\fill (a) circle (2pt);}}} &
		& \makecell{\scalemath{0.7}{\ytableaushort{112,334,556,778}}} &  
		\makecell{
			\scalebox{0.8}{
				$\thead{
					-P_{2345}P_{1357}P_{1678} \\ 
					-P_{2345}P_{1378}P_{1567} \\ 
					+P_{2357}P_{1367}P_{1458} \\ 
					-P_{1235}P_{3457}P_{1678}}$}}& 
		\makecell{
			\scalemath{0.5}{
				\Diskh{
					\draw[-stealth] (8) arc[start angle=45, end angle=-80, radius=.414];
					\draw[-stealth] (2) arc[start angle=-45, end angle=-170, radius=.414];
					\draw[-stealth] (4) arc[start angle=-135, end angle=-260, radius=.414];
					\draw[-stealth] (6) arc[start angle=135, end angle=10, radius=.414];}}} \\
		\hline
	\end{tabular}
	\caption{14 different webs compatible with the cubic cluster variables in $\Gr{4}{8}$ with 8 vertices.}
	\label{tab:com3}
\end{table}

\begin{example}
	Take the first cubic cluster variables in Table \ref{tab:com3} as an example. For the cubic cluster variable
	\begin{equation*}
		P=P_{1238}P_{1234}P_{4567} - P_{1238}P_{1456}P_{2347} \\- P_{1248}P_{1234}P_{3567} + P_{1248}P_{1356}P_{2347},
	\end{equation*}
	denote by $P_i$ the $i$th term of polynomial $P$, and by $\operatorname{sgn}_i$ the sign of $P_i$ in $P$.  By Theorem \ref{thm:fll19}, note that the web corresponding to each $P_i$ is not necessarily non-elliptic. We therefore express each as a linear combination of non-elliptic webs with coefficients given by the compatibility degrees $a(I,J,K;W)$. For convenience, we represent the web $W_i$ compatible with each $P_i$ through a multiple set $\mathcal{W}_i$, where each non-elliptic web $W$ appears with multiplicity $a(I,J,K;W)$.
	\begin{itemize}
		\item $P_1$ is correspond to multiset $\mathcal{W}_1=\{(\mathrm{i})\}$
		\item $P_2$ is correspond to multiset $\mathcal{W}_2=\{(\mathrm{i}), (\mathrm{iii})\}$
		\item $P_3$ is correspond to multiset $\mathcal{W}_3=\{(\mathrm{i}), (\mathrm{ii})\}$
		\item $P_4$ is correspond to multiset $\mathcal{W}_4=\{(\mathrm{i}), (\mathrm{ii}), (\mathrm{iii}), (\mathrm{iv})\}$.
	\end{itemize}
	
	\begin{figure}[H]
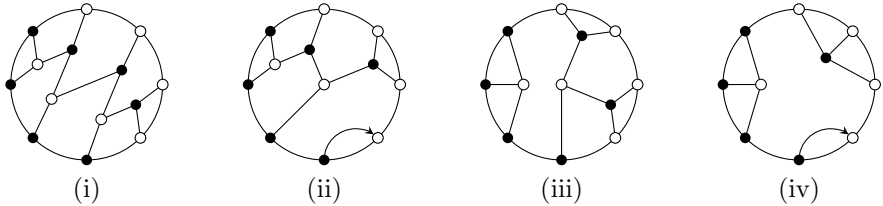

		\centering
		\begin{minipage}[t]{0.23\textwidth}%m=1
			\centering
			\Diska{
				\coordinate (A) at (157.5:0.7);
				\coordinate (B) at (-67.5:0.5);
				\coordinate (C) at (-157.5:0.5);
				
				\coordinate (a) at (-22.5:0.7);
				\coordinate (b) at (22.5:0.5);
				\coordinate (c) at (112.5:0.5);
				
				\draw (1)--(c);		\draw (2)--(b);		\draw (3)--(a);		\draw (4)--(a);
				\draw (5)--(B);		\draw (6)--(C);		\draw (7)--(A);		\draw (8)--(A);
				
				\draw (a)--(B);	
				\draw (b)--(B);		\draw (b)--(C);	
				\draw (c)--(A);		\draw (c)--(C);	
				
				\foreach \p in {A,B,C} {
					\fill[white] (\p) circle (2pt);
					\draw (\p) circle (2pt);
				}
				\foreach \p in {a,b,c}{
					\fill (\p) circle (2pt);
				}
			}
			\makebox[\textwidth][c]{(i)}
		\end{minipage}
		\begin{minipage}[t]{0.23\textwidth}%m=2
			\centering
			\Diska{
				\coordinate (A) at (157.5:0.7);
				\coordinate (B) at (0:0);
				
				\coordinate (a) at (22.5:0.7);
				\coordinate (b) at (112.5:0.5);
				
				\draw[-stealth] (5) arc[start angle=180, end angle=55, radius=.414];
				
				\draw (1)--(b);		\draw (2)--(a);		\draw (3)--(a);		
				\draw (6)--(B);		\draw (7)--(A);		\draw (8)--(A);
				
				\draw (a)--(B);	
				\draw (b)--(A);		\draw (b)--(B);
				
				\foreach \p in {A,B} {
					\fill[white] (\p) circle (2pt);
					\draw (\p) circle (2pt);
				}
				\foreach \p in {a,b}{
					\fill (\p) circle (2pt);
				}
			}
			\makebox[\textwidth][c]{(ii)}
		\end{minipage}
		\begin{minipage}[t]{0.23\textwidth}%m=2
			\centering
			\Diska{
				\coordinate (A) at (180:0.5);
				\coordinate (B) at (0:0);
				
				\coordinate (a) at (67.5:0.7);
				\coordinate (b) at (-22.5:0.7);
				
				\draw (1)--(a);		\draw (2)--(a);		\draw (3)--(b);		\draw (4)--(b);		
				\draw (5)--(B);		\draw (6)--(A);		\draw (7)--(A);		\draw (8)--(A);
				
				\draw (a)--(B);	
				\draw (b)--(B);
				
				\foreach \p in {A,B} {
					\fill[white] (\p) circle (2pt);
					\draw (\p) circle (2pt);
				}
				\foreach \p in {a,b}{
					\fill (\p) circle (2pt);
				}
			}
			\makebox[\textwidth][c]{(iii)}
		\end{minipage}
		\begin{minipage}[t]{0.23\textwidth}%m=3
			\centering
			\Diska{
				\coordinate (A) at (180:0.5);
				\coordinate (a) at (45:0.5);
				
				\draw (1)--(a);		\draw (2)--(a);		\draw (3)--(a);
				\draw (6)--(A);		\draw (7)--(A);		\draw (8)--(A);		
				
				\draw[-stealth] (5) arc[start angle=180, end angle=55, radius=.414];
				
				\fill[white] (A) circle (2pt);
				\draw (A) circle (2pt);
				\fill (a) circle (2pt);
			}
			\makebox[\textwidth][c]{(iv)}
		\end{minipage}
		\caption{The webs corresponding to $P_i$ in $P$, $P_1$ is correspond to (i), $P_2$ is correspond to webs (i) and (iii), $P_3$ is correspond to webs (i) and (ii), and $P_4$ is correspond to webs (i), (ii), (iii) and (iv).}
		\label{fig:comP}
	\end{figure}
	
	Therefore, the web compatible with $P$ is the web in the multiset $\mathcal{W}=\mathcal{W}_1 \uplus \mathcal{W}_4 - \mathcal{W}_2 - \mathcal{W}_3 = \{(\mathrm{iv})\}$. Therefore, the non-elliptic (iv) in Figure \ref{fig:comP} is the web compatible with cluster variable $P$. Concurrently, the web obtained from compatibility conditions is also the dual web of its corresponding web diagram in Section 3.3.
\end{example}

\backmatter

\bmhead{Acknowledgements}

This research was partially supported by the National Natural Science Foundation of China (Nos. 12271224, 12161062) and the Fundamental Research Funds for the Central University (No. lzujbky-2023-ey06).

\begin{appendices}

\section{The non-elliptic webs classified by the boundary conditions} \label{sec:typelist}
In this section, all of the non-elliptic webs obtain from dimers that are mentioned in Section \ref{ssc:ComSink} are listed. We classify them by their boundary conditions (type 1-8) in Figures \ref{fig:type1}-\ref{fig:type8}.

\begin{figure}[H]
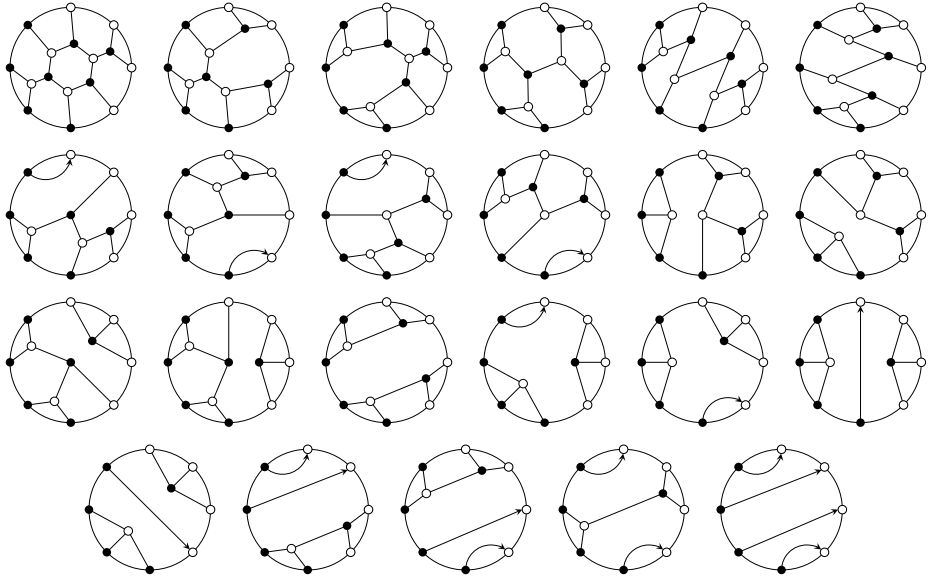

	\centering
	\begin{minipage}[t]{0.15\textwidth}%m=1
		\centering
		\scalemath{0.8}{
			\Diska{
				\coordinate (A) at (-157.5:0.7);
				\coordinate (B) at (22.5:0.4);
				\coordinate (C) at (-97.5:0.4);
				\coordinate (D) at (142.5:0.4);
				
				\coordinate (a) at (22.5:0.7);
				\coordinate (b) at (82.5:0.4);
				\coordinate (c) at (-37.5:0.4);
				\coordinate (d) at (-157.5:0.4);
				
				\draw (1)--(b);		\draw (2)--(a);		\draw (3)--(a);		\draw (4)--(c);
				\draw (5)--(C);		\draw (6)--(A);		\draw (7)--(A);		\draw (8)--(D);
				
				\draw (a)--(B);	
				\draw (b)--(B);		\draw (b)--(D);
				\draw (c)--(B);		\draw (c)--(C);	
				\draw (d)--(A);		\draw (d)--(C);		\draw (d)--(D);	
				
				\foreach \p in {A,B,C,D} {
					\fill[white] (\p) circle (2pt);
					\draw (\p) circle (2pt);
				}
				\foreach \p in {a,b,c,d}{
					\fill (\p) circle (2pt);
				}
		}}
	\end{minipage}
	\begin{minipage}[t]{0.15\textwidth}%m=1
		\centering
		\scalemath{0.8}{
			\Diska{
				\coordinate (A) at (-157.5:0.7);
				\coordinate (B) at (-97.5:0.4);
				\coordinate (C) at (142.5:0.4);
				
				\coordinate (a) at (67.5:0.7);
				\coordinate (b) at (-22.5:0.7);
				\coordinate (c) at (-157.5:0.4);
				
				\draw (1)--(a);		\draw (2)--(a);		\draw (3)--(b);		\draw (4)--(b);
				\draw (5)--(B);		\draw (6)--(A);		\draw (7)--(A);		\draw (8)--(C);
				
				\draw (a)--(C);	
				\draw (b)--(B);		
				\draw (c)--(A);		\draw (c)--(B);		\draw (c)--(C);	
				
				\foreach \p in {A,B,C} {
					\fill[white] (\p) circle (2pt);
					\draw (\p) circle (2pt);
				}
				\foreach \p in {a,b,c}{
					\fill (\p) circle (2pt);
				}
		}}
	\end{minipage}
	\begin{minipage}[t]{0.15\textwidth}%m=1
		\centering
		\scalemath{0.8}{
			\Diska{
				\coordinate (A) at (-112.5:0.7);
				\coordinate (B) at (157.5:0.7);
				\coordinate (C) at (22.5:0.4);
				
				\coordinate (a) at (22.5:0.7);
				\coordinate (b) at (87.5:0.4);
				\coordinate (c) at (-37.5:0.4);
				
				\draw (1)--(b);		\draw (2)--(a);		\draw (3)--(a);		\draw (4)--(c);
				\draw (5)--(A);		\draw (6)--(A);		\draw (7)--(B);		\draw (8)--(B);
				
				\draw (a)--(C);	
				\draw (b)--(B);		\draw (b)--(C);	
				\draw (c)--(A);		\draw (c)--(C);	
				
				\foreach \p in {A,B,C} {
					\fill[white] (\p) circle (2pt);
					\draw (\p) circle (2pt);
				}
				\foreach \p in {a,b,c}{
					\fill (\p) circle (2pt);
				}
		}}
	\end{minipage}
	\begin{minipage}[t]{0.15\textwidth}%m=1
		\centering
		\scalemath{0.8}{
			\Diska{
				\coordinate (A) at (-112.5:0.7);
				\coordinate (B) at (157.5:0.7);
				\coordinate (C) at (22.5:0.3);
				
				\coordinate (a) at (67.5:0.7);
				\coordinate (b) at (-22.5:0.7);
				\coordinate (c) at (-157.5:0.3);
				
				\draw (1)--(a);		\draw (2)--(a);		\draw (3)--(b);		\draw (4)--(b);
				\draw (5)--(A);		\draw (6)--(A);		\draw (7)--(B);		\draw (8)--(B);
				
				\draw (a)--(C);	
				\draw (b)--(C);	
				\draw (c)--(A);		\draw (c)--(B);		\draw (c)--(C);	
				
				\foreach \p in {A,B,C} {
					\fill[white] (\p) circle (2pt);
					\draw (\p) circle (2pt);
				}
				\foreach \p in {a,b,c}{
					\fill (\p) circle (2pt);
				}
		}}
	\end{minipage}
	\begin{minipage}[t]{0.15\textwidth}%m=1
		\centering
		\scalemath{0.8}{
			\Diska{
				\coordinate (A) at (157.5:0.7);
				\coordinate (B) at (-67.5:0.5);
				\coordinate (C) at (-157.5:0.5);
				
				\coordinate (a) at (-22.5:0.7);
				\coordinate (b) at (22.5:0.5);
				\coordinate (c) at (112.5:0.5);
				
				\draw (1)--(c);		\draw (2)--(b);		\draw (3)--(a);		\draw (4)--(a);
				\draw (5)--(B);		\draw (6)--(C);		\draw (7)--(A);		\draw (8)--(A);
				
				\draw (a)--(B);	
				\draw (b)--(B);		\draw (b)--(C);	
				\draw (c)--(A);		\draw (c)--(C);	
				
				\foreach \p in {A,B,C} {
					\fill[white] (\p) circle (2pt);
					\draw (\p) circle (2pt);
				}
				\foreach \p in {a,b,c}{
					\fill (\p) circle (2pt);
				}
		}}
	\end{minipage}
	\begin{minipage}[t]{0.15\textwidth}%m=1
		\centering
		\scalemath{0.8}{
			\Diska{
				\coordinate (A) at (-112.5:0.7);
				\coordinate (B) at (-157.5:0.5);
				\coordinate (C) at (112.5:0.5);
				
				\coordinate (a) at (67.5:0.7);
				\coordinate (b) at (22.5:0.5);
				\coordinate (c) at (-67.5:0.5);
				
				\draw (1)--(a);		\draw (2)--(a);		\draw (3)--(b);		\draw (4)--(c);
				\draw (5)--(A);		\draw (6)--(A);		\draw (7)--(B);		\draw (8)--(C);
				
				\draw (a)--(C);	
				\draw (b)--(B);		\draw (b)--(C);	
				\draw (c)--(A);		\draw (c)--(B);	
				
				\foreach \p in {A,B,C} {
					\fill[white] (\p) circle (2pt);
					\draw (\p) circle (2pt);
				}
				\foreach \p in {a,b,c}{
					\fill (\p) circle (2pt);
				}
		}}
	\end{minipage}
	\\\vspace{0.2cm}
	\begin{minipage}[t]{0.15\textwidth}%m=2
		\centering
		\scalemath{0.8}{
			\Diska{
				\coordinate (A) at (-157.5:0.7);
				\coordinate (B) at (-67.5:0.5);
				
				\coordinate (a) at (-22.5:0.7);
				\coordinate (b) at (0:0);
				
				\draw[-stealth] (8) arc[start angle=-135, end angle=-10, radius=.414];
				
				\draw (2)--(b);		\draw (3)--(a);		\draw (4)--(a);
				\draw (5)--(B);		\draw (6)--(A);		\draw (7)--(A);	
				
				\draw (a)--(B);	
				\draw (b)--(A);		\draw (b)--(B);
				
				\foreach \p in {A,B} {
					\fill[white] (\p) circle (2pt);
					\draw (\p) circle (2pt);
				}
				\foreach \p in {a,b}{
					\fill (\p) circle (2pt);
				}
		}}
	\end{minipage}
	\begin{minipage}[t]{0.15\textwidth}%m=2
		\centering
		\scalemath{0.8}{
			\Diska{
				\coordinate (A) at (-157.5:0.7);
				\coordinate (B) at (112.5:0.5);
				
				\coordinate (a) at (67.5:0.7);
				\coordinate (b) at (0:0);
				
				\draw[-stealth] (5) arc[start angle=180, end angle=55, radius=.414];
				
				\draw (1)--(a);		\draw (2)--(a);		\draw (3)--(b);		
				\draw (6)--(A);		\draw (7)--(A);		\draw (8)--(B);
				
				\draw (a)--(B);	
				\draw (b)--(A);		\draw (b)--(B);
				
				\foreach \p in {A,B} {
					\fill[white] (\p) circle (2pt);
					\draw (\p) circle (2pt);
				}
				\foreach \p in {a,b}{
					\fill (\p) circle (2pt);
				}
		}}
	\end{minipage}
	\begin{minipage}[t]{0.15\textwidth}%m=2
		\centering
		\scalemath{0.8}{
			\Diska{
				\coordinate (A) at (-112.5:0.7);
				\coordinate (B) at (0:0);
				
				\coordinate (a) at (22.5:0.7);
				\coordinate (b) at (-67.5:0.5);
				
				\draw[-stealth] (8) arc[start angle=-135, end angle=-10, radius=.414];
				
				\draw (2)--(a);		\draw (3)--(a);		\draw (4)--(b);		
				\draw (5)--(A);		\draw (6)--(A);		\draw (7)--(B);
				
				\draw (a)--(B);	
				\draw (b)--(A);		\draw (b)--(B);
				
				\foreach \p in {A,B} {
					\fill[white] (\p) circle (2pt);
					\draw (\p) circle (2pt);
				}
				\foreach \p in {a,b}{
					\fill (\p) circle (2pt);
				}
		}}
	\end{minipage}
	\begin{minipage}[t]{0.15\textwidth}%m=2
		\centering
		\scalemath{0.8}{
			\Diska{
				\coordinate (A) at (157.5:0.7);
				\coordinate (B) at (0:0);
				
				\coordinate (a) at (22.5:0.7);
				\coordinate (b) at (112.5:0.5);
				
				\draw[-stealth] (5) arc[start angle=180, end angle=55, radius=.414];
				
				\draw (1)--(b);		\draw (2)--(a);		\draw (3)--(a);		
				\draw (6)--(B);		\draw (7)--(A);		\draw (8)--(A);
				
				\draw (a)--(B);	
				\draw (b)--(A);		\draw (b)--(B);
				
				\foreach \p in {A,B} {
					\fill[white] (\p) circle (2pt);
					\draw (\p) circle (2pt);
				}
				\foreach \p in {a,b}{
					\fill (\p) circle (2pt);
				}
		}}
	\end{minipage}
	\begin{minipage}[t]{0.15\textwidth}%m=2
		\centering
		\scalemath{0.8}{
			\Diska{
				\coordinate (A) at (180:0.5);
				\coordinate (B) at (0:0);
				
				\coordinate (a) at (67.5:0.7);
				\coordinate (b) at (-22.5:0.7);
				
				\draw (1)--(a);		\draw (2)--(a);		\draw (3)--(b);		\draw (4)--(b);		
				\draw (5)--(B);		\draw (6)--(A);		\draw (7)--(A);		\draw (8)--(A);
				
				\draw (a)--(B);	
				\draw (b)--(B);
				
				\foreach \p in {A,B} {
					\fill[white] (\p) circle (2pt);
					\draw (\p) circle (2pt);
				}
				\foreach \p in {a,b}{
					\fill (\p) circle (2pt);
				}
		}}
	\end{minipage}
	\begin{minipage}[t]{0.15\textwidth}%m=2
		\centering
		\scalemath{0.8}{
			\Diska{
				\coordinate (A) at (-135:0.5);
				\coordinate (B) at (0:0);
				
				\coordinate (a) at (67.5:0.7);
				\coordinate (b) at (-22.5:0.7);
				
				\draw (1)--(a);		\draw (2)--(a);		\draw (3)--(b);		\draw (4)--(b);
				\draw (5)--(A);		\draw (6)--(A);		\draw (7)--(A);		\draw (8)--(B);
				
				\draw (a)--(B);	
				\draw (b)--(B);
				
				\foreach \p in {A,B} {
					\fill[white] (\p) circle (2pt);
					\draw (\p) circle (2pt);
				}
				\foreach \p in {a,b}{
					\fill (\p) circle (2pt);
				}
		}}
	\end{minipage}
	\\\vspace{0.2cm}
	\begin{minipage}[t]{0.15\textwidth}%m=2
		\centering
		\scalemath{0.8}{
			\Diska{
				\coordinate (A) at (-112.5:0.7);
				\coordinate (B) at (157.5:0.7);
				
				\coordinate (a) at (45:0.5);
				\coordinate (b) at (0:0);
				
				\draw (1)--(a);		\draw (2)--(a);		\draw (3)--(a);		\draw (4)--(b);
				\draw (5)--(A);		\draw (6)--(A);		\draw (7)--(B);		\draw (8)--(B);
				
				\draw (b)--(A);		\draw (b)--(B);
				
				\foreach \p in {A,B} {
					\fill[white] (\p) circle (2pt);
					\draw (\p) circle (2pt);
				}
				\foreach \p in {a,b}{
					\fill (\p) circle (2pt);
				}
		}}
	\end{minipage}
	\begin{minipage}[t]{0.15\textwidth}%m=2
		\centering
		\scalemath{0.8}{
			\Diska{
				\coordinate (A) at (-112.5:0.7);
				\coordinate (B) at (157.5:0.7);
				
				\coordinate (a) at (0:0.5);
				\coordinate (b) at (0:0);
				
				\draw (1)--(b);		\draw (2)--(a);		\draw (3)--(a);		\draw (4)--(a);
				\draw (5)--(A);		\draw (6)--(A);		\draw (7)--(B);		\draw (8)--(B);
				
				\draw (b)--(A);		\draw (b)--(B);
				
				\foreach \p in {A,B} {
					\fill[white] (\p) circle (2pt);
					\draw (\p) circle (2pt);
				}
				\foreach \p in {a,b}{
					\fill (\p) circle (2pt);
				}
		}}
	\end{minipage}
	\begin{minipage}[t]{0.15\textwidth}%m=2
		\centering
		\scalemath{0.8}{
			\Diska{
				\coordinate (A) at (-112.5:0.7);
				\coordinate (B) at (157.5:0.7);
				
				\coordinate (a) at (67.5:0.7);
				\coordinate (b) at (-22.5:0.7);
				
				\draw (1)--(a);		\draw (2)--(a);		\draw (3)--(b);		\draw (4)--(b);
				\draw (5)--(A);		\draw (6)--(A);		\draw (7)--(B);		\draw (8)--(B);
				
				\draw (a)--(B);	
				\draw (b)--(A);	
				
				\foreach \p in {A,B} {
					\fill[white] (\p) circle (2pt);
					\draw (\p) circle (2pt);
				}
				\foreach \p in {a,b}{
					\fill (\p) circle (2pt);
				}
		}}
	\end{minipage}
	\begin{minipage}[t]{0.15\textwidth}%m=3
		\centering
		\scalemath{0.8}{
			\Diska{
				\coordinate (A) at (-135:0.5);
				\coordinate (a) at (0:0.5);
				
				\draw (2)--(a);		\draw (3)--(a);		\draw (4)--(a);
				\draw (5)--(A);		\draw (6)--(A);		\draw (7)--(A);		
				
				\draw[-stealth] (8) arc[start angle=-135, end angle=-10, radius=.414];
				
				\fill[white] (A) circle (2pt);
				\draw (A) circle (2pt);
				\fill (a) circle (2pt);
		}}
	\end{minipage}
	\begin{minipage}[t]{0.15\textwidth}%m=3
		\centering
		\scalemath{0.8}{
			\Diska{
				\coordinate (A) at (180:0.5);
				\coordinate (a) at (45:0.5);
				
				\draw (1)--(a);		\draw (2)--(a);		\draw (3)--(a);
				\draw (6)--(A);		\draw (7)--(A);		\draw (8)--(A);		
				
				\draw[-stealth] (5) arc[start angle=180, end angle=55, radius=.414];
				
				\fill[white] (A) circle (2pt);
				\draw (A) circle (2pt);
				\fill (a) circle (2pt);
		}}
	\end{minipage}
	\begin{minipage}[t]{0.15\textwidth}%m=3
		\centering
		\scalemath{0.8}{
			\Diska{
				\coordinate (A) at (180:0.5);
				\coordinate (a) at (0:0.5);
				
				\draw (2)--(a);		\draw (3)--(a);		\draw (4)--(a);
				\draw (6)--(A);		\draw (7)--(A);		\draw (8)--(A);		
				
				\draw[-stealth] (5)--(90:0.93);
				
				\fill[white] (A) circle (2pt);
				\draw (A) circle (2pt);
				\fill (a) circle (2pt);
		}}
	\end{minipage}
	\\\vspace{0.2cm}
	\begin{minipage}[t]{0.15\textwidth}%m=3
		\centering
		\scalemath{0.8}{
			\Diska{
				\coordinate (A) at (-135:0.5);
				\coordinate (a) at (45:0.5);
				
				\draw (1)--(a);		\draw (2)--(a);		\draw (3)--(a);
				\draw (5)--(A);		\draw (6)--(A);		\draw (7)--(A);		
				
				\draw[-stealth] (8)--(-45:0.93);
				
				\fill[white] (A) circle (2pt);
				\draw (A) circle (2pt);
				\fill (a) circle (2pt);
		}}
	\end{minipage}
	\begin{minipage}[t]{0.15\textwidth}%m=3
		\centering
		\scalemath{0.8}{
			\Diska{
				\coordinate (A) at (-112.5:0.7);
				\coordinate (a) at (-22.5:0.7);
				
				\draw (3)--(a);		\draw (4)--(a);
				\draw (5)--(A);		\draw (6)--(A);		
				
				\draw[-stealth] (8) arc[start angle=-135, end angle=-10, radius=.414];
				\draw[-stealth] (7)--(45:0.93);
				
				\draw(a)--(A);
				
				\fill[white] (A) circle (2pt);
				\draw (A) circle (2pt);
				\fill (a) circle (2pt);
		}}
	\end{minipage}
	\begin{minipage}[t]{0.15\textwidth}%m=3
		\centering
		\scalemath{0.8}{
			\Diska{
				\coordinate (A) at (157.5:0.7);
				\coordinate (a) at (67.5:0.7);
				
				\draw (1)--(a);		\draw (2)--(a);		
				\draw (7)--(A);		\draw (8)--(A);		
				
				\draw[-stealth] (5) arc[start angle=180, end angle=55, radius=.414];
				\draw[-stealth] (6)--(0:0.93);
				
				\draw (a)--(A);
				
				\fill[white] (A) circle (2pt);
				\draw (A) circle (2pt);
				\fill (a) circle (2pt);
		}}
	\end{minipage}
	\begin{minipage}[t]{0.15\textwidth}%m=3
		\centering
		\scalemath{0.8}{
			\Diska{
				\coordinate (A) at (-157.5:0.7);
				\coordinate (a) at (22.5:0.7);
				
				\draw (2)--(a);		\draw (3)--(a);		
				\draw (6)--(A);		\draw (7)--(A);		
				
				\draw[-stealth] (5) arc[start angle=180, end angle=55, radius=.414];
				\draw[-stealth] (8) arc[start angle=-135, end angle=-10, radius=.414];
				
				\draw (a)--(A);
				
				\fill[white] (A) circle (2pt);
				\draw (A) circle (2pt);
				\fill (a) circle (2pt);
		}}
	\end{minipage}
	\begin{minipage}[t]{0.15\textwidth}%m=4
		\centering
		\scalemath{0.8}{
			\Diska{	
				\draw[-stealth] (5) arc[start angle=180, end angle=55, radius=.414];
				\draw[-stealth] (6)--(0:0.93);
				\draw[-stealth] (8) arc[start angle=-135, end angle=-10, radius=.414];
				\draw[-stealth] (7)--(45:0.93);
		}}
	\end{minipage}
	\caption{Non-elliptic webs with 4 black and 4 white boundary vertices of type 1.}
	\label{fig:type1}
\end{figure}

\begin{figure}[H]
	\centering
	\begin{minipage}[t]{0.15\textwidth}%m=1
		\centering
		\scalemath{0.8}{
			\Diskb{
				\coordinate (A) at (157.5:0.7);
				\coordinate (B) at (82.5:0.4);
				\coordinate (C) at (-37.5:0.4);
				\coordinate (D) at (-157.5:0.4);
				
				\coordinate (a) at (67.5:0.7);
				\coordinate (b) at (22.5:0.4);
				\coordinate (c) at (-97.5:0.4);
				\coordinate (d) at (142.5:0.4);
				
				\draw (1)--(a);		\draw (2)--(a);		\draw (3)--(b);		\draw (5)--(c);
				\draw (4)--(C);		\draw (6)--(D);		\draw (7)--(A);		\draw (8)--(A);
				
				\draw (a)--(B);	
				\draw (b)--(B);		\draw (b)--(C);
				\draw (c)--(C);		\draw (c)--(D);	
				\draw (d)--(A);		\draw (d)--(B);		\draw (d)--(D);	
				
				\foreach \p in {A,B,C,D} {
					\fill[white] (\p) circle (2pt);
					\draw (\p) circle (2pt);
				}
				\foreach \p in {a,b,c,d}{
					\fill (\p) circle (2pt);
				}
		}}
	\end{minipage}
	\begin{minipage}[t]{0.15\textwidth}%m=1
		\centering
		\scalemath{0.8}{
			\Diskb{
				\coordinate (A) at (-157.5:0.7);
				\coordinate (B) at (-22.5:0.5);
				\coordinate (C) at (145:0.4);
				
				\coordinate (a) at (22.5:0.7);
				\coordinate (b) at (-112.5:0.5);
				\coordinate (c) at (80:0.4);
				
				\draw (1)--(c);		\draw (2)--(a);		\draw (3)--(a);		\draw (5)--(b);
				\draw (4)--(B);		\draw (6)--(A);		\draw (7)--(A);		\draw (8)--(C);
				
				\draw (a)--(B);	
				\draw (b)--(A);		\draw (b)--(C);
				\draw (c)--(B);		\draw (c)--(C);	
				
				\foreach \p in {A,B,C} {
					\fill[white] (\p) circle (2pt);
					\draw (\p) circle (2pt);
				}
				\foreach \p in {a,b,c}{
					\fill (\p) circle (2pt);
				}
		}}
	\end{minipage}
	\begin{minipage}[t]{0.15\textwidth}%m=1
		\centering
		\scalemath{0.8}{
			\Diskb{
				\coordinate (A) at (157.5:0.7);
				\coordinate (B) at (-30:0.4);
				\coordinate (C) at (-150:0.4);
				
				\coordinate (a) at (22.5:0.7);
				\coordinate (b) at (-90:0.4);
				\coordinate (c) at (150:0.4);
				
				\draw (1)--(c);		\draw (2)--(a);		\draw (3)--(a);		\draw (5)--(b);
				\draw (4)--(B);		\draw (6)--(C);		\draw (7)--(A);		\draw (8)--(A);
				
				\draw (a)--(B);	
				\draw (b)--(B);		\draw (b)--(C);
				\draw (c)--(A);		\draw (c)--(C);	
				
				\foreach \p in {A,B,C} {
					\fill[white] (\p) circle (2pt);
					\draw (\p) circle (2pt);
				}
				\foreach \p in {a,b,c}{
					\fill (\p) circle (2pt);
				}
		}}
	\end{minipage}
	\begin{minipage}[t]{0.15\textwidth}%m=1
		\centering
		\scalemath{0.8}{
			\Diskb{
				\coordinate (A) at (-157.5:0.7);
				\coordinate (B) at (75:0.4);
				\coordinate (C) at (-45:0.4);
				
				\coordinate (a) at (67.5:0.7);
				\coordinate (b) at (15:0.4);
				\coordinate (c) at (-105:0.4);
				
				\draw (1)--(a);		\draw (2)--(a);		\draw (3)--(b);		\draw (5)--(c);
				\draw (4)--(C);		\draw (6)--(A);		\draw (7)--(A);		\draw (8)--(B);
				
				\draw (a)--(B);	
				\draw (b)--(B);		\draw (b)--(C);
				\draw (c)--(A);		\draw (c)--(C);	
				
				\foreach \p in {A,B,C} {
					\fill[white] (\p) circle (2pt);
					\draw (\p) circle (2pt);
				}
				\foreach \p in {a,b,c}{
					\fill (\p) circle (2pt);
				}
		}}
	\end{minipage}
	\begin{minipage}[t]{0.15\textwidth}%m=2
		\centering
		\scalemath{0.8}{
			\Diskb{
				\coordinate (A) at (-157.5:0.7);
				\coordinate (B) at (-35:0.4);
				
				\coordinate (a) at (22.5:0.7);
				\coordinate (b) at (-100:0.4);
				
				\draw[-stealth] (8) arc[start angle=-135, end angle=-10, radius=.414];
				
				\draw (2)--(a);		\draw (3)--(a);		\draw (5)--(b);
				\draw (4)--(B);		\draw (6)--(A);		\draw (7)--(A);		
				
				\draw (a)--(B);	
				\draw (b)--(A);		\draw (b)--(B);
				
				\foreach \p in {A,B} {
					\fill[white] (\p) circle (2pt);
					\draw (\p) circle (2pt);
				}
				\foreach \p in {a,b}{
					\fill (\p) circle (2pt);
				}
		}}
	\end{minipage}
	\begin{minipage}[t]{0.15\textwidth}%m=2
		\centering
		\scalemath{0.8}{
			\Diskb{
				\coordinate (A) at (-157.5:0.7);
				\coordinate (B) at (90:0.3);
				
				\coordinate (a) at (67.5:0.7);
				\coordinate (b) at (-90:0.3);
				
				\draw[-stealth] (4) arc[start angle=225, end angle=100, radius=.414];
				
				\draw (1)--(a);		\draw (2)--(a);		\draw (5)--(b);
				\draw (6)--(A);		\draw (7)--(A);		\draw (8)--(B);		
				
				\draw (a)--(B);	
				\draw (b)--(A);		\draw (b)--(B);
				
				\foreach \p in {A,B} {
					\fill[white] (\p) circle (2pt);
					\draw (\p) circle (2pt);
				}
				\foreach \p in {a,b}{
					\fill (\p) circle (2pt);
				}
		}}
	\end{minipage}
	\\\vspace{0.2cm}
	\begin{minipage}[t]{0.15\textwidth}%m=2
		\centering
		\scalemath{0.8}{
			\Diskb{
				\coordinate (A) at (157.5:0.7);
				\coordinate (B) at (-45:0.3);
				
				\coordinate (a) at (22.5:0.7);
				\coordinate (b) at (135:0.3);
				
				\draw[-stealth] (6) arc[start angle=135, end angle=10, radius=.414];
				
				\draw (1)--(b);		\draw (2)--(a);		\draw (3)--(a);
				\draw (4)--(B);		\draw (7)--(A);		\draw (8)--(A);		
				
				\draw (a)--(B);	
				\draw (b)--(A);		\draw (b)--(B);
				
				\foreach \p in {A,B} {
					\fill[white] (\p) circle (2pt);
					\draw (\p) circle (2pt);
				}
				\foreach \p in {a,b}{
					\fill (\p) circle (2pt);
				}
		}}
	\end{minipage}
	\begin{minipage}[t]{0.15\textwidth}%m=2
		\centering
		\scalemath{0.8}{
			\Diskb{
				\coordinate (A) at (-157.5:0.7);
				\coordinate (B) at (112.5:0.5);
				
				\coordinate (a) at (67.5:0.7);
				\coordinate (b) at (0:0);
				
				\draw[-stealth] (4) arc[start angle=45, end angle=170, radius=.414];
				
				\draw (1)--(a);		\draw (2)--(a);		\draw (3)--(b);
				\draw (6)--(A);		\draw (7)--(A);		\draw (8)--(B);		
				
				\draw (a)--(B);	
				\draw (b)--(A);		\draw (b)--(B);
				
				\foreach \p in {A,B} {
					\fill[white] (\p) circle (2pt);
					\draw (\p) circle (2pt);
				}
				\foreach \p in {a,b}{
					\fill (\p) circle (2pt);
				}
		}}
	\end{minipage}
	\begin{minipage}[t]{0.15\textwidth}%m=2
		\centering
		\scalemath{0.8}{
			\Diskb{
				\coordinate (A) at (157.5:0.7);
				\coordinate (B) at (0:0);
				
				\coordinate (a) at (22.5:0.7);
				\coordinate (b) at (112.5:0.5);
				
				\draw[-stealth] (4) arc[start angle=45, end angle=170, radius=.414];
				
				\draw (1)--(b);		\draw (2)--(a);		\draw (3)--(a);
				\draw (6)--(B);		\draw (7)--(A);		\draw (8)--(A);		
				
				\draw (a)--(B);	
				\draw (b)--(A);		\draw (b)--(B);
				
				\foreach \p in {A,B} {
					\fill[white] (\p) circle (2pt);
					\draw (\p) circle (2pt);
				}
				\foreach \p in {a,b}{
					\fill (\p) circle (2pt);
				}
		}}
	\end{minipage}
	\begin{minipage}[t]{0.15\textwidth}%m=2
		\centering
		\scalemath{0.8}{
			\Diskb{
				\coordinate (A) at (112.5:0.5);
				\coordinate (B) at (-112.5:0.3);
				
				\coordinate (a) at (67.5:0.7);
				\coordinate (b) at (0:0.4);
				
				\draw[-stealth] (6) arc[start angle=135, end angle=10, radius=.414];
				
				\draw (1)--(a);		\draw (2)--(a);		\draw (3)--(b);
				\draw (4)--(B);		\draw (7)--(B);		\draw (8)--(A);		
				
				\draw (a)--(A);	
				\draw (b)--(A);		\draw (b)--(B);
				
				\foreach \p in {A,B} {
					\fill[white] (\p) circle (2pt);
					\draw (\p) circle (2pt);
				}
				\foreach \p in {a,b}{
					\fill (\p) circle (2pt);
				}
		}}
	\end{minipage}
	\begin{minipage}[t]{0.15\textwidth}%m=2
		\centering
		\scalemath{0.8}{
			\Diskb{
				\coordinate (A) at (157.5:0.7);
				\coordinate (B) at (-135:0.4);
				
				\coordinate (a) at (112.5:0.5);
				\coordinate (b) at (-22.5:0.3);
				
				\draw[-stealth] (4) arc[start angle=225, end angle=100, radius=.414];
				
				\draw (1)--(a);		\draw (2)--(b);		\draw (5)--(b);
				\draw (6)--(B);		\draw (7)--(A);		\draw (8)--(A);		
				
				\draw (a)--(A);		\draw (a)--(B);	
				\draw (b)--(B);
				
				\foreach \p in {A,B} {
					\fill[white] (\p) circle (2pt);
					\draw (\p) circle (2pt);
				}
				\foreach \p in {a,b}{
					\fill (\p) circle (2pt);
				}
		}}
	\end{minipage}
	\begin{minipage}[t]{0.15\textwidth}%m=2
		\centering
		\scalemath{0.8}{
			\Diskb{
				\coordinate (A) at (180:0.5);
				\coordinate (B) at (0:0.4);
				
				\coordinate (a) at (22.5:0.7);
				\coordinate (b) at (0:0);
				
				\draw (1)--(b);		\draw (2)--(a);		\draw (3)--(a);		\draw (5)--(b);
				\draw (4)--(B);		\draw (6)--(A);		\draw (7)--(A);		\draw (8)--(A);		
				
				\draw (a)--(B);	
				\draw (b)--(B);
				
				\foreach \p in {A,B} {
					\fill[white] (\p) circle (2pt);
					\draw (\p) circle (2pt);
				}
				\foreach \p in {a,b}{
					\fill (\p) circle (2pt);
				}
		}}
	\end{minipage}
	\\\vspace{0.2cm}
	\begin{minipage}[t]{0.15\textwidth}%m=2
		\centering
		\scalemath{0.8}{
			\Diskb{
				\coordinate (A) at (-157.5:0.7);
				\coordinate (B) at (0:0);
				
				\coordinate (a) at (45:0.5);
				\coordinate (b) at (-135:0.4);
				
				\draw (1)--(a);		\draw (2)--(a);		\draw (3)--(a);		\draw (5)--(b);
				\draw (4)--(B);		\draw (6)--(A);		\draw (7)--(A);		\draw (8)--(B);

				\draw (b)--(A);		\draw (b)--(B);
				
				\foreach \p in {A,B} {
					\fill[white] (\p) circle (2pt);
					\draw (\p) circle (2pt);
				}
				\foreach \p in {a,b}{
					\fill (\p) circle (2pt);
				}
		}}
	\end{minipage}
	\begin{minipage}[t]{0.15\textwidth}%m=3
		\centering
		\scalemath{0.8}{
			\Diskb{
				\coordinate (A) at (157.5:0.7);
				
				\coordinate (a) at (67.5:0.7);
				
				\draw (1)--(a);		\draw (2)--(a);		
				\draw (7)--(A);		\draw (8)--(A);		
				
				\draw[-stealth] (4) arc[start angle=225, end angle=100, radius=.414];
				\draw[-stealth] (6) arc[start angle=135, end angle=10, radius=.414];
				
				\draw (a)--(A);	
				
				\fill[white] (A) circle (2pt);
				\draw (A) circle (2pt);
				\fill (a) circle (2pt);
		}}
	\end{minipage}
	\begin{minipage}[t]{0.15\textwidth}%m=3
		\centering
		\scalemath{0.8}{
			\Diskb{
				\coordinate (A) at (157.5:0.7);
				
				\coordinate (a) at (67.5:0.7);
				
				\draw (1)--(a);		\draw (2)--(a);		
				\draw (7)--(A);		\draw (8)--(A);		
				
				\draw[-stealth] (4) arc[start angle=45, end angle=170, radius=.414];
				\draw[-stealth] (6)--(0:0.93);
				
				\draw (a)--(A);	
				
				\fill[white] (A) circle (2pt);
				\draw (A) circle (2pt);
				\fill (a) circle (2pt);
		}}
	\end{minipage}
	\begin{minipage}[t]{0.15\textwidth}%m=3
		\centering
		\scalemath{0.8}{
			\Diskb{
				\coordinate (A) at (180:0.5);
				
				\coordinate (a) at (45:0.5);
				
				\draw (1)--(a);		\draw (2)--(a);		\draw (3)--(a);
				\draw (6)--(A);		\draw (7)--(A);		\draw (8)--(A);		
				
				\draw[-stealth] (4) arc[start angle=45, end angle=170, radius=.414];
				
				\fill[white] (A) circle (2pt);
				\draw (A) circle (2pt);
				\fill (a) circle (2pt);
		}}
	\end{minipage}
	\begin{minipage}[t]{0.15\textwidth}%m=3
		\centering
		\scalemath{0.8}{
			\Diskb{
				\coordinate (A) at (0:0);
				
				\coordinate (a) at (45:0.5);
				
				\draw (1)--(a);		\draw (2)--(a);		\draw (3)--(a);
				\draw (4)--(A);		\draw (7)--(A);		\draw (8)--(A);		
				
				\draw[-stealth] (6) arc[start angle=135, end angle=10, radius=.414];
				
				\fill[white] (A) circle (2pt);
				\draw (A) circle (2pt);
				\fill (a) circle (2pt);
		}}
	\end{minipage}
	\begin{minipage}[t]{0.15\textwidth}%m=3
		\centering
		\scalemath{0.8}{
			\Diskb{
				\coordinate (A) at (0:0);
				
				\coordinate (a) at (22.5:0.7);
				
				\draw (2)--(a);		\draw (3)--(a);		
				\draw (4)--(A);		\draw (7)--(A);		
				
				\draw[-stealth] (6) arc[start angle=135, end angle=10, radius=.414];
				\draw[-stealth] (8) arc[start angle=-135, end angle=-10, radius=.414];
				
				\draw (a)--(A);	
				
				\fill[white] (A) circle (2pt);
				\draw (A) circle (2pt);
				\fill (a) circle (2pt);
		}}
	\end{minipage}
	\\\vspace{0.2cm}
	\begin{minipage}[t]{0.15\textwidth}%m=3
		\centering
		\scalemath{0.8}{
			\Diskb{
				\coordinate (A) at (-157.5:0.7);
				
				\coordinate (a) at (22.5:0.7);
				
				\draw (2)--(a);		\draw (3)--(a);		
				\draw (6)--(A);		\draw (7)--(A);		
				
				\draw[-stealth] (4) arc[start angle=45, end angle=170, radius=.414];
				\draw[-stealth] (8) arc[start angle=-135, end angle=-10, radius=.414];
				
				\draw (a)--(A);	
				
				\fill[white] (A) circle (2pt);
				\draw (A) circle (2pt);
				\fill (a) circle (2pt);
		}}
	\end{minipage}
	\begin{minipage}[t]{0.15\textwidth}%m=3
		\centering
		\scalemath{0.8}{
			\Diskb{
				\coordinate (A) at (-157.5:0.7);
				
				\coordinate (a) at (0:0);
				
				\draw (2)--(a);		\draw (5)--(a);		
				\draw (6)--(A);		\draw (7)--(A);		
				
				\draw[-stealth] (4) arc[start angle=225, end angle=100, radius=.414];
				\draw[-stealth] (8) arc[start angle=-135, end angle=-10, radius=.414];
				
				\draw (a)--(A);	
				
				\fill[white] (A) circle (2pt);
				\draw (A) circle (2pt);
				\fill (a) circle (2pt);
		}}
	\end{minipage}
	\begin{minipage}[t]{0.15\textwidth}%m=3
		\centering
		\scalemath{0.8}{
			\Diskb{
				\coordinate (A) at (180:0.5);
				
				\coordinate (a) at (0:0);
				
				\draw (1)--(a);		\draw (2)--(a);		\draw (5)--(a);	
				\draw (6)--(A);		\draw (7)--(A);		\draw (8)--(A);		
				
				\draw[-stealth] (4) arc[start angle=225, end angle=100, radius=.414];
				
				\fill[white] (A) circle (2pt);
				\draw (A) circle (2pt);
				\fill (a) circle (2pt);
		}}
	\end{minipage}
	\begin{minipage}[t]{0.15\textwidth}%m=4
		\centering
		\scalemath{0.8}{
			\Diskb{
				\draw[-stealth] (4) arc[start angle=45, end angle=170, radius=.414];
				\draw[-stealth] (6)--(0:0.93);
				\draw[-stealth] (8) arc[start angle=-135, end angle=-10, radius=.414];
				\draw[-stealth] (7)--(45:0.93);
		}}
	\end{minipage}
	\begin{minipage}[t]{0.15\textwidth}%m=4
		\centering
		\scalemath{0.8}{
			\Diskb{
				\draw[-stealth] (4) arc[start angle=225, end angle=100, radius=.414];
				\draw[-stealth] (6) arc[start angle=135, end angle=10, radius=.414];
				\draw[-stealth] (8) arc[start angle=-135, end angle=-10, radius=.414];
				\draw[-stealth] (7)--(45:0.93);
		}}
	\end{minipage}
	\caption{Non-elliptic webs with 4 black and 4 white boundary vertices of type 2.}
	\label{fig:type2}
\end{figure}

\begin{figure}[H]
	\centering
	\begin{minipage}[t]{0.15\textwidth}%m=1
		\centering
		\scalemath{0.8}{
			\Diskc{
				\coordinate (A) at (-67.5:0.7);
				\coordinate (B) at (157.5:0.7);
				\coordinate (C) at (0:0);
				
				\coordinate (a) at (22.5:0.7);
				\coordinate (b) at (-112.5:0.5);
				\coordinate (c) at (112.5:0.5);
				
				\draw (1)--(c);		\draw (2)--(a);		\draw (3)--(a);		\draw (6)--(b);
				\draw (4)--(A);		\draw (5)--(A);		\draw (7)--(B);		\draw (8)--(B);
				
				\draw (a)--(C);	
				\draw (b)--(A);		\draw (b)--(C);
				\draw (c)--(B);		\draw (c)--(C);	
				
				\foreach \p in {A,B,C} {
					\fill[white] (\p) circle (2pt);
					\draw (\p) circle (2pt);
				}
				\foreach \p in {a,b,c}{
					\fill (\p) circle (2pt);
				}
		}}
	\end{minipage}
	\begin{minipage}[t]{0.15\textwidth}%m=1
		\centering
		\scalemath{0.8}{
			\Diskc{
				\coordinate (A) at (-67.5:0.7);
				\coordinate (B) at (157.5:0.7);
				\coordinate (C) at (0:0);
				
				\coordinate (a) at (67.5:0.7);
				\coordinate (b) at (-22.5:0.5);
				\coordinate (c) at (-157.5:0.5);
				
				\draw (1)--(a);		\draw (2)--(a);		\draw (3)--(b);		\draw (6)--(c);
				\draw (4)--(A);		\draw (5)--(A);		\draw (7)--(B);		\draw (8)--(B);
				
				\draw (a)--(C);	
				\draw (b)--(A);		\draw (b)--(C);
				\draw (c)--(B);		\draw (c)--(C);	
				
				\foreach \p in {A,B,C} {
					\fill[white] (\p) circle (2pt);
					\draw (\p) circle (2pt);
				}
				\foreach \p in {a,b,c}{
					\fill (\p) circle (2pt);
				}
		}}
	\end{minipage}
	\begin{minipage}[t]{0.15\textwidth}%m=1
		\centering
		\scalemath{0.8}{
			\Diskc{
				\coordinate (A) at (-67.5:0.7);
				\coordinate (B) at (-170:0.5);
				\coordinate (C) at (125:0.5);
				
				\coordinate (a) at (67.5:0.7);
				\coordinate (b) at (-112.5:0.5);
				\coordinate (c) at (10:0.3);
				
				\draw (1)--(a);		\draw (2)--(a);		\draw (3)--(c);		\draw (6)--(b);
				\draw (4)--(A);		\draw (5)--(A);		\draw (7)--(B);		\draw (8)--(C);
				
				\draw (a)--(C);	
				\draw (b)--(A);		\draw (b)--(B);
				\draw (c)--(B);		\draw (c)--(C);	
				
				\foreach \p in {A,B,C} {
					\fill[white] (\p) circle (2pt);
					\draw (\p) circle (2pt);
				}
				\foreach \p in {a,b,c}{
					\fill (\p) circle (2pt);
				}
		}}
	\end{minipage}
	\begin{minipage}[t]{0.15\textwidth}%m=1
		\centering
		\scalemath{0.8}{
			\Diskc{
				\coordinate (A) at (157.5:0.7);
				\coordinate (B) at (-35:0.5);
				\coordinate (C) at (-100:0.5);
				
				\coordinate (a) at (22.5:0.7);
				\coordinate (b) at (-157.5:0.5);
				\coordinate (c) at (80:0.3);
				
				\draw (1)--(c);		\draw (2)--(a);		\draw (3)--(a);		\draw (6)--(b);
				\draw (4)--(B);		\draw (5)--(C);		\draw (7)--(A);		\draw (8)--(A);
				
				\draw (a)--(B);	
				\draw (b)--(A);		\draw (b)--(C);
				\draw (c)--(B);		\draw (c)--(C);	
				
				\foreach \p in {A,B,C} {
					\fill[white] (\p) circle (2pt);
					\draw (\p) circle (2pt);
				}
				\foreach \p in {a,b,c}{
					\fill (\p) circle (2pt);
				}
		}}
	\end{minipage}
	\begin{minipage}[t]{0.15\textwidth}%m=2
		\centering
		\scalemath{0.8}{
			\Diskc{
				\coordinate (A) at (-67.5:0.7);
				\coordinate (B) at (0:0);
				
				\coordinate (a) at (22.5:0.7);
				\coordinate (b) at (-112.5:0.5);
				
				\draw (2)--(a);		\draw (3)--(a);		\draw (6)--(b);		
				\draw (4)--(A);		\draw (5)--(A);		\draw (7)--(B);
				
				\draw[-stealth] (8) arc[start angle=-135, end angle=-10, radius=.414];
				
				\draw (a)--(B);	
				\draw (b)--(A);		\draw (b)--(B);
				
				\foreach \p in {A,B} {
					\fill[white] (\p) circle (2pt);
					\draw (\p) circle (2pt);
				}
				\foreach \p in {a,b}{
					\fill (\p) circle (2pt);
				}
		}}
	\end{minipage}
	\begin{minipage}[t]{0.15\textwidth}%m=2
		\centering
		\scalemath{0.8}{
			\Diskc{
				\coordinate (A) at (157.5:0.7);
				\coordinate (B) at (0:0);
				
				\coordinate (a) at (67.5:0.7);
				\coordinate (b) at (-157.5:0.5);
				
				\draw (1)--(a);		\draw (2)--(a);		\draw (6)--(b);		
				\draw (5)--(B);		\draw (7)--(A);		\draw (8)--(A);
				
				\draw[-stealth] (4) arc[start angle=225, end angle=100, radius=.414];
				
				\draw (a)--(B);	
				\draw (b)--(A);		\draw (b)--(B);
				
				\foreach \p in {A,B} {
					\fill[white] (\p) circle (2pt);
					\draw (\p) circle (2pt);
				}
				\foreach \p in {a,b}{
					\fill (\p) circle (2pt);
				}
		}}
	\end{minipage}
	\\\vspace{0.2cm}
	\begin{minipage}[t]{0.15\textwidth}%m=2
		\centering
		\scalemath{0.8}{
			\Diskc{
				\coordinate (A) at (157.5:0.7);
				\coordinate (B) at (0:0.3);
				
				\coordinate (a) at (22.5:0.7);
				\coordinate (b) at (135:0.3);
				
				\draw (1)--(b);		\draw (2)--(a);		\draw (3)--(a);		
				\draw (4)--(B);		\draw (7)--(A);		\draw (8)--(A);
				
				\draw[-stealth] (5) arc[start angle=0, end angle=125, radius=.414];
				
				\draw (a)--(B);	
				\draw (b)--(A);		\draw (b)--(B);
				
				\foreach \p in {A,B} {
					\fill[white] (\p) circle (2pt);
					\draw (\p) circle (2pt);
				}
				\foreach \p in {a,b}{
					\fill (\p) circle (2pt);
				}
		}}
	\end{minipage}
	\begin{minipage}[t]{0.15\textwidth}%m=2
		\centering
		\scalemath{0.8}{
			\Diskc{
				\coordinate (A) at (-67.5:0.7);
				\coordinate (B) at (90:0.3);
				
				\coordinate (a) at (67.5:0.7);
				\coordinate (b) at (-45:0.3);
				
				\draw (1)--(a);		\draw (2)--(a);		\draw (3)--(b);		
				\draw (4)--(A);		\draw (5)--(A);		\draw (8)--(B);
				
				\draw[-stealth] (7) arc[start angle=90, end angle=-45, radius=.414];
				
				\draw (a)--(B);	
				\draw (b)--(A);		\draw (b)--(B);
				
				\foreach \p in {A,B} {
					\fill[white] (\p) circle (2pt);
					\draw (\p) circle (2pt);
				}
				\foreach \p in {a,b}{
					\fill (\p) circle (2pt);
				}
		}}
	\end{minipage}
	\begin{minipage}[t]{0.15\textwidth}%m=2
		\centering
		\scalemath{0.8}{
			\Diskc{
				\coordinate (A) at (-22.5:0.5);
				\coordinate (B) at (-157.5:0.3);
				
				\coordinate (a) at (22.5:0.7);
				\coordinate (b) at (90:0.4);
				
				\draw (1)--(b);		\draw (2)--(a);		\draw (3)--(a);		
				\draw (4)--(A);		\draw (5)--(B);		\draw (8)--(B);
				
				\draw[-stealth] (7) arc[start angle=90, end angle=-45, radius=.414];
				
				\draw (a)--(A);	
				\draw (b)--(A);		\draw (b)--(B);
				
				\foreach \p in {A,B} {
					\fill[white] (\p) circle (2pt);
					\draw (\p) circle (2pt);
				}
				\foreach \p in {a,b}{
					\fill (\p) circle (2pt);
				}
		}}
	\end{minipage}
	\begin{minipage}[t]{0.15\textwidth}%m=2
		\centering
		\scalemath{0.8}{
			\Diskc{
				\coordinate (A) at (112.5:0.5);
				\coordinate (B) at (-112.5:0.3);
				
				\coordinate (a) at (67.5:0.7);
				\coordinate (b) at (0:0.4);
				
				\draw (1)--(a);		\draw (2)--(a);		\draw (3)--(b);		
				\draw (4)--(B);		\draw (7)--(B);		\draw (8)--(A);
				
				\draw[-stealth] (5) arc[start angle=0, end angle=125, radius=.414];
				
				\draw (a)--(A);	
				\draw (b)--(A);		\draw (b)--(B);
				
				\foreach \p in {A,B} {
					\fill[white] (\p) circle (2pt);
					\draw (\p) circle (2pt);
				}
				\foreach \p in {a,b}{
					\fill (\p) circle (2pt);
				}
		}}
	\end{minipage}
	\begin{minipage}[t]{0.15\textwidth}%m=2
		\centering
		\scalemath{0.8}{
			\Diskc{
				\coordinate (A) at (-67.5:0.7);
				\coordinate (B) at (157.5:0.7);
				
				\coordinate (a) at (22.5:0.7);
				\coordinate (b) at (157.5:0.3);
				
				\draw (1)--(b);		\draw (2)--(a);		\draw (3)--(a);		\draw (6)--(b);
				\draw (4)--(A);		\draw (5)--(A);		\draw (7)--(B);		\draw (8)--(B);
				
				\draw (a)--(A);	
				\draw (b)--(B);
				
				\foreach \p in {A,B} {
					\fill[white] (\p) circle (2pt);
					\draw (\p) circle (2pt);
				}
				\foreach \p in {a,b}{
					\fill (\p) circle (2pt);
				}
		}}
	\end{minipage}
	\begin{minipage}[t]{0.15\textwidth}%m=2
		\centering
		\scalemath{0.8}{
			\Diskc{
				\coordinate (A) at (-67.5:0.7);
				\coordinate (B) at (157.5:0.7);
				
				\coordinate (a) at (67.5:0.7);
				\coordinate (b) at (-67.5:0.3);
				
				\draw (1)--(a);		\draw (2)--(a);		\draw (3)--(b);		\draw (6)--(b);
				\draw (4)--(A);		\draw (5)--(A);		\draw (7)--(B);		\draw (8)--(B);
				
				\draw (a)--(B);	
				\draw (b)--(A);
				
				\foreach \p in {A,B} {
					\fill[white] (\p) circle (2pt);
					\draw (\p) circle (2pt);
				}
				\foreach \p in {a,b}{
					\fill (\p) circle (2pt);
				}
		}}
	\end{minipage}
	\\\vspace{0.2cm}
	\begin{minipage}[t]{0.15\textwidth}%m=2
		\centering
		\scalemath{0.8}{
			\Diskc{
				\coordinate (A) at (-67.5:0.7);
				\coordinate (B) at (157.5:0.7);
				
				\coordinate (a) at (45:0.5);
				\coordinate (b) at (-135:0.4);
				
				\draw (1)--(a);		\draw (2)--(a);		\draw (3)--(a);		\draw (6)--(b);
				\draw (4)--(A);		\draw (5)--(A);		\draw (7)--(B);		\draw (8)--(B);
				
				\draw (b)--(A);		\draw (b)--(B);
				
				\foreach \p in {A,B} {
					\fill[white] (\p) circle (2pt);
					\draw (\p) circle (2pt);
				}
				\foreach \p in {a,b}{
					\fill (\p) circle (2pt);
				}
		}}
	\end{minipage}
	\begin{minipage}[t]{0.15\textwidth}%m=3
		\centering
		\scalemath{0.8}{
			\Diskc{
				\coordinate (A) at (157.5:0.7);
				
				\coordinate (a) at (67.5:0.7);
				
				\draw (1)--(a);		\draw (2)--(a);		
				\draw (7)--(A);		\draw (8)--(A);		
				
				\draw (a)--(A);	
				
				\draw[-stealth] (4) arc[start angle=225, end angle=100, radius=.414];
				\draw[-stealth] (5) arc[start angle=0, end angle=125, radius=.414];
				
				\fill[white] (A) circle (2pt);
				\draw (A) circle (2pt);
				\fill (a) circle (2pt);
		}}
	\end{minipage}
	\begin{minipage}[t]{0.15\textwidth}%m=3
		\centering
		\scalemath{0.8}{
			\Diskc{
				\coordinate (A) at (-67.5:0.7);
				
				\coordinate (a) at (22.5:0.7);
				
				\draw (2)--(a);		\draw (3)--(a);		
				\draw (4)--(A);		\draw (5)--(A);		
				
				\draw (a)--(A);	
				
				\draw[-stealth] (7) arc[start angle=90, end angle=-35, radius=.414];
				\draw[-stealth] (8) arc[start angle=-135, end angle=-10, radius=.414];
				
				\fill[white] (A) circle (2pt);
				\draw (A) circle (2pt);
				\fill (a) circle (2pt);
		}}
	\end{minipage}
	\begin{minipage}[t]{0.15\textwidth}%m=3
		\centering
		\scalemath{0.8}{
			\Diskc{
				\coordinate (A) at (0:0);
				
				\coordinate (a) at (45:0.5);
				
				\draw (1)--(a);		\draw (2)--(a);		\draw (3)--(a);		
				\draw (4)--(A);		\draw (7)--(A);		\draw (8)--(A);
				
				\draw[-stealth] (5) arc[start angle=0, end angle=125, radius=.414];
				
				\fill[white] (A) circle (2pt);
				\draw (A) circle (2pt);
				\fill (a) circle (2pt);
		}}
	\end{minipage}
	\begin{minipage}[t]{0.15\textwidth}%m=3
		\centering
		\scalemath{0.8}{
			\Diskc{
				\coordinate (A) at (0:0);
				
				\coordinate (a) at (45:0.5);
				
				\draw (1)--(a);		\draw (2)--(a);		\draw (3)--(a);		
				\draw (4)--(A);		\draw (5)--(A);		\draw (8)--(A);
				
				\draw[-stealth] (7) arc[start angle=90, end angle=-35, radius=.414];
				
				\fill[white] (A) circle (2pt);
				\draw (A) circle (2pt);
				\fill (a) circle (2pt);
		}}
	\end{minipage}
	\begin{minipage}[t]{0.15\textwidth}%m=3
		\centering
		\scalemath{0.8}{
			\Diskc{
				\coordinate (A) at (-67.5:0.7);
				
				\coordinate (a) at (-67.5:0.3);
				
				\draw (3)--(a);		\draw (6)--(a);		
				\draw (4)--(A);		\draw (5)--(A);	
				\draw (a)--(A);
				
				\draw[-stealth] (8) arc[start angle=-135, end angle=-10, radius=.414];
				\draw[-stealth] (7)--(45:0.93);
				
				\fill[white] (A) circle (2pt);
				\draw (A) circle (2pt);
				\fill (a) circle (2pt);
		}}
	\end{minipage}
	\\\vspace{0.2cm}
	\begin{minipage}[t]{0.15\textwidth}%m=3
		\centering
		\scalemath{0.8}{
			\Diskc{
				\coordinate (A) at (157.5:0.7);
				
				\coordinate (a) at (157.5:0.3);
				
				\draw (1)--(a);		\draw (6)--(a);		
				\draw (7)--(A);		\draw (8)--(A);	
				\draw (a)--(A);
				
				\draw[-stealth] (4) arc[start angle=225, end angle=100, radius=.414];
				\draw[-stealth] (5)--(45:0.93);
				
				\fill[white] (A) circle (2pt);
				\draw (A) circle (2pt);
				\fill (a) circle (2pt);
		}}
	\end{minipage}
	\begin{minipage}[t]{0.15\textwidth}%m=3
		\centering
		\scalemath{0.8}{
			\Diskc{
				\coordinate (A) at (0:0);
				
				\coordinate (a) at (67.5:0.7);
				
				\draw (1)--(a);		\draw (2)--(a);		
				\draw (5)--(A);		\draw (8)--(A);	
				\draw (a)--(A);
				
				\draw[-stealth] (4) arc[start angle=225, end angle=100, radius=.414];
				\draw[-stealth] (7) arc[start angle=90, end angle=-35, radius=.414];
				
				\fill[white] (A) circle (2pt);
				\draw (A) circle (2pt);
				\fill (a) circle (2pt);
		}}
	\end{minipage}
	\begin{minipage}[t]{0.15\textwidth}%m=3
		\centering
		\scalemath{0.8}{
			\Diskc{
				\coordinate (A) at (0:0);
				
				\coordinate (a) at (22.5:0.7);
				
				\draw (2)--(a);		\draw (3)--(a);		
				\draw (4)--(A);		\draw (7)--(A);	
				\draw (a)--(A);
				
				\draw[-stealth] (5) arc[start angle=0, end angle=125, radius=.414];
				\draw[-stealth] (8) arc[start angle=-135, end angle=-10, radius=.414];
				
				\fill[white] (A) circle (2pt);
				\draw (A) circle (2pt);
				\fill (a) circle (2pt);
		}}
	\end{minipage}
	\begin{minipage}[t]{0.15\textwidth}%m=4
		\centering
		\scalemath{0.8}{
			\Diskc{
				\draw[-stealth] (8) arc[start angle=-135, end angle=-10, radius=.414];
				\draw[-stealth] (7) arc[start angle=90, end angle=-45, radius=.414];
				\draw[-stealth] (4) arc[start angle=225, end angle=100, radius=.414];
				\draw[-stealth] (5)--(45:0.93);
		}}
	\end{minipage}
	\begin{minipage}[t]{0.15\textwidth}%m=4
		\centering
		\scalemath{0.8}{
			\Diskc{
				\draw[-stealth] (8) arc[start angle=-135, end angle=-10, radius=.414];
				\draw[-stealth] (5) arc[start angle=0, end angle=125, radius=.414];
				\draw[-stealth] (4) arc[start angle=225, end angle=100, radius=.414];
				\draw[-stealth] (7)--(45:0.93);
		}}
	\end{minipage}
	\caption{Non-elliptic webs with 4 black and 4 white boundary vertices of type 3.}
	\label{fig:type3}
\end{figure}

\begin{figure}[H]
	\centering
	\begin{minipage}[t]{0.15\textwidth}%m=1
		\centering
		\scalemath{0.8}{
			\Diskd{
				\coordinate (A) at (157.5:0.7);
				\coordinate (B) at (22.5:0.5);
				\coordinate (C) at (-112.5:0.5);
				
				\coordinate (a) at (67.5:0.7);
				\coordinate (b) at (-67.5:0.7);
				\coordinate (c) at (0:0);
				
				\draw (1)--(a);		\draw (2)--(a);		\draw (4)--(b);		\draw (5)--(b);
				\draw (3)--(B);		\draw (6)--(C);		\draw (7)--(A);		\draw (8)--(A);
				
				\draw (a)--(B);	
				\draw (b)--(C);	
				\draw (c)--(A);		\draw (c)--(B);		\draw (c)--(C);	
				
				\foreach \p in {A,B,C} {
					\fill[white] (\p) circle (2pt);
					\draw (\p) circle (2pt);
				}
				\foreach \p in {a,b,c}{
					\fill (\p) circle (2pt);
				}
		}}
	\end{minipage}
	\begin{minipage}[t]{0.15\textwidth}%m=1
		\centering
		\scalemath{0.8}{
			\Diskd{
				\coordinate (A) at (-157.5:0.7);
				\coordinate (B) at (-22.5:0.5);
				\coordinate (C) at (112.5:0.5);
				
				\coordinate (a) at (67.5:0.7);
				\coordinate (b) at (-67.5:0.7);
				\coordinate (c) at (0:0);
				
				\draw (1)--(a);		\draw (2)--(a);		\draw (4)--(b);		\draw (5)--(b);
				\draw (3)--(B);		\draw (6)--(A);		\draw (7)--(A);		\draw (8)--(C);
				
				\draw (a)--(C);	
				\draw (b)--(B);	
				\draw (c)--(A);		\draw (c)--(B);		\draw (c)--(C);	
				
				\foreach \p in {A,B,C} {
					\fill[white] (\p) circle (2pt);
					\draw (\p) circle (2pt);
				}
				\foreach \p in {a,b,c}{
					\fill (\p) circle (2pt);
				}
		}}
	\end{minipage}
	\begin{minipage}[t]{0.15\textwidth}%m=1
		\centering
		\scalemath{0.8}{
			\Diskd{
				\coordinate (A) at (157.5:0.7);
				\coordinate (B) at (-22.5:0.5);
				\coordinate (C) at (-145:0.3);
				
				\coordinate (a) at (-67.5:0.7);
				\coordinate (b) at (35:0.5);
				\coordinate (c) at (100:0.5);
				
				\draw (1)--(c);		\draw (2)--(b);		\draw (4)--(a);		\draw (5)--(a);
				\draw (3)--(B);		\draw (6)--(C);		\draw (7)--(A);		\draw (8)--(A);
				
				\draw (a)--(B);	
				\draw (b)--(B);		\draw (b)--(C);	
				\draw (c)--(A);		\draw (c)--(C);	
				
				\foreach \p in {A,B,C} {
					\fill[white] (\p) circle (2pt);
					\draw (\p) circle (2pt);
				}
				\foreach \p in {a,b,c}{
					\fill (\p) circle (2pt);
				}
		}}
	\end{minipage}
	\begin{minipage}[t]{0.15\textwidth}%m=1
		\centering
		\scalemath{0.8}{
			\Diskd{
				\coordinate (A) at (-157.5:0.7);
				\coordinate (B) at (22.5:0.5);
				\coordinate (C) at (145:0.3);
				
				\coordinate (a) at (67.5:0.7);
				\coordinate (b) at (-35:0.5);
				\coordinate (c) at (-100:0.5);
				
				\draw (1)--(a);		\draw (2)--(a);		\draw (4)--(b);		\draw (5)--(c);
				\draw (3)--(B);		\draw (6)--(A);		\draw (7)--(A);		\draw (8)--(C);
				
				\draw (a)--(B);	
				\draw (b)--(B);		\draw (b)--(C);	
				\draw (c)--(A);		\draw (c)--(C);	
				
				\foreach \p in {A,B,C} {
					\fill[white] (\p) circle (2pt);
					\draw (\p) circle (2pt);
				}
				\foreach \p in {a,b,c}{
					\fill (\p) circle (2pt);
				}
		}}
	\end{minipage}
	\begin{minipage}[t]{0.15\textwidth}%m=2
		\centering
		\scalemath{0.8}{
			\Diskd{
				\coordinate (A) at (-157.5:0.7);
				\coordinate (B) at (90:0.3);
				
				\coordinate (a) at (67.5:0.7);
				\coordinate (b) at (-135:0.3);
				
				\draw (1)--(a);		\draw (2)--(a);		\draw (5)--(b);		
				\draw (6)--(A);		\draw (7)--(A);		\draw (8)--(B);
				
				\draw[-stealth] (3) arc[start angle=90, end angle=215, radius=.414];
				
				\draw (a)--(B);	
				\draw (b)--(A);		\draw (b)--(B);
				
				\foreach \p in {A,B} {
					\fill[white] (\p) circle (2pt);
					\draw (\p) circle (2pt);
				}
				\foreach \p in {a,b}{
					\fill (\p) circle (2pt);
				}
		}}
	\end{minipage}
	\begin{minipage}[t]{0.15\textwidth}%m=2
		\centering
		\scalemath{0.8}{
			\Diskd{
				\coordinate (A) at (157.5:0.7);
				\coordinate (B) at (-90:0.3);
				
				\coordinate (a) at (-67.5:0.7);
				\coordinate (b) at (135:0.3);
				
				\draw (1)--(b);		\draw (4)--(a);		\draw (5)--(a);		
				\draw (6)--(B);		\draw (7)--(A);		\draw (8)--(A);
				
				\draw[-stealth] (3) arc[start angle=-90, end angle=-215, radius=.414];
				
				\draw (a)--(B);	
				\draw (b)--(A);		\draw (b)--(B);
				
				\foreach \p in {A,B} {
					\fill[white] (\p) circle (2pt);
					\draw (\p) circle (2pt);
				}
				\foreach \p in {a,b}{
					\fill (\p) circle (2pt);
				}
		}}
	\end{minipage}
	\\\vspace{0.2cm}
	\begin{minipage}[t]{0.15\textwidth}%m=2
		\centering
		\scalemath{0.8}{
			\Diskd{
				\coordinate (A) at (-157.5:0.7);
				\coordinate (B) at (135:0.4);
				
				\coordinate (a) at (-112.5:0.5);
				\coordinate (b) at (22.5:0.3);
				
				\draw (1)--(b);		\draw (4)--(b);		\draw (5)--(a);		
				\draw (6)--(A);		\draw (7)--(A);		\draw (8)--(B);
				
				\draw[-stealth] (3) arc[start angle=-90, end angle=-215, radius=.414];
				
				\draw (a)--(A);		\draw (a)--(B);	
				\draw (b)--(B);
				
				\foreach \p in {A,B} {
					\fill[white] (\p) circle (2pt);
					\draw (\p) circle (2pt);
				}
				\foreach \p in {a,b}{
					\fill (\p) circle (2pt);
				}
		}}
	\end{minipage}
	\begin{minipage}[t]{0.15\textwidth}%m=2
		\centering
		\scalemath{0.8}{
			\Diskd{
				\coordinate (A) at (157.5:0.7);
				\coordinate (B) at (-135:0.4);
				
				\coordinate (a) at (112.5:0.5);
				\coordinate (b) at (-22.5:0.3);
				
				\draw (1)--(a);		\draw (2)--(b);		\draw (5)--(b);		
				\draw (6)--(B);		\draw (7)--(A);		\draw (8)--(A);
				
				\draw[-stealth] (3) arc[start angle=90, end angle=215, radius=.414];
				
				\draw (a)--(A);		\draw (a)--(B);	
				\draw (b)--(B);
				
				\foreach \p in {A,B} {
					\fill[white] (\p) circle (2pt);
					\draw (\p) circle (2pt);
				}
				\foreach \p in {a,b}{
					\fill (\p) circle (2pt);
				}
		}}
	\end{minipage}
	\begin{minipage}[t]{0.15\textwidth}%m=2
		\centering
		\scalemath{0.8}{
			\Diskd{
				\coordinate (A) at (157.5:0.7);
				\coordinate (B) at (22.5:0.5);
				
				\coordinate (a) at (67.5:0.7);
				\coordinate (b) at (0:0);
				
				\draw (1)--(a);		\draw (2)--(a);		\draw (4)--(b);		
				\draw (3)--(B);		\draw (7)--(A);		\draw (8)--(A);
				
				\draw[-stealth] (6) arc[start angle=135, end angle=10, radius=.414];
				
				\draw (a)--(B);		
				\draw (b)--(A);		\draw (b)--(B);	
				
				\foreach \p in {A,B} {
					\fill[white] (\p) circle (2pt);
					\draw (\p) circle (2pt);
				}
				\foreach \p in {a,b}{
					\fill (\p) circle (2pt);
				}
		}}
	\end{minipage}
	\begin{minipage}[t]{0.15\textwidth}%m=2
		\centering
		\scalemath{0.8}{
			\Diskd{
				\coordinate (A) at (-157.5:0.7);
				\coordinate (B) at (-22.5:0.5);
				
				\coordinate (a) at (-67.5:0.7);
				\coordinate (b) at (0:0);
				
				\draw (2)--(b);		\draw (4)--(a);		\draw (5)--(a);		
				\draw (3)--(B);		\draw (6)--(A);		\draw (7)--(A);
				
				\draw[-stealth] (8) arc[start angle=-135, end angle=-10, radius=.414];
				
				\draw (a)--(B);		
				\draw (b)--(A);		\draw (b)--(B);	
				
				\foreach \p in {A,B} {
					\fill[white] (\p) circle (2pt);
					\draw (\p) circle (2pt);
				}
				\foreach \p in {a,b}{
					\fill (\p) circle (2pt);
				}
		}}
	\end{minipage}
	\begin{minipage}[t]{0.15\textwidth}%m=2
		\centering
		\scalemath{0.8}{
			\Diskd{
				\coordinate (A) at (-157.5:0.7);
				\coordinate (B) at (67.5:0.3);
				
				\coordinate (a) at (67.5:0.7);
				\coordinate (b) at (-67.5:0.7);
				
				\draw (1)--(a);		\draw (2)--(a);		\draw (4)--(b);		\draw (5)--(b);		
				\draw (3)--(B);		\draw (6)--(A);		\draw (7)--(A);		\draw (8)--(B);	
				
				\draw (a)--(B);		
				\draw (b)--(A);	
				
				\foreach \p in {A,B} {
					\fill[white] (\p) circle (2pt);
					\draw (\p) circle (2pt);
				}
				\foreach \p in {a,b}{
					\fill (\p) circle (2pt);
				}
		}}
	\end{minipage}
	\begin{minipage}[t]{0.15\textwidth}%m=2
		\centering
		\scalemath{0.8}{
			\Diskd{
				\coordinate (A) at (157.5:0.7);
				\coordinate (B) at (-67.5:0.3);
				
				\coordinate (a) at (67.5:0.7);
				\coordinate (b) at (-67.5:0.7);
				
				\draw (1)--(a);		\draw (2)--(a);		\draw (4)--(b);		\draw (5)--(b);		
				\draw (3)--(B);		\draw (6)--(B);		\draw (7)--(A);		\draw (8)--(A);	
				
				\draw (a)--(A);		
				\draw (b)--(B);	
				
				\foreach \p in {A,B} {
					\fill[white] (\p) circle (2pt);
					\draw (\p) circle (2pt);
				}
				\foreach \p in {a,b}{
					\fill (\p) circle (2pt);
				}
		}}
	\end{minipage}
	\\\vspace{0.2cm}
	\begin{minipage}[t]{0.15\textwidth}%m=2
		\centering
		\scalemath{0.8}{
			\Diskd{
				\coordinate (A) at (180:0.5);
				\coordinate (B) at (0:0.4);
				
				\coordinate (a) at (67.5:0.7);
				\coordinate (b) at (-67.5:0.7);
				
				\draw (1)--(a);		\draw (2)--(a);		\draw (4)--(b);		\draw (5)--(b);		
				\draw (3)--(B);		\draw (6)--(A);		\draw (7)--(A);		\draw (8)--(A);	
				
				\draw (a)--(B);		
				\draw (b)--(B);	
				
				\foreach \p in {A,B} {
					\fill[white] (\p) circle (2pt);
					\draw (\p) circle (2pt);
				}
				\foreach \p in {a,b}{
					\fill (\p) circle (2pt);
				}
		}}
	\end{minipage}
	\begin{minipage}[t]{0.15\textwidth}%m=3
		\centering
		\scalemath{0.8}{
			\Diskd{
				\coordinate (A) at (-157.5:0.7);
				
				\coordinate (a) at (-67.5:0.7);
				
				\draw (4)--(a);		\draw (5)--(a);		
				\draw (6)--(A);		\draw (7)--(A);	
				\draw (a)--(A);		
				
				\draw[-stealth] (3) arc[start angle=-90, end angle=-215, radius=.414];
				\draw[-stealth] (8) arc[start angle=-135, end angle=-10, radius=.414];
				
				\fill[white] (A) circle (2pt);
				\draw (A) circle (2pt);
				\fill (a) circle (2pt);
		}}
	\end{minipage}
	\begin{minipage}[t]{0.15\textwidth}%m=3
		\centering
		\scalemath{0.8}{
			\Diskd{
				\coordinate (A) at (157.5:0.7);
				
				\coordinate (a) at (67.5:0.7);
				
				\draw (1)--(a);		\draw (2)--(a);		
				\draw (7)--(A);		\draw (8)--(A);	
				\draw (a)--(A);		
				
				\draw[-stealth] (3) arc[start angle=90, end angle=215, radius=.414];
				\draw[-stealth] (6) arc[start angle=135, end angle=10, radius=.414];
				
				\fill[white] (A) circle (2pt);
				\draw (A) circle (2pt);
				\fill (a) circle (2pt);
		}}
	\end{minipage}
	\begin{minipage}[t]{0.15\textwidth}%m=3
		\centering
		\scalemath{0.8}{
			\Diskd{
				\coordinate (A) at (67.5:0.3);
				
				\coordinate (a) at (67.5:0.7);
				
				\draw (1)--(a);		\draw (2)--(a);		
				\draw (3)--(A);		\draw (8)--(A);	
				\draw (a)--(A);		
				
				\draw[-stealth] (6) arc[start angle=135, end angle=10, radius=.414];
				\draw[-stealth] (7)--(-45:0.93);
				
				\fill[white] (A) circle (2pt);
				\draw (A) circle (2pt);
				\fill (a) circle (2pt);
		}}
	\end{minipage}
	\begin{minipage}[t]{0.15\textwidth}%m=3
		\centering
		\scalemath{0.8}{
			\Diskd{
				\coordinate (A) at (-67.5:0.3);
				
				\coordinate (a) at (-67.5:0.7);
				
				\draw (4)--(a);		\draw (5)--(a);		
				\draw (3)--(A);		\draw (6)--(A);	
				\draw (a)--(A);
				
				\draw[-stealth] (8) arc[start angle=-135, end angle=-10, radius=.414];
				\draw[-stealth] (7)--(45:0.93);
				
				\fill[white] (A) circle (2pt);
				\draw (A) circle (2pt);
				\fill (a) circle (2pt);
		}}
	\end{minipage}
	\begin{minipage}[t]{0.15\textwidth}%m=3
		\centering
		\scalemath{0.8}{
			\Diskd{
				\coordinate (A) at (180:0.5);
				
				\coordinate (a) at (0:0);
				
				\draw (1)--(a);		\draw (2)--(a);		\draw (5)--(a);
				\draw (6)--(A);		\draw (7)--(A);		\draw (8)--(A);			
				
				\draw[-stealth] (3) arc[start angle=90, end angle=215, radius=.414];
				
				\fill[white] (A) circle (2pt);
				\draw (A) circle (2pt);
				\fill (a) circle (2pt);
		}}
	\end{minipage}
	\\\vspace{0.2cm}
	\begin{minipage}[t]{0.15\textwidth}%m=3
		\centering
		\scalemath{0.8}{
			\Diskd{
				\coordinate (A) at (180:0.5);
				
				\coordinate (a) at (0:0);
				
				\draw (1)--(a);		\draw (4)--(a);		\draw (5)--(a);
				\draw (6)--(A);		\draw (7)--(A);		\draw (8)--(A);			
				
				\draw[-stealth] (3) arc[start angle=-90, end angle=-215, radius=.414];
				
				\fill[white] (A) circle (2pt);
				\draw (A) circle (2pt);
				\fill (a) circle (2pt);
		}}
	\end{minipage}
	\begin{minipage}[t]{0.15\textwidth}%m=3
		\centering
		\scalemath{0.8}{
			\Diskd{
				\coordinate (A) at (-157.5:0.7);
				
				\coordinate (a) at (0:0);
				
				\draw (2)--(a);		\draw (5)--(a);		
				\draw (6)--(A);		\draw (7)--(A);		
				\draw (a)--(A);		
				
				\draw[-stealth] (3) arc[start angle=90, end angle=215, radius=.414];
				\draw[-stealth] (8) arc[start angle=-135, end angle=-10, radius=.414];
				
				\fill[white] (A) circle (2pt);
				\draw (A) circle (2pt);
				\fill (a) circle (2pt);
		}}
	\end{minipage}
	\begin{minipage}[t]{0.15\textwidth}%m=3
		\centering
		\scalemath{0.8}{
			\Diskd{
				\coordinate (A) at (157.5:0.7);
				
				\coordinate (a) at (0:0);
				
				\draw (1)--(a);		\draw (4)--(a);		
				\draw (7)--(A);		\draw (8)--(A);		
				\draw (a)--(A);		
				
				\draw[-stealth] (3) arc[start angle=-90, end angle=-215, radius=.414];
				\draw[-stealth] (6) arc[start angle=135, end angle=10, radius=.414];
				
				\fill[white] (A) circle (2pt);
				\draw (A) circle (2pt);
				\fill (a) circle (2pt);
		}}
	\end{minipage}
	\begin{minipage}[t]{0.15\textwidth}%m=4
		\centering
		\scalemath{0.8}{
			\Diskd{
				\draw[-stealth] (8) arc[start angle=-135, end angle=-10, radius=.414];
				\draw[-stealth] (3) arc[start angle=-90, end angle=-215, radius=.414];
				\draw[-stealth] (6) arc[start angle=135, end angle=10, radius=.414];
				\draw[-stealth] (7)--(-45:0.93);
		}}
	\end{minipage}
	\begin{minipage}[t]{0.15\textwidth}%m=4
		\centering
		\scalemath{0.8}{
			\Diskd{
				\draw[-stealth] (8) arc[start angle=-135, end angle=-10, radius=.414];
				\draw[-stealth] (3) arc[start angle=90, end angle=215, radius=.414];
				\draw[-stealth] (6) arc[start angle=135, end angle=10, radius=.414];
				\draw[-stealth] (7)--(45:0.93);
		}}
	\end{minipage}
	\caption{Non-elliptic webs with 4 black and 4 white boundary vertices of type 4.}
	\label{fig:type4}
\end{figure}

\begin{figure}[H]
	\centering
	\begin{minipage}[t]{0.15\textwidth}%m=1
		\centering
		\scalemath{0.8}{
			\Diske{
				\coordinate (A) at (-90:0.6);
				\coordinate (B) at (112.5:0.5);
				\coordinate (C) at (0:{0.2*sqrt(3)});
				\coordinate (D) at (180:{0.2*sqrt(3)});
				
				\coordinate (a) at (67.5:0.7);
				\coordinate (b) at ({-90+acos(2/(sqrt(7)))}:{0.2*sqrt(7)});
				\coordinate (c) at ({-90-acos(2/(sqrt(7)))}:{0.2*sqrt(7)});
				\coordinate (d) at (90:0.2);
				
				\draw (1)--(a);		\draw (2)--(a);		\draw (4)--(b);		\draw (6)--(c);
				\draw (3)--(C);		\draw (5)--(A);		\draw (7)--(D);		\draw (8)--(B);
				
				\draw (a)--(B);		\draw (b)--(A);		\draw (b)--(C);		
				\draw (c)--(A);		\draw (c)--(D);		
				\draw (d)--(B);		\draw (d)--(C);		\draw (d)--(D);	
				
				\foreach \p in {A,B,C,D} {
					\fill[white] (\p) circle (2pt);
					\draw (\p) circle (2pt);
				}
				\foreach \p in {a,b,c,d}{
					\fill (\p) circle (2pt);
				}
		}}
	\end{minipage}
	\begin{minipage}[t]{0.15\textwidth}%m=1
		\centering
		\scalemath{0.8}{
			\Diske{
				\coordinate (A) at (157.5:0.7);
				\coordinate (B) at ({-45+acos(2/(sqrt(7)))}:{0.2*sqrt(7)});
				\coordinate (C) at ({-45-acos(2/(sqrt(7)))}:{0.2*sqrt(7)});
				\coordinate (D) at (135:0.2);
				
				\coordinate (a) at (-45:0.6);
				\coordinate (b) at (112.5:0.5);
				\coordinate (c) at (45:{0.2*sqrt(3)});
				\coordinate (d) at (-135:{0.2*sqrt(3)});
				
				\draw (1)--(b);		\draw (2)--(c);		\draw (4)--(a);		\draw (6)--(d);
				\draw (3)--(B);		\draw (5)--(C);		\draw (7)--(A);		\draw (8)--(A);
				
				\draw (a)--(B);		\draw (a)--(C);		\draw (b)--(A);		\draw (b)--(D);
				\draw (c)--(B);		\draw (c)--(D);		\draw (d)--(C);		\draw (d)--(D);	
				
				\foreach \p in {A,B,C,D} {
					\fill[white] (\p) circle (2pt);
					\draw (\p) circle (2pt);
				}
				\foreach \p in {a,b,c,d}{
					\fill (\p) circle (2pt);
				}
		}}
	\end{minipage}
	\begin{minipage}[t]{0.15\textwidth}%m=1
		\centering
		\scalemath{0.8}{
			\Diske{
				\coordinate (A) at (157.5:0.7);
				\coordinate (B) at (22.5:0.4);
				\coordinate (C) at (-97.5:0.4);
				
				\coordinate (a) at (67.5:0.7);
				\coordinate (b) at (-37.5:0.4);
				\coordinate (c) at (-157.5:0.4);
				
				\draw (1)--(a);		\draw (2)--(a);		\draw (4)--(b);		\draw (6)--(c);
				\draw (3)--(B);		\draw (5)--(C);		\draw (7)--(A);		\draw (8)--(A);
				
				\draw (a)--(B);		\draw (b)--(B);		\draw (b)--(C);
				\draw (c)--(A);		\draw (c)--(C);		
				
				\foreach \p in {A,B,C} {
					\fill[white] (\p) circle (2pt);
					\draw (\p) circle (2pt);
				}
				\foreach \p in {a,b,c}{
					\fill (\p) circle (2pt);
				}
		}}
	\end{minipage}
	\begin{minipage}[t]{0.15\textwidth}%m=2
		\centering
		\scalemath{0.8}{
			\Diske{
				\coordinate (A) at (22.5:0.4);
				\coordinate (B) at (-97.5:0.4);
				\coordinate (C) at (142.5:0.4);
				
				\coordinate (a) at (82.5:0.4);
				\coordinate (b) at (-37.5:0.4);
				\coordinate (c) at (-157.5:0.4);
				
				\draw (2)--(a);		\draw (4)--(b);		\draw (6)--(c);
				\draw (3)--(A);		\draw (5)--(B);		\draw (7)--(C);		
				
				\draw (a)--(A);		\draw (a)--(C);		\draw (b)--(A);		\draw (b)--(B);
				\draw (c)--(B);		\draw (c)--(C);		
				
				\draw[-stealth] (8) arc[start angle=-135, end angle=-10, radius=.414];
				
				\foreach \p in {A,B,C} {
					\fill[white] (\p) circle (2pt);
					\draw (\p) circle (2pt);
				}
				\foreach \p in {a,b,c}{
					\fill (\p) circle (2pt);
				}
		}}
	\end{minipage}
	\begin{minipage}[t]{0.15\textwidth}%m=2
		\centering
		\scalemath{0.8}{
			\Diske{
				\coordinate (A) at (22.5:0.5);
				\coordinate (B) at (-157.5:0.2);
				
				\coordinate (a) at (67.5:0.7);
				\coordinate (b) at (-45:0.4);
				
				\draw (1)--(a);		\draw (2)--(a);		\draw (4)--(b);
				\draw (3)--(A);		\draw (5)--(B);		\draw (8)--(B);		
				
				\draw (a)--(A);		\draw (b)--(A);		\draw (b)--(B);	
				
				\draw[-stealth] (7) arc[start angle=90, end angle=-35, radius=.414];
				
				\foreach \p in {A,B} {
					\fill[white] (\p) circle (2pt);
					\draw (\p) circle (2pt);
				}
				\foreach \p in {a,b}{
					\fill (\p) circle (2pt);
				}
		}}
	\end{minipage}
	\begin{minipage}[t]{0.15\textwidth}%m=2
		\centering
		\scalemath{0.8}{
			\Diske{
				\coordinate (A) at (157.5:0.7);
				\coordinate (B) at (-90:0.4);
				
				\coordinate (a) at (-157.5:0.5);
				\coordinate (b) at (22.5:0.2);
				
				\draw (1)--(b);		\draw (4)--(b);		\draw (6)--(a);
				\draw (5)--(B);		\draw (7)--(A);		\draw (8)--(A);		
				
				\draw (a)--(A);		\draw (a)--(B);		\draw (b)--(B);	
				
				\draw[-stealth] (3) arc[start angle=-90, end angle=-215, radius=.414];
				
				\foreach \p in {A,B} {
					\fill[white] (\p) circle (2pt);
					\draw (\p) circle (2pt);
				}
				\foreach \p in {a,b}{
					\fill (\p) circle (2pt);
				}
		}}
	\end{minipage}
	\\\vspace{0.2cm}
	\begin{minipage}[t]{0.15\textwidth}%m=2
		\centering
		\scalemath{0.8}{
			\Diske{
				\coordinate (A) at (157.5:0.7);
				\coordinate (B) at (22.5:0.5);
				
				\coordinate (a) at (67.5:0.7);
				\coordinate (b) at (0:0);
				
				\draw (1)--(a);		\draw (2)--(a);		\draw (4)--(b);
				\draw (3)--(B);		\draw (7)--(A);		\draw (8)--(A);		
				
				\draw (a)--(B);		\draw (b)--(A);		\draw (b)--(B);	
				
				\draw[-stealth] (5) arc[start angle=0, end angle=125, radius=.414];
				
				\foreach \p in {A,B} {
					\fill[white] (\p) circle (2pt);
					\draw (\p) circle (2pt);
				}
				\foreach \p in {a,b}{
					\fill (\p) circle (2pt);
				}
		}}
	\end{minipage}
	\begin{minipage}[t]{0.15\textwidth}%m=2
		\centering
		\scalemath{0.8}{
			\Diske{
				\coordinate (A) at (157.5:0.7);
				\coordinate (B) at (0:0);
				
				\coordinate (a) at (67.5:0.7);
				\coordinate (b) at (-157.5:0.5);
				
				\draw (1)--(a);		\draw (2)--(a);		\draw (6)--(b);
				\draw (5)--(B);		\draw (7)--(A);		\draw (8)--(A);		
				
				\draw (a)--(B);		\draw (b)--(A);		\draw (b)--(B);	
				
				\draw[-stealth] (3) arc[start angle=90, end angle=215, radius=.414];
				
				\foreach \p in {A,B} {
					\fill[white] (\p) circle (2pt);
					\draw (\p) circle (2pt);
				}
				\foreach \p in {a,b}{
					\fill (\p) circle (2pt);
				}
		}}
	\end{minipage}
	\begin{minipage}[t]{0.15\textwidth}%m=3
		\centering
		\scalemath{0.8}{
			\Diske{
				\coordinate (A) at (0:0);
				
				\coordinate (a) at (67.5:0.7);
				
				\draw (1)--(a);		\draw (2)--(a);	
				\draw (5)--(A);		\draw (8)--(A);		
				\draw (a)--(A);	
				
				\draw[-stealth] (3) arc[start angle=90, end angle=215, radius=.414];
				\draw[-stealth] (7) arc[start angle=90, end angle=-35, radius=.414];
				
				\fill[white] (A) circle (2pt);
				\draw (A) circle (2pt);
				\fill (a) circle (2pt);
		}}
	\end{minipage}
	\begin{minipage}[t]{0.15\textwidth}%m=3
		\centering
		\scalemath{0.8}{
			\Diske{
				\coordinate (A) at (157.5:0.7);
				
				\coordinate (a) at (0:0);
				
				\draw (1)--(a);		\draw (4)--(a);	
				\draw (7)--(A);		\draw (8)--(A);		
				\draw (a)--(A);	
				
				\draw[-stealth] (3) arc[start angle=-90, end angle=-215, radius=.414];
				\draw[-stealth] (5) arc[start angle=0, end angle=125, radius=.414];
				
				\fill[white] (A) circle (2pt);
				\draw (A) circle (2pt);
				\fill (a) circle (2pt);
		}}
	\end{minipage}
	\begin{minipage}[t]{0.15\textwidth}%m=3
		\centering
		\scalemath{0.8}{
			\Diske{
				\coordinate (A) at (-157.5:0.3);
				
				\coordinate (a) at (22.5:0.3);
				
				\draw (1)--(a);		\draw (4)--(a);	
				\draw (5)--(A);		\draw (8)--(A);		
				\draw (a)--(A);	
				
				\draw[-stealth] (3) arc[start angle=-90, end angle=-215, radius=.414];
				\draw[-stealth] (7) arc[start angle=90, end angle=-35, radius=.414];
				
				\fill[white] (A) circle (2pt);
				\draw (A) circle (2pt);
				\fill (a) circle (2pt);
		}}
	\end{minipage}
	\begin{minipage}[t]{0.15\textwidth}%m=3
		\centering
		\scalemath{0.8}{
			\Diske{
				\coordinate (A) at (157.5:0.7);
				
				\coordinate (a) at (157.5:0.3);
				
				\draw (1)--(a);		\draw (6)--(a);	
				\draw (7)--(A);		\draw (8)--(A);		
				\draw (a)--(A);	
				
				\draw[-stealth] (3) arc[start angle=-90, end angle=-215, radius=.414];
				\draw[-stealth] (5) arc[start angle=180, end angle=55, radius=.414];
				
				\fill[white] (A) circle (2pt);
				\draw (A) circle (2pt);
				\fill (a) circle (2pt);
		}}
	\end{minipage}
	\\\vspace{0.2cm}
	\begin{minipage}[t]{0.15\textwidth}%m=3
		\centering
		\scalemath{0.8}{
			\Diske{
				\coordinate (A) at (157.5:0.7);
				
				\coordinate (a) at (67.5:0.7);
				
				\draw (1)--(a);		\draw (2)--(a);	
				\draw (7)--(A);		\draw (8)--(A);		
				\draw (a)--(A);	
				
				\draw[-stealth] (3) arc[start angle=90, end angle=215, radius=.414];
				\draw[-stealth] (5) arc[start angle=0, end angle=125, radius=.414];
				
				\fill[white] (A) circle (2pt);
				\draw (A) circle (2pt);
				\fill (a) circle (2pt);
		}}
	\end{minipage}
	\begin{minipage}[t]{0.15\textwidth}%m=3
		\centering
		\scalemath{0.8}{
			\Diske{
				\coordinate (A) at (67.5:0.3);
				
				\coordinate (a) at (67.5:0.7);
				
				\draw (1)--(a);		\draw (2)--(a);	
				\draw (3)--(A);		\draw (8)--(A);		
				\draw (a)--(A);	
				
				\draw[-stealth] (5) arc[start angle=180, end angle=55, radius=.414];
				\draw[-stealth] (7) arc[start angle=90, end angle=-35, radius=.414];
				
				\fill[white] (A) circle (2pt);
				\draw (A) circle (2pt);
				\fill (a) circle (2pt);
		}}
	\end{minipage}
	\begin{minipage}[t]{0.15\textwidth}%m=3
		\centering
		\scalemath{0.8}{
			\Diske{
				\coordinate (A) at (157.5:0.7);
				
				\coordinate (a) at (157.5:0.3);
				
				\draw (1)--(a);		\draw (6)--(a);	
				\draw (7)--(A);		\draw (8)--(A);		
				\draw (a)--(A);	
				
				\draw[-stealth] (3) arc[start angle=90, end angle=215, radius=.414];
				\draw[-stealth] (5)--(45:0.93);
				
				\fill[white] (A) circle (2pt);
				\draw (A) circle (2pt);
				\fill (a) circle (2pt);
		}}
	\end{minipage}
	\begin{minipage}[t]{0.15\textwidth}%m=3
		\centering
		\scalemath{0.8}{
			\Diske{
				\coordinate (A) at (157.5:0.7);
				
				\coordinate (a) at (67.5:0.7);
				
				\draw (1)--(a);		\draw (2)--(a);	
				\draw (7)--(A);		\draw (8)--(A);		
				\draw (a)--(A);	
				
				\draw[-stealth] (5) arc[start angle=180, end angle=55, radius=.414];
				\draw[-stealth] (3)--(-135:0.93);
				
				\fill[white] (A) circle (2pt);
				\draw (A) circle (2pt);
				\fill (a) circle (2pt);
		}}
	\end{minipage}
	\begin{minipage}[t]{0.15\textwidth}%m=3
		\centering
		\scalemath{0.8}{
			\Diske{
				\coordinate (A) at (67.5:0.3);
				
				\coordinate (a) at (67.5:0.7);
				
				\draw (1)--(a);		\draw (2)--(a);	
				\draw (3)--(A);		\draw (8)--(A);		
				\draw (a)--(A);	
				
				\draw[-stealth] (5) arc[start angle=0, end angle=125, radius=.414];
				\draw[-stealth] (7)--(-45:0.93);
				
				\fill[white] (A) circle (2pt);
				\draw (A) circle (2pt);
				\fill (a) circle (2pt);
		}}
	\end{minipage}
	\begin{minipage}[t]{0.15\textwidth}%m=4
		\centering
		\scalemath{0.8}{
			\Diske{
				\draw[-stealth] (8) arc[start angle=-135, end angle=-10, radius=.414];
				\draw[-stealth] (3) arc[start angle=-90, end angle=-215, radius=.414];
				\draw[-stealth] (5) arc[start angle=180, end angle=55, radius=.414];
				\draw[-stealth] (7) arc[start angle=90, end angle=-35, radius=.414];
		}}
	\end{minipage}
	\\\vspace{0.2cm}
	\begin{minipage}[t]{0.15\textwidth}%m=4
		\centering
		\scalemath{0.8}{
			\Diske{
				\draw[-stealth] (8) arc[start angle=-135, end angle=-10, radius=.414];
				\draw[-stealth] (3) arc[start angle=-90, end angle=-215, radius=.414];
				\draw[-stealth] (5) arc[start angle=0, end angle=125, radius=.414];
				\draw[-stealth] (7)--(-45:0.93);
		}}
	\end{minipage}
	\begin{minipage}[t]{0.15\textwidth}%m=4
		\centering
		\scalemath{0.8}{
			\Diske{
				\draw[-stealth] (8) arc[start angle=-135, end angle=-10, radius=.414];
				\draw[-stealth] (7) arc[start angle=90, end angle=-35, radius=.414];
				\draw[-stealth] (3) arc[start angle=90, end angle=215, radius=.414];
				\draw[-stealth] (5)--(45:0.93);
		}}
	\end{minipage}
	\begin{minipage}[t]{0.15\textwidth}%m=4
		\centering
		\scalemath{0.8}{
			\Diske{
				\draw[-stealth] (8) arc[start angle=-135, end angle=-10, radius=.414];
				\draw[-stealth] (5) arc[start angle=0, end angle=125, radius=.414];
				\draw[-stealth] (3) arc[start angle=90, end angle=215, radius=.414];
				\draw[-stealth] (7)--(45:0.93);
		}}
	\end{minipage}
	\begin{minipage}[t]{0.15\textwidth}%m=4
		\centering
		\scalemath{0.8}{
			\Diske{
				\draw[-stealth] (8) arc[start angle=-135, end angle=-10, radius=.414];
				\draw[-stealth] (5) arc[start angle=180, end angle=55, radius=.414];
				\draw[-stealth] (3)--(-135:0.93);
				\draw[-stealth] (7)--(45:0.93);
		}}
	\end{minipage}
	\caption{Non-elliptic webs with 4 black and 4 white boundary vertices of type 5.}
	\label{fig:type5}
\end{figure}

\begin{figure}[H]
	\centering
	\begin{minipage}[t]{0.15\textwidth}%m=1
		\centering
		\scalemath{0.8}{
			\Diskf{
				\coordinate (A) at ({67.5-acos(1/sqrt(13))}:{0.2*sqrt(13)});
				\coordinate (B) at ({67.5+acos(1/sqrt(13))}:{0.2*sqrt(13)});
				\coordinate (C) at ({-112.5+acos(2/sqrt(7))}:{0.2*sqrt(7)});
				\coordinate (D) at ({-112.5-acos(2/sqrt(7))}:{0.2*sqrt(7)});
				\coordinate (E) at (67.5:0.2);
				
				\coordinate (a) at ({-112.5+acos(1/sqrt(13))}:{0.2*sqrt(13)});
				\coordinate (b) at ({-112.5-acos(1/sqrt(13))}:{0.2*sqrt(13)});
				\coordinate (c) at ({67.5-acos(2/sqrt(7))}:{0.2*sqrt(7)});
				\coordinate (d) at ({67.5+acos(2/sqrt(7))}:{0.2*sqrt(7)});
				\coordinate (e) at (-112.5:0.2);
				
				\draw (1)--(d);		\draw (2)--(c);		\draw (4)--(a);		\draw (7)--(b);
				\draw (3)--(A);		\draw (5)--(C);		\draw (6)--(D);		\draw (8)--(B);
				
				\draw (a)--(A);		\draw (a)--(C);		\draw (b)--(B);		\draw (b)--(D);
				\draw (c)--(A);		\draw (c)--(E);		\draw (d)--(B);		\draw (d)--(E);	
				\draw (e)--(C);		\draw (e)--(D);		\draw (e)--(E);	
				
				\foreach \p in {A,B,C,D,E} {
					\fill[white] (\p) circle (2pt);
					\draw (\p) circle (2pt);
				}
				\foreach \p in {a,b,c,d,e}{
					\fill (\p) circle (2pt);
				}
		}}
	\end{minipage}
	\begin{minipage}[t]{0.15\textwidth}%m=1
		\centering
		\scalemath{0.8}{
			\Diskf{
				\coordinate (A) at (-112.5:0.7);
				\coordinate (B) at (22.5:0.5);
				\coordinate (C) at (112.5:0.5);
				
				\coordinate (a) at (67.5:0.7);
				\coordinate (b) at (-67.5:0.5);
				\coordinate (c) at (-157.5:0.5);
				
				\draw (1)--(a);		\draw (2)--(a);		\draw (4)--(b);		\draw (7)--(c);
				\draw (3)--(B);		\draw (5)--(A);		\draw (6)--(A);		\draw (8)--(C);
				
				\draw (a)--(B);			
				\draw (b)--(B);		\draw (b)--(C);
				\draw (c)--(A);		\draw (c)--(C);		
				
				\foreach \p in {A,B,C} {
					\fill[white] (\p) circle (2pt);
					\draw (\p) circle (2pt);
				}
				\foreach \p in {a,b,c}{
					\fill (\p) circle (2pt);
				}
		}}
	\end{minipage}
	\begin{minipage}[t]{0.15\textwidth}%m=1
		\centering
		\scalemath{0.8}{
			\Diskf{
				\coordinate (A) at (-112.5:0.7);
				\coordinate (B) at (22.5:0.5);
				\coordinate (C) at (112.5:0.5);
				
				\coordinate (a) at (67.5:0.7);
				\coordinate (b) at (-67.5:0.5);
				\coordinate (c) at (-157.5:0.5);
				
				\draw (1)--(a);		\draw (2)--(a);		\draw (4)--(b);		\draw (7)--(c);
				\draw (3)--(B);		\draw (5)--(A);		\draw (6)--(A);		\draw (8)--(C);
				
				\draw (a)--(C);			
				\draw (b)--(A);		\draw (b)--(B);
				\draw (c)--(B);		\draw (c)--(C);		
				
				\foreach \p in {A,B,C} {
					\fill[white] (\p) circle (2pt);
					\draw (\p) circle (2pt);
				}
				\foreach \p in {a,b,c}{
					\fill (\p) circle (2pt);
				}
		}}
	\end{minipage}
	\begin{minipage}[t]{0.15\textwidth}%m=2
		\centering
		\scalemath{0.8}{
			\Diskf{
				\coordinate (A) at (-112.5:0.7);
				\coordinate (B) at (67.5:0.3);
				
				\coordinate (a) at (67.5:0.7);
				\coordinate (b) at (-112.5:0.3);
				
				\draw (1)--(a);		\draw (2)--(a);		\draw (4)--(b);		\draw (7)--(b);
				\draw (3)--(B);		\draw (5)--(A);		\draw (6)--(A);		\draw (8)--(B);	
				
				\draw (a)--(B);		\draw (b)--(A);	
				
				\foreach \p in {A,B} {
					\fill[white] (\p) circle (2pt);
					\draw (\p) circle (2pt);
				}
				\foreach \p in {a,b}{
					\fill (\p) circle (2pt);
				}
		}}
	\end{minipage}
	\begin{minipage}[t]{0.15\textwidth}%m=2
		\centering
		\scalemath{0.8}{
			\Diskf{
				\coordinate (A) at (-112.5:0.7);
				\coordinate (B) at (125:0.4);
				
				\coordinate (a) at (67.5:0.7);
				\coordinate (b) at (190:0.4);
				
				\draw (1)--(a);		\draw (2)--(a);		\draw (7)--(b);
				\draw (5)--(A);		\draw (6)--(A);		\draw (8)--(B);		
				
				\draw (a)--(B);		\draw (b)--(A);		\draw (b)--(B);	
				
				\draw[-stealth] (3) arc[start angle=90, end angle=215, radius=.414];
				
				\foreach \p in {A,B} {
					\fill[white] (\p) circle (2pt);
					\draw (\p) circle (2pt);
				}
				\foreach \p in {a,b}{
					\fill (\p) circle (2pt);
				}
		}}
	\end{minipage}
	\begin{minipage}[t]{0.15\textwidth}%m=2
		\centering
		\scalemath{0.8}{
			\Diskf{
				\coordinate (A) at (-112.5:0.7);
				\coordinate (B) at (10:0.4);
				
				\coordinate (a) at (67.5:0.7);
				\coordinate (b) at (-55:0.4);
				
				\draw (1)--(a);		\draw (2)--(a);		\draw (4)--(b);
				\draw (3)--(B);		\draw (5)--(A);		\draw (6)--(A);		
				
				\draw (a)--(B);		\draw (b)--(A);		\draw (b)--(B);	
				
				\draw[-stealth] (8) arc[start angle=45, end angle=-80, radius=.414];
				
				\foreach \p in {A,B} {
					\fill[white] (\p) circle (2pt);
					\draw (\p) circle (2pt);
				}
				\foreach \p in {a,b}{
					\fill (\p) circle (2pt);
				}
		}}
	\end{minipage}
	\\\vspace{0.2cm}
	\begin{minipage}[t]{0.15\textwidth}%m=2
		\centering
		\scalemath{0.8}{
			\Diskf{
				\coordinate (A) at (22.5:0.5);
				\coordinate (B) at (-157.5:0.2);
				
				\coordinate (a) at (67.5:0.7);
				\coordinate (b) at (-45:0.4);
				
				\draw (1)--(a);		\draw (2)--(a);		\draw (4)--(b);
				\draw (3)--(A);		\draw (5)--(B);		\draw (8)--(B);		
				
				\draw (a)--(A);		\draw (b)--(A);		\draw (b)--(B);	
				
				\draw[-stealth] (6) arc[start angle=-45, end angle=80, radius=.414];
				
				\foreach \p in {A,B} {
					\fill[white] (\p) circle (2pt);
					\draw (\p) circle (2pt);
				}
				\foreach \p in {a,b}{
					\fill (\p) circle (2pt);
				}
		}}
	\end{minipage}
	\begin{minipage}[t]{0.15\textwidth}%m=2
		\centering
		\scalemath{0.8}{
			\Diskf{
				\coordinate (A) at (112.5:0.5);
				\coordinate (B) at (-67.5:0.2);
				
				\coordinate (a) at (67.5:0.7);
				\coordinate (b) at (180:0.4);
				
				\draw (1)--(a);		\draw (2)--(a);		\draw (7)--(b);
				\draw (3)--(B);		\draw (6)--(B);		\draw (8)--(A);		
				
				\draw (a)--(A);		\draw (b)--(A);		\draw (b)--(B);	
				
				\draw[-stealth] (5) arc[start angle=180, end angle=55, radius=.414];
				
				\foreach \p in {A,B} {
					\fill[white] (\p) circle (2pt);
					\draw (\p) circle (2pt);
				}
				\foreach \p in {a,b}{
					\fill (\p) circle (2pt);
				}
		}}
	\end{minipage}
	\begin{minipage}[t]{0.15\textwidth}%m=2
		\centering
		\scalemath{0.8}{
			\Diskf{
				\coordinate (A) at (-112.5:0.7);
				\coordinate (B) at (135:0.4);
				
				\coordinate (a) at (-157.5:0.5);
				\coordinate (b) at (22.5:0.2);
				
				\draw (1)--(b);		\draw (4)--(b);		\draw (7)--(a);
				\draw (5)--(A);		\draw (6)--(A);		\draw (8)--(B);		
				
				\draw (a)--(A);		\draw (a)--(B);		\draw (b)--(B);	
				
				\draw[-stealth] (3) arc[start angle=-90, end angle=-215, radius=.414];
				
				\foreach \p in {A,B} {
					\fill[white] (\p) circle (2pt);
					\draw (\p) circle (2pt);
				}
				\foreach \p in {a,b}{
					\fill (\p) circle (2pt);
				}
		}}
	\end{minipage}
	\begin{minipage}[t]{0.15\textwidth}%m=2
		\centering
		\scalemath{0.8}{
			\Diskf{
				\coordinate (A) at (-112.5:0.7);
				\coordinate (B) at (0:0.4);
				
				\coordinate (a) at (-67.5:0.5);
				\coordinate (b) at (112.5:0.2);
				
				\draw (2)--(b);		\draw (4)--(a);		\draw (7)--(b);
				\draw (3)--(B);		\draw (5)--(A);		\draw (6)--(A);		
				
				\draw (a)--(A);		\draw (a)--(B);		\draw (b)--(B);	
				
				\draw[-stealth] (8) arc[start angle=-135, end angle=-10, radius=.414];
				
				\foreach \p in {A,B} {
					\fill[white] (\p) circle (2pt);
					\draw (\p) circle (2pt);
				}
				\foreach \p in {a,b}{
					\fill (\p) circle (2pt);
				}
		}}
	\end{minipage}
	\begin{minipage}[t]{0.15\textwidth}%m=3
		\centering
		\scalemath{0.8}{
			\Diskf{
				\coordinate (A) at (-112.5:0.7);
				
				\coordinate (a) at (-112.5:0.3);
				
				\draw (4)--(a);		\draw (7)--(a);	
				\draw (5)--(A);		\draw (6)--(A);		
				\draw (a)--(A);	
				
				\draw[-stealth] (3) arc[start angle=-90, end angle=-215, radius=.414];
				\draw[-stealth] (8) arc[start angle=-135, end angle=-10, radius=.414];
				
				\fill[white] (A) circle (2pt);
				\draw (A) circle (2pt);
				\fill (a) circle (2pt);
		}}
	\end{minipage}
	\begin{minipage}[t]{0.15\textwidth}%m=3
		\centering
		\scalemath{0.8}{
			\Diskf{
				\coordinate (A) at (67.5:0.3);
				
				\coordinate (a) at (67.5:0.7);
				
				\draw (1)--(a);		\draw (2)--(a);	
				\draw (3)--(A);		\draw (8)--(A);		
				\draw (a)--(A);	
				
				\draw[-stealth] (5) arc[start angle=180, end angle=55, radius=.414];
				\draw[-stealth] (6) arc[start angle=-45, end angle=80, radius=.414];
				
				\fill[white] (A) circle (2pt);
				\draw (A) circle (2pt);
				\fill (a) circle (2pt);
		}}
	\end{minipage}
	\\\vspace{0.2cm}
	\begin{minipage}[t]{0.15\textwidth}%m=3
		\centering
		\scalemath{0.8}{
			\Diskf{
				\coordinate (A) at (0:0);
				
				\coordinate (a) at (67.5:0.7);
				
				\draw (1)--(a);		\draw (2)--(a);	
				\draw (5)--(A);		\draw (8)--(A);		
				\draw (a)--(A);	
				
				\draw[-stealth] (3) arc[start angle=90, end angle=215, radius=.414];
				\draw[-stealth] (6) arc[start angle=-45, end angle=80, radius=.414];
				
				\fill[white] (A) circle (2pt);
				\draw (A) circle (2pt);
				\fill (a) circle (2pt);
		}}
	\end{minipage}
	\begin{minipage}[t]{0.15\textwidth}%m=3
		\centering
		\scalemath{0.8}{
			\Diskf{
				\coordinate (A) at (0:0);
				
				\coordinate (a) at (67.5:0.7);
				
				\draw (1)--(a);		\draw (2)--(a);	
				\draw (3)--(A);		\draw (6)--(A);		
				\draw (a)--(A);	
				
				\draw[-stealth] (5) arc[start angle=180, end angle=55, radius=.414];
				\draw[-stealth] (8) arc[start angle=45, end angle=-80, radius=.414];
				
				\fill[white] (A) circle (2pt);
				\draw (A) circle (2pt);
				\fill (a) circle (2pt);
		}}
	\end{minipage}
	\begin{minipage}[t]{0.15\textwidth}%m=3
		\centering
		\scalemath{0.8}{
			\Diskf{
				\coordinate (A) at (-112.5:0.7);
				
				\coordinate (a) at (0:0);
				
				\draw (1)--(a);		\draw (4)--(a);	
				\draw (5)--(A);		\draw (6)--(A);		
				\draw (a)--(A);	
				
				\draw[-stealth] (3) arc[start angle=-90, end angle=-215, radius=.414];
				\draw[-stealth] (8) arc[start angle=45, end angle=-80, radius=.414];
				
				\fill[white] (A) circle (2pt);
				\draw (A) circle (2pt);
				\fill (a) circle (2pt);
		}}
	\end{minipage}
	\begin{minipage}[t]{0.15\textwidth}%m=3
		\centering
		\scalemath{0.8}{
			\Diskf{
				\coordinate (A) at (-112.5:0.7);
				
				\coordinate (a) at (0:0);
				
				\draw (2)--(a);		\draw (7)--(a);	
				\draw (5)--(A);		\draw (6)--(A);		
				\draw (a)--(A);	
				
				\draw[-stealth] (3) arc[start angle=90, end angle=215, radius=.414];
				\draw[-stealth] (8) arc[start angle=-135, end angle=-10, radius=.414];
				
				\fill[white] (A) circle (2pt);
				\draw (A) circle (2pt);
				\fill (a) circle (2pt);
		}}
	\end{minipage}
	\begin{minipage}[t]{0.15\textwidth}%m=3
		\centering
		\scalemath{0.8}{
			\Diskf{
				\coordinate (A) at (-67.5:0.3);
				
				\coordinate (a) at (112.5:0.3);
				
				\draw (2)--(a);		\draw (7)--(a);	
				\draw (3)--(A);		\draw (6)--(A);		
				\draw (a)--(A);	
				
				\draw[-stealth] (5) arc[start angle=180, end angle=55, radius=.414];
				\draw[-stealth] (8) arc[start angle=-135, end angle=-10, radius=.414];
				
				\fill[white] (A) circle (2pt);
				\draw (A) circle (2pt);
				\fill (a) circle (2pt);
		}}
	\end{minipage}
	\begin{minipage}[t]{0.15\textwidth}%m=3
		\centering
		\scalemath{0.8}{
			\Diskf{
				\coordinate (A) at (-112.5:0.7);
				
				\coordinate (a) at (67.5:0.7);
				
				\draw (1)--(a);		\draw (2)--(a);	
				\draw (5)--(A);		\draw (6)--(A);		
				\draw (a)--(A);	
				
				\draw[-stealth] (3) arc[start angle=90, end angle=215, radius=.414];
				\draw[-stealth] (8) arc[start angle=45, end angle=-80, radius=.414];
				
				\fill[white] (A) circle (2pt);
				\draw (A) circle (2pt);
				\fill (a) circle (2pt);
		}}
	\end{minipage}
	\\\vspace{0.2cm}
	\begin{minipage}[t]{0.15\textwidth}%m=4
		\centering
		\scalemath{0.8}{
			\Diskf{
				\draw[-stealth] (8) arc[start angle=-135, end angle=-10, radius=.414];
				\draw[-stealth] (3) arc[start angle=-90, end angle=-215, radius=.414];
				\draw[-stealth] (5) arc[start angle=180, end angle=55, radius=.414];
				\draw[-stealth] (6) arc[start angle=-45, end angle=80, radius=.414];
		}}
	\end{minipage}
	\begin{minipage}[t]{0.15\textwidth}%m=4
		\centering
		\scalemath{0.8}{
			\Diskf{
				\draw[-stealth] (8) arc[start angle=-135, end angle=-10, radius=.414];
				\draw[-stealth] (3) arc[start angle=90, end angle=215, radius=.414];
				\draw[-stealth] (6) arc[start angle=-45, end angle=80, radius=.414];
				\draw[-stealth] (5)--(45:0.93);
		}}
	\end{minipage}
	\begin{minipage}[t]{0.15\textwidth}%m=4
		\centering
		\scalemath{0.8}{
			\Diskf{
				\draw[-stealth] (3) arc[start angle=-90, end angle=-215, radius=.414];
				\draw[-stealth] (5) arc[start angle=180, end angle=55, radius=.414];
				\draw[-stealth] (8) arc[start angle=45, end angle=-80, radius=.414];
				\draw[-stealth] (6)--(90:0.93);
		}}
	\end{minipage}
	\begin{minipage}[t]{0.15\textwidth}%m=4
		\centering
		\scalemath{0.8}{
			\Diskf{
				\draw[-stealth] (3) arc[start angle=90, end angle=215, radius=.414];
				\draw[-stealth] (8) arc[start angle=45, end angle=-80, radius=.414];
				\draw[-stealth] (6)--(90:0.93);
				\draw[-stealth] (5)--(45:0.93);
		}}
	\end{minipage}
	\caption{Non-elliptic webs with 4 black and 4 white boundary vertices of type 6.}
	\label{fig:type6}
\end{figure}

\begin{figure}[H]
	\centering
	\begin{minipage}[t]{0.15\textwidth}%m=1
		\centering
		\scalemath{0.8}{
			\Diskg{
				\coordinate (A) at (-22.5:0.7);
				\coordinate (B) at (157.5:0.7);
				\coordinate (C) at (67.5:0.1);
				
				\coordinate (a) at (67.5:0.7);
				\coordinate (b) at (-67.5:0.5);
				\coordinate (c) at (-157.5:0.5);
				
				\draw (1)--(a);		\draw (2)--(a);		\draw (5)--(b);		\draw (6)--(c);
				\draw (3)--(A);		\draw (4)--(A);		\draw (7)--(B);		\draw (8)--(B);
				
				\draw (a)--(C);			
				\draw (b)--(A);		\draw (b)--(C);
				\draw (c)--(B);		\draw (c)--(C);		
				
				\foreach \p in {A,B,C} {
					\fill[white] (\p) circle (2pt);
					\draw (\p) circle (2pt);
				}
				\foreach \p in {a,b,c}{
					\fill (\p) circle (2pt);
				}
		}}
	\end{minipage}
	\begin{minipage}[t]{0.15\textwidth}%m=1
		\centering
		\scalemath{0.8}{
			\Diskg{
				\coordinate (A) at (-22.5:0.7);
				\coordinate (B) at (157.5:0.7);
				\coordinate (C) at (-112.5:0.1);
				
				\coordinate (a) at (-112.5:0.7);
				\coordinate (b) at (22.5:0.5);
				\coordinate (c) at (112.5:0.5);
				
				\draw (1)--(c);		\draw (2)--(b);		\draw (5)--(a);		\draw (6)--(a);
				\draw (3)--(A);		\draw (4)--(A);		\draw (7)--(B);		\draw (8)--(B);
				
				\draw (a)--(C);			
				\draw (b)--(A);		\draw (b)--(C);
				\draw (c)--(B);		\draw (c)--(C);		
				
				\foreach \p in {A,B,C} {
					\fill[white] (\p) circle (2pt);
					\draw (\p) circle (2pt);
				}
				\foreach \p in {a,b,c}{
					\fill (\p) circle (2pt);
				}
		}}
	\end{minipage}
	\begin{minipage}[t]{0.15\textwidth}%m=1
		\centering
		\scalemath{0.8}{
			\Diskg{
				\coordinate (A) at (-22.5:0.7);
				\coordinate (B) at (-157.5:0.5);
				\coordinate (C) at (112.5:0.5);
				
				\coordinate (a) at (67.5:0.7);
				\coordinate (b) at (-112.5:0.7);
				\coordinate (c) at (-22.5:0.1);
				
				\draw (1)--(a);		\draw (2)--(a);		\draw (5)--(b);		\draw (6)--(b);
				\draw (3)--(A);		\draw (4)--(A);		\draw (7)--(B);		\draw (8)--(C);
				
				\draw (a)--(C);			
				\draw (b)--(B);		
				\draw (c)--(A);		\draw (c)--(B);		\draw (c)--(C);		
				
				\foreach \p in {A,B,C} {
					\fill[white] (\p) circle (2pt);
					\draw (\p) circle (2pt);
				}
				\foreach \p in {a,b,c}{
					\fill (\p) circle (2pt);
				}
		}}
	\end{minipage}
	\begin{minipage}[t]{0.15\textwidth}%m=1
		\centering
		\scalemath{0.8}{
			\Diskg{
				\coordinate (A) at (157.5:0.7);
				\coordinate (B) at (22.5:0.5);
				\coordinate (C) at (-67.5:0.5);
				
				\coordinate (a) at (67.5:0.7);
				\coordinate (b) at (-112.5:0.7);
				\coordinate (c) at (157.5:0.1);
				
				\draw (1)--(a);		\draw (2)--(a);		\draw (5)--(b);		\draw (6)--(b);
				\draw (3)--(B);		\draw (4)--(C);		\draw (7)--(A);		\draw (8)--(A);
				
				\draw (a)--(B);			
				\draw (b)--(C);		
				\draw (c)--(A);		\draw (c)--(B);		\draw (c)--(C);		
				
				\foreach \p in {A,B,C} {
					\fill[white] (\p) circle (2pt);
					\draw (\p) circle (2pt);
				}
				\foreach \p in {a,b,c}{
					\fill (\p) circle (2pt);
				}
		}}
	\end{minipage}
	\begin{minipage}[t]{0.15\textwidth}%m=2
		\centering
		\scalemath{0.8}{
			\Diskg{
				\coordinate (A) at (-22.5:0.7);
				\coordinate (B) at (-145:0.3);
				
				\coordinate (a) at (-112.5:0.7);
				\coordinate (b) at (10:0.3);
				
				\draw (2)--(b);		\draw (5)--(a);		\draw (6)--(a);
				\draw (3)--(A);		\draw (4)--(A);		\draw (7)--(B);	
				
				\draw (a)--(B);		\draw (b)--(A);		\draw (b)--(B);	
				
				\draw[-stealth] (8) arc[start angle=-135, end angle=-10, radius=.414];
				
				\foreach \p in {A,B} {
					\fill[white] (\p) circle (2pt);
					\draw (\p) circle (2pt);
				}
				\foreach \p in {a,b}{
					\fill (\p) circle (2pt);
				}
		}}
	\end{minipage}
	\begin{minipage}[t]{0.15\textwidth}%m=2
		\centering
		\scalemath{0.8}{
			\Diskg{
				\coordinate (A) at (157.5:0.7);
				\coordinate (B) at (-80:0.3);
				
				\coordinate (a) at (-112.5:0.7);
				\coordinate (b) at (125:0.3);
				
				\draw (1)--(b);		\draw (5)--(a);		\draw (6)--(a);
				\draw (4)--(B);		\draw (7)--(A);		\draw (8)--(A);	
				
				\draw (a)--(B);		\draw (b)--(A);		\draw (b)--(B);	
				
				\draw[-stealth] (3) arc[start angle=-90, end angle=-215, radius=.414];
				
				\foreach \p in {A,B} {
					\fill[white] (\p) circle (2pt);
					\draw (\p) circle (2pt);
				}
				\foreach \p in {a,b}{
					\fill (\p) circle (2pt);
				}
		}}
	\end{minipage}
	\\\vspace{0.2cm}
	\begin{minipage}[t]{0.15\textwidth}%m=2
		\centering
		\scalemath{0.8}{
			\Diskg{
				\coordinate (A) at (157.5:0.7);
				\coordinate (B) at (35:0.3);
				
				\coordinate (a) at (67.5:0.7);
				\coordinate (b) at (-170:0.3);
				
				\draw (1)--(a);		\draw (2)--(a);		\draw (6)--(b);
				\draw (3)--(B);		\draw (7)--(A);		\draw (8)--(A);	
				
				\draw (a)--(B);		\draw (b)--(A);		\draw (b)--(B);	
				
				\draw[-stealth] (4) arc[start angle=45, end angle=170, radius=.414];
				
				\foreach \p in {A,B} {
					\fill[white] (\p) circle (2pt);
					\draw (\p) circle (2pt);
				}
				\foreach \p in {a,b}{
					\fill (\p) circle (2pt);
				}
		}}
	\end{minipage}
	\begin{minipage}[t]{0.15\textwidth}%m=2
		\centering
		\scalemath{0.8}{
			\Diskg{
				\coordinate (A) at (-22.5:0.7);
				\coordinate (B) at (100:0.3);
				
				\coordinate (a) at (67.5:0.7);
				\coordinate (b) at (-55:0.3);
				
				\draw (1)--(a);		\draw (2)--(a);		\draw (5)--(b);
				\draw (3)--(A);		\draw (4)--(A);		\draw (8)--(B);	
				
				\draw (a)--(B);		\draw (b)--(A);		\draw (b)--(B);	
				
				\draw[-stealth] (7) arc[start angle=90, end angle=-35, radius=.414];
				
				\foreach \p in {A,B} {
					\fill[white] (\p) circle (2pt);
					\draw (\p) circle (2pt);
				}
				\foreach \p in {a,b}{
					\fill (\p) circle (2pt);
				}
		}}
	\end{minipage}
	\begin{minipage}[t]{0.15\textwidth}%m=2
		\centering
		\scalemath{0.8}{
			\Diskg{
				\coordinate (A) at (-22.5:0.7);
				\coordinate (B) at (157.5:0.7);
				
				\coordinate (a) at (67.5:0.7);
				\coordinate (b) at (-112.5:0.7);
				
				\draw (1)--(a);		\draw (2)--(a);		\draw (5)--(b);		\draw (6)--(b);
				\draw (3)--(A);		\draw (4)--(A);		\draw (7)--(B);		\draw (8)--(B);	
				
				\draw (a)--(B);		\draw (b)--(A);	
				
				\foreach \p in {A,B} {
					\fill[white] (\p) circle (2pt);
					\draw (\p) circle (2pt);
				}
				\foreach \p in {a,b}{
					\fill (\p) circle (2pt);
				}
		}}
	\end{minipage}
	\begin{minipage}[t]{0.15\textwidth}%m=2
		\centering
		\scalemath{0.8}{
			\Diskg{
				\coordinate (A) at (-22.5:0.7);
				\coordinate (B) at (157.5:0.7);
				
				\coordinate (a) at (67.5:0.7);
				\coordinate (b) at (-112.5:0.7);
				
				\draw (1)--(a);		\draw (2)--(a);		\draw (5)--(b);		\draw (6)--(b);
				\draw (3)--(A);		\draw (4)--(A);		\draw (7)--(B);		\draw (8)--(B);	
				
				\draw (a)--(A);		\draw (b)--(B);	
				
				\foreach \p in {A,B} {
					\fill[white] (\p) circle (2pt);
					\draw (\p) circle (2pt);
				}
				\foreach \p in {a,b}{
					\fill (\p) circle (2pt);
				}
		}}
	\end{minipage}
	\begin{minipage}[t]{0.15\textwidth}%m=2
		\centering
		\scalemath{0.8}{
			\Diskg{
				\coordinate (A) at (-22.5:0.7);
				\coordinate (B) at (157.5:0.7);
				
				\coordinate (a) at (-22.5:0.3);
				\coordinate (b) at (157.5:0.3);
				
				\draw (1)--(b);		\draw (2)--(a);		\draw (5)--(a);		\draw (6)--(b);
				\draw (3)--(A);		\draw (4)--(A);		\draw (7)--(B);		\draw (8)--(B);	
				
				\draw (a)--(A);		\draw (b)--(B);	
				
				\foreach \p in {A,B} {
					\fill[white] (\p) circle (2pt);
					\draw (\p) circle (2pt);
				}
				\foreach \p in {a,b}{
					\fill (\p) circle (2pt);
				}
		}}
	\end{minipage}
	\begin{minipage}[t]{0.15\textwidth}%m=2
		\centering
		\scalemath{0.8}{
			\Diskg{
				\coordinate (A) at (67.5:0.3);
				\coordinate (B) at (-112.5:0.3);
				
				\coordinate (a) at (67.5:0.7);
				\coordinate (b) at (-112.5:0.7);
				
				\draw (1)--(a);		\draw (2)--(a);		\draw (5)--(b);		\draw (6)--(b);
				\draw (3)--(A);		\draw (4)--(B);		\draw (7)--(B);		\draw (8)--(A);	
				
				\draw (a)--(A);		\draw (b)--(B);	
				
				\foreach \p in {A,B} {
					\fill[white] (\p) circle (2pt);
					\draw (\p) circle (2pt);
				}
				\foreach \p in {a,b}{
					\fill (\p) circle (2pt);
				}
		}}
	\end{minipage}
	\\\vspace{0.2cm}
	\begin{minipage}[t]{0.15\textwidth}%m=3
		\centering
		\scalemath{0.8}{
			\Diskg{
				\coordinate (A) at (-22.5:0.7);
				
				\coordinate (a) at (-22.5:0.3);
				
				\draw (2)--(a);		\draw (5)--(a);	
				\draw (3)--(A);		\draw (4)--(A);		
				\draw (a)--(A);	
				
				\draw[-stealth] (7) arc[start angle=90, end angle=-35, radius=.414];
				\draw[-stealth] (8) arc[start angle=-135, end angle=-10, radius=.414];
				
				\fill[white] (A) circle (2pt);
				\draw (A) circle (2pt);
				\fill (a) circle (2pt);
		}}
	\end{minipage}
	\begin{minipage}[t]{0.15\textwidth}%m=3
		\centering
		\scalemath{0.8}{
			\Diskg{
				\coordinate (A) at (157.5:0.7);
				
				\coordinate (a) at (157.5:0.3);
				
				\draw (1)--(a);		\draw (6)--(a);	
				\draw (7)--(A);		\draw (8)--(A);		
				\draw (a)--(A);	
				
				\draw[-stealth] (3) arc[start angle=-90, end angle=-215, radius=.414];
				\draw[-stealth] (4) arc[start angle=45, end angle=170, radius=.414];
				
				\fill[white] (A) circle (2pt);
				\draw (A) circle (2pt);
				\fill (a) circle (2pt);
		}}
	\end{minipage}
	\begin{minipage}[t]{0.15\textwidth}%m=3
		\centering
		\scalemath{0.8}{
			\Diskg{
				\coordinate (A) at (67.5:0.3);
				
				\coordinate (a) at (67.5:0.7);
				
				\draw (1)--(a);		\draw (2)--(a);	
				\draw (3)--(A);		\draw (8)--(A);		
				\draw (a)--(A);	
				
				\draw[-stealth] (4) arc[start angle=45, end angle=170, radius=.414];
				\draw[-stealth] (7) arc[start angle=90, end angle=-35, radius=.414];
				
				\fill[white] (A) circle (2pt);
				\draw (A) circle (2pt);
				\fill (a) circle (2pt);
		}}
	\end{minipage}
	\begin{minipage}[t]{0.15\textwidth}%m=3
		\centering
		\scalemath{0.8}{
			\Diskg{
				\coordinate (A) at (-112.5:0.3);
				
				\coordinate (a) at (-112.5:0.7);
				
				\draw (5)--(a);		\draw (6)--(a);	
				\draw (4)--(A);		\draw (7)--(A);		
				\draw (a)--(A);	
				
				\draw[-stealth] (3) arc[start angle=-90, end angle=-215, radius=.414];
				\draw[-stealth] (8) arc[start angle=-135, end angle=-10, radius=.414];
				
				\fill[white] (A) circle (2pt);
				\draw (A) circle (2pt);
				\fill (a) circle (2pt);
		}}
	\end{minipage}
	\begin{minipage}[t]{0.15\textwidth}%m=3
		\centering
		\scalemath{0.8}{
			\Diskg{
				\coordinate (A) at (-22.5:0.7);
				
				\coordinate (a) at (-112.5:0.7);
				
				\draw (5)--(a);		\draw (6)--(a);	
				\draw (3)--(A);		\draw (4)--(A);		
				\draw (a)--(A);	
				
				\draw[-stealth] (8) arc[start angle=-135, end angle=-10, radius=.414];
				\draw[-stealth] (7)--(45:0.93);
				
				\fill[white] (A) circle (2pt);
				\draw (A) circle (2pt);
				\fill (a) circle (2pt);
		}}
	\end{minipage}
	\begin{minipage}[t]{0.15\textwidth}%m=3
		\centering
		\scalemath{0.8}{
			\Diskg{
				\coordinate (A) at (157.5:0.7);
				
				\coordinate (a) at (-112.5:0.7);
				
				\draw (5)--(a);		\draw (6)--(a);	
				\draw (7)--(A);		\draw (8)--(A);		
				\draw (a)--(A);	
				
				\draw[-stealth] (3) arc[start angle=-90, end angle=-215, radius=.414];
				\draw[-stealth] (4)--(90:0.93);
				
				\fill[white] (A) circle (2pt);
				\draw (A) circle (2pt);
				\fill (a) circle (2pt);
		}}
	\end{minipage}
	\\\vspace{0.2cm}
	\begin{minipage}[t]{0.15\textwidth}%m=3
		\centering
		\scalemath{0.8}{
			\Diskg{
				\coordinate (A) at (157.5:0.7);
				
				\coordinate (a) at (67.5:0.7);
				
				\draw (1)--(a);		\draw (2)--(a);	
				\draw (7)--(A);		\draw (8)--(A);		
				\draw (a)--(A);	
				
				\draw[-stealth] (4) arc[start angle=45, end angle=170, radius=.414];
				\draw[-stealth] (3)--(-135:0.93);
				
				\fill[white] (A) circle (2pt);
				\draw (A) circle (2pt);
				\fill (a) circle (2pt);
		}}
	\end{minipage}
	\begin{minipage}[t]{0.15\textwidth}%m=3
		\centering
		\scalemath{0.8}{
			\Diskg{
				\coordinate (A) at (-22.5:0.7);
				
				\coordinate (a) at (67.5:0.7);
				
				\draw (1)--(a);		\draw (2)--(a);	
				\draw (3)--(A);		\draw (4)--(A);		
				\draw (a)--(A);	
				
				\draw[-stealth] (7) arc[start angle=90, end angle=-35, radius=.414];
				\draw[-stealth] (8)--(-90:0.93);
				
				\fill[white] (A) circle (2pt);
				\draw (A) circle (2pt);
				\fill (a) circle (2pt);
		}}
	\end{minipage}
	\begin{minipage}[t]{0.15\textwidth}%m=4
		\centering
		\scalemath{0.8}{
			\Diskg{
				\draw[-stealth] (8) arc[start angle=-135, end angle=-10, radius=.414];
				\draw[-stealth] (3) arc[start angle=-90, end angle=-215, radius=.414];
				\draw[-stealth] (4) arc[start angle=45, end angle=170, radius=.414];
				\draw[-stealth] (7) arc[start angle=90, end angle=-45, radius=.414];
		}}
	\end{minipage}
	\begin{minipage}[t]{0.15\textwidth}%m=4
		\centering
		\scalemath{0.8}{
			\Diskg{
				\draw[-stealth] (3) arc[start angle=-90, end angle=-215, radius=.414];
				\draw[-stealth] (7) arc[start angle=90, end angle=-45, radius=.414];
				\draw[-stealth] (4)--(90:0.93);
				\draw[-stealth] (8)--(-90:0.93);
		}}
	\end{minipage}
	\begin{minipage}[t]{0.15\textwidth}%m=4
		\centering
		\scalemath{0.8}{
			\Diskg{
				\draw[-stealth] (8) arc[start angle=-135, end angle=-10, radius=.414];
				\draw[-stealth] (4) arc[start angle=45, end angle=170, radius=.414];
				\draw[-stealth] (3)--(-135:0.93);
				\draw[-stealth] (7)--(45:0.93);
		}}
	\end{minipage}
	\caption{Non-elliptic webs with 4 black and 4 white boundary vertices of type 7.}
	\label{fig:type7}
\end{figure}

\begin{figure}[H]
	\centering	
	\begin{minipage}[t]{0.15\textwidth}%m=1
		\centering
		\scalemath{0.8}{
			\Diskh{
				\coordinate (A) at (45:0.6);
				\coordinate (B) at (-45:0.6);
				\coordinate (C) at (-135:0.6);
				\coordinate (D) at (135:0.6);
				
				\coordinate (a) at (0:0.6);
				\coordinate (b) at (-90:0.6);
				\coordinate (c) at (180:0.6);
				\coordinate (d) at (90:0.6);
				
				\draw (1)--(d);		\draw (3)--(a);		\draw (5)--(b);		\draw (7)--(c);
				\draw (2)--(A);		\draw (4)--(B);		\draw (6)--(C);		\draw (8)--(D);		
				
				\draw (a)--(A);		\draw (a)--(B);		\draw (b)--(B);		\draw (b)--(C);
				\draw (c)--(C);		\draw (c)--(D);		\draw (d)--(A);		\draw (d)--(D);
				
				\foreach \p in {A,B,C,D} {
					\fill[white] (\p) circle (2pt);
					\draw (\p) circle (2pt);
				}
				\foreach \p in {a,b,c,d}{
					\fill (\p) circle (2pt);
				}
		}}
	\end{minipage}
	\begin{minipage}[t]{0.15\textwidth}%m=2
		\centering
		\scalemath{0.8}{
			\Diskh{
				\coordinate (A) at (-22.5:0.4);
				\coordinate (B) at (-142.5:0.4);
				\coordinate (C) at (97.5:0.4);
				
				\coordinate (a) at (37.5:0.4);
				\coordinate (b) at (-82.5:0.4);
				\coordinate (c) at (157.5:0.4);
				
				\draw (3)--(a);		\draw (5)--(b);		\draw (7)--(c);
				\draw (4)--(A);		\draw (6)--(B);		\draw (8)--(C);		
				
				\draw (a)--(A);		\draw (a)--(C);		\draw (b)--(A);		\draw (b)--(B);
				\draw (c)--(B);		\draw (c)--(C);		
				
				\draw[-stealth] (2) arc[start angle=-45, end angle=-170, radius=.414];
				
				\foreach \p in {A,B,C} {
					\fill[white] (\p) circle (2pt);
					\draw (\p) circle (2pt);
				}
				\foreach \p in {a,b,c}{
					\fill (\p) circle (2pt);
				}
		}}
	\end{minipage}
	\begin{minipage}[t]{0.15\textwidth}%m=2
		\centering
		\scalemath{0.8}{
			\Diskh{
				\coordinate (A) at (82.5:0.4);
				\coordinate (B) at (-37.5:0.4);
				\coordinate (C) at (-157.5:0.4);
				
				\coordinate (a) at (22.5:0.4);
				\coordinate (b) at (-97.5:0.4);
				\coordinate (c) at (142.5:0.4);
				
				\draw (3)--(a);		\draw (5)--(b);		\draw (7)--(c);
				\draw (2)--(A);		\draw (4)--(B);		\draw (6)--(C);		
				
				\draw (a)--(A);		\draw (a)--(B);		\draw (b)--(B);		\draw (b)--(C);
				\draw (c)--(A);		\draw (c)--(C);		
				
				\draw[-stealth] (8) arc[start angle=-135, end angle=-10, radius=.414];
				
				\foreach \p in {A,B,C} {
					\fill[white] (\p) circle (2pt);
					\draw (\p) circle (2pt);
				}
				\foreach \p in {a,b,c}{
					\fill (\p) circle (2pt);
				}
		}}
	\end{minipage}
	\begin{minipage}[t]{0.15\textwidth}%m=2
		\centering
		\scalemath{0.8}{
			\Diskh{
				\coordinate (A) at (7.5:0.4);
				\coordinate (B) at (-112.5:0.4);
				\coordinate (C) at (127.5:0.4);
				
				\coordinate (a) at (67.5:0.4);
				\coordinate (b) at (-52.5:0.4);
				\coordinate (c) at (-172.5:0.4);
				
				\draw (1)--(a);		\draw (5)--(b);		\draw (7)--(c);
				\draw (2)--(A);		\draw (6)--(B);		\draw (8)--(C);		
				
				\draw (a)--(A);		\draw (a)--(C);		\draw (b)--(A);		\draw (b)--(B);
				\draw (c)--(B);		\draw (c)--(C);		
				
				\draw[-stealth] (4) arc[start angle=-135, end angle=-260, radius=.414];
				
				\foreach \p in {A,B,C} {
					\fill[white] (\p) circle (2pt);
					\draw (\p) circle (2pt);
				}
				\foreach \p in {a,b,c}{
					\fill (\p) circle (2pt);
				}
		}}
	\end{minipage}
	\begin{minipage}[t]{0.15\textwidth}%m=2
		\centering
		\scalemath{0.8}{
			\Diskh{
				\coordinate (A) at (-7.5:0.4);
				\coordinate (B) at (-127.5:0.4);
				\coordinate (C) at (112.5:0.4);
				
				\coordinate (a) at (52.5:0.4);
				\coordinate (b) at (-67.5:0.4);
				\coordinate (c) at (172.5:0.4);
				
				\draw (1)--(a);		\draw (5)--(b);		\draw (7)--(c);
				\draw (4)--(A);		\draw (6)--(B);		\draw (8)--(C);		
				
				\draw (a)--(A);		\draw (a)--(C);		\draw (b)--(A);		\draw (b)--(B);
				\draw (c)--(B);		\draw (c)--(C);		
				
				\draw[-stealth] (2) arc[start angle=135, end angle=260, radius=.414];
				
				\foreach \p in {A,B,C} {
					\fill[white] (\p) circle (2pt);
					\draw (\p) circle (2pt);
				}
				\foreach \p in {a,b,c}{
					\fill (\p) circle (2pt);
				}
		}}
	\end{minipage}
	\begin{minipage}[t]{0.15\textwidth}%m=2
		\centering
		\scalemath{0.8}{
			\Diskh{
				\coordinate (A) at (37.5:0.4);
				\coordinate (B) at (-82.5:0.4);
				\coordinate (C) at (157.5:0.4);
				
				\coordinate (a) at (-22.5:0.4);
				\coordinate (b) at (-142.5:0.4);
				\coordinate (c) at (97.5:0.4);
				
				\draw (1)--(c);		\draw (3)--(a);		\draw (7)--(b);
				\draw (2)--(A);		\draw (4)--(B);		\draw (8)--(C);		
				
				\draw (a)--(A);		\draw (a)--(B);		\draw (b)--(B);		\draw (b)--(C);
				\draw (c)--(A);		\draw (c)--(C);		
				
				\draw[-stealth] (6) arc[start angle=135, end angle=10, radius=.414];
				
				\foreach \p in {A,B,C} {
					\fill[white] (\p) circle (2pt);
					\draw (\p) circle (2pt);
				}
				\foreach \p in {a,b,c}{
					\fill (\p) circle (2pt);
				}
		}}
	\end{minipage}
	\\\vspace{0.2cm}
	\begin{minipage}[t]{0.15\textwidth}%m=2
		\centering
		\scalemath{0.8}{
			\Diskh{
				\coordinate (A) at (22.5:0.4);
				\coordinate (B) at (-97.5:0.4);
				\coordinate (C) at (142.5:0.4);
				
				\coordinate (a) at (82.5:0.4);
				\coordinate (b) at (-37.5:0.4);
				\coordinate (c) at (-157.5:0.4);
				
				\draw (1)--(a);		\draw (3)--(b);		\draw (7)--(c);
				\draw (2)--(A);		\draw (6)--(B);		\draw (8)--(C);		
				
				\draw (a)--(A);		\draw (a)--(C);		\draw (b)--(A);		\draw (b)--(B);
				\draw (c)--(B);		\draw (c)--(C);		
				
				\draw[-stealth] (4) arc[start angle=45, end angle=170, radius=.414];
				
				\foreach \p in {A,B,C} {
					\fill[white] (\p) circle (2pt);
					\draw (\p) circle (2pt);
				}
				\foreach \p in {a,b,c}{
					\fill (\p) circle (2pt);
				}
		}}
	\end{minipage}
	\begin{minipage}[t]{0.15\textwidth}%m=2
		\centering
		\scalemath{0.8}{
			\Diskh{
				\coordinate (A) at (67.5:0.4);
				\coordinate (B) at (-52.5:0.4);
				\coordinate (C) at (-172.5:0.4);
				
				\coordinate (a) at (7.5:0.4);
				\coordinate (b) at (-112.5:0.4);
				\coordinate (c) at (127.5:0.4);
				
				\draw (1)--(c);		\draw (3)--(a);		\draw (5)--(b);
				\draw (2)--(A);		\draw (4)--(B);		\draw (6)--(C);		
				
				\draw (a)--(A);		\draw (a)--(B);		\draw (b)--(B);		\draw (b)--(C);
				\draw (c)--(A);		\draw (c)--(C);		
				
				\draw[-stealth] (8) arc[start angle=45, end angle=-80, radius=.414];
				
				\foreach \p in {A,B,C} {
					\fill[white] (\p) circle (2pt);
					\draw (\p) circle (2pt);
				}
				\foreach \p in {a,b,c}{
					\fill (\p) circle (2pt);
				}
		}}
	\end{minipage}
	\begin{minipage}[t]{0.15\textwidth}%m=2
		\centering
		\scalemath{0.8}{
			\Diskh{
				\coordinate (A) at (52.5:0.4);
				\coordinate (B) at (-67.5:0.4);
				\coordinate (C) at (172.5:0.4);
				
				\coordinate (a) at (-7.5:0.4);
				\coordinate (b) at (-127.5:0.4);
				\coordinate (c) at (112.5:0.4);
				
				\draw (1)--(c);		\draw (3)--(a);		\draw (5)--(b);
				\draw (2)--(A);		\draw (4)--(B);		\draw (8)--(C);		
				
				\draw (a)--(A);		\draw (a)--(B);		\draw (b)--(B);		\draw (b)--(C);
				\draw (c)--(A);		\draw (c)--(C);		
				
				\draw[-stealth] (6) arc[start angle=-45, end angle=80, radius=.414];
				
				\foreach \p in {A,B,C} {
					\fill[white] (\p) circle (2pt);
					\draw (\p) circle (2pt);
				}
				\foreach \p in {a,b,c}{
					\fill (\p) circle (2pt);
				}
		}}
	\end{minipage}
	\begin{minipage}[t]{0.15\textwidth}%m=4
		\centering
		\scalemath{0.8}{
			\Diskh{
				\draw[-stealth] (8) arc[start angle=-135, end angle=-10, radius=.414];
				\draw[-stealth] (2) arc[start angle=135, end angle=260, radius=.414];
				\draw[-stealth] (4) arc[start angle=45, end angle=170, radius=.414];
				\draw[-stealth] (6) arc[start angle=-45, end angle=80, radius=.414];
		}}
	\end{minipage}
	\begin{minipage}[t]{0.15\textwidth}%m=4
		\centering
		\scalemath{0.8}{
			\Diskh{
				\draw[-stealth] (8) arc[start angle=45, end angle=-80, radius=.414];
				\draw[-stealth] (2) arc[start angle=-45, end angle=-170, radius=.414];
				\draw[-stealth] (4) arc[start angle=-135, end angle=-260, radius=.414];
				\draw[-stealth] (6) arc[start angle=135, end angle=10, radius=.414];
		}}
	\end{minipage}
	\begin{minipage}[t]{0.15\textwidth}%m=4
		\centering
		\scalemath{0.8}{
			\Diskh{
				\draw[-stealth] (8) arc[start angle=-135, end angle=-10, radius=.414];
				\draw[-stealth] (4) arc[start angle=-135, end angle=-260, radius=.414];
				\draw[-stealth] (6) arc[start angle=135, end angle=10, radius=.414];
				\draw[-stealth] (2)--(180:0.93);
		}}
	\end{minipage}
	\\\vspace{0.2cm}
	\begin{minipage}[t]{0.15\textwidth}%m=4
		\centering
		\scalemath{0.8}{
			\Diskh{
				\draw[-stealth] (2) arc[start angle=-45, end angle=-170, radius=.414];
				\draw[-stealth] (4) arc[start angle=45, end angle=170, radius=.414];
				\draw[-stealth] (6) arc[start angle=-45, end angle=80, radius=.414];
				\draw[-stealth] (8)--(0:0.93);
		}}
	\end{minipage}
	\begin{minipage}[t]{0.15\textwidth}%m=4
		\centering
		\scalemath{0.8}{
			\Diskh{
				\draw[-stealth] (8) arc[start angle=45, end angle=-80, radius=.414];
				\draw[-stealth] (2) arc[start angle=135, end angle=260, radius=.414];
				\draw[-stealth] (6) arc[start angle=135, end angle=10, radius=.414];
				\draw[-stealth] (4)--(90:0.93);
		}}
	\end{minipage}
	\begin{minipage}[t]{0.15\textwidth}%m=4
		\centering
		\scalemath{0.8}{
			\Diskh{
				\draw[-stealth] (8) arc[start angle=-135, end angle=-10, radius=.414];
				\draw[-stealth] (6) arc[start angle=-45, end angle=80, radius=.414];
				\draw[-stealth] (4) arc[start angle=-135, end angle=-260, radius=.414];
				\draw[-stealth] (2)--(-90:0.93);
		}}
	\end{minipage}
	\begin{minipage}[t]{0.15\textwidth}%m=4
		\centering
		\scalemath{0.8}{
			\Diskh{
				\draw[-stealth] (8) arc[start angle=45, end angle=-80, radius=.414];
				\draw[-stealth] (2) arc[start angle=-45, end angle=-170, radius=.414];
				\draw[-stealth] (4) arc[start angle=45, end angle=170, radius=.414];
				\draw[-stealth] (6)--(0:0.93);
		}}
	\end{minipage}
	\begin{minipage}[t]{0.15\textwidth}%m=4
		\centering
		\scalemath{0.8}{
			\Diskh{
				\draw[-stealth] (8) arc[start angle=-135, end angle=-10, radius=.414];
				\draw[-stealth] (2) arc[start angle=135, end angle=260, radius=.414];
				\draw[-stealth] (6) arc[start angle=135, end angle=10, radius=.414];
				\draw[-stealth] (4)--(180:0.93);
		}}
	\end{minipage}
	\begin{minipage}[t]{0.15\textwidth}%m=4
		\centering
		\scalemath{0.8}{
			\Diskh{
				\draw[-stealth] (2) arc[start angle=-45, end angle=-170, radius=.414];
				\draw[-stealth] (4) arc[start angle=-135, end angle=-260, radius=.414];
				\draw[-stealth] (6) arc[start angle=-45, end angle=80, radius=.414];
				\draw[-stealth] (8)--(-90:0.93);
		}}
	\end{minipage}
	\\\vspace{0.2cm}
	\begin{minipage}[t]{0.15\textwidth}%m=4
		\centering
		\scalemath{0.8}{
			\Diskh{
				\draw[-stealth] (2) arc[start angle=135, end angle=260, radius=.414];
				\draw[-stealth] (4) arc[start angle=45, end angle=170, radius=.414];
				\draw[-stealth] (8) arc[start angle=45, end angle=-80, radius=.414];
				\draw[-stealth] (6)--(90:0.93);
		}}
	\end{minipage}
	\begin{minipage}[t]{0.15\textwidth}%m=4
		\centering
		\scalemath{0.8}{
			\Diskh{
				\draw[-stealth] (8) arc[start angle=-135, end angle=-10, radius=.414];
				\draw[-stealth] (4) arc[start angle=45, end angle=170, radius=.414];
				\draw[-stealth] (2)--(180:0.93);
				\draw[-stealth] (6)--(0:0.93);
		}}
	\end{minipage}
	\begin{minipage}[t]{0.15\textwidth}%m=4
		\centering
		\scalemath{0.8}{
			\Diskh{
				\draw[-stealth] (2) arc[start angle=-45, end angle=-170, radius=.414];
				\draw[-stealth] (6) arc[start angle=135, end angle=10, radius=.414];
				\draw[-stealth] (4)--(180:0.93);
				\draw[-stealth] (8)--(0:0.93);
		}}
	\end{minipage}
	\begin{minipage}[t]{0.15\textwidth}%m=4
		\centering
		\scalemath{0.8}{
			\Diskh{
				\draw[-stealth] (2) arc[start angle=135, end angle=260, radius=.414];
				\draw[-stealth] (6) arc[start angle=-45, end angle=80, radius=.414];
				\draw[-stealth] (4)--(90:0.93);
				\draw[-stealth] (8)--(-90:0.93);
		}}
	\end{minipage}
	\begin{minipage}[t]{0.15\textwidth}%m=4
		\centering
		\scalemath{0.8}{
			\Diskh{
				\draw[-stealth] (4) arc[start angle=-135, end angle=-260, radius=.414];
				\draw[-stealth] (8) arc[start angle=45, end angle=-80, radius=.414];
				\draw[-stealth] (6)--(90:0.93);
				\draw[-stealth] (2)--(-90:0.93);
		}}
	\end{minipage}
	\caption{Non-elliptic webs with 4 black and 4 white boundary vertices of type 8.}
	\label{fig:type8}
\end{figure}

\end{appendices}

%%===========================================================================================%%
%% If you are submitting to one of the Nature Portfolio journals, using the eJP submission   %%
%% system, please include the references within the manuscript file itself. You may do this  %%
%% by copying the reference list from your .bbl file, paste it into the main manuscript .tex %%
%% file, and delete the associated \verb+\bibliography+ commands.                            %%
%%===========================================================================================%%

\bibliography{Grassmannian}% common bib file

%% BioMed_Central_Bib_Style_v1.01

\begin{thebibliography}{26}
% BibTex style file: bmc-mathphys.bst (version 2.1), 2014-07-24
\ifx \bisbn   \undefined \def \bisbn  #1{ISBN #1}\fi
\ifx \binits  \undefined \def \binits#1{#1}\fi
\ifx \bauthor  \undefined \def \bauthor#1{#1}\fi
\ifx \batitle  \undefined \def \batitle#1{#1}\fi
\ifx \bjtitle  \undefined \def \bjtitle#1{#1}\fi
\ifx \bvolume  \undefined \def \bvolume#1{\textbf{#1}}\fi
\ifx \byear  \undefined \def \byear#1{#1}\fi
\ifx \bissue  \undefined \def \bissue#1{#1}\fi
\ifx \bfpage  \undefined \def \bfpage#1{#1}\fi
\ifx \blpage  \undefined \def \blpage #1{#1}\fi
\ifx \burl  \undefined \def \burl#1{\textsf{#1}}\fi
\ifx \doiurl  \undefined \def \doiurl#1{\url{https://doi.org/#1}}\fi
\ifx \betal  \undefined \def \betal{\textit{et al.}}\fi
\ifx \binstitute  \undefined \def \binstitute#1{#1}\fi
\ifx \binstitutionaled  \undefined \def \binstitutionaled#1{#1}\fi
\ifx \bctitle  \undefined \def \bctitle#1{#1}\fi
\ifx \beditor  \undefined \def \beditor#1{#1}\fi
\ifx \bpublisher  \undefined \def \bpublisher#1{#1}\fi
\ifx \bbtitle  \undefined \def \bbtitle#1{#1}\fi
\ifx \bedition  \undefined \def \bedition#1{#1}\fi
\ifx \bseriesno  \undefined \def \bseriesno#1{#1}\fi
\ifx \blocation  \undefined \def \blocation#1{#1}\fi
\ifx \bsertitle  \undefined \def \bsertitle#1{#1}\fi
\ifx \bsnm \undefined \def \bsnm#1{#1}\fi
\ifx \bsuffix \undefined \def \bsuffix#1{#1}\fi
\ifx \bparticle \undefined \def \bparticle#1{#1}\fi
\ifx \barticle \undefined \def \barticle#1{#1}\fi
\bibcommenthead
\ifx \bconfdate \undefined \def \bconfdate #1{#1}\fi
\ifx \botherref \undefined \def \botherref #1{#1}\fi
\ifx \url \undefined \def \url#1{\textsf{#1}}\fi
\ifx \bchapter \undefined \def \bchapter#1{#1}\fi
\ifx \bbook \undefined \def \bbook#1{#1}\fi
\ifx \bcomment \undefined \def \bcomment#1{#1}\fi
\ifx \oauthor \undefined \def \oauthor#1{#1}\fi
\ifx \citeauthoryear \undefined \def \citeauthoryear#1{#1}\fi
\ifx \endbibitem  \undefined \def \endbibitem {}\fi
\ifx \bconflocation  \undefined \def \bconflocation#1{#1}\fi
\ifx \arxivurl  \undefined \def \arxivurl#1{\textsf{#1}}\fi
\csname PreBibitemsHook\endcsname

%%% 1
\bibitem[\protect\citeauthoryear{Fomin and
  Zelevinsky}{2002}]{fomin_cluster_2002}
\begin{barticle}
\bauthor{\bsnm{Fomin}, \binits{S.}},
\bauthor{\bsnm{Zelevinsky}, \binits{A.}}:
\batitle{Cluster algebras \text{I}: foundations}.
\bjtitle{Journal of the American mathematical society}
\bvolume{15}(\bissue{2}),
\bfpage{497}--\blpage{529}
(\byear{2002})
\end{barticle}
\endbibitem

%%% 2
\bibitem[\protect\citeauthoryear{Scott}{2006}]{scott_grassmannians_2006}
\begin{barticle}
\bauthor{\bsnm{Scott}, \binits{J.S.}}:
\batitle{Grassmannians and cluster algebras}.
\bjtitle{Proceedings of the London Mathematical Society}
\bvolume{92}(\bissue{2}),
\bfpage{345}--\blpage{380}
(\byear{2006})
\end{barticle}
\endbibitem

%%% 3
\bibitem[\protect\citeauthoryear{Postnikov}{2006}]{postnikov_total_2006}
\begin{botherref}
\oauthor{\bsnm{Postnikov}, \binits{A.}}:
Total positivity, grassmannians, and networks.
{arXiv} preprint arXiv:math/0609764
(2006)
\end{botherref}
\endbibitem

%%% 4
\bibitem[\protect\citeauthoryear{Talaska}{2008}]{talaska_formula_2008}
\begin{botherref}
\oauthor{\bsnm{Talaska}, \binits{K.}}:
A formula for pl\"ucker coordinates associated with a planar network.
International Mathematics Research Notices
(2008)
\end{botherref}
\endbibitem

%%% 5
\bibitem[\protect\citeauthoryear{Berenstein
  et~al.}{1996}]{berenstein_parametrizations_1996}
\begin{barticle}
\bauthor{\bsnm{Berenstein}, \binits{A.}},
\bauthor{\bsnm{Fomin}, \binits{S.}},
\bauthor{\bsnm{Zelevinsky}, \binits{A.}}:
\batitle{Parametrizations of canonical bases and totally positive matrices}.
\bjtitle{Advances in Mathematics}
\bvolume{122}(\bissue{1}),
\bfpage{49}--\blpage{149}
(\byear{1996})
\end{barticle}
\endbibitem

%%% 6
\bibitem[\protect\citeauthoryear{Marsh and Scott}{2016}]{marsh_twists_2016}
\begin{barticle}
\bauthor{\bsnm{Marsh}, \binits{R.J.}},
\bauthor{\bsnm{Scott}, \binits{J.S.}}:
\batitle{Twists of pl\"ucker coordinates as dimer partition functions}.
\bjtitle{Communications in Mathematical Physics}
\bvolume{341}(\bissue{3}),
\bfpage{821}--\blpage{884}
(\byear{2016})
\end{barticle}
\endbibitem

%%% 7
\bibitem[\protect\citeauthoryear{Muller and Speyer}{2017}]{muller_twist_2016}
\begin{barticle}
\bauthor{\bsnm{Muller}, \binits{G.}},
\bauthor{\bsnm{Speyer}, \binits{D.}}:
\batitle{The twist for positroid varieties}.
\bjtitle{Proceedings of the London Mathematical Society}
\bvolume{115},
\bfpage{1014}--\blpage{1071}
(\byear{2017})
\end{barticle}
\endbibitem

%%% 8
\bibitem[\protect\citeauthoryear{Kuperberg}{1996}]{kuperberg_spiders_1996}
\begin{barticle}
\bauthor{\bsnm{Kuperberg}, \binits{G.}}:
\batitle{Spiders for rank 2 lie algebras}.
\bjtitle{Communications in Mathematical Physics}
\bvolume{180}(\bissue{1}),
\bfpage{109}--\blpage{151}
(\byear{1996})
\end{barticle}
\endbibitem

%%% 9
\bibitem[\protect\citeauthoryear{Fomin and Pylyavskyy}{2016}]{FOMIN2016717}
\begin{barticle}
\bauthor{\bsnm{Fomin}, \binits{S.}},
\bauthor{\bsnm{Pylyavskyy}, \binits{P.}}:
\batitle{Tensor diagrams and cluster algebras}.
\bjtitle{Advances in Mathematics}
\bvolume{300},
\bfpage{717}--\blpage{787}
(\byear{2016})
\end{barticle}
\endbibitem

%%% 10
\bibitem[\protect\citeauthoryear{Chang et~al.}{2020}]{chang_quantum_2020}
\begin{barticle}
\bauthor{\bsnm{Chang}, \binits{W.}},
\bauthor{\bsnm{Duan}, \binits{B.}},
\bauthor{\bsnm{Fraser}, \binits{C.}},
\bauthor{\bsnm{Li}, \binits{J.R.}}:
\batitle{Quantum affine algebras and grassmannians}.
\bjtitle{Mathematische Zeitschrift}
\bvolume{296},
\bfpage{1539}--\blpage{1583}
(\byear{2020})
\end{barticle}
\endbibitem

%%% 11
\bibitem[\protect\citeauthoryear{Gaetz et~al.}{2023}]{gaetz2023rotation}
\begin{botherref}
\oauthor{\bsnm{Gaetz}, \binits{C.}},
\oauthor{\bsnm{Pechenik}, \binits{O.}},
\oauthor{\bsnm{Pfannerer}, \binits{S.}},
\oauthor{\bsnm{Striker}, \binits{J.}},
\oauthor{\bsnm{Swanson}, \binits{J.P.}}:
Rotation-invariant web bases from hourglass plabic graphs. preprint.
arXiv preprint arxiv:2306.12501,
65
(2023)
\end{botherref}
\endbibitem

%%% 12
\bibitem[\protect\citeauthoryear{Lam}{2015}]{lam_dimers_2015}
\begin{barticle}
\bauthor{\bsnm{Lam}, \binits{T.}}:
\batitle{Dimers, webs, and positroids}.
\bjtitle{Journal of the London Mathematical Society}
\bvolume{92}(\bissue{3}),
\bfpage{633}--\blpage{656}
(\byear{2015})
\end{barticle}
\endbibitem

%%% 13
\bibitem[\protect\citeauthoryear{Fraser et~al.}{2019}]{fraser_dimers_2019}
\begin{barticle}
\bauthor{\bsnm{Fraser}, \binits{C.}},
\bauthor{\bsnm{Lam}, \binits{T.}},
\bauthor{\bsnm{Le}, \binits{I.}}:
\batitle{From dimers to webs}.
\bjtitle{Transactions of the American Mathematical Society}
\bvolume{371}(\bissue{9}),
\bfpage{6087}--\blpage{6124}
(\byear{2019})
\end{barticle}
\endbibitem

%%% 14
\bibitem[\protect\citeauthoryear{Elkin et~al.}{2024}]{elkin_twists_2023}
\begin{botherref}
\oauthor{\bsnm{Elkin}, \binits{M.}},
\oauthor{\bsnm{Musiker}, \binits{G.}},
\oauthor{\bsnm{Wright}, \binits{K.}}:
Twists of gr(3,n) cluster variables as double and triple dimer partition
  functions.
{arXiv} preprint arXiv:2305.15531
(2024)
\end{botherref}
\endbibitem

%%% 15
\bibitem[\protect\citeauthoryear{Russell}{2013}]{russell_explicit_2013}
\begin{barticle}
\bauthor{\bsnm{Russell}, \binits{H.M.}}:
\batitle{An explicit bijection between semistandard tableaux and non-elliptic
  $sl_3$ webs}.
\bjtitle{Journal of Algebraic Combinatorics}
\bvolume{38}(\bissue{4}),
\bfpage{851}--\blpage{862}
(\byear{2013})
\end{barticle}
\endbibitem

%%% 16
\bibitem[\protect\citeauthoryear{Cheung et~al.}{2023}]{cheung_clustering_2022}
\begin{barticle}
\bauthor{\bsnm{Cheung}, \binits{M.W.}},
\bauthor{\bsnm{Dechant}, \binits{P.P.}},
\bauthor{\bsnm{He}, \binits{Y.H.}},
\bauthor{\bsnm{Heyes}, \binits{E.}},
\bauthor{\bsnm{Hirst}, \binits{E.}},
\bauthor{\bsnm{Li}, \binits{J.R.}}:
\batitle{Clustering cluster algebras with clusters}.
\bjtitle{Advances in Theoretical and Mathematical Physics}
\bvolume{27}(\bissue{3}),
\bfpage{797}--\blpage{828}
(\byear{2023})
\end{barticle}
\endbibitem

%%% 17
\bibitem[\protect\citeauthoryear{Lusztig}{2010}]{lusztig2010introduction}
\begin{bbook}
\bauthor{\bsnm{Lusztig}, \binits{G.}}:
\bbtitle{Introduction to Quantum Groups}.
\bsertitle{Modern Birkhäuser Classics}.
\bpublisher{Birkhäuser},
\blocation{Boston, MA}
(\byear{2010})
\end{bbook}
\endbibitem

%%% 18
\bibitem[\protect\citeauthoryear{Berenstein and
  Zwicknagl}{2008}]{berenstein2008braided}
\begin{barticle}
\bauthor{\bsnm{Berenstein}, \binits{A.}},
\bauthor{\bsnm{Zwicknagl}, \binits{S.}}:
\batitle{Braided symmetric and exterior algebras}.
\bjtitle{Transactions of the American Mathematical Society}
\bvolume{360}(\bissue{7}),
\bfpage{3429}--\blpage{3472}
(\byear{2008})
\end{barticle}
\endbibitem

%%% 19
\bibitem[\protect\citeauthoryear{Cautis et~al.}{2014}]{Cautis_2014}
\begin{barticle}
\bauthor{\bsnm{Cautis}, \binits{S.}},
\bauthor{\bsnm{Kamnitzer}, \binits{J.}},
\bauthor{\bsnm{Morrison}, \binits{S.}}:
\batitle{Webs and quantum skew howe duality}.
\bjtitle{Mathematische Annalen}
\bvolume{360}(\bissue{1–2}),
\bfpage{351}--\blpage{390}
(\byear{2014})
\end{barticle}
\endbibitem

%%% 20
\bibitem[\protect\citeauthoryear{Morrison}{2007}]{morrison2007diagrammatic}
\begin{botherref}
\oauthor{\bsnm{Morrison}, \binits{S.E.}}:
A diagrammatic category for the representation theory of $U_q(sl_n)$.
University of California, Berkeley
(2007)
\end{botherref}
\endbibitem

%%% 21
\bibitem[\protect\citeauthoryear{Kim}{2003}]{kim2003graphical}
\begin{botherref}
\oauthor{\bsnm{Kim}, \binits{D.}}:
Graphical calculus on representations of quantum lie algebras.
{arXiv} preprint arXiv:math/0310143
(2003)
\end{botherref}
\endbibitem

%%% 22
\bibitem[\protect\citeauthoryear{Schützenberger}{1972}]{SCHUTZENBERGER197273}
\begin{barticle}
\bauthor{\bsnm{Schützenberger}, \binits{M.P.}}:
\batitle{Promotion des morphismes d'ensembles ordonnes}.
\bjtitle{Discrete Mathematics}
\bvolume{2}(\bissue{1}),
\bfpage{73}--\blpage{94}
(\byear{1972})
\end{barticle}
\endbibitem

%%% 23
\bibitem[\protect\citeauthoryear{Gaetz et~al.}{2024a}]{gaetz2024promotion}
\begin{botherref}
\oauthor{\bsnm{Gaetz}, \binits{C.}},
\oauthor{\bsnm{Pechenik}, \binits{O.}},
\oauthor{\bsnm{Pfannerer}, \binits{S.}},
\oauthor{\bsnm{Striker}, \binits{J.}},
\oauthor{\bsnm{Swanson}, \binits{J.P.}}:
Promotion permutations for tableaux.
Combinatorial Theory
\textbf{4}(2)
(2024)
\end{botherref}
\endbibitem

%%% 24
\bibitem[\protect\citeauthoryear{Gaetz et~al.}{2024b}]{gaetz2024web}
\begin{botherref}
\oauthor{\bsnm{Gaetz}, \binits{C.}},
\oauthor{\bsnm{Pechenik}, \binits{O.}},
\oauthor{\bsnm{Pfannerer}, \binits{S.}},
\oauthor{\bsnm{Striker}, \binits{J.}},
\oauthor{\bsnm{Swanson}, \binits{J.P.}}:
Web bases in degree two from hourglass plabic graphs.
arXiv preprint arXiv:2402.13978
(2024)
\end{botherref}
\endbibitem

%%% 25
\bibitem[\protect\citeauthoryear{Patrias}{2019}]{patrias2019promotion}
\begin{barticle}
\bauthor{\bsnm{Patrias}, \binits{R.}}:
\batitle{Promotion on generalized oscillating tableaux and web rotation}.
\bjtitle{Journal of Combinatorial Theory, Series A}
\bvolume{161},
\bfpage{1}--\blpage{28}
(\byear{2019})
\end{barticle}
\endbibitem

%%% 26
\bibitem[\protect\citeauthoryear{Tymoczko}{2012}]{tymoczko_simple_2012}
\begin{barticle}
\bauthor{\bsnm{Tymoczko}, \binits{J.}}:
\batitle{A simple bijection between standard $3\times n$ tableaux and
  irreducible webs for sl 3}.
\bjtitle{Journal of Algebraic Combinatorics}
\bvolume{35},
\bfpage{611}--\blpage{632}
(\byear{2012})
\end{barticle}
\endbibitem

\end{thebibliography}
%% if required, the content of .bbl file can be included here once bbl is generated
%%\input sn-article.bbl

\end{document}